\documentclass[bj,numbers,noshowframe]{imsart}
\usepackage{amsmath,amsthm,amssymb,amsfonts,hyperref,xcolor,bbm,circuitikz,enumitem,graphicx,algorithm,algorithmic,array,verbatim,caption,subcaption}

\startlocaldefs
\theoremstyle{plain}
\newtheorem{theorem}{Theorem}
\newtheorem{lemma}{Lemma}
\newtheorem{corollary}{Corollary}
\theoremstyle{definition}
\newtheorem{assumption}{Assumption}
\newtheorem{definition}{Definition}
\newtheorem{problem}{Class}
\newtheorem{example}{Example}
\newtheorem{remark}{Remark}

\newcommand{\mR}{\mathbb{R}}
\newcommand{\mF}{\mathcal{F}}
\newcommand{\tlambda}{\Tilde{\lambda}}
\newcommand{\hlambda}{\hat{\lambda}}
\newcommand{\clambda}{\check{\lambda}}
\newcommand{\blambda}{\bar{\lambda}}
\DeclareMathOperator*{\argmin}{argmin}
\DeclareMathOperator*{\argmax}{argmax}
\newcommand{\expt}{\mathbb{E}}

\newcommand{\idf}{\mathbbm{1}}
\newcommand{\ir}{r^{\textup{int}}}
\newcommand{\ie}{\textup{ERR}^{\textup{int}}}
\newcommand{\ess}{\textup{ESS}^{\textup{int}}}
\newcommand{\cl}{\check{\ell}}
\newcommand{\hl}{\hat{\ell}}
\newcommand{\bd}[1]{\boldsymbol{#1}}
\newcommand{\prob}{\mathbb{P}}

\newcommand{\cov}{\textup{Cov}}

\newcommand{\bSigma}{\boldsymbol{\Sigma}}
\newcommand{\bell}{\boldsymbol{\theta}}
\newcommand{\BellSP}{\boldsymbol{\Theta}}

\newcommand{\ys}[1]{\textcolor{blue}{(#1)}}
\newcommand{\mW}{\mathcal{W}}
\newcommand{\KL}{\textup{KL}}

\endlocaldefs

\makeatletter
\def\journal@name{} 
\makeatother

\begin{document}
\begin{frontmatter}
\title{Sequential Multiple Testing: A Second-Order Asymptotic Analysis}
\runtitle{Sequential Multiple Testing: Second-Order Analysis}

\begin{aug}
\author{\fnms{Jingyu}~\snm{Liu}\ead[label=e1]{jingyu.liu@queensu.ca}}
\author{\fnms{Yanglei}~\snm{Song}\ead[label=e2]{yanglei.song@queensu.ca}}
\address{Department of Mathematics and Statistics, Queen's University, Kingston, Canada}
\address{\printead[presep={\ }]{e1,e2}}
\end{aug}

\begin{abstract}
We study sequential multiple testing with independent data streams, where the goal is to identify an unknown subset of signals while controlling commonly used error metrics, including generalized familywise rates and false discovery and non-discovery rates. For these problems, procedures that are first-order optimal are known, in the sense that the ratio of their expected sample size (ESS) to the minimal achievable ESS converges to one as the error tolerance levels vanish. In this work, we develop a unified theory of second-order asymptotic optimality. We establish general sufficient conditions under which second-order Bayesian optimality implies second-order frequentist optimality for broad classes of sequential testing procedures. As a consequence, several procedures previously known to be first-order optimal are shown to be second-order optimal: for every signal configuration, the difference between their ESS and the minimal achievable ESS remains uniformly bounded as the error tolerance levels tend to zero. In addition, we derive a second-order asymptotic expansion of the minimal achievable ESS, which refines the classical first-order approximation by identifying the second-order correction term arising from a boundary-crossing problem for a multidimensional random walk. We apply this result to several commonly used error metrics.
\end{abstract}

\begin{keyword}
\kwd{Asymptotic optimality; expected sample size; nonlinear renewal theory; second-order asymptotics; sequential multiple testing}
\end{keyword}

\end{frontmatter}


\section{Introduction}
Multiple testing concerns the simultaneous assessment of $K$ hypotheses and remains a central topic in statistics. Classical formulations assume a fixed sample size and focus on controlling prespecified Type-I error criteria. Representative examples include the familywise error rate (FWER), which bounds the probability of at least one false rejection \cite{holm1979simple,hommel1988stagewise}; the generalized FWER, which controls the probability of at least $k \ge 1$ false rejections \cite{Lehmann:2005:GFE}; and the false discovery rate (FDR), defined as the expected proportion of false rejections among all discoveries \cite{benjamini1995controlling}.

In many applications, such as industrial process control, clinical trials, and multichannel signal detection, data are collected sequentially and decisions must adapt to the accumulated observations \cite{tartakovsky2014sequential}. 
Sequential procedures can substantially reduce sampling effort while maintaining prescribed \emph{Type-I and Type-II} error guarantees. This framework originates from Wald’s seminal work on the sequential probability ratio test (SPRT) \cite{wald1945sequential}. Subsequent research extended Wald’s ideas to multiple hypotheses observed through multiple data streams, leading to two main paradigms: sampling may terminate at different times across streams \cite{malloy2014sequential,bartroff2014sequential,bartroff2018multiple,xing2025asymptotically}, or simultaneously across all streams \cite{de2012sequential,de2012step,song2017asymptotically,song2019sequential,he2021asymptotically,xing2023signal,chaudhuri2024joint}. In this work, we focus on the simultaneous stopping paradigm. 

Specifically, we consider $K$ independent data streams, with $\{\mathbf{X}_1^{k}, \mathbf{X}_2^{k}, \ldots\}$ denoting the observations from the $k$th stream, for $k \in [K] := \{1,\ldots,K\}$. For each stream $k$, we test two simple hypotheses $H_0^{k}$ versus $H_1^{k}$. Let $A \subset [K]$ denote the (unknown) set of signals, that is, the streams for which the alternative hypothesis holds. We allow $A$ to take values in a prescribed collection $\mathcal{A} \subset 2^{[K]}$, where $2^{[K]}$ is the power set of $[K]$. If $\mathcal{A}$ is a strict subset of $2^{[K]}$, then it encodes structural information about the signal subset. For example, if $\mathcal{A} = \{A \subset [K] : |A| = m\}$, then the number of signals is known to be $m$. In this work, we focus on the case of independent streams and refer to \cite{chaudhuri2024joint} for extensions to dependent settings.

At each time $t \ge 1$, samples $\{\mathbf{X}_t^{1},\ldots,\mathbf{X}_t^{K}\}$ are observed. Sampling continues until a stopping rule determines that sufficient evidence has been collected, at which point a decision about the unknown signal subset is made. Formally, a sequential multiple testing rule consists of a stopping time $T$ and a terminal decision rule $D \in \mathcal{A}$, where $T$ is the total number of observations collected and $D$ estimates $A$ based on the data observed up to time $T$. The objective is to construct $(T,D)$ so as to control prescribed error criteria while keeping the expected sample size (ESS) $\expt_A[T]$ small for each $A \in \mathcal{A}$.

In contrast to classical fixed-sample formulations, sequential procedures typically aim to control both Type-I and Type-II errors. Procedures have been developed to control  familywise error rates (FWER) \cite{bartroff2010multistage,de2012sequential,de2012step,bartroff2014sequential}, generalized familywise error rates \cite{bartroff2018multiple,de2015sequential}, generalized misclassification rates \cite{li2014universal,malloy2014sequential}, and false discovery and non-discovery rates \cite{javanmard2018online,bartroff2020sequential,he2021asymptotically}; see Subsection \ref{sec: multi testing error} for precise definitions. For these criteria, first-order asymptotic optimality results are available, in some cases under additional structural information such as a known number of signals \cite{song2017asymptotically,song2019sequential,he2021asymptotically}.

Specifically, denote by $\Delta(\bell)$ a general class of sequential procedures indexed by $\bell \in (0,1)^r$, where $r \ge 1$. In specific applications, $\Delta(\bell)$ may represent the collection of procedures that satisfy a prescribed error criterion at level $\bell$, in which case $r$ is typically either $1$ or $2$. Given $\Delta(\bell)$ and $A \in \mathcal{A}$, define
\begin{equation}\label{def: small ESS}
T_A^{\textup{min}}(\Delta(\bell))
:=
\inf_{(T,D)\in \Delta(\bell)} \expt_A[T],
\end{equation}
the minimal achievable expected sample size (ESS) over $\Delta(\bell)$ under signal configuration $A$.

Let $\{\delta(\bell) = (T(\bell), D(\bell)) : \bell \in (0,1)^r\}$ be a family of procedures such that $\delta(\bell) \in \Delta(\bell)$ for each $\bell$. We say that $\{\delta(\bell)\}$ is \emph{first-order asymptotically optimal} with respect to $\{\Delta(\bell)\}$ if, for each $A \in \mathcal{A}$,
\[
\expt_A[T(\bell)]
=
T_A^{\min}(\Delta(\bell))(1+o(1))
\quad \text{as } \bell \to \bd{0},
\]
that is, the ratio of the ESS of the procedure to the minimal achievable ESS converges to one as $\bell$ approaches the zero vector $\bd{0}$.
Moreover, for a collection of real numbers $\{Q_1(\bell) : \bell \in (0,1)^r\}$, we say that $Q_1(\bell)$ provides a first-order asymptotic approximation to $T_A^{\min}(\Delta(\bell))$ if
\[
T_A^{\min}(\Delta(\bell))
=
Q_1(\bell)(1+o(1)).
\]

For the error control criteria mentioned above, both first-order optimal procedures and corresponding first-order approximations of the minimal achievable ESS have been established \cite{song2017asymptotically,song2019sequential,he2021asymptotically}. We refer to Section \ref{sec: application} for further details. Here, for concreteness, we consider the class of sequential procedures 
$\Delta^{\textup{gmr}}_{m_0}(\alpha)$ that controls the probability of committing at least $m_0$ mistakes, regardless of Type-I or Type-II, at level $\alpha \in (0,1)$ under each signal configuration $A \subset [K]$, that is, $\mathcal{A} = 2^{[K]}$.  In \cite{song2019sequential}, the Sum-Intersection rule $\{\delta_S(b_\alpha) = (T_S(b_\alpha), D_S(b_\alpha))\}$ is proposed and shown to be first-order asymptotically optimal, where $b_{\alpha}$ is a threshold so that $\delta_S(b_{\alpha}) \in \Delta^{\textup{gmr}}_{m_0}(\alpha)$ (see Subsection \ref{sec: gmr}). Moreover, the minimal achievable ESS is characterized to the first-order. That is, for each $A \subset [K]$, as $\alpha \to 0$,
\begin{align*}
\expt_A[T_S(b_\alpha)]  =
T_A^{\min}(\Delta^{\textup{gmr}}_{m_0}(\alpha))(1+o(1)), \quad \text{ and }\quad
T_A^{\min}(\Delta^{\textup{gmr}}_{m_0}(\alpha))
= \frac{|\log(\alpha)|}{\kappa_A}(1+o(1)),
\end{align*}
where $\kappa_A$ is a positive constant independent of $\alpha$ (see Subsection \ref{sec: gmr} for its definition).

In Fig. \ref{fig: intro 1}, we present a numerical study with $K=20$, $m_0=1$, and $A=\emptyset$; see Section \ref{sec:numerical_results} for details. The left panel displays, using square markers, the ESS of the Sum--Intersection rule,
$\expt_A[T_S(b_\alpha^*)]$, where $b_\alpha^*$ is a non-conservative threshold. 
It also shows, using triangle markers, the first-order approximation 
$|\log(\alpha)|/\kappa_A$. Both quantities are plotted against 
$|\log_{10}(\alpha)|$ as $\alpha \to 0$. The middle and right panels show the difference $\expt_A[T_S(b_\alpha^*)] - |\log(\alpha)|/\kappa_A$ and the ratio $\expt_A[T_S(b_\alpha^*)] / (|\log(\alpha)|/\kappa_A)$, respectively. As predicted by first-order asymptotic theory, the ratio converges to one as $\alpha \to 0$. However, the middle panel indicates that the difference is not bounded; instead, it diverges as $\alpha \to 0$.

This raises the question of whether the Sum-Intersection rule enjoys a higher-order form of optimality, namely, whether the difference $\expt_A[T_S(b_\alpha)] - T_A^{\min}(\Delta^{\textup{gmr}}_{m_0}(\alpha))$ remains bounded as $\alpha \to 0$. Here the comparison is made with the minimal achievable ESS $T_A^{\min}(\Delta^{\textup{gmr}}_{m_0}(\alpha))$, rather than with its approximations. Equally important is the problem of obtaining a sharper asymptotic characterization of $T_A^{\min}(\Delta^{\textup{gmr}}_{m_0}(\alpha))$ beyond the leading logarithmic term. More generally, these questions arise for other error metrics and classes of sequential procedures.

\begin{figure}[!t]
\centering
\includegraphics[width=0.95\textwidth]{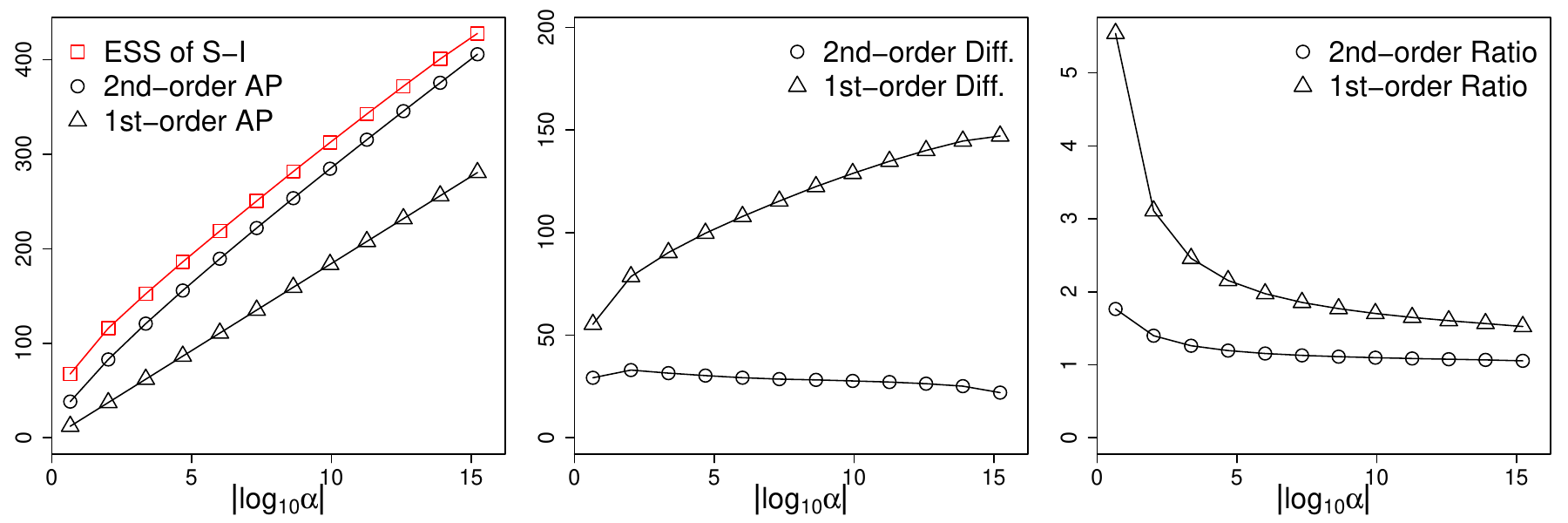}
\caption{Symmetric case: $K=20, m_0=1$. The x-axis in all panels is $|\log_{10}\alpha|$. In the leftmost plots, ``S-I'' denotes the Sum-Intersection rule (see \eqref{eq: sum intersection rule}) and ``AP'' denotes the asymptotic approximation to the smallest achievable ESS. In the middle plots, ``Diff.'' denotes the difference between the ESS of the S-I rule and the two approximations. In the rightmost plots, ``Ratio'' denotes the ratio of the ESS of the S-I rule to each of the two approximations.}
\label{fig: intro 1}
\end{figure}

\smallskip
\noindent \textbf{Our contributions.}
This paper establishes second-order frequentist optimality for several sequential multiple testing procedures and provides a second-order characterization of the minimal achievable ESS under a broad range of error metrics, including the generalized misclassification rate, generalized familywise error rates, false discovery/non-discovery rates, and settings with structural information on the signal configuration.

First, we develop in Theorem \ref{thm: main result} a general framework that provides explicit and verifiable sufficient conditions under which a family of rules is second-order asymptotically optimal. 
Recall that for each $\bell \in (0,1)^r$, $\Delta(\bell)$ denotes a class of sequential procedures. 
A family $\{\delta(\bell) = (T(\bell), D(\bell)) \in \Delta(\bell)\}$ is said to be \emph{second-order asymptotically optimal} with respect to $\{\Delta(\bell)\}$ if, for each $A \in \mathcal{A}$,
\[
\expt_A[T(\bell)]
=
T_A^{\min}(\Delta(\bell)) + O(1)
\quad \text{as } \bell \to \bd{0}.
\]
That is, the excess ESS remains uniformly bounded as the error tolerance level vanishes.

Our framework builds on the Bayesian optimality theory developed in 
\cite{kiefer1963asymptotically,lorden1967integrated,lorden1977nearly}. 
In the associated Bayesian formulation, the signal configuration $A$ is endowed with a prior distribution, and performance is measured by an integrated risk consisting of a sampling cost and a decision loss, defined through a suitably chosen cost parameter and loss function tailored to the class $\Delta(\bell)$. 
We show that if, for each $\bell \in (0,1)^r$, the stopping time $T(\bell)$ does not exceed that of an appropriately constructed Bayes rule almost surely, and if the induced integrated loss due to incorrect decisions is uniformly controlled over $\Delta(\bell)$ at the relevant asymptotic scale, then the procedure is second-order asymptotically optimal in the frequentist sense.

Second, we derive in Theorem~\ref{thm: min ESS characterization} a second-order accurate asymptotic expansion of $T_A^{\min}(\Delta(\bell))$, which characterizes the fundamental efficiency limit within this class.
Under suitable conditions, we show that 
\[
T_A^{\min}(\Delta(\bell))
=
\frac{\log(c_{\bell}^{-1})}{\kappa_A}
+
\kappa_A'
\sqrt{\log(c_{\bell}^{-1})}
+
O\left((\log(c_{\bell}^{-1}))^{1/4+\epsilon}\right),\quad \text{ as }\; \bell \to \bd{0},
\]
for any fixed $\epsilon \in (0,1/4)$, where the cost parameter $c_{\bell}>0$ is a function of $\bell$, and $\kappa_A$ and $\kappa_A'$ are constants independent of $\bell$. 
This refines the existing first-order approximation 
\[
T_A^{\min}(\Delta(\bell))
=
\frac{\log(c_{\bell}^{-1})}{\kappa_A}(1+o(1)),
\]
by identifying the explicit second-order correction term $\kappa_A'\sqrt{\log(c_{\bell}^{-1})}$. 
This term arises from a boundary-crossing problem for a multidimensional random walk. 
Our analysis relies on nonlinear renewal theory \cite{nagai2006nonlinear}, and appears to be new in the present context; see Remark \ref{rk:draglin_issue}.

Finally, we apply the above framework to several sequential multiple testing procedures previously known to be first-order asymptotically optimal under various error metrics and information structures, and establish their second-order optimality. In addition, we specialize the second-order asymptotic approximation of the minimal achievable ESS for the corresponding classes of procedures. 

For concreteness, consider the class $\Delta^{\textup{gmr}}_{m_0}(\alpha)$ and the Sum-Intersection rule $\delta_S(b_{\alpha})$ mentioned above. In the left panel of Fig. \ref{fig: intro 1}, the circle markers represent the second-order asymptotic approximation of $T_A^{\min}(\Delta^{\textup{gmr}}_{m_0}(\alpha))$ for $A=\emptyset$, which has the form
${|\log(\alpha)|}/{\kappa_A} + \kappa_A' \sqrt{|\log(\alpha)|}$, 
where $\kappa_A' > 0$ is independent of $\alpha$. The middle and right panels display the corresponding difference and ratio between $\expt_A[T_S(b_{\alpha}^*)]$ and this second-order approximation, respectively. The boundedness of the excess ESS as $\alpha \to 0$, shown in the middle panel, confirms the second-order optimality of $\{\delta_S(b_{\alpha})\}$ established in this paper.

\smallskip
\noindent \textbf{Organization.} Section \ref{sec: problem formulation} introduces the sequential multiple testing framework, including the error metrics, notions of asymptotic optimality, and the asymptotic characterization of the minimal achievable ESS. 
Section \ref{sec: bayesian formulation} presents the main theoretical results: we formulate the associated Bayesian problem, establish the link between second-order Bayesian and frequentist optimality, and derive the second-order expansion of the minimal ESS. 
Section \ref{sec: application} applies these results to several problem instances and classes of sequential procedures. 
Section \ref{sec:numerical_results} contains numerical studies under the generalized misclassification rate, and
section \ref{sec:conclusion} concludes the paper. Omitted proofs are given in the appendices, together with additional discussions and numerical results.

\section{Frequentist formulation, Optimality, and Minimum ESS}
\label{sec: problem formulation}
Consider $K \ge 2$ independent streams of random vectors $\{\mathbf{X}^k : k \in [K]\}$, where $[K] = \{1,\ldots,K\}$. 
For each $k \in [K]$,   $\mathbf{X}^k = \{\mathbf{X}_t^k \in \mathcal{X}^k : t = 1,2,\ldots\}$ is an independent and identically distributed (i.i.d.) sequence of $\mathcal{X}^k$-valued random vectors, where $\mathbf{X}_t^k$ denotes the observation from the $k$-th stream at time $t$. 
At each time $t \ge 1$, define
$$
\mathbf{X}_t^{[K]} := (\mathbf{X}_t^1,\ldots,\mathbf{X}_t^K)^\top
\in \mathcal{X}^1 \times \cdots \times \mathcal{X}^K
$$
to be the vector of observations across all streams at time $t$, and let
$
\mathcal{F}_t := \sigma(\mathbf{X}_1^{[K]},\ldots,\mathbf{X}_t^{[K]})
$
denote the $\sigma$-field generated by the observations up to time $t$.

For each $k \in [K]$, let $\mu^k$ be a $\sigma$-finite measure, and let $f^k$ denote the density of the marginal distribution of $X_1^k$ with respect to $\mu^k$. We consider the pair of simple hypotheses
\begin{equation}\label{def:simple_vs_simple}
H_0^k: f^k = f_0^k
\quad \text{versus} \quad
H_1^k: f^k = f_1^k.
\end{equation}
We impose the following assumption, which requires that the Kullback-Leibler divergences between $f_0^k$ and $f_1^k$ are positive and that the log-likelihood ratio has a finite second moment under both the null and the alternative.

\begin{assumption}\label{assumption: lorden}
For each stream $k \in [K]$, we assume that for $(i,i') \in \{(0,1),(1,0)\}$,
$$
\int \log\left(f_i^k/f_{i'}^k\right) f_i^k \, d\mu^k > 0,
\quad \text{and} \quad
\int \left[\log\left(f_i^k/f_{i'}^k\right)\right]^2 f_i^k \, d\mu^k < \infty.
$$
\end{assumption}

In addition, for each $k \in [K]$, let $P_0^k$ and $P_1^k$ denote the corresponding distributions of the \emph{data stream} $\mathbf{X}^k$ under $H_0^k$ and $H_1^k$, respectively. For any subset $A \subset [K]$, interpret $A$ as the set of indices for which the alternative hypothesis is true, referred to as the \emph{signal subset}, and let $\prob_A$ denote the joint distribution of $\{\mathbf{X}^k : k \in [K]\}$ under this configuration. In particular, by independence across streams,
\begin{equation*} 
\prob_A = \bigotimes_{k \in [K]} P^k,
\quad \text{ where } \quad 
P^k =
\begin{cases}
P_1^k, & k \in A,\\
P_0^k, & k \in A^c,
\end{cases}
\end{equation*}
where $\bigotimes$ denotes the product measure. Denote by $\expt_A$ the expectation with respect to the measure $\prob_A$.  
Under $\prob_A$, the distribution of $\mathbf{X}_1^{[K]}$ admits a density with respect to  $\mu^1 \otimes \cdots \otimes \mu^K$,  given by
\begin{equation}\label{eq: density}
f_A(x^1,\ldots,x^K)
=
\prod_{i \in A} f_1^i(x^i)
\prod_{j \in A^c} f_0^j(x^j),
\quad \text{ for } \;\;
x^k \in \mathcal{X}^k,\ k \in [K].
\end{equation}

Let $\mathcal{A} \subset 2^{[K]}$ denote the collection of all possible signal subsets, where $2^{[K]}$ is the power set of $[K]$. The case $\mathcal{A} = 2^{[K]}$ corresponds to the absence of any prior information about the signal configuration, whereas $\mathcal{A} \subsetneq 2^{[K]}$ reflects the availability of prior knowledge on the structure of the signals.

A sequential multiple testing procedure $\delta$ is defined as a pair $(T,D)$, where $T$ is an $\{\mathcal{F}_t\}$-stopping time at which sampling in all streams is terminated, and $D = (D^1,\ldots,D^K)^\top$ is an $\mathcal{F}_T$-measurable decision rule. Each component $D^k$, $k \in [K]$, is a Bernoulli random variable, with $D^k = 0$ corresponding to the selection of $H_0^k$ and $D^k = 1$ corresponding to the selection of $H_1^k$. Accordingly, upon stopping and based on the information available up to time $T$, we declare the presence of a signal (resp.~noise) in the $k$-th stream if $D^k = 1$ (resp.~$D^k = 0$). With a slight abuse of notation, we also write
\begin{equation*}
D := \{k \in [K] : D^k = 1\}
\end{equation*}
to denote the subset of streams declared to contain signals. We restrict attention to decision rules satisfying $D \in \mathcal{A}$, and denote by $\Delta$ the class of \emph{all} sequential testing procedures.

Our objective is to minimize the expected sample size (ESS) $\expt_A[T]$ for all $A \in \mathcal{A}$, while simultaneously controlling prescribed notions of error associated with the decision rule $D$.

\subsection{Multiple Testing Error Rates}\label{sec: multi testing error}

In this subsection, we define several subclasses of sequential testing procedures corresponding to different notions of error metrics and information structures.

Given a decision $D \in \mathcal{A}$ and the true signal subset $A \in \mathcal{A}$, there are two natural types of errors associated with each stream $k \in [K]$: a Type-I error occurs if the true hypothesis for the $k$-th stream is $H_0^k$ but $D^k = 1$ (i.e., a false positive), while a Type-II error occurs if the true hypothesis is $H_1^k$ but $D^k = 0$ (i.e., a false negative).

We first recall three subclasses of procedures from the literature that assume no prior information, i.e., $\mathcal{A} = 2^{[K]}$, and control different multiple testing error criteria.

\begin{problem}\label{problem: gmr}
Given a user-specified integer $1\leq m_0 \leq K$ and a tolerance level $\alpha \in (0,1)$, we define the class of sequential procedures that control the generalized misclassification rate (GMR)   \cite{li2014universal,malloy2014sequential,song2019sequential}  as 
\begin{equation*}
\mathcal{A} = 2^{[K]}, \quad \text{ and } \quad 
\Delta_{m_0}^{\mathrm{gmr}}(\alpha)
=
\left\{
(T,D) \in \Delta :
\max_{A \in \mathcal{A}} \prob_A\bigl(|D \triangle A| \ge m_0\bigr)
\le \alpha
\right\}.
\end{equation*}
\end{problem}

\begin{problem}\label{problem: gfr}
Given two user-specified integers $1 \leq m_1,m_2 \leq K$ such that $m_1+m_2 \leq K$ and tolerance levels $\alpha,\beta \in (0,1)$, we define the class of sequential procedures that control the generalized familywise error rates (GFWER)  \cite{bartroff2018multiple,de2015sequential,song2019sequential}, as $\mathcal{A} = 2^{[K]}$ and
\begin{equation*}
\Delta_{m_1,m_2}^{\mathrm{gfr}}(\alpha,\beta)
=
\left\{
(T,D) \in \Delta :
\max_{A \in \mathcal{A}} \prob_A\bigl(|D \setminus A| \ge m_1\bigr) \le \alpha,
\;
\max_{A \in \mathcal{A}} \prob_A\bigl(|A \setminus D| \ge m_2\bigr) \le \beta
\right\}.
\end{equation*}
\end{problem}

\begin{problem}\label{problem: fdr}
Given tolerance levels $\alpha,\beta \in (0,1)$, we define the class of sequential procedures that control the false discovery rate (FDR) and the false non-discovery rate (FNR)   \cite{javanmard2018online,bartroff2020sequential,he2021asymptotically} as $\mathcal{A} = 2^{[K]}$ and
\begin{equation*}
\Delta^{\mathrm{fdr}}(\alpha,\beta)
=
\left\{
(T,D) \in \Delta :
\max_{A \in \mathcal{A}} \expt_A\left[\frac{|D \setminus A|}{\max\{|D|,1\}}\right] \le \alpha,
\;
\max_{A \in \mathcal{A}} \expt_A\left[\frac{|A \setminus D|}{\max\{|D^c|,1\}}\right] \le \beta
\right\}. 
\end{equation*}
\end{problem}

A few remarks are in order. First, for each of the three classes above, error control is required uniformly over all possible signal subsets $A \subset [K]$. Second, in contrast to Class \ref{problem: gmr}, the GFWER and FDR/FNR criteria distinguish between Type-I and Type-II errors. Third, for GFWER, when $m_1 = m_2 = 1$, the criterion reduces to the classical familywise error rates \cite{bartroff2010multistage,de2012sequential,de2012step,bartroff2014sequential}; more generally, GFWER allows up to $m_1 - 1$ Type-I errors and $m_2 - 1$ Type-II errors. A similar discussion applies to the \emph{generalized} misclassification rate. Finally, Class \ref{problem: fdr} focuses on controlling the expected proportions of Type-I and Type-II errors.

In some applications, prior knowledge about the structure of the signal subset may be available. For instance, the exact number of signals may be known, or lower and upper bounds on the number of signals may be specified. We next recall a subclass of sequential procedures from the literature designed for settings in which $\mathcal{A} \subsetneq 2^{[K]}$  \cite{song2017asymptotically,he2021asymptotically}.

\begin{problem}\label{problem: kns}
Suppose the number of signals is known to be $m \in [1,K)$. For a tolerance level $\alpha \in (0,1)$, we define the class of sequential procedures that control the probability of a wrong decision as
\begin{equation}\label{eq: problem kns}
\mathcal{A} = \{A \subset [K] : |A| = m\},\qquad
\Delta_m^{\textup{kns}}(\alpha)
=
\left\{
(T,D) \in \Delta :
\max_{A \in \mathcal{A}} \prob_A\bigl(D \neq A\bigr)
\le \alpha
\right\}.
\end{equation}
\end{problem}

We review additional subclasses of sequential procedures in Appendix \ref{app:further}. For a given subclass, the objective is to identify a procedure within the class that minimizes the ESS in an appropriate asymptotic sense. In the next subsection, we review several notions of asymptotic optimality.

\subsection{Asymptotic Optimality} \label{subsec:asymptic_optimality}
Let $\Delta(\bell) \subset \Delta$ denote a general subclass of procedures indexed by an error tolerance vector $\bell \in (0,1)^r$, where $r \in \{1,2\}$. For example, $\Delta(\bell)$ may correspond to one of the four classes of procedures defined above, with $r = 1$ for Class \ref{problem: gmr} and $r = 2$ for Class \ref{problem: gfr}. 
Recall from \eqref{def: small ESS} that $T_A^{\textup{min}}(\Delta(\bell))$ denotes the minimum achievable expected sample size (ESS) over the class $\Delta(\bell)$ under the true signal subset $A \in \mathcal{A}$.


For many classes of sequential procedures, such as Classes \ref{problem: gmr}--\ref{problem: kns}, the minimum achievable ESS diverges as the error probability requirements $\bell$ become more stringent, that is,
\begin{align}\label{def:diverge}
T_A^{\textup{min}}(\Delta(\bell))\to\infty \;\; \text{ as } \;\;\bell\to\bd{0}, \quad \text{for each } A\in\mathcal{A},
\end{align}
where $\bd{0}$ denotes the zero vector in $\mathbb{R}^r$. We now formally define two notions of asymptotic optimality.


\begin{definition}[Asymptotic Optimality]\label{def: optimal}
Let $\{\delta(\bell) = (T(\bell), D(\bell)): \bell \in (0,1)^r\}$ be a collection of sequential multiple testing procedures such that $\delta(\bell) \in \Delta(\bell)$ for all $\bell \in (0,1)^r$. Then:
\begin{enumerate}[label = (\alph*)]
\item $\{\delta(\bell)\}$ is called \emph{first-order asymptotically optimal} in $\{\Delta(\bell)\}$ if, for each $A \in \mathcal{A}$,
\begin{equation*}
\lim_{\bell \to \boldsymbol{0}}
\frac{\expt_A[T(\bell)]}
{T_A^{\textup{min}}(\Delta(\bell))}
= 1.
\end{equation*}

\item $\{\delta(\bell)\}$ is called \emph{second-order asymptotically optimal} in $\{\Delta(\bell)\}$ if, for each $A \in \mathcal{A}$,
\begin{equation*}
\limsup_{\bell \to \boldsymbol{0}}
\Bigl(
\expt_A[T(\bell)]
-
T_A^{\textup{min}}(\Delta(\bell))
\Bigr)
< \infty.
\end{equation*}
\end{enumerate}
\end{definition}

We emphasize that the definition of asymptotic optimality (AO) requires the stated limits to hold for all $A \in \mathcal{A}$. While first-order AO ensures that the ESS is asymptotically optimal in a relative sense, it does not preclude the possibility that, for some $A \in \mathcal{A}$, the excess
$
\expt_A[T(\bell)]
-
T_A^{\textup{min}}(\Delta(\bell))
$
diverges as $\bell \to \boldsymbol{0}$; see Figure \ref{fig: intro 1} for an illustration. When \eqref{def:diverge} holds, the notion  of second-order AO imposes stronger requirements by controlling the excess in an additive sense.  We refer to Subsection \ref{subsec:discussion} for further discussion of higher-order optimality.

For Classes \ref{problem: gmr}--\ref{problem: kns} and additional classes in Appendix \ref{app:further}, the existing literature proposes testing procedures and establishes their first-order asymptotic optimality \cite{song2017asymptotically,song2019sequential,he2021asymptotically}; see Section \ref{sec: application} and Appendix \ref{app:kns}. However, it remains open whether these first-order optimal procedures also enjoy the stronger notion of second-order optimality. In this work, we provide an \emph{affirmative} answer to this question.

The above formulations are frequentist in nature, aiming to control the relevant error probabilities uniformly over $A \in \mathcal{A}$, as well as the ESS for each $A \in \mathcal{A}$.  To establish the \emph{second-order} optimality of existing procedures, our strategy is to leverage optimality results from associated \emph{Bayesian} formulations, which average errors and ESS with respect to a prior distribution over $\mathcal{A}$, and then translate these results into their frequentist counterparts. To this end, in Subsection \ref{subsec:Bayes_formulation} we introduce the Bayesian formulation and develop a framework linking Bayesian and frequentist optimality, which we then apply to general subclasses of sequential procedures.

\subsection{Minimum ESS Characterization}\label{subsec:mim_ESS}
The second-order optimality results make it possible to derive a more accurate asymptotic approximation to the minimum ESS than what is currently available in the literature.
Next, we define two notions of asymptotic approximation.

\begin{definition}[Asymptotic approximation]\label{def: optimal approximation}
Let $A \in \mathcal{A}$, and $\BellSP \subset (0,1)^r$. Let $\{Q_{1}(\bell), Q_2(\bell): \bell \in \BellSP\}$ be a collection of positive numbers indexed by $\bell$.
\begin{enumerate}[label=(\alph*)]
\item $\{Q_1(\bell): \bell \in \BellSP\}$ is called a \emph{first-order accurate approximation} to $T_A^{\textup{min}}(\Delta(\bell))$ if, as $\bell \to \boldsymbol{0}$ with $\bell \in \BellSP$,
\begin{equation*}
T_A^{\textup{min}}(\Delta(\bell)) = Q_1(\bell)\bigl(1+o(1)\bigr).
\end{equation*}

\item $\{Q_1(\bell), Q_2(\bell): \bell \in \BellSP\}$ is called a \emph{second-order accurate approximation} to $T_A^{\textup{min}}(\Delta(\bell))$ if, as $\bell \to \boldsymbol{0}$ with $\bell \in \BellSP$,
\begin{equation*}
T_A^{\textup{min}}(\Delta(\bell)) = Q_1(\bell) + Q_2(\bell)\bigl(1+o(1)\bigr),
\qquad \text{and} \qquad Q_2(\bell)=Q_1(\bell)o(1).
\end{equation*}
\end{enumerate}
\end{definition}

\begin{remark}
In later applications, when $r=1$, we let $\BellSP=(0,1)$. When $r=2$, we may impose conditions on the way the two components of $\bell$ approach zero. For this reason, we introduce $\BellSP$ in the above definition.
\end{remark}

From the above definition, if $\{Q_1(\bell)\}$ is first-order accurate, then $T_A^{\textup{min}}(\Delta(\bell))$ is characterized up to an error term that is negligible compared to $Q_1(\bell)$. Thus, $Q_1(\bell)$ is the first-order dominant term in the asymptotic approximation.

On the other hand, if $\{Q_1(\bell), Q_2(\bell): \bell \in \BellSP\}$ is second-order accurate, the error of the approximation $Q_1(\bell)+Q_2(\bell)$ is negligible compared to $Q_2(\bell)$, which is more precise than an $o(Q_1(\bell))$ error since $Q_2(\bell)=Q_1(\bell)o(1)$. This also implies that $Q_2(\bell)$ is the second-order dominant term in the asymptotic approximation, which justifies the terminology.

As detailed in Section \ref{sec: application}, for various classes of sequential procedures there exist first-order accurate approximations in the literature, which we strengthen to second order. To give some preview, for now assume $r=1$ and denote $\bell$ by $\alpha$. For several classes, the existing first-order approximation to $T_A^{\textup{min}}(\Delta(\alpha))$ is of the form $Q_1(\alpha)=|\log(\alpha)|/\kappa_A$, where $\kappa_A > 0$ is a constant. In this work, we show that as $\alpha\to 0$,
\[
T_A^{\textup{min}}(\Delta(\alpha))
=
\frac{|\log(\alpha)|}{\kappa_A}
+
\kappa_A' \sqrt{|\log(\alpha)|}
+
O\left(|\log(\alpha)|^{\epsilon_0}\right),
\]
where $\kappa_A' \geq 0$ is another constant, and $\epsilon_0\in(1/4,1/2)$ is a fixed constant.

Note that when $\kappa_A' > 0$, the first- and second-order approximations have different forms. Furthermore, roughly speaking, the existing results are accurate up to $o(|\log(\alpha)|)$, while our results are accurate up to $O(|\log(\alpha)|^{\epsilon_0})$, where $\epsilon_0$ can be made arbitrarily close to $1/4$ under appropriate moment conditions. Finally, when $\kappa_A' = 0$, the approximation ${|\log(\alpha)|}/{\kappa_A}$ in fact achieves an $O(1)$ error.

\section{A Framework for Frequentist Optimality via Bayesian Formulations} \label{sec: bayesian formulation}

In this section, we first introduce a general Bayesian formulation of the sequential multiple testing problem   and review a procedure that is second-order optimal in the Bayesian sense (see Subsection \ref{subsec:Bayes_formulation}). We then establish sufficient conditions under which a procedure can be shown to be second-order optimal in the frequentist sense by comparison with this second-order Bayesian optimal procedure (see Subsection \ref{subsec:conversion}). Finally, we derive a second-order accurate asymptotic approximation to the minimum achievable expected sample size (ESS) under the frequentist formulation (see Subsection \ref{subsec:second-order ESS}).

\subsection{Bayesian Formulation and Bayesian Optimality}
\label{subsec:Bayes_formulation}
Recall that $\mathcal{A} \subset 2^{[K]}$ denotes the collection of all possible signal subsets, and let $\pi_0$ be the uniform distribution on $\mathcal{A}$, that is,  $\pi_0(A) = |\mathcal{A}|^{-1}$ for each $A \in \mathcal{A}$. 
In the Bayesian formulation, the true signal subset $A$ is treated as a random variable drawn from the prior distribution $\pi_0$.

Let $\delta = (T,D) \in \Delta$ be a sequential procedure, and recall that the decision rule $D$ takes values in $\mathcal{A}$. We define the integrated risk of $\delta$ as the sum of the cost due to sampling and the loss due to incorrect decisions.  
Specifically, let $c > 0$ denote the cost incurred per additional set of observations. The integrated sampling cost is defined as
\begin{equation}\label{eq: int ess def}
\ess(\delta; c)
:= c \,\expt[T]
= c \sum_{A \in \mathcal{A}} \pi_0(A)\,\expt_A[T],
\end{equation}
where the expectation $\expt$ is taken with respect to both the prior distribution of $A$ and the randomness of the observations. In addition, let
\begin{equation*}
\mW(\cdot \mid \cdot): \mathcal{A} \times \mathcal{A} \to [0,\infty)
\end{equation*}
be a nonnegative loss function, where $\mW(D \mid A)$ denotes the loss incurred by declaring $D$ when the true signal subset is $A$. We assume that $\mW(A \mid A) = 0$ for all $A \in \mathcal{A}$. The integrated error loss associated with incorrect decisions is then defined as
\begin{equation}\label{eq: integrated error}
\ie(\delta; \mW)
:= \expt\left[\mW(D \mid A)\right]
= \sum_{A \in \mathcal{A}} \pi_0(A)
\sum_{d \in \mathcal{A}} \prob_A(D = d)\,\mW(d \mid A).
\end{equation}

Thus, given $c > 0$ and $\mW$, the integrated cost is defined as
\begin{equation}\label{eq: integrated risk}
\begin{aligned}
\ir(\delta; c, \mW) := \ess(\delta; c) + \ie(\delta; \mW).
\end{aligned}
\end{equation}

\begin{remark}
To connect frequentist optimality with Bayesian optimality, it is necessary to specify the cost parameter $c$ and the loss function $\mW$. 
\end{remark}

Next, we review the notion of asymptotic optimality in the Bayesian sense. Let
\begin{equation}\label{smallest_Bayes_risk}
\ir_{\textup{min}}(c,\mW)
:=
\inf_{\delta \in \Delta}\, \ir(\delta; c, \mW)
\end{equation}
denote the minimum achievable integrated risk.

\begin{definition}[Second-order Bayesian optimality]\label{def: bayesian optimality}
Let $\{\delta(c) : c > 0\} \subset \Delta$ be a collection of sequential procedures. The family $\{\delta(c)\}$ is called \emph{second-order Bayesian optimal} with respect to $\mW$ if there exists a constant $M > 0$ such that 
\begin{equation*}
\ir(\delta(c); c, \mW)
\le
\ir_{\textup{min}}(c, \mW) + M c,\quad \text{ for all } c > 0.
\end{equation*}
\end{definition}
We defer discussion of this definition to Remark \ref{remark:bayesian_optimality}.

Finally, we review a second-order Bayesian optimal procedure from the literature. Recall the definition of the density $f_A(\cdot)$ in \eqref{eq: density}. For $t \ge 1$, the posterior distribution $\pi_t : \mathcal{A} \to [0,1]$ of the signal subset $A$ given the $\sigma$-field $\mathcal{F}_t$ is defined as
\begin{equation}\label{eq: posterior}
\pi_t(A)
:=
\frac{f_{A,[t]}}{\sum_{B \in \mathcal{A}} f_{B,[t]}},
\quad \text{for each } A \in \mathcal{A},
\qquad
\text{where }
f_{A,[t]} := \prod_{s=1}^t f_A(\mathbf{X}_s^{[K]}).
\end{equation}

Given $c > 0$ and a loss function $\mW$, the following procedure $\delta_{\mathrm{Ld}}(c,\mW) = (T_{\mathrm{Ld}}(c,\mW), D_{\mathrm{Ld}}(c,\mW))$, studied in \cite{schwarz1962asymptotic,kiefer1963asymptotically,lorden1967integrated}, is defined as
\begin{equation}\label{eq: lorden procedure}
T_{\mathrm{Ld}} 
=
\min\Bigl\{
t \ge 1 :
\min_{D \in \mathcal{A}}\;
\sum_{A \in \mathcal{A}} \pi_t(A)\,\mW(D \mid A)
< c
\Bigr\},
\quad\; 
D_{\mathrm{Ld}} 
\in 
\argmin_{D \in \mathcal{A}}
\sum_{A \in \mathcal{A}} \pi_{T_{\mathrm{Ld}}}(A)\,\mW(D \mid A),
\end{equation}
where ties are broken arbitrarily, and the dependence on $(c,\mW)$ is omitted to simplify the notation. In words, the procedure stops at the first time $t$ when there exists a decision $D \in \mathcal{A}$ whose posterior expected loss falls below the threshold $c$, and upon stopping selects a decision that minimizes the posterior expected loss. It is shown in Lorden's paper \cite{lorden1967integrated} that the family $\{\delta_{\mathrm{Ld}}(c,\mW)\}$ is second-order Bayesian optimal, which motivates the subscript ``Ld''.

\begin{remark}\label{remark:bayesian_optimality}
In \cite{kiefer1963asymptotically}, it is shown that $\ir_{\textup{min}}(c, \mW)$ is of order $c|\log c|$ as $c \to 0$, and that the family $\{\delta_{\mathrm{Ld}}(c,\mW)\}$ is \emph{first-order} asymptotically optimal in the sense that
\begin{equation*}
\lim_{c \to 0}
\frac{\ir(\delta_{\mathrm{Ld}}(c,\mW); c, \mW)}
{\ir_{\textup{min}}(c, \mW)}
= 1.
\end{equation*}
The second-order optimality of $\{\delta_{\mathrm{Ld}}(c,\mW)\}$, as defined in Definition \ref{def: bayesian optimality} and established in \cite{lorden1967integrated}, is therefore a strictly stronger notion of optimality.
\end{remark}

\subsection{Second-Order Frequentist Optimality via Bayesian Optimality}\label{subsec:conversion}

In this subsection, we present one of the main results, which provides a technical scheme for establishing second-order frequentist optimality using Bayesian optimality as a tool. Specifically, for each $\bell \in (0,1)^{r}$ with $r \in \{1,2\}$, let $\Delta(\bell) \subset \Delta$ be a subclass of sequential multiple testing procedures. Recall the definition of $T_A^{\min}(\Delta(\bell))$ in \eqref{def: small ESS}, which is the minimum achievable ESS over this class.

Now, let $\delta_0(\bell) = (T_0(\bell),D_0(\bell))$, $\bell \in (0,1)^{r}$, be a collection of procedures such that $\delta_0(\bell) \in \Delta(\bell)$ for each $\bell$. Our first result establishes sufficient conditions under which $\{\delta_0(\bell)\}$ is second-order frequentist optimal with respect to $\{\Delta(\bell)\}$. The proof is given in Appendix \ref{sec: proof of thm: main result}.

\begin{theorem}\label{thm: main result}
Suppose that Assumption \ref{assumption: lorden} holds. Assume that there exists a cost $c_{\bell} > 0$ for each $\bell \in (0,1)^r$, a loss function $\mW : \mathcal{A} \times \mathcal{A} \to [0,\infty)$, and a constant $L > 0$ such that the following conditions hold:
\begin{align}
\label{assumption: lorden 1}
&\max_{A \in \mathcal{A}} \mW(D \mid A) > 0, \quad \text{for each } D \in \mathcal{A}; \\
\label{eq: second-order optimal sufficient condition 1}
&T_0(\bell) \le T_{\mathrm{Ld}}(c_{\bell}, \mW)\;\; \prob_A\text{-almost surely}, \;\; \text{for each } A \in \mathcal{A} \text{ and } \bell \in (0,1)^r; \\
\label{eq: second-order optimal sufficient condition 2}
&\ie(\delta; \mW) \le L c_{\bell} , \quad \text{for any procedure } \delta \in \Delta(\bell) \text{ and each } \bell \in (0,1)^r.
\end{align}
Then there exists a constant $M > 0$, depending only on $\mW$ and not on $\bell$, such that  
$$
\expt_A[T_0(\bell)] - T_A^{\textup{min}}(\Delta(\bell))
\le (2L + M)\,|\mathcal{A}|,  \quad \text{ for each } A \in \mathcal{A} \text{ and }  \bell \in (0,1)^r.
$$
\end{theorem}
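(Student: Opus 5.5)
The plan is to pass from the frequentist class $\Delta(\bell)$ to the Bayesian problem with cost $c=c_{\bell}$ and loss $\mW$, and to compare $\delta_{\mathrm{Ld}}(c_{\bell},\mW)$ with an arbitrary competitor $\delta=(T,D)\in\Delta(\bell)$. The difficulty is that Lorden's second-order Bayesian optimality (Definition \ref{def: bayesian optimality}, established in \cite{lorden1967integrated}) only controls the \emph{prior-averaged} sample size. A direct comparison would give $\sum_{B}\pi_0(B)(\expt_B[T_{\mathrm{Ld}}]-\expt_B[T])\le L+M$, and a difference of this kind may be negative for some $B$. So it says nothing about a fixed $A$. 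To obtain a per-configuration bound, I will compare $\delta_{\mathrm{Ld}}$ with a \emph{hybrid} rule. Then the integrand becomes nonnegative, and the averaged bound can be disintegrated.

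\textbf{Step 1 (error of Lorden's rule).} I first show that $T_{\mathrm{Ld}}<\infty$ almost surely under each $\prob_B$. Posterior consistency gives $\pi_t(B)\to1$, and then $\sum_{A'}\pi_t(A')\mW(D\mid A')\to \mW(D\mid B)$, which equals $0$ for $D=B$. I expect condition \eqref{assumption: lorden 1} to be what Lorden needs for his optimality result, and I will just invoke that result. By the definition of the stopping rule,
\[
\ie(\delta_{\mathrm{Ld}};\mW)=\expt\Bigl[\min_{D}\sum_{A'}\pi_{T_{\mathrm{Ld}}}(A')\mW(D\mid A')\Bigr]\le c_{\bell},
\]
using the tower property with respect to $\mathcal{F}_{T_{\mathrm{Ld}}}$.

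\textbf{Step 2 (hybrid rule).} Fix $\delta=(T,D)\in\Delta(\bell)$ and define $\delta'=(T',D')$ with $T'=\min(T,T_{\mathrm{Ld}})$. Set $D'=D$ on $\{T\le T_{\mathrm{Ld}}\}$ and $D'=D_{\mathrm{Ld}}$ otherwise. This is a legitimate procedure in $\Delta$, since $D'$ is $\mathcal{F}_{T'}$-measurable and takes values in $\mathcal{A}$. Because $\mW\ge0$, we have $\mW(D'\mid A)\le \mW(D\mid A)+\mW(D_{\mathrm{Ld}}\mid A)$. Combining this with \eqref{eq: second-order optimal sufficient condition 2} and Step 1 gives
\[
\ie(\delta';\mW)\le (L+1)c_{\bell}.
\]

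\textbf{Step 3 (Bayesian comparison).} By second-order Bayesian optimality, $\ir(\delta_{\mathrm{Ld}})\le \ir_{\min}+Mc_{\bell}\le \ir(\delta')+Mc_{\bell}$. I drop $\ie(\delta_{\mathrm{Ld}})\ge0$ on the left and divide by $c_{\bell}$. This yields
\[
\sum_{B\in\mathcal{A}}\pi_0(B)\,\expt_B\bigl[T_{\mathrm{Ld}}-\min(T,T_{\mathrm{Ld}})\bigr]\le L+1+M .
\]
Each integrand equals $(T_{\mathrm{Ld}}-T)^+\ge0$, so keeping only the term $B=A$ gives
\[
\expt_A[T_{\mathrm{Ld}}]\le \expt_A[T]+|\mathcal{A}|(L+1+M).
\]
Condition \eqref{eq: second-order optimal sufficient condition 1} then gives $\expt_A[T_0(\bell)]\le\expt_A[T]+|\mathcal{A}|(L+M+1)$. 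Taking the infimum over $\delta\in\Delta(\bell)$ yields the claim, with the additive $1$ absorbed into $2L+M$ or into the constant $M$, which depends only on $\mW$.

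\textbf{Main obstacle.} The crux is the per-$A$ disintegration in Step 3. A naive comparison fails because of sign cancellation across configurations, and the hybrid $\min(T,T_{\mathrm{Ld}})$ is what removes it. The remaining technical point is justifying $\ie(\delta_{\mathrm{Ld}})\le c_{\bell}$ and the finiteness of $T_{\mathrm{Ld}}$ and of its expectation. For these I would rely on Assumption \ref{assumption: lorden} together with Lorden's results. If $\expt_A[T]=\infty$, the claimed bound is trivial, so that case needs no separate argument.
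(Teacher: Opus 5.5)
Your proposal is correct and is essentially the paper's argument (Lemma \ref{lemma: thm 4 in lordon}(b)). Both build a hybrid rule that stops at the minimum of the competitor's time and a dominating rule, invoke Lorden's second-order Bayes optimality, and use nonnegativity of the resulting integrand to pass from the prior average to a fixed $A$. The only difference is the choice of dominating rule: you hybridize with $\delta_{\mathrm{Ld}}$, use $\ie(\delta_{\mathrm{Ld}};\mW)\le c_{\bell}$, and invoke $T_0(\bell)\le T_{\mathrm{Ld}}(c_{\bell},\mW)$ only at the end, whereas the paper hybridizes directly with $\delta_0$ and uses $\ie(\delta_0;\mW)\le Lc_{\bell}$, which is why it gets $2L+M$ where you get $L+1+M$. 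Your variant does not need $\delta_0(\bell)\in\Delta(\bell)$, but it requires the extra fact $\ie(\delta_{\mathrm{Ld}};\mW)\le c_{\bell}$, which you correctly justify via a.s. finiteness of $T_{\mathrm{Ld}}$ and the posterior being the conditional law of $A$ given $\mathcal{F}_{T_{\mathrm{Ld}}}$.
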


Note that Theorem \ref{thm: main result} is non-asymptotic. Nevertheless, letting $\bell \to \bd{0}$ immediately implies the second-order frequentist optimality of $\{\delta_0(\bell)\}$ with respect to $\{\Delta(\bell)\}$, in view of Definition \ref{def: optimal}.

The key to applying Theorem \ref{thm: main result} is to specify a cost $c_{\bell}$ that depends on $\bell$, and a loss function $\mW(\cdot \mid \cdot)$ that is independent of $\bell$, such that the three conditions therein can be verified.
In particular, condition \eqref{assumption: lorden 1} requires that for each decision $D \in \mathcal{A}$, there exists a signal subset $A \in \mathcal{A}$ under which $D$ is incorrect, in the sense that $\mW(D \mid A) > 0$. Condition \eqref{eq: second-order optimal sufficient condition 1} requires that the stopping time $T_0(\bell)$ under consideration stops no later than $T_{\mathrm{Ld}}(c_{\bell}, \mW)$ defined in \eqref{eq: lorden procedure}. Finally, condition \eqref{eq: second-order optimal sufficient condition 2} requires that the integrated error loss of \emph{any} procedure in $\Delta(\bell)$ is bounded above by $L c_{\bell}$. In Section \ref{sec: application} and Appendix \ref{app:kns}, we apply Theorem \ref{thm: main result} to Classes \ref{problem: gmr}--\ref{problem: kns} to show that the existing procedures in the literature are second-order frequentist optimal.

\begin{remark}\label{rk:idea_discuss}
The idea of deriving frequentist optimality from Bayesian optimality has appeared previously in the sequential analysis literature. In particular, some arguments in the proof of Theorem \ref{thm: main result} are similar in spirit to those in the proof of Theorem~4 of \cite{lorden1977nearly}. However, the existing results are tailored to specific procedures and settings. Our contribution is to formulate explicit and verifiable sufficient conditions, stated in Theorem \ref{thm: main result}, under which second-order frequentist optimality follows from second-order Bayesian optimality. The theorem is presented at a level of generality that accommodates multiple error metrics, including Class \ref{problem: gmr} and Class \ref{problem: gfr}, and does not require the assumption $\prob_A(D(\bell)\neq A)\to 0$ as $\bell\to 0$ for all $A\in\mathcal{A}$, as assumed in \cite{lorden1977nearly}.
\end{remark}

\subsection{Second-Order Characterization of the Minimum ESS under the Frequentist Formulation} \label{subsec:second-order ESS}
In this subsection, we characterize $T_A^{\textup{min}}(\Delta(\bell))$ up to the second-order as $\bell \to \bd{0}$ for the family of subclasses $\Delta(\bell) \subset \Delta$ indexed by $\bell \in (0,1)^r$. We consider a general loss function $\mW:\mathcal{A}\times\mathcal{A}\to[0,\infty)$ and introduce some notation and assumptions.

 For each $k \in [K]$, define
\begin{equation}
\label{def:KLs_k}
\mathcal{I}_1^k
:=
\int \log \left({f_1^{k}}/{f_0^{k}}\right) f_1^{k} \, d\mu^k,
\quad
\mathcal{I}_0^k
:=
\int \log \left({f_0^{k}}/{f_1^{k}}\right) f_0^{k} \, d\mu^k,
\end{equation}
which are the Kullback--Leibler (KL) divergences between the densities $f_1^{k}$ and $f_0^{k}$.   Furthermore, for $A, C \in \mathcal{A}$ and given that the true signal subset is $A$, we denote the KL divergence between the densities $f_A$ and $f_C$ (see \eqref{eq: density}) by
\begin{equation}
\label{def:KL_fA_fC}
\KL(f_A \mid f_C)
:=
\expt_A\left[\log\left(\frac{f_A}{f_C}(\mathbf{X}_1^{[K]})\right)\right]
=
\sum_{k \in A \setminus C} \mathcal{I}_1^k
+
\sum_{k \in C \setminus A} \mathcal{I}_0^k.
\end{equation}

For each $D \in \mathcal{A}$,  denote the collection of signal subsets under which  $D$ is incorrect with respective to the loss function $\mW$ by
\begin{equation}
    \label{def:H_D_W}
\mathcal{H}_D^{\mW} := \{C \in \mathcal{A} : \mW(D \mid C) > 0\}.
\end{equation}
  In addition, for $A,D \in \mathcal{A}$,  define
\begin{equation}
\label{def:KL_A_D_star}
\KL_{A,D}^{\mW}
:=
\min_{C \in \mathcal{H}_D^{\mW}}
\KL(f_A \mid f_C), \quad \text{ and } \quad
\KL_{A,*}^{\mW} := \max_{B \in \mathcal{A}} \; \KL_{A,B}^{\mW}.
\end{equation}
That is, given the true signal subset $A$, $\KL_{A,D}^{\mW}$ is the minimum KL divergence between $f_A$ and $f_C$ over all signal subsets $C$ for which the decision $D$ is incorrect, and $\KL_{A,*}^{\mW}$ is the maximum of $\KL_{A,D}^{\mW}$ over all $D \in \mathcal{A}$.

\begin{definition} \label{def: dragalin 1}
Let $A \in \mathcal{A}$. We say that $A$ admits a \emph{unique most favorable subset} with respect to $\mW$ if
$\KL_{A,*}^{\mW}$ is attained at a unique subset $D_A^{\mW} \in \mathcal{A}$, that is,
$$
\KL_{A,D_A^{\mW}}^{\mW} = \KL_{A,*}^{\mW} > \KL_{A,D}^{\mW},
\quad \text{for all } D \in \mathcal{A} \setminus \{D_A^{\mW}\}.
$$
\end{definition}

\begin{remark}\label{remark: zero-one loss}
We call $\mW$ a \emph{zero-one loss} if $\mW(D \mid A) > 0$ for all $D \neq A$ with $A,D \in \mathcal{A}$; that is, the loss is positive unless $D = A$. If $\mW$ is a zero-one loss, then clearly $\mathcal{H}_D^{\mW} = \mathcal{A} \setminus \{D\}$. 
Now fix $A \in \mathcal{A}$. Under Assumption \ref{assumption: lorden}, we have
\[
\KL_{A,A}^{\mW} > 0,
\quad
\KL_{A,D}^{\mW} \le \KL(f_A \mid f_A) = 0,
\qquad \text{for all } D \neq A.
\]
As a result, under a zero-one loss $\mW$, each $A \in \mathcal{A}$ admits a \emph{unique most favorable subset} with respect to $\mW$, namely $D_A^{\mW} = A$.
\end{remark}

Let $A\in\mathcal{A}$ admit a \emph{unique most favorable subset} with respect to $\mW$. We denote by
\begin{equation}\label{eq: C_A}
\mathcal{C}_A^{\mW}:=\left\{C\in\mathcal{H}_{D_A^{\mW}}^{\mW}:\KL(f_A\mid f_C)=\KL_{A,*}^{\mW}\right\}  
\quad \text{ and } \quad
r_A^{\mW}:=\left|\mathcal{C}_A^{\mW}\right|
\end{equation}
the collection of signal subsets attaining the minimum KL divergence over $\mathcal{H}_{D_A^{\mW}}^{\mW}$ and its cardinality, respectively.
The resulting asymptotic behavior under $\prob_A$ differs qualitatively depending on whether $r_A^{\mW} = 1$ or $r_A^{\mW} \ge 2$. We refer to the former as the \emph{asymmetric} case and the latter as the \emph{symmetric} case.



Now, let $\left(\mathcal{C}_{A,1}^{\mW},\ldots,\mathcal{C}_{A,r_A^{\mW}}^{\mW}\right)$ be an ordered listing of the elements of $\mathcal{C}_A^{\mW}$. We define an $r_A^{\mW}$-dimensional random vector $\bd{R}_A^{\mW}$, where for each $1 \leq j\leq r_A^{\mW}$,
$$
\bd{R}_{A,j}^{\mW} :=  \log  f_A\left(\mathbf{X}_1^{[K]}\right) -\log f_{\mathcal{C}_{A,j}^{\mW}}\left(\mathbf{X}_1^{[K]}\right) - \KL_{A,*}^{\mW}.
$$
We note that $\expt_A(\bd{R}_{A}^{\mW})=\bd{0}$, and denote by
\begin{align} \label{def:Sigma_A_Z}
    \bSigma_{A}^{\mW} := \expt_A\left[ \bd{R}_{A}^{\mW}(\bd{R}_{A}^{\mW})^\top\right],
\end{align}
the $r_A^{\mW} \times r_A^{\mW}$ covariance matrix of $\bd{R}_{A}^{\mW}$ when $A$ is the true signal subset. Finally, let $\bd{Y}_{A}^{\mW}\sim \mathcal{N}(\mathbf{0},\bSigma_{A}^{\mW})$, the $r_A^{\mW}$-dimensional normal distribution with mean vector $\mathbf{0}$ and covariance matrix $\bSigma_{A}^{\mW}$. Denote the expected value of its largest component by
\begin{align}
\label{def:h_A}
h_{A}^{\mW}:=\expt_A\left[\max\left\{\bd{Y}_{A,1}^{\mW},\ldots,\bd{Y}_{A,r_A^{\mW}}^{\mW} \right\}\right].
\end{align}

\begin{remark}
When $r_A^{\mW}=1$, $h_{A}^{\mW}=0$. For the symmatric case $r_A^{\mW}\ge 2$, we have $h_{A}^{\mW} \geq 0$, which  can be estimated via a Monte Carlo method by sampling random vectors from $\mathcal{N}(\mathbf{0},\bSigma_{A}^{\mW})$ and taking the average of the largest component in each sample.
\end{remark}

In the following, we characterize the second-order approximation of the minimum achievable expected sample size (ESS) for $\Delta(\bell), \bell \in (0,1)^r$. For some $q\ge 2$ to be determined later, we assume 
\begin{equation}\label{eq: bounded moment assumption}
\int \bigl|\log(f_i^k / f_{i'}^k)\bigr|^q f_i^k \, d\mu^k < \infty,
\quad \text{ for } (i,i') \in \{(0,1),(1,0)\} \text{ and } k\in[K].   
\end{equation}

In addition, we consider the case where, for each $\bell\in(0,1)^r$, whether a procedure belongs to $\Delta(\bell)$ depends only on its decision rule and not on its stopping rule.

\begin{assumption}
\label{cond:class_depends_on_decision}
Let $\delta=(T,D)\in\Delta$ and $\delta'=(T',D')\in\Delta$ be arbitrary sequential procedures. If $\delta\in\Delta(\bell)$ for some $\bell\in(0,1)^r$ and $D=D'$, then $\delta'\in\Delta(\bell)$.
\end{assumption}

This assumption is satisfied by classes of sequential procedures, such as Class \ref{problem: gmr}--\ref{problem: kns}, which are defined in terms of error probabilities induced by incorrect decisions. The proof of the following theorem is provided in Appendix \ref{sec: proof of min ESS}.

\begin{theorem}\label{thm: min ESS characterization}
Suppose Assumptions \ref{assumption: lorden} and \ref{cond:class_depends_on_decision} hold. Let $\delta_0(\bell) = (T_0(\bell),D_0(\bell))$, $\bell \in (0,1)^{r}$, be a collection of procedures such that $\delta_0(\bell) \in \Delta(\bell)$ for each $\bell$. Assume that there exist a cost $c_{\bell} > 0$ for each $\bell \in (0,1)^r$, a loss function $\mW : \mathcal{A} \times \mathcal{A} \to [0,\infty)$, and a constant $L > 0$ such that  \eqref{assumption: lorden 1}, \eqref{eq: second-order optimal sufficient condition 1}, and \eqref{eq: second-order optimal sufficient condition 2} hold.  Furthermore, assume $c_{\bell} \to 0$ as $\bell \to \boldsymbol{0}$.

Let $A \in \mathcal{A}$, and assume that $A$ admits a unique most favorable subset with respect to $\mW$.

\begin{enumerate}[label=(\alph*), ref=\theassumption $(\alph*)$]
\item 
Consider the case $r_A^{\mW} = 1$. Assume \eqref{eq: bounded moment assumption} holds for 
$q=3$. Then, as $\bell\rightarrow \bd{0}$,
$$
T_A^{\textup{min}}(\Delta(\bell))= \frac{\log (c_{\bell}^{-1})}{\KL_{A,*}^{\mW}}+O(1).
$$
\item
Consider the case $r_A^{\mW} \geq 2$. For $\epsilon \in (0,1/2)$,  assume
\eqref{eq: bounded moment assumption} holds for some $q$ satisfying
\begin{equation}\label{eq: main text q_cond symmetric}
q > \epsilon^{-1} \quad \textup{ and } \quad  q\ge 3.
\end{equation}
Then, as $\bell\rightarrow \bd{0}$, 
$$
T_A^{\textup{min}}(\Delta(\bell))= \frac{\log (c_{\bell}^{-1})}{\KL_{A,*}^{\mW}}+\frac{h_{A}^{\mW}\sqrt{\log (c_{\bell}^{-1}})}{(\KL_{A,*}^{\mW})^{3/2}}+O\left((\log (c_{\bell}^{-1}))^{1/4+ \epsilon/2}\right).
$$
\end{enumerate}
\end{theorem}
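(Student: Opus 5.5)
The plan is to sandwich $T_A^{\min}(\Delta(\bell))$ between $\expt_A[T_{\mathrm{Ld}}(c_{\bell},\mW)] - O(1)$ and $\expt_A[T_{\mathrm{Ld}}(c_{\bell},\mW)]$, and then expand $\expt_A[T_{\mathrm{Ld}}]$ to second order.

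\textbf{Sandwich.} For the upper bound, fix $\bell$ and replace the stopping rule of $\delta_0(\bell)$ by $T_{\mathrm{Ld}}(c_{\bell},\mW)$ while keeping the decision $D_0(\bell)$. This is legitimate because \eqref{eq: second-order optimal sufficient condition 1} gives $T_0\le T_{\mathrm{Ld}}$, so $D_0(\bell)$ is $\mathcal{F}_{T_{\mathrm{Ld}}}$-measurable. By Assumption \ref{cond:class_depends_on_decision}, the resulting procedure lies in $\Delta(\bell)$, and therefore $T_A^{\min}(\Delta(\bell))\le \expt_A[T_{\mathrm{Ld}}(c_{\bell},\mW)]$. (Simply using $\expt_A[T_0]$ would also give an upper bound.) For the lower bound, invoke Theorem \ref{thm: main result}. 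Its proof compares any $\delta\in\Delta(\bell)$ with $\delta_{\mathrm{Ld}}$ via Lorden's second-order Bayes optimality together with \eqref{eq: second-order optimal sufficient condition 2}, which yields $\expt_A[T_{\mathrm{Ld}}(c_{\bell},\mW)]\le \expt_A[T]+(2L+M)|\mathcal{A}|$ for every $\delta=(T,D)\in\Delta(\bell)$. If this intermediate inequality is not literally stated, I would rerun that argument with $T_{\mathrm{Ld}}$ in place of $T_0$; the only additional fact needed is $\ie(\delta_{\mathrm{Ld}};\mW)=O(c)$, which follows from the stopping rule. Taking the infimum gives $T_A^{\min}=\expt_A[T_{\mathrm{Ld}}]+O(1)$. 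Hence it suffices to expand $\expt_A[T_{\mathrm{Ld}}(c,\mW)]$ as $c\to0$, which reduces the problem to an expansion of $\expt_A[T_{\mathrm{Ld}}]$ alone.

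\textbf{Reduction to a random-walk crossing.} Writing $\Lambda_t(A,C)=\log(f_{A,[t]}/f_{C,[t]})$, the posterior loss of decision $D$ equals $\sum_{C\in\mathcal{H}_D^{\mW}}\mW(D\mid C)e^{-\Lambda_t(A,C)}\,\pi_t(A)/\pi_t(A)$-normalized. Under $\prob_A$, $\Lambda_t(A,C)\approx t\KL(f_A\mid f_C)$, so after an almost surely finite time (with moments controlled) the minimizing $D$ is $D_A^{\mW}$, by uniqueness of the most favorable subset. Then
\[
\log\Bigl(\sum_{C\in\mathcal{H}_{D_A}}\mW(D_A\mid C)\pi_t(C)\Bigr) = -\min_{C}\Lambda_t(A,C)+\log\sum \mW e^{-(\Lambda_t(A,C)-\min)}+O(\text{posterior normalization}).
\]
Thus $T_{\mathrm{Ld}}$ is the first $t$ with $\min_{j\le r_A}\Lambda_t(A,\mathcal{C}_{A,j})+\xi_t\ge \log c^{-1}$, where $\xi_t$ is a slowly changing perturbation. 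This $\xi_t$ collects the non-minimizing $C$'s, which drift apart linearly and contribute $o(1)$ exponentially fast, together with the bounded log-sum over the tied set. Write $\min_j\Lambda_t(A,\mathcal{C}_{A,j})=t\KL_{A,*}^{\mW}-\max_j S_{t,j}$, where $S_t=\sum_{s\le t}\bd{R}_{A,s}$ is a mean-zero $r_A$-dimensional random walk.

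\textbf{Case analysis and the main obstacle.} In case (a), $r_A=1$ and the statistic is a one-dimensional random walk with positive drift plus a slowly changing term. Nonlinear renewal theory (Lai--Siegmund / Woodroofe type, needing third moments) gives $\expt_A[T]=\log(c^{-1})/\KL_{A,*}^{\mW}+O(1)$, with the overshoot bounded in expectation. In case (b) the crossing statistic is $t\KL_{A,*}-\max_j S_{t,j}$, and the main obstacle lies here. Set $n=\log(c^{-1})/\KL_{A,*}$. Near $t\approx n$ we have $\max_j S_{t,j}\approx\sqrt{n}\max_j Y_j$ by the CLT, so heuristically $T\approx n+\max_j S_{T,j}/\KL_{A,*}$, giving $\expt T\approx n+h_A\sqrt{n}/\KL_{A,*}$, which matches the claimed $h_A^{\mW}\sqrt{\log c^{-1}}/\KL_{A,*}^{3/2}$. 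To make this rigorous I would use Wald-type identities: $\KL_{A,*}\expt T=\log c^{-1}+\expt[\max_jS_{T,j}]+O(1)$, where the overshoot is controlled in $L^1$. Next I would show $\expt\max_jS_{T,j}-\expt\max_jS_{n,j}=O(\expt|T-n|^{1/2}\cdots)$ using $|T-n|=O_p(\sqrt n)$, a maximal inequality for increments over windows of length $n^{1/2}$ (of size $n^{1/4}$), and uniform integrability from $q$-th moments. Finally, $\expt\max_jS_{n,j}=\sqrt n\,h_A+o(n^{1/4+\epsilon/2})$ would follow from a multivariate CLT with rate, or from strong approximation (KMT/Einmahl with $q$ moments, error $n^{1/q}\le n^{\epsilon}$). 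The $n^{1/4+\epsilon/2}$ error then comes from the window fluctuation together with the tail truncation that requires $q>\epsilon^{-1}$.
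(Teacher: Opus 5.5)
Your proposal is correct, and its first step matches the paper's Step~1. The paper applies Theorem~\ref{thm: main result} to $\delta_1(\bell)=(T_{\mathrm{Ld}}(c_{\bell},\mW),D_0(\bell))$ and bounds $\ie(\delta_1;\mW)\le Lc_{\bell}$ via \eqref{eq: second-order optimal sufficient condition 2}, whereas you use $\ie(\delta_{\mathrm{Ld}};\mW)\le c$; both go through Lemma~\ref{lemma: thm 4 in lordon}(b). After that the execution differs in two places.

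\emph{Reduction to the random walk.} The paper does not expand the posterior loss. Instead, Lemma~\ref{lemma: MSPRT equivalent with Lorden} sandwiches $T_{\mathrm{Ld}}(c)$ between $\tilde T(c|\mathcal{A}|/m_{\mW})$ and $\tilde T(c/(|\mathcal{A}|M_{\mW}))$. This turns your bounded log-sum and posterior normalization into an $O(1)$ shift of $\log c^{-1}$. Early stopping through any $(B,D)\neq(A,D_A^{\mW})$ is then removed by H\"older, using $\|\tilde T_{A,D_A^{\mW}}(c)\|_3=O(L_c)$ and $\prob_A(\tilde T_{B,D}<\tilde T_{A,D_A^{\mW}})=O(L_c^{-3/2})$; the latter comes from first-passage moment bounds and Janson's last-exit moments (Lemmas~\ref{lemma: aux 0} and~\ref{lemma: aux 2}). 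Your ``eventually $D_A^{\mW}$ is the minimizer'' argument also works under $q=3$, but only if you control the relevant last-exit time through its second moment (Janson, which needs exactly third moments). A union bound over $t\ge\kappa L_c$ gives only $O(L_c^{-1/2})$, which is too weak for the $O(1)$ in case (a).

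\emph{The expansion.} The paper invokes Theorem~4.2 of \cite{nagai2006nonlinear} after verifying $\rho$-regularity: $\rho\equiv1$ in case (a), and $\rho(n)=n^{\alpha/2}$ with $\alpha=1/2+\epsilon$ around the deterministic walk $n\KL_{A,*}^{\mW}$ in case (b). It then obtains $\expt\zeta_{n_b}=-h_A^{\mW}\sqrt{n_b}+O(1)$ from a normal approximation for Lipschitz functionals (Lemma~\ref{lemma: E(zeta)}). Your case (b) argument is in effect a hand proof of the special case of Nagai's theorem that is needed:
\begin{itemize}
\item take expectations in the crossing identity;
\item compare $S_T$ with $S_n$ on $\{|T-n|\le n^{1/2+\epsilon}\}$ by a maximal inequality;
\item discard the complement by Cauchy--Schwarz, using $\prob(|T-n|>n^{1/2+\epsilon})=O(n^{-\epsilon q})$ and $\|\max_{t\le T\vee n}|S_t|\|_2=O(\sqrt n)$.
\end{itemize}
This is more elementary and shows directly why $q>\epsilon^{-1}$ and the exponent $1/4+\epsilon/2$ appear. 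The paper's route is more modular and yields case (a) from the same theorem.

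Two minor corrections. First, with $\bd{R}_{A,j}$ as defined, $\min_j\Lambda_t(A,\mathcal{C}_{A,j})=t\KL_{A,*}^{\mW}+\min_jS_{t,j}$, not $t\KL_{A,*}^{\mW}-\max_jS_{t,j}$. This is harmless, since $\bd{Y}_A^{\mW}$ and $-\bd{Y}_A^{\mW}$ have the same law. Second, the overshoot of a minimum of several walks has no ladder-height structure, so $O(1)$ is not automatic. Use instead the bound $\expt\max_{t\le T}|\bd{R}_t|\le(\expt T\cdot\expt|\bd{R}_1|^q)^{1/q}=O(L_c^{1/q})$, which still fits inside the $O(L_c^{1/4+\epsilon/2})$ remainder because $1/q<\epsilon$.
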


A few remarks are in order. First, Assumptions \ref{assumption: lorden} and \eqref{eq: bounded moment assumption} impose moment conditions on the log-likelihood ratios. In particular, the second part of Assumption \ref{assumption: lorden} is implied by \eqref{eq: bounded moment assumption} when $q\ge2$.  Second, both the condition $c_{\bell}\to0$ as $\bell\to0$ and Assumption \ref{cond:class_depends_on_decision} are natural and hold for most classes of procedures, in particular for Classes \ref{problem: gmr}--\ref{problem: bns fdr fnr} considered in this paper. Third, before applying Theorem \ref{thm: min ESS characterization}, we typically consider a collection of procedures $\delta_0(\bell)\in\Delta(\bell)$ for $\bell\in(0,1)^r$ and establish their second-order optimality via Theorem \ref{thm: main result} by verifying that \eqref{assumption: lorden 1}, \eqref{eq: second-order optimal sufficient condition 1}, and \eqref{eq: second-order optimal sufficient condition 2} hold for suitable choices of $c_{\bell}$, $\mathcal{W}$, and $L$. As this verification is already carried out in Theorem \ref{thm: main result}, no additional work is required at this stage to apply Theorem \ref{thm: min ESS characterization}.

Thus, to apply Theorem \ref{thm: min ESS characterization}, the key step is to verify that a given $A\in\mathcal{A}$ admits a unique most favorable subset with respect to $\mW$, which is tailored to some error metric of interest. In Section \ref{sec: application} and Appendix \ref{app:kns}, we show that for Classes \ref{problem: gmr}, \ref{problem: fdr}, and \ref{problem: kns}, each $A\in\mathcal{A}$ admits a unique most favorable subset for the corresponding $\mW$. For Class \ref{problem: gfr}, however, we establish via a counterexample that this property need not hold, and we further derive sufficient conditions under which it is ensured.

\begin{remark}
In the proof of Theorem \ref{thm: min ESS characterization}, we define a procedure $\delta_1(\bell)$ that shares the same stopping rule as $\delta_{\mathrm{Ld}}(c_{\bell}, \mW)$ and the same decision rule as $\delta_0(\bell)$.  This serves two purposes: (i) the shared decision rule allows us to avoid verifying that $\delta_{\mathrm{Ld}}(c_{\bell}, \mW) \in \Delta(\bell)$; (ii) the shared stopping rule ensures, by Theorem \ref{thm: main result}, that the minimum ESS over $\Delta(\bell)$ coincides with $\expt_A[T_{\mathrm{Ld}}(c_{\bell},\mW)]$ up to an $O(1)$ term. Consequently, the problem reduces to deriving a second-order approximation for the ESS of the specific rule $\delta_{\mathrm{Ld}}(c_{\bell},\mW)$.

Moreover, since $A$ admits a unique most favorable subset $D_A^{\mW}$, we show that $\expt_A[T_{\mathrm{Ld}}(c_{\bell},\mW)] = \expt_{A}[\tau_{A}^{\mW}(c_{\bell})] + O(1)$, where for $c > 0$,
\begin{equation*}
\tau_{A}^{\mW}(c) =\min\left\{t\ge1:\bd{R}_{t,G}\ge\log(c^{-1})\ \text{for all }G\in\mathcal{H}_{D_A^{\mW}}^{\mW}\right\},
\end{equation*}
and $\bd{R}_{t,G}:=\sum_{s=1}^t\log\left({f_A(\mathbf{X}_s^{[K]})}/{f_G(\mathbf{X}_s^{[K]})}\right)$. Thus, $\tau_{A}^{\mW}(c)$ is the first time at which all components of the multidimensional random walk $\{\bd{R}_t:t\ge1\}$ cross the level $\log(c^{-1})$. Finally, we apply nonlinear renewal theory as developed in \cite{nagai2006nonlinear} to approximate $\expt_A[\tau_{A}^{\mW}(c)]$ (see Appendix \ref{sec: Asymptotic ESS of stopping times associated with multidimensional random walks}).
\end{remark}

\begin{remark} \label{rk:draglin_issue}
For the asymmetric case $r_A^{\mW}=1$, Theorem 3.1 in \cite{dragalin2000multihypothesis} provides an asymptotic approximation to $\expt_A[\tau_{A}^{\mW}(c)]$ that is accurate up to an $o(1)$ term, under the assumption that the increment $\bd{R}_{1,G}$ is $\prob_A$-nonarithmetic for each $G$. In the present work, however, since we focus on second-order optimality, it suffices to characterize the error term up to $O(1)$, and the nonarithmetic condition is not required.

More importantly, in the symmetric case $r_A^{\mW}\ge 2$, the applicability of Theorem~3.3 in \cite{dragalin2000multihypothesis} appears questionable. The argument therein invokes nonlinear renewal theory via Theorem~3 of \cite{zhang1988nonlinear}. However, as explained in Appendix~\ref{sec: Theorem 3.3 in dragalin is not correct}, a key regularity condition required in \cite{zhang1988nonlinear} is not satisfied in the present setting. To our knowledge, a rigorous second-order expansion for $\expt_A[\tau_{A}^{\mW}(c)]$ in the symmetric case has not been established, and obtaining an $O(1)$-accurate expansion remains open.
\end{remark}

\section{Applications under Different Error Metrics}\label{sec: application}
In this section, we apply our main results to Classes \ref{problem: gmr}--\ref{problem: fdr}, which correspond to different error metrics and assume no prior knowledge, that is, $\mathcal{A}=2^{[K]}$. In Appendix \ref{app:kns}, we apply these results to Class \ref{problem: kns}, which assumes that the number of signals is known. Additional classes of sequential procedures are discussed in Appendix \ref{app:further}.

For each class, we first review a procedure from the literature that provides explicit error control and has been shown to achieve \emph{first-order} asymptotic optimality in the frequentist sense (see Definition \ref{def: optimal}). We then establish its \emph{second-order} optimality by applying Theorem \ref{thm: main result}. In addition, using Theorems \ref{thm: min ESS characterization}, we characterize the second-order minimal expected sample size (ESS) under each signal subset.  

Recall the densities $\{f_0^k, f_1^k : k \in [K]\}$ defined in \eqref{def:simple_vs_simple}. 
For each stream $k \in [K]$ and time $t \ge 1$, define the log-likelihood ratio (LLR) 
statistic up to time $t$ (that is, restricted to $\mF_t$) by
\begin{equation}\label{eq:LLR}
\lambda_t^k 
:= \log \frac{dP_1^k}{dP_0^k}\bigg|_{\mF_t}
= \sum_{s=1}^t \log \frac{f_1^k(\mathbf{X}_s^k)}{f_0^k(\mathbf{X}_s^k)}.
\end{equation}



\subsection{Generalized Misclassification Rate}\label{sec: gmr}
In this subsection, we focus on Class \ref{problem: gmr},  $\Delta_{m_0}^{\mathrm{gmr}}(\alpha)$, which controls the generalized misclassification rate below level $\alpha \in (0,1)$ and assumes no prior information, that is, $\mathcal{A} = 2^{[K]}$. 
We start by reviewing the Sum-Intersection rule, denoted by 
$\delta_S(b) = (T_S(b), D_S(b))$, proposed in 
\cite{song2019sequential}, where $b > 0$ is a threshold parameter.
For each $t \ge 1$,  denote by
\begin{equation}\label{eq: ordered statistics sum intersection}
    0 < \tlambda_t^{(1)} \le \cdots \le \tlambda_t^{(K)}
\end{equation}
the order statistics of the absolute LLRs
$\{ |\lambda_t^k| : k \in [K] \}$. 
Note that if $\lambda_t^{k}$ is small in absolute value, it provides 
weak evidence in favor of either the null or the alternative at time $t$. 
Then, for $b > 0$,
\begin{equation}\label{eq: sum intersection rule}
\begin{aligned}
T_S(b) 
&:= \min\left\{ t \ge 1 : \sum_{i=1}^{m_0} \tlambda_t^{(i)} \ge b \right\}, \quad
D_S(b) 
:= \left\{ k \in [K] : \lambda_{T_S(b)}^k > 0 \right\}.
\end{aligned}
\end{equation}
That is, the procedure stops at the first time when the sum of the 
$m_0$ least significant LLRs exceeds $b$, and upon stopping, the null 
hypothesis is rejected in every stream whose LLR is positive.

It is shown in Theorem 3.1 of \cite{song2019sequential} that, for $\alpha > 0$, if the threshold is selected as
\begin{equation}\label{eq: second-order optimal of sum intersection rule 0}
    b_{\alpha} = |\log \alpha| + \log \binom{K}{m_0},
\end{equation}
then $\delta_S(b_{\alpha}) \in \Delta_{m_0}^{\mathrm{gmr}}(\alpha)$. With this choice of threshold, the family $\{\delta_S(b_{\alpha})\}$ is first-order asymptotically optimal as $\alpha \to 0$ (see Theorem~3.3 in \cite{song2019sequential}).
The next theorem establishes the second-order optimality of $\{\delta_S(b_{\alpha})\}$. We include the proof here to illustrate the application of Theorem \ref{thm: main result}.

\begin{theorem}\label{thm: second-order optimal of sum intersection rule}
Suppose Assumption \ref{assumption: lorden} holds. For each $\alpha > 0$, let the threshold be chosen as $b_{\alpha}$ in \eqref{eq: second-order optimal of sum intersection rule 0}. Then the Sum-Intersection rule $\{\delta_S(b_{\alpha})\}$ is second-order asymptotically optimal in $\{\Delta_{m_0}^{\mathrm{gmr}}(\alpha)\}$ as $\alpha \to 0$, that is, for each $A \subset [K]$,
\begin{equation*}
\limsup_{\alpha \to {0}}
\Bigl(
\expt_A[T_{S}(b_{\alpha})]
-
T_A^{\textup{min}}\left(\Delta_{m_0}^{\mathrm{gmr}}(\alpha)\right) 
\Bigr)
< \infty.
\end{equation*}
\end{theorem}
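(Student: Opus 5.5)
We apply Theorem \ref{thm: main result} with $\Delta(\bell)=\Delta_{m_0}^{\mathrm{gmr}}(\alpha)$, $r=1$, and $\delta_0(\alpha)=\delta_S(b_\alpha)$. The natural loss function is the indicator of the GMR error event,
\[
\mW(D\mid A):=\idf\{|D\triangle A|\ge m_0\},\qquad A,D\in\mathcal{A}=2^{[K]},
\]
and the cost will be $c_\alpha := 2^{-K}e^{-b_\alpha}=2^{-K}\alpha\binom{K}{m_0}^{-1}$. Since $\mW(A\mid A)=0$, this is an admissible loss. We then verify the three conditions of Theorem \ref{thm: main result} one at a time.

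\emph{Conditions \eqref{assumption: lorden 1} and \eqref{eq: second-order optimal sufficient condition 2}.} For \eqref{assumption: lorden 1}, fix $D$. Because $m_0\le K$, we can flip any $m_0$ coordinates of $D$ to get some $A$ with $|D\triangle A|\ge m_0$, so $\max_A\mW(D\mid A)=1>0$. For \eqref{eq: second-order optimal sufficient condition 2}, take any $\delta\in\Delta_{m_0}^{\mathrm{gmr}}(\alpha)$. Then
\[
\ie(\delta;\mW)=\sum_A\pi_0(A)\,\prob_A(|D\triangle A|\ge m_0)\le\alpha=L c_\alpha,
\qquad L:=2^K\binom{K}{m_0}.
\]
The constant $L$ does not depend on $\alpha$, as the theorem requires.

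\emph{Condition \eqref{eq: second-order optimal sufficient condition 1}.} This is the main step. It suffices to show that at any time $t$ with $\sum_{i=1}^{m_0}\tlambda_t^{(i)}<b_\alpha$, the minimal posterior expected loss is at least $c_\alpha$, so that $T_{\mathrm{Ld}}$ cannot stop before $T_S$.

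First we compute the posterior. With the uniform prior on $2^{[K]}$, we have $\pi_t(A)\propto\exp(\sum_{k\in A}\lambda_t^k)$. Let $\hat D_t:=\{k:\lambda_t^k>0\}$. Then
\[
\pi_t(A)=\frac{\exp\bigl(-\sum_{k\in A\triangle\hat D_t}|\lambda_t^k|\bigr)}{\prod_{k}(1+e^{-|\lambda_t^k|})}\ge 2^{-K}\exp\Bigl(-\sum_{k\in A\triangle \hat D_t}|\lambda_t^k|\Bigr).
\]

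Next we find a heavy set at which $D$ is wrong. Fix any $D\in\mathcal{A}$, and let $S_t$ be the indices of the $m_0$ smallest $|\lambda_t^k|$. Define $A$ by $A^k=1-D^k$ for $k\in S_t$ and $A^k=\hat D_t^k$ for $k\notin S_t$. Then $|A\triangle D|\ge|S_t|=m_0$, so $\mW(D\mid A)=1$. Also $A\triangle\hat D_t\subset S_t$, which gives
\[
\sum_{A'}\pi_t(A')\mW(D\mid A')\ge\pi_t(A)\ge 2^{-K}e^{-\sum_{i\le m_0}\tlambda_t^{(i)}}>2^{-K}e^{-b_\alpha}=c_\alpha .
\]
Since $D$ was arbitrary, the minimum over $D$ is also at least $c_\alpha$, so the stopping condition of $T_{\mathrm{Ld}}(c_\alpha,\mW)$ fails at every $t<T_S(b_\alpha)$. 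Hence $T_S(b_\alpha)\le T_{\mathrm{Ld}}(c_\alpha,\mW)$ surely, and in particular $\prob_A$-almost surely for each $A$.

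Theorem \ref{thm: main result} then gives $\expt_A[T_S(b_\alpha)]-T_A^{\min}(\Delta_{m_0}^{\mathrm{gmr}}(\alpha))\le(2L+M)2^K$ for every $\alpha$ and every $A$, which yields the claim. The only delicate point is the posterior lower bound. The key is that the construction of $A$ changes $\hat D_t$ only on the $m_0$ least significant streams while still disagreeing with $D$ on all of them, so that $D$ is wrong under $A$ yet $A$ keeps large posterior mass.
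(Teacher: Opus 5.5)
Your proposal is correct and follows the paper's proof: the same loss $\mW$, the same cost $c_\alpha=2^{-K}\binom{K}{m_0}^{-1}\alpha$ and constant $L=2^K\binom{K}{m_0}$, and the same key construction for verifying $T_S(b_\alpha)\le T_{\mathrm{Ld}}(c_\alpha,\mW)$, namely flipping the decision on the $m_0$ least significant streams and following the LLR signs elsewhere. The differences are cosmetic. You argue by contrapositive at times $t<T_S(b_\alpha)$, using the explicit product form of the posterior. The paper instead works at $T_{\mathrm{Ld}}$, applying its generic Lemma \ref{lemma: A_0} (denominator bounded by $|\mathcal{A}|$ times the sign-based likelihood). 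For condition \eqref{assumption: lorden 1} you flip $m_0$ coordinates, where the paper simply takes $A=D^c$.
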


\begin{proof}
Let $\pi_0$ be the uniform distribution on $2^{[K]}$, that is, $\pi_0(A) = 1/2^K$ for $A \subset [K]$. In addition, we define the following loss function: for $A,D \subset [K]$,
\begin{equation}\label{eq: loss gmr}
\begin{aligned}
&\mW(D\mid A)=
\begin{cases}
    1 &\text{ if }\;\; |D \triangle A|\ge m_0 \\
    0 &\text{else}
\end{cases}.
\end{aligned}
\end{equation}
That is, $W(D|A) =  1$ if and only if the decision $D$ differs with $A$ in at least $m_0$ components. Furthermore, we define
\begin{equation}\label{eq: second-order optimal of sum intersection rule 2}
    c_{\alpha} := (2^Ke^{b_{\alpha}})^{-1} = 2^{-K}\binom{K}{m_0}^{-1}\alpha,\quad
    L := 2^{K}\binom{K}{m_0}.
\end{equation}
Then the conclusion follows immediately from Theorem \ref{thm: main result} once we verify condition \eqref{assumption: lorden 1}--\eqref{eq: second-order optimal sufficient condition 2} hold with the above loss function $\mW$, cost $c_{\alpha}$ and constant $L$.

We start with condition \eqref{assumption: lorden 1}. Fix an arbitrary $D \subset [K]$. Let $A = D^c$, and then $D \triangle A = [K]$. Since $m_0 \leq K$ (see Definition \ref{problem: gmr}), we have $\mW(D|A) = 1$. Thus condition \eqref{assumption: lorden 1} holds.

The verification of condition \eqref{eq: second-order optimal sufficient condition 1} requires showing that $T_S(b_{\alpha})$ stops no later than $T_{\mathrm{Ld}}(c_{\alpha}; \mW)$. By definition, for each $\alpha \in (0,1)$, we have $b_{\alpha}= \log(2^{-K}c_{\alpha}^{-1})$ and $c_{\alpha} < 2^{-K}$. Then condition \eqref{eq: second-order optimal sufficient condition 1} follows immediately from Lemma \ref{lemma: stop earlier sum intersection} in the Appendix \ref{app: proof of gmr}.

Finally, we verify condition \eqref{eq: second-order optimal sufficient condition 2}.
 Fix an arbitrary procedure $\delta=(T,D)\in\Delta_{m_0}^{\text{gmr}}(\alpha)$.
 By the definition of Class \ref{problem: gmr}, we have
 $\prob_A(|A \triangle D|\ge m_0) \leq \alpha$ for each $A \subset [K]$. Then, due to 
 the definition of $\mW$, 
\begin{equation*}
    \ie(\delta; \mW) = \sum_{A\in\mathcal{A}}\frac{1}{2^K}\prob_A(|A \triangle D|\ge m_0) \le \sum_{A\in\mathcal{A}}\frac{\alpha}{2^K} = \alpha = L c_{\alpha}.
\end{equation*} 
Thus, condition \eqref{eq: second-order optimal sufficient condition 2} holds. The proof
 is complete.
\end{proof}

Next, we consider asymptotic approximations to the smallest achievable ESS,
$T_A^{\textup{min}}\left(\Delta_{m_0}^{\mathrm{gmr}}(\alpha)\right)$, for each $A \subset [K]$ as $\alpha \to 0$.
Specifically, Theorem~3.3 of \cite{song2019sequential} shows that for each $A \subset [K]$, the first-order performance satisfies
\begin{equation}
    \label{eq: first_order_gmr}
T_A^{\textup{min}}\left(\Delta_{m_0}^{\mathrm{gmr}}(\alpha)\right)
=
\frac{|\log \alpha|}{\KL_{A,A}^{\mW}}\,(1+o(1)),
\end{equation}
where $\KL_{A,A}^{\mW}$ is defined in \eqref{def:KL_A_D_star} with $\mW$ given in \eqref{eq: loss gmr}, namely,
\[
\KL_{A,A}^{\mW}
=
\min\{\KL(f_A \mid f_C) : C \subset [K], \;\; |A \triangle C| \ge m_0\}.
\]

In this work, we derive a more accurate second-order approximation by applying Theorem \ref{thm: min ESS characterization}. The main step is to verify that each $A\subset [K]$ admits a unique most favorable subset with respect to $\mW$ defined in \eqref{eq: loss gmr}, as established in the following lemma.

\begin{remark}
Note that when $m_0 \ge 2$, $\mW$ in \eqref{eq: loss gmr} is \emph{not} a zero--one loss as defined in Remark \ref{remark: zero-one loss}.
\end{remark}

Recall the definition of  $\KL_{A,D}^{\mW}$ and $\KL_{A,*}^{\mW}$ in \eqref{def:KL_A_D_star}.

\begin{lemma}\label{lemma: misclassification rate satisfies assumption}
Suppose Assumption \ref{assumption: lorden} holds.
Let $m_0 \in [1,K]$ be an integer, and let $\mW$ be defined in \eqref{eq: loss gmr}. For each $A \subset [K]$,
\[
\KL_{A,A}^{\mW} > \KL_{A,D}^{\mW}, \quad \text{for any } D \subset [K] \text{ such that } D \ne A.
\]
Thus, each $A \subset [K]$ admits a unique most favorable subset with respect to $\mW$  with $D_A^{\mW} = A$, and
$\KL_{A,*}^{\mW} = \KL_{A,A}^{\mW}$.
\end{lemma}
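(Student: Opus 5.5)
The plan is to write both sides explicitly as minimizations of additive KL functionals and then compare them directly. By \eqref{def:KL_fA_fC}, for any $C\subset[K]$ we have
\[
\KL(f_A\mid f_C)=\sum_{k\in A\triangle C} w_k,
\qquad\text{where } w_k:=\mathcal{I}_1^k \text{ for } k\in A \text{ and } w_k:=\mathcal{I}_0^k \text{ for } k\notin A .
\]
Under Assumption \ref{assumption: lorden}, every $w_k>0$. Writing $S:=A\triangle C$, we see that $C$ ranges over all of $2^{[K]}$ exactly as $S$ does. Moreover $D\triangle C=(D\triangle A)\triangle S$.

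With this notation the two quantities become
\[
\KL_{A,D}^{\mW}=\min\Bigl\{\textstyle\sum_{k\in S}w_k : |E\triangle S|\ge m_0\Bigr\},
\qquad E:=D\triangle A,
\]
and $\KL_{A,A}^{\mW}$ is the case $E=\emptyset$, i.e.\ $\min\{w(S):|S|\ge m_0\}$. Since $m_0\le K$, the feasible sets are nonempty. This gives $\KL_{A,A}^{\mW}=w_{(1)}+\cdots+w_{(m_0)}$, the sum of the $m_0$ smallest weights, attained by some set $S^*$ of exactly $m_0$ indices.

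Now fix $D\neq A$, so that $E\neq\emptyset$. The aim is to exhibit a feasible $S$ for $E$ with $w(S)<w(S^*)$. Pick any $j\in E$.
\begin{itemize}
\item If $j\in S^*$, take $S:=S^*\setminus\{j\}$. Then $E\triangle S\supseteq\{j\}\cup\bigl(S\setminus E\bigr)$, and a direct count should give $|E\triangle S|\ge m_0$.
\end{itemize}
The count in the first case is not automatic, so I would instead use a cleaner construction covering all cases. Let $T$ be the set of $m_0$ indices with smallest weight among $[K]\setminus\{j\}$ together with $j$, and set $S:=T\setminus\{j\}$ when $j\in T$, which needs care.

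The simplest route is to take $S:=S'\setminus E$, where $S'$ is any minimizer. Then $E\triangle S=E\cup S$, which is disjoint union, so $|E\triangle S|=|E|+|S'\setminus E|$. Start from $S^*$ and set $S:=S^*\setminus\{j\}$ if $j\in S^*$; otherwise drop an arbitrary element $i$ of $S^*$, i.e.\ $S:=S^*\setminus\{i\}$. We then check $|E\triangle S|\ge m_0$ as follows.
\begin{itemize}
\item \textbf{Case $j\in S^*$.} The index $j$ lies in $E$ but not in $S$, so it belongs to $E\triangle S$. Each $k\in S$ either lies outside $E$, and so is in $E\triangle S$, or lies in $E$. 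We can avoid the latter by removing all of $E\cap S^*$ instead, taking $S:=S^*\setminus E$. Then $E\triangle S=E\,\dot\cup\,(S^*\setminus E)\supseteq (E\cap S^*)\,\dot\cup\,(S^*\setminus E)=S^*$, which has size $m_0$, and $w(S)=w(S^*)-w(E\cap S^*)<w(S^*)$ since $E\cap S^*\ni j$ is nonempty.
\item \textbf{Case $E\cap S^*=\emptyset$.} Take $S:=S^*\setminus\{i\}$ for some $i\in S^*$. Then $E\triangle S=E\,\dot\cup\,S$ has size at least $1+(m_0-1)=m_0$, and $w(S)=w(S^*)-w_i<w(S^*)$.
\end{itemize}
In both cases $\KL_{A,D}^{\mW}<\KL_{A,A}^{\mW}$.

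The statement's conclusions then follow from Definition \ref{def: dragalin 1}: $D_A^{\mW}=A$ is the unique maximizer, and $\KL_{A,*}^{\mW}=\KL_{A,A}^{\mW}$. The only delicate point is the strictness of the inequality, and that rests on the positivity of the KL divergences guaranteed by Assumption \ref{assumption: lorden}.
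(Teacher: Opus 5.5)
Your final argument is correct and is essentially the paper's proof. Under your reparametrization $S=A\triangle C$, $E=D\triangle A$, the paper's case split $\Gamma_1\cup\Gamma_3\neq\emptyset$ is exactly your $E\cap S^*\neq\emptyset$, and its Case~1 set $C^*$ satisfies $A\triangle C^*=\Gamma_2\cup\Gamma_4=S^*\setminus E$; in Case~2 the paper discards $\min\{|E|,|S^*|\}$ elements of $S^*$, where you equally validly discard just one. When writing it up, drop the abandoned constructions and label the first case as $E\cap S^*\neq\emptyset$ (choosing $j$ there) rather than ``$j\in S^*$'' for an arbitrary $j\in E$, so the two cases visibly exhaust all possibilities.
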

\begin{proof}
    See Appendix \ref{app: proof of gmr}.
\end{proof}

\begin{theorem}\label{thm: second-order min ESS gmr}
Suppose Assumption \ref{assumption: lorden} holds. 
Let $\mW$ be defined in \eqref{eq: loss gmr}, and let $A\subset [K]$ be an arbitrary signal subset.

\begin{enumerate}[label=(\alph*), ref=\theassumption $(\alph*)$]
\item 
Consider the asymmetric case $r_A^{\mW} = 1$. Assume \eqref{eq: bounded moment assumption} holds for $q=3$.
Then, as $\alpha\rightarrow 0$,  
$$
T_A^{\textup{min}}\left(\Delta_{m_0}^{\mathrm{gmr}}(\alpha)\right)= \frac{|\log \alpha|}{\KL_{A,A}^{\mW}}+O(1).
$$
\item 
Consider the symmetric case $r_A^{\mW} \geq 2$. Let $\epsilon \in (0,1/2)$, and assume
\eqref{eq: bounded moment assumption} holds for some $q$ satisfying \eqref{eq: main text q_cond symmetric}. Recall $h_{A}^{\mW}$ defined in \eqref{def:h_A}. Then, as $\alpha\rightarrow 0$,  
$$
T_A^{\textup{min}}\left(\Delta_{m_0}^{\mathrm{gmr}}(\alpha)\right)= \frac{|\log \alpha|}{\KL_{A,A}^{\mW}}+\frac{h_{A}^{\mW}\sqrt{|\log \alpha|}}{(\KL_{A,A}^{\mW})^{3/2}}+O\left((\log \alpha)^{1/4+ \epsilon/2}\right).
$$
\end{enumerate}
\end{theorem}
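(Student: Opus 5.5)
The plan is to obtain Theorem \ref{thm: second-order min ESS gmr} as a direct specialization of Theorem \ref{thm: min ESS characterization}. All of that theorem's hypotheses except the unique most favorable subset condition were already verified in the proof of Theorem \ref{thm: second-order optimal of sum intersection rule}, so the only remaining work is bookkeeping with $\log(c_\alpha^{-1})$.

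I take $\delta_0(\alpha)=\delta_S(b_\alpha)$, with $\mW$ from \eqref{eq: loss gmr} and $c_\alpha,L$ from \eqref{eq: second-order optimal of sum intersection rule 2}. The proof of Theorem \ref{thm: second-order optimal of sum intersection rule} checks conditions \eqref{assumption: lorden 1}--\eqref{eq: second-order optimal sufficient condition 2} for these choices, and $\delta_S(b_\alpha)\in\Delta_{m_0}^{\mathrm{gmr}}(\alpha)$ by Theorem 3.1 of \cite{song2019sequential}. Assumption \ref{cond:class_depends_on_decision} holds because membership in $\Delta_{m_0}^{\mathrm{gmr}}(\alpha)$ depends only on the law of $D$ under each $\prob_A$, not on $T$. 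Finally, $c_\alpha=2^{-K}\binom{K}{m_0}^{-1}\alpha\to0$ as $\alpha\to0$. By Lemma \ref{lemma: misclassification rate satisfies assumption}, every $A\subset[K]$ admits the unique most favorable subset $D_A^{\mW}=A$ with $\KL_{A,*}^{\mW}=\KL_{A,A}^{\mW}$. Hence parts (a) and (b) of Theorem \ref{thm: min ESS characterization} apply under the stated moment conditions, with $q=3$ in part (a) and $q$ satisfying \eqref{eq: main text q_cond symmetric} in part (b).

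The last step is to rewrite the expansions in terms of $|\log\alpha|$. We have $\log(c_\alpha^{-1})=|\log\alpha|+\log\bigl(2^K\binom{K}{m_0}\bigr)=|\log\alpha|+C_K$, where $C_K$ is a constant. In case (a), dividing by $\KL_{A,A}^{\mW}$ changes the leading term only by $C_K/\KL_{A,A}^{\mW}=O(1)$. In case (b) we also need the square-root term:
\[
\sqrt{|\log\alpha|+C_K}-\sqrt{|\log\alpha|}\le \frac{C_K}{2\sqrt{|\log\alpha|}}\to0 .
\]
So the second-order term changes by $o(1)$. The error term $(\log(c_\alpha^{-1}))^{1/4+\epsilon/2}$ is of the same order as $|\log\alpha|^{1/4+\epsilon/2}$. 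This yields the displayed expansion, where the $O((\log\alpha)^{1/4+\epsilon/2})$ in the statement should be read as $O(|\log\alpha|^{1/4+\epsilon/2})$.

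No step here is a genuine obstacle, since the substantive difficulties sit in Theorem \ref{thm: min ESS characterization} and in Lemma \ref{lemma: misclassification rate satisfies assumption}, both of which are taken as given. The only points needing care are confirming that Assumption \ref{cond:class_depends_on_decision} holds for the GMR class and that the additive constant $C_K$ is absorbed into the error terms.
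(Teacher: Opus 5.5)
Your proposal is correct and follows the paper's proof: it applies Theorem \ref{thm: min ESS characterization} with $\delta_0(\alpha)=\delta_S(b_\alpha)$, the loss \eqref{eq: loss gmr}, and the cost $c_\alpha$ from \eqref{eq: second-order optimal of sum intersection rule 2}, whose conditions were already checked in the proof of Theorem \ref{thm: second-order optimal of sum intersection rule}, and it uses Lemma \ref{lemma: misclassification rate satisfies assumption} to get $D_A^{\mW}=A$. Your explicit step $\log(c_\alpha^{-1})=|\log\alpha|+\log\bigl(2^K\binom{K}{m_0}\bigr)$, with the additive constant absorbed into the $O(1)$, $o(1)$ and $O(|\log\alpha|^{1/4+\epsilon/2})$ terms, is correct and fills in bookkeeping the paper leaves implicit.
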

\begin{proof}
As mentioned above, with $b_{\alpha}$ in \eqref{eq: second-order optimal of sum intersection rule 0},
$\delta_S(b_{\alpha}) \in \Delta_{m_0}^{\mathrm{gmr}}(\alpha)$ for each $\alpha \in (0,1)$. Moreover, we have shown in the proof of Theorem \ref{thm: second-order optimal of sum intersection rule} that condition \eqref{assumption: lorden 1}--\eqref{eq: second-order optimal sufficient condition 2} hold with the loss function $\mW$ in \eqref{eq: loss gmr}, cost $c_{\alpha}$ in \eqref{eq: second-order optimal of sum intersection rule 2} and a constant $L$. It is clear that $c_{\alpha} \to 0$ as $\alpha \to 0$, and that Assumption \ref{cond:class_depends_on_decision} holds.

Finally, Lemma \ref{lemma: misclassification rate satisfies assumption} shows that each $A$ admits a unique favorable subset with $D^{\mW}_A = A$ and  $\KL_{A,*}^{\mW} = \KL_{A,A}^{\mW}$. The proof is then finished by Theorem \ref{thm: min ESS characterization}.
\end{proof}

\subsection{Generalized Familywise Error Rates} \label{sec: leap rule}

In this subsection, we focus on Class \ref{problem: gfr}, $\Delta_{m_1,m_2}^{\mathrm{gfr}}(\alpha,\beta)$, which controls the generalized familywise false positive and false negative rates below levels $\alpha$ and $\beta$, respectively, and assumes no prior information, that is, $\mathcal{A} = 2^{[K]}$.
We start by reviewing the Leap rule, denoted by 
$\delta_L(a,b) = (T_L(a,b), D_L(a,b))$, proposed in 
\cite{song2019sequential}, where $a, b > 0$ are threshold parameters.

Recall the LLRs $\{\lambda_t^k : k \in [K],\, t = 1,2,\ldots\}$ defined in \eqref{eq:LLR}. For each time $t \ge 1$, denote by
\begin{equation*}
0 < \hlambda_t^{(1)} \le \cdots \le \hlambda_t^{(p(t))}
\end{equation*}
the ordered statistics of the positive LLRs $\{\lambda_t^k : \lambda_t^k > 0,\, k \in [K]\}$, where $p(t)$ is the number of strictly positive LLRs at time $t$. Similarly, denote by
\begin{equation*}
0 < \clambda_t^{(1)} \le \cdots \le \clambda_t^{(K-p(t))}
\end{equation*}
the ordered statistics of the absolute values of the nonpositive LLRs $\{|\lambda_t^k| : \lambda_t^k \le 0,\, k \in [K]\}$.

We also denote the corresponding stream indices by $\{\hl_1(t),\ldots,\hl_{p(t)}(t)\}$ such that $\hlambda_t^{\hl_i(t)} = \hlambda_t^{(i)}$ for $i \in [p(t)]$, and by $\{\cl_1(t),\ldots,\cl_{K-p(t)}(t)\}$ such that $|\lambda_t^{\cl_j(t)}| = \clambda_t^{(j)}$ for $j \in [K-p(t)]$. For the statistic $\hlambda_t^{(i)}$, a smaller index $i$ indicates weaker evidence for rejecting the $k$-th null hypothesis (and thus a higher risk of a Type-I error). Similarly, for $\clambda_t^{(i)}$, a smaller index $i$ indicates weaker evidence for accepting the $k$-th null hypothesis (and thus a higher risk of a Type-II error).

For thresholds $a,b>0$, we first define a collection of rules as follows: for $i=0,\ldots,m_1-1$,  
\begin{align*}
\hat{\tau}_i := \min\left\{ t\ge 1:
\sum_{j=1}^{m_1-i}\hlambda_t^{(j)} \ge b,\ 
\sum_{j=i+1}^{i+m_2}\clambda_t^{(j)} \ge a \right\}, \;\; 
\hat{D}_i := \{k\in[K]: \lambda_{\hat{\tau}_i}^k>0\}\ \cup\ \{\cl_1(\hat{\tau}_i), \ldots, \cl_i(\hat{\tau}_i)\},
\end{align*}
and for $i=1,\ldots,m_2-1$,  
\begin{align*}
\check{\tau}_i := \min\left\{ t\ge 1:
\sum_{j= i+1}^{i+m_1}\hlambda_t^{(j)} \ge b,\ 
\sum_{j=1}^{m_2- i}\clambda_t^{(j)} \ge a \right\}, \;\; 
\check{D}_i := \{k\in[K]: \lambda_{\check{\tau}_i}^k>0\}\ \setminus\ \{\hl_1(\check{\tau}_i),\ldots, \hl_i(\check{\tau}_i)\}.
\end{align*}
The dependence on $(a,b)$ is suppressed for simplicity. Furthermore, we adopt the convention that $\hlambda_t^{k}=\infty$ for $k>p(t)$ and $\clambda_t^{k}=\infty$ for $k>K-p(t)$.

Then, the leap rule $\delta_L(a,b)$ with parameters $a,b>0$ is defined by
\begin{align}\label{eq: leap rule}
T_L(a,b) := \min\{\hat{\tau}_0,\cdots,\hat{\tau}_{m_1-1},\check{\tau}_1,\ldots,\check{\tau}_{m_2-1}\}, \quad
D_L(a,b) =
\begin{cases}
\hat{D}_i, & \text{if } \hat{\tau}_i = T_L(a,b),\\
\check{D}_i, & \text{if } \check{\tau}_i = T_L(a,b),
\end{cases}
\end{align}
where, if the minimum is attained by multiple stopping rules, $D_L$ is defined as the union of the corresponding decisions. We refer the interested reader to Section 4 of \cite{song2019sequential} for motivation and discussion of the Leap rule.

It is shown in Theorem 4.1 of \cite{song2019sequential} that for any $\alpha,\beta \in (0,1)$, if the thresholds are selected as
\begin{equation}\label{eq: leap_thresholds}
a_{\alpha,\beta} = |\log\beta| + \log \left[2^{m_2}\binom{K}{m_2}\right], \quad
b_{\alpha,\beta} = |\log\alpha| + \log \left[2^{m_1}\binom{K}{m_1}\right],
\end{equation}
then $\delta_L(a_{\alpha,\beta},b_{\alpha,\beta})$ belongs to the class $\Delta_{m_1,m_2}^{\mathrm{gfr}}(\alpha,\beta)$. Moreover, with this choice of thresholds, the family $\{\delta_L(a_{\alpha,\beta},b_{\alpha,\beta})\}$ is first-order asymptotically optimal as $\alpha \vee \beta \to 0$ (see Theorem~4.3 in \cite{song2019sequential}).

The next theorem establishes its second-order optimality when $\alpha$ and $\beta$ vanish at the same rate; specifically, we assume
\begin{equation}\label{eq:alpha_beta_same_rate}
C^{-1}\alpha \le \beta \le C\alpha, \quad \text{for some absolute constant } C > 0.
\end{equation}

\begin{theorem}\label{thm: second-order optimal of leap rule}
Suppose Assumption \ref{assumption: lorden} holds and $\alpha,\beta \in (0,1)$ satisfy condition \eqref{eq:alpha_beta_same_rate}. Then the Leap rule $\{\delta_L(a_{\alpha,\beta},b_{\alpha,\beta})\}$ is second-order asymptotically optimal in $\{\Delta_{m_1,m_2}^{\textup{gfr}}(\alpha,\beta)\}$ 
as $\alpha,\beta \to 0$,  that is, for each $A \subset [K]$,
\begin{equation*}
\limsup_{\alpha,\beta \to {0}}
\Bigl(
\expt_A[T_L(a_{\alpha,\beta},b_{\alpha,\beta})]
-
T_A^{\textup{min}}\left(\Delta_{m_1,m_2}^{\mathrm{gfr}}(\alpha,\beta)\right) 
\Bigr)
< \infty.
\end{equation*}
\end{theorem}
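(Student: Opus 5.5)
Following the proof of Theorem \ref{thm: second-order optimal of sum intersection rule}, the plan is to apply Theorem \ref{thm: main result} with $\bell=(\alpha,\beta)$, $r=2$, the uniform prior $\pi_0$ on $2^{[K]}$, and a loss function $\mW$ independent of $(\alpha,\beta)$ that charges the two error types separately. For $A,D\subset[K]$ we set
\[
\mW(D\mid A)=\idf\{|D\setminus A|\ge m_1\}+\idf\{|A\setminus D|\ge m_2\},
\]
possibly with different constant weights on the two indicators if this makes the comparison with the Leap rule cleaner. For the cost we take $c_{\alpha,\beta}$ proportional to $\min\{\alpha,\beta\}$, matching the thresholds \eqref{eq: leap_thresholds}. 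A natural choice is
\[
c_{\alpha,\beta}:=\gamma\,\min\{e^{-a_{\alpha,\beta}},e^{-b_{\alpha,\beta}}\},
\]
with $\gamma$ a small combinatorial constant (such as $2^{-K}$ times a binomial factor) to be fixed during the stopping-time comparison.

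Condition \eqref{assumption: lorden 1} is immediate: for any $D$, take $A=D^c$. If $D\neq\emptyset$ then $|D\setminus A|=|D|$, and otherwise $|A\setminus D|=K$. We are not guaranteed $|D|\ge m_1$, so we pick $A$ more carefully. If $|D|\ge m_1$, choose $A=\emptyset$. Otherwise $|D^c|\ge K-m_1\ge m_2$, and we choose $A=[K]$. Either way $\mW(D\mid A)>0$.

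Condition \eqref{eq: second-order optimal sufficient condition 2} follows as before, using \eqref{eq:alpha_beta_same_rate}. For any $\delta\in\Delta^{\mathrm{gfr}}_{m_1,m_2}(\alpha,\beta)$,
\[
\ie(\delta;\mW)\le \alpha+\beta\le (1+C)\min\{\alpha,\beta\}\cdot C',
\]
and since $e^{-a_{\alpha,\beta}}$ and $e^{-b_{\alpha,\beta}}$ are constant multiples of $\beta$ and $\alpha$, this is at most $L c_{\alpha,\beta}$ for a constant $L$ depending only on $K,m_1,m_2,C$. This is the only place where the same-rate assumption is used.

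The main obstacle is condition \eqref{eq: second-order optimal sufficient condition 1}, namely $T_L\le T_{\mathrm{Ld}}(c_{\alpha,\beta},\mW)$ almost surely. This is the analogue of Lemma \ref{lemma: stop earlier sum intersection}. The strategy is to show that at any time $t$ where some $D$ has posterior loss $\sum_A\pi_t(A)\mW(D\mid A)<c$, one of the stopping conditions $\hat\tau_i\le t$ or $\check\tau_i\le t$ already holds. By independence of streams, $\pi_t(A)\propto\exp(\sum_{k\in A}\lambda_t^k)$. Hence for any $A$ with $|D\setminus A|\ge m_1$, comparing $\pi_t(A)$ with $\pi_t(A\cup S)$ for suitable sets gives lower bounds of the form
\[
\pi_t(A)\ge 2^{-K}\exp\Bigl(-\sum_{k\in S}|\lambda_t^k|\Bigr)
\]
for the relevant sets $S$. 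One can then argue by contrapositive. Suppose no $\hat\tau_i,\check\tau_i$ has occurred by $t$. Fix $D$ and let $i$ be the number of nonpositive-LLR streams placed in $D$, or the number of positive-LLR streams excluded from $D$. Failure of the corresponding leap condition means either $\sum_{j\le m_1-i}\hlambda_t^{(j)}<b$ or $\sum_{j}\clambda_t^{(j)}<a$. From this we exhibit an explicit $A$ obtained by flipping the $m_1-i$ weakest positive streams (resp.\ the $m_2-i$ weakest negative ones) relative to $D$. This $A$ satisfies $|D\setminus A|\ge m_1$ (resp.\ $|A\setminus D|\ge m_2$), and its posterior weight exceeds $2^{-K}e^{-b}$ (resp.\ $2^{-K}e^{-a}$), which is at least $c$. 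The delicate part is the bookkeeping over the case index $i$ and over decisions $D$ that disagree with the sign pattern in both directions. We will handle it by reducing a general $D$ to the nearest $\hat D_i$ or $\check D_i$ form, counting sign disagreements in each direction. Once \eqref{eq: second-order optimal sufficient condition 1} is established, Theorem \ref{thm: main result} gives a uniform bound $(2L+M)2^K$ on the excess ESS, which proves the claim.
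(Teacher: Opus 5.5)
Your overall structure is the paper's. You apply Theorem \ref{thm: main result} with the uniform prior and a loss that vanishes exactly when $|D\setminus A|<m_1$ and $|A\setminus D|<m_2$; your sum of two indicators works as well as the paper's single indicator. You take $c_{\alpha,\beta}=2^{-K}e^{-(a_{\alpha,\beta}\vee b_{\alpha,\beta})}$, and your verification of \eqref{assumption: lorden 1} and \eqref{eq: second-order optimal sufficient condition 2} is the paper's. The bound $\pi_t(A)\ge 2^{-K}\exp(-\sum_{k\in A\triangle P}|\lambda_t^k|)$, with $P=\{k:\lambda_t^k>0\}$, is also the right tool.

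The gap is in \eqref{eq: second-order optimal sufficient condition 1}, which carries the real content of the theorem. You flag decisions that disagree with the sign pattern in both directions as the delicate case, but the index you propose fails exactly there. Write $L_2=P\setminus D$, $L_3=P\cap D$, $L_4=D\setminus P$ and $L_1=[K]\setminus(P\cup D)$. Take $m_1=m_2=2$ and $D=(P\setminus\{p_1\})\cup\{n_1\}$, where $p_1$ is the weakest positive stream and $n_1$ the weakest nonpositive one, so $|L_2|=|L_4|=1$.
\begin{itemize}
\item Failure of $\hat\tau_1$ may be due only to $\hlambda_t^{(1)}<b$. Yet every $A$ with $|D\setminus A|\ge 2$ must drop a positive stream of $D$, so it costs at least $\hlambda_t^{(2)}$, which can be arbitrarily large.
\item Symmetrically, failure of $\check\tau_1$ may be due only to $\clambda_t^{(1)}<a$, while every $A$ with $|A\setminus D|\ge 2$ costs at least $\clambda_t^{(2)}$.
\end{itemize}
So neither ``$i=$ number of nonpositive streams placed in $D$'' nor ``$i=$ number of positive streams excluded from $D$'' yields a witness. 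Here one must compare with $\hat\tau_0$.

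The missing idea is that the two kinds of disagreement offset each other. A type-I witness needs $m_1-|L_4|$ streams of $L_3$, and these are guaranteed only among the $m_1-|L_4|+|L_2|$ weakest positive streams. A type-II witness needs $m_2-|L_2|$ streams of $L_1$, guaranteed only among the $m_2-|L_2|+|L_4|$ weakest nonpositive ones. So the correct index is the net count $i^*=|L_4|-|L_2|$: use $\hat\tau_{i^*}$ if $i^*\ge 0$ and $\check\tau_{-i^*}$ otherwise.

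First dispose of the case $|L_4|\ge m_1$ or $|L_2|\ge m_2$ using $A=P$, whose posterior weight is at least $2^{-K}>c$; this also puts $i^*$ in the admissible range. For $i^*\ge 0$ the two witnesses are as follows.
\begin{itemize}
\item Let $B^+$ be the $m_1-i^*$ weakest positive streams and $A^+=P\setminus(B^+\cap D)$. Then $|D\setminus A^+|\ge |L_4|+(m_1-i^*-|L_2|)=m_1$, at cost at most $\sum_{j\le m_1-i^*}\hlambda_t^{(j)}$.
\item Let $A^-=P\cup\tilde B^-$, where $\tilde B^-$ consists of between $m_2-|L_2|$ and $m_2$ streams of $L_1$ taken among the $m_2+i^*$ weakest nonpositive ones. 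Its cost is at most $\sum_{j=i^*+1}^{i^*+m_2}\clambda_t^{(j)}$, because it is a sum of at most $m_2$ of the $m_2+i^*$ smallest values. If fewer than $m_2+i^*$ nonpositive streams exist, that sum is $+\infty$ by convention and only $A^+$ is needed.
\end{itemize}
This is why the second sum of $\hat\tau_i$ runs over the shifted ranks $i+1,\ldots,i+m_2$; your description in terms of the weakest negative streams does not match it.

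The paper's Lemmas \ref{lemma: stop earlier leap} and \ref{lemma: leap rule claim} carry out exactly this construction, phrased directly at $T_{\mathrm{Ld}}$ via Lemma \ref{lemma: A_0}. Without it, your verification of \eqref{eq: second-order optimal sufficient condition 1} is still a plan rather than a proof.
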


\begin{proof}
The proof is in Appendix \ref{sec: proof of gfr}.
\end{proof}

\begin{remark}\label{remark:gmr_same_rate}
For the first-order asymptotic optimality, $\alpha$ and $\beta$ may vanish at arbitrary relative rates; however, in the above theorem they are required to vanish at the same rate.
\end{remark}

In the proof of Theorem \ref{thm: second-order optimal of leap rule}, we apply Theorem \ref{thm: main result} with the following loss function: for $A,D \subset [K]$,
\begin{equation}\label{eq: loss gfr}
\begin{aligned}
\mW(D\mid A)=
\begin{cases}
1, & \text{ if }\;\; |D\setminus A|\ge m_1 \;\text{ or }\; |A\setminus D|\ge m_2,\\
0, & \text{otherwise}.
\end{cases}
\end{aligned}
\end{equation}
This equals $1$ if and only if the decision $D$ incurs at least $m_1$ Type-I errors or at least $m_2$ Type-II errors (or both) when the true signal subset is $A$.

Next, we consider asymptotic approximations to the smallest achievable ESS,
$T_A^{\textup{min}}\left(\Delta_{m_1,m_2}^{\mathrm{gfr}}(\alpha,\beta)\right)$, for each $A \subset [K]$ as $\alpha,\beta \to 0$ in such a way that \eqref{eq:alpha_beta_same_rate} holds.
Specifically, under \eqref{eq:alpha_beta_same_rate}, Theorem~4.3 of \cite{song2019sequential} shows that for each $A \subset [K]$, the first-order performance satisfies
\begin{equation*}
T_A^{\textup{min}}\left(\Delta_{m_1,m_2}^{\mathrm{gfr}}(\alpha,\beta)\right)
=
\frac{|\log \alpha|}{\KL_{A,*}^{\mW}}\,(1+o(1)),   
\end{equation*}
where $\KL_{A,*}^{\mW}$ is defined in \eqref{def:KL_A_D_star} with $\mW$ given in \eqref{eq: loss gfr}, that is,
\begin{align*}
    \KL_{A,*}^{\mW} = \max_{D \subset [K]} \KL_{A,D}^{\mW}, \text{ with }
    \KL_{A,D}^{\mW} = \min\{\KL(f_A|f_C): C \subset [K], \; |D\setminus C| \geq m_1 \text{ or } |C \setminus D| \geq m_2\}.
\end{align*}

In this work, we aim to derive a more accurate second-order approximation by applying Theorem \ref{thm: min ESS characterization}.
\begin{theorem}\label{thm: second-order min ESS gfr}
Suppose that Assumption \ref{assumption: lorden} holds, and that $\alpha,\beta \in (0,1)$ satisfy condition \eqref{eq:alpha_beta_same_rate}. Let $A \in \mathcal{A}$, and assume that $A$ admits a unique most favorable subset with respect to $\mW$ in \eqref{eq: loss gfr}.
\begin{enumerate}[label=(\alph*), ref=\theassumption $(\alph*)$]
\item 
Consider the asymmetric case $r_A^{\mW} = 1$. Assume \eqref{eq: bounded moment assumption} holds for $q=3$.
Then, as $\alpha\rightarrow 0$,  
$$
T_A^{\textup{min}}\left(\Delta_{m_1,m_2}^{\mathrm{gfr}}(\alpha,\beta)\right)= \frac{|\log (\alpha)|}{\KL_{A,*}^{\mW}}+O(1).
$$
\item 
Consider the symmetric case $r_A^{\mW} \geq 2$. Let $\epsilon \in (0,1/2)$, and assume
\eqref{eq: bounded moment assumption} holds for some $q$ satisfying \eqref{eq: main text q_cond symmetric}. Then, as $\alpha\rightarrow 0$,  
$$
T_A^{\textup{min}}\left(\Delta_{m_1,m_2}^{\mathrm{gfr}}(\alpha,\beta)\right)= \frac{|\log (\alpha)|}{\KL_{A,*}^{\mW}}+\frac{h_{A,*}^{\mW}\sqrt{|\log (\alpha)|}}{(\KL_{A,*}^{\mW})^{3/2}}+O\left((\log (\alpha))^{1/4+ \epsilon/2}\right).
$$
\end{enumerate}
\end{theorem}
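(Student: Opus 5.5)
The plan mirrors the proof of Theorem~\ref{thm: second-order min ESS gmr}: invoke Theorem~\ref{thm: min ESS characterization} with $\delta_0(\bell)=\delta_L(a_{\alpha,\beta},b_{\alpha,\beta})$, $\bell=(\alpha,\beta)$, and the loss $\mW$ in \eqref{eq: loss gfr}. Then convert the resulting expansion in $\log(c_{\bell}^{-1})$ into one in $|\log\alpha|$ using \eqref{eq:alpha_beta_same_rate}. The cost $c_{\bell}$ and constant $L$ are taken from the proof of Theorem~\ref{thm: second-order optimal of leap rule} (Appendix~\ref{sec: proof of gfr}), which verifies \eqref{assumption: lorden 1}--\eqref{eq: second-order optimal sufficient condition 2}. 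I expect that choice to be of the form $c_{\bell}=\kappa\,\min\{\alpha,\beta\}$, or $\kappa\max\{\alpha,\beta\}$, or something comparable, for a constant $\kappa>0$ depending only on $K,m_1,m_2$ (and on $C$ in \eqref{eq:alpha_beta_same_rate}).

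The remaining hypotheses of Theorem~\ref{thm: min ESS characterization} are routine. Membership in $\Delta_{m_1,m_2}^{\mathrm{gfr}}(\alpha,\beta)$ depends only on $D$, so Assumption~\ref{cond:class_depends_on_decision} holds. The condition $\delta_0(\bell)\in\Delta(\bell)$ is Theorem~4.1 of \cite{song2019sequential}. The condition $c_{\bell}\to0$ follows from the explicit form of $c_{\bell}$. The unique most favorable subset property is assumed in the statement, unlike the GMR case where Lemma~\ref{lemma: misclassification rate satisfies assumption} supplied it. Applying the theorem then gives, in case (a), $T_A^{\min}=\log(c_{\bell}^{-1})/\KL_{A,*}^{\mW}+O(1)$, and in case (b) the expansion with the $\sqrt{\log(c_{\bell}^{-1})}$ correction and remainder $O((\log c_{\bell}^{-1})^{1/4+\epsilon/2})$.

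The only genuine step is the change of variables from $\log(c_{\bell}^{-1})$ to $|\log\alpha|$. By \eqref{eq:alpha_beta_same_rate}, $|\log\beta|=|\log\alpha|+O(1)$, so in any of the candidate forms $\log(c_{\bell}^{-1})=|\log\alpha|+O(1)$. In case (a) this changes the first term by only $O(1)$. In case (b), we use $\sqrt{x+O(1)}=\sqrt{x}+O(x^{-1/2})$, so the square-root term changes by $o(1)$. Likewise $(\log c_{\bell}^{-1})^{1/4+\epsilon/2}\asymp|\log\alpha|^{1/4+\epsilon/2}$. Together these give the stated expansions, with $h_{A,*}^{\mW}$ read as $h_A^{\mW}$ from \eqref{def:h_A}.

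The main obstacle is confirming that the cost chosen in Appendix~\ref{sec: proof of gfr} really satisfies $|\log c_{\bell}|=|\log\alpha|+O(1)$. This is exactly where the same-rate condition \eqref{eq:alpha_beta_same_rate} is needed. If that proof instead tied $c_{\bell}$ to both thresholds $a_{\alpha,\beta}$ and $b_{\alpha,\beta}$, the same bounded-difference argument still applies, since $|a_{\alpha,\beta}-b_{\alpha,\beta}|=O(1)$ under \eqref{eq:alpha_beta_same_rate}.
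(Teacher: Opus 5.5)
Your proposal is correct and follows the paper's proof. The paper applies Theorem~\ref{thm: min ESS characterization} to the Leap rule with the cost and constant in \eqref{eq: leap_constants_def}, where $c_{\bell}=2^{-K}e^{-(a_{\bell}\vee b_{\bell})}$, so $\log(c_{\bell}^{-1})=|\log\alpha|+O(1)$ under \eqref{eq:alpha_beta_same_rate}; your explicit conversion from $\log(c_{\bell}^{-1})$ to $|\log\alpha|$, including the step $\sqrt{x+O(1)}=\sqrt{x}+O(x^{-1/2})$, fills in a step the paper leaves implicit.
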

\begin{proof}
As mentioned above, with $a_{\alpha,\beta}, b_{\alpha,\beta}$ in \eqref{eq: leap_thresholds}, $\delta_L(a_{\alpha,\beta}, b_{\alpha,\beta}) \in \Delta_{m_1,m_2}^{\mathrm{gfr}}(\alpha,\beta)$ for each $\alpha,\beta \in (0,1)$. Moreover, we show in the proof of Theorem \ref{thm: second-order optimal of leap rule} that condition \eqref{assumption: lorden 1}--\eqref{eq: second-order optimal sufficient condition 2} hold with the loss function $\mW$ in \eqref{eq: loss gfr}, and cost $c_{\alpha,\beta}$ and constant $L$ defined in \eqref{eq: leap_constants_def}. It is clear that $c_{\alpha,\beta} \to 0$ as $\alpha,\beta \to 0$, and that Assumption \ref{cond:class_depends_on_decision} holds. Since $A$ is assumed to admit a unique most favorable subset with respect to $\mW$, the proof is then finished by Theorem \ref{thm: min ESS characterization}.
\end{proof}

Unlike Class \ref{problem: gmr}, which controls the generalized misclassification rate (see Lemma \ref{lemma: misclassification rate satisfies assumption}), for $\mW$ defined in \eqref{eq: loss gfr} it is not guaranteed that each $A\subset[K]$ admits a unique most favorable subset in the sense of Definition \ref{def: dragalin 1}, as illustrated by the following counterexample.

\begin{example}
Let $K=3$ and $m_1=m_2=2$. Consider the completely homogeneous case
$\mathcal{I}_i^{j} = \mathcal{I} > 0$ for $i \in \{0,1\}$ and $j \in \{1,2,3\}$,
where the information numbers are defined in \eqref{def:KLs_k}. Now consider $A=\emptyset$. By the definition in \eqref{def:KL_A_D_star} and $\mW$ in \eqref{eq: loss gfr},
\[
\KL_{A,*}^{\mW} = 3\mathcal{I}
= \KL_{A,\{1\}}^{\mW}
= \KL_{A,\{2\}}^{\mW}
= \KL_{A,\{3\}}^{\mW}.
\]
Thus, the three subsets $\{1\}$, $\{2\}$, and $\{3\}$ attain the maximum in $\max_{D\subset[K]}\KL_{A,D}^{\mW}$.
\end{example}

Next, we provide sufficient conditions under which $A\subset[K]$ admits a unique most favorable subset under $\mW$ defined in \eqref{eq: loss gfr}.

\begin{lemma}\label{gfr:sufficient_conditions_unique}
Suppose Assumption \ref{assumption: lorden} holds. Then $A \subset [K]$ admits a unique most favorable subset with respect to $\mW$  in \eqref{eq: loss gfr} if one of the following conditions holds: 
\begin{equation*}
\begin{aligned}
&\textup{ (a) } m_1=m_2=1.
\\&\textup{ (b) } m_1=m_2:=m^*, \ \mathcal{I}_0^k=\mathcal{I}_1^k:=\mathcal{I}^* \textup{ for } k\in[K], \;\textup{ and }\; \min\{|A|,|A^c|\}\ge m^*.     
\end{aligned}   
\end{equation*}
In either case,  $D_A^{\mW}=A$, and $\KL_{A,*}^{\mW}=\KL_{A,A}^{\mW}$.
\end{lemma}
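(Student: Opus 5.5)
The plan is to compute $\KL_{A,D}^{\mW}$ directly from \eqref{def:KL_A_D_star} and \eqref{def:KL_fA_fC}. The starting point is that, by \eqref{def:KL_fA_fC}, $\KL(f_A\mid f_C)$ is a sum of positive per-stream information numbers over the streams in $A\triangle C$. It vanishes exactly when $C=A$, by Assumption \ref{assumption: lorden}. It therefore suffices to show two things: $\KL_{A,A}^{\mW}$ is strictly larger than $\KL_{A,D}^{\mW}$ for every $D\neq A$, and uniqueness plus $D_A^{\mW}=A$ then follow from Definition \ref{def: dragalin 1}.

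\emph{Case (a).} When $m_1=m_2=1$, the loss in \eqref{eq: loss gfr} satisfies $\mW(D\mid C)=1$ iff $D\setminus C\neq\emptyset$ or $C\setminus D\neq\emptyset$, that is, iff $D\neq C$. So $\mW$ is a zero-one loss in the sense of Remark \ref{remark: zero-one loss}, and that remark already gives $D_A^{\mW}=A$: we have $\KL_{A,A}^{\mW}>0$, and $\KL_{A,D}^{\mW}\le \KL(f_A\mid f_A)=0$ for $D\ne A$. Nothing more is needed.

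\emph{Case (b).} Here $\KL(f_A\mid f_C)=\mathcal{I}^*|A\triangle C|$, so
\[
\KL_{A,D}^{\mW}=\mathcal{I}^*\,\min\bigl\{|A\triangle C|:\ |D\setminus C|\ge m^* \text{ or } |C\setminus D|\ge m^*\bigr\}.
\]

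\emph{Step 1: compute $\KL_{A,A}^{\mW}$.} For $D=A$, any feasible $C$ must either drop at least $m^*$ elements of $A$ or add at least $m^*$ elements of $A^c$, so $|A\triangle C|\ge m^*$. Equality is attained because $\min\{|A|,|A^c|\}\ge m^*$. Hence $\KL_{A,A}^{\mW}=m^*\mathcal{I}^*$.

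\emph{Step 2: show $\KL_{A,D}^{\mW}\le (m^*-1)\mathcal{I}^*$ for every $D\ne A$.} For this I must exhibit a feasible $C$ with $|A\triangle C|\le m^*-1$. The problem is invariant under complementation: replacing $A,D,C$ by their complements swaps $D\setminus C$ and $C\setminus D$, preserves $|A\triangle C|$, and preserves the hypothesis $\min\{|A|,|A^c|\}\ge m^*$. So I may assume $s:=|D\setminus A|\ge 1$.

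If $s\ge m^*$, take $C=A$. Otherwise, try to make $|D\setminus C|\ge m^*$ by removing from $A$ some $m^*-s$ elements of $D\cap A$; this gives $|A\triangle C|=m^*-s\le m^*-1$. That works if $|D\cap A|\ge m^*-s$, i.e.\ $|D|\ge m^*$.

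If instead $|D|<m^*$, switch to the other criterion, $|C\setminus D|\ge m^*$. Then $|A\setminus D|=|A|-|A\cap D|> m^*-(m^*-s)= s\ge 1$. Set $u:=|A\setminus D|$ and $v:=\max\{m^*-u,0\}$. Take $C=A\cup E$, where $E$ consists of $v$ elements of $A^c\setminus D$. Then $C\setminus D\supseteq (A\setminus D)\cup E$ has size at least $m^*$, while $|A\triangle C|=v\le m^*-1$ since $u\ge1$. The remaining check is availability, $|A^c\setminus D|\ge v$. Since $A^c\setminus D$ has $|A^c|-s$ elements, this needs $|A^c|\ge s+m^*-u$; I expect it to follow from $|A^c|\ge m^*$ together with $u>s$. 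Indeed $u>s$ gives $s+m^*-u<m^*\le|A^c|$. This is the only step requiring care.

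\emph{Conclusion.} Combining the two steps gives the strict inequality $\KL_{A,A}^{\mW}>\KL_{A,D}^{\mW}$ for all $D\ne A$. Hence $D_A^{\mW}=A$ and $\KL_{A,*}^{\mW}=\KL_{A,A}^{\mW}$.

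The main obstacle is the case bookkeeping in Step 2. I would double-check that the complementation reduction is valid and that both sub-cases, according to whether $|D|\ge m^*$, are exhaustive.
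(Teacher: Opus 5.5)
Your proof is correct and follows essentially the paper's argument. Part (a) goes through the zero-one-loss remark. Part (b) computes $\KL_{A,A}^{\mW}=m^*\mathcal{I}^*$ and then, for each $D\neq A$, exhibits a $C\in\mathcal{H}_D^{\mW}$ with fewer than $m^*$ flipped streams, either by deleting elements of $A\cap D$ or by adding elements of $A^c\setminus D$. The only difference is bookkeeping: the paper reduces by symmetry to $|D|\ge m^*$ and splits on whether $D\setminus A$ is empty, whereas you reduce via complementation to $D\setminus A\neq\emptyset$ and split on whether $|D|\ge m^*$, which also makes explicit the symmetry step the paper dismisses as ``analogous.''
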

\begin{proof}
The proof is in Appendix \ref{sec: proof of gfr}.
\end{proof}
\begin{remark}
When $m_1=m_2=1$, corresponding to the classical familywise error rates, Theorem \ref{thm: second-order min ESS gfr} therefore provides a second-order accurate asymptotic approximation. However, for generalized familywise error rates, this remains an open problem in general.    
\end{remark}

\subsection{False Discovery and Non-discovery Rates}\label{sec: fdr}

In this subsection, we consider Class \ref{problem: fdr}, $\Delta^{\mathrm{fdr}}(\alpha,\beta)$, which controls the false discovery and false non-discovery rates, and assume $\mathcal{A}=2^{[K]}$. We start by reviewing the Intersection rule, denoted by 
$\delta_I(a,b) = (T_I(a,b), D_I(a,b))$, proposed in 
\cite{de2012sequential}, where $a, b > 0$ are threshold parameters. Specifically,
\begin{equation}
\label{def:intersection_rule}
    T_I(a,b) = \min\{t\ge 1: \lambda_t^k\notin (-a,b) \text{ for all }k\in[K]\},
    \qquad D_I(a,b) = \{k\in[K]: \lambda_{T_I(a,b)}^k>0\}.
\end{equation}
It is shown in Corollary 4.1 of \cite{he2021asymptotically} that for $\alpha,\beta \in (0,1)$, if
\begin{equation}
    \label{fdr:a_b}
    a_{\alpha,\beta} = |\log(\beta)| + \log(K),\quad b_{\alpha,\beta} = |\log(\alpha)|+ \log(K),
\end{equation}
then $\delta_I(a_{\alpha,\beta},b_{\alpha,\beta}) \in \Delta^{\mathrm{fdr}}(\alpha,\beta)$. Furthermore, by the same corollary, with this choice of thresholds, the family $\{\delta_L(a_{\alpha,\beta},b_{\alpha,\beta})\}$ is first-order asymptotically optimal as $\alpha \vee \beta \to 0$.

The next theorem establishes its second-order optimality when $\alpha$ and $\beta$ vanish at the same rate, that is, when \eqref{eq:alpha_beta_same_rate} holds.

\begin{theorem}\label{thm: second-order optimal of intersection rule}
Suppose Assumption \ref{assumption: lorden} holds,  and $\alpha,\beta \in (0,1)$ satisfy condition \eqref{eq:alpha_beta_same_rate}. Then the Intersection rule $\{\delta_I(a_{\alpha,\beta},b_{\alpha,\beta})\}$ is second-order asymptotically optimal in $\{\Delta^{\mathrm{fdr}}(\alpha,\beta)\}$ 
as $\alpha,\beta \to 0$,  that is, for each $A \subset [K]$,
\begin{equation*}
\limsup_{\alpha,\beta \to {0}}
\Bigl(
\expt_A[T_I(a_{\alpha,\beta},b_{\alpha,\beta})]
-
T_A^{\textup{min}}\left(\Delta^{\mathrm{fdr}}(\alpha,\beta)\right) 
\Bigr)
< \infty.
\end{equation*}
\end{theorem}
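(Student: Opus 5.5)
We apply Theorem \ref{thm: main result} with the uniform prior $\pi_0$ on $2^{[K]}$ and the zero-one loss $\mW(D\mid A)=\idf\{D\neq A\}$. We then choose a cost $c_{\alpha,\beta}$ of order $\min\{\alpha,\beta\}$ and a constant $L$ that depends only on $K$ and the constant $C$ in \eqref{eq:alpha_beta_same_rate}. Once conditions \eqref{assumption: lorden 1}--\eqref{eq: second-order optimal sufficient condition 2} are checked, the theorem gives a bound $(2L+M)2^K$ on the excess ESS, uniformly in $(\alpha,\beta)$. This is the desired second-order optimality.

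\emph{Condition \eqref{assumption: lorden 1}.} This is immediate: for any $D$, the subset $A=D^c\neq D$ gives $\mW(D\mid A)=1$.

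\emph{Condition \eqref{eq: second-order optimal sufficient condition 2}.} Fix $\delta=(T,D)\in\Delta^{\mathrm{fdr}}(\alpha,\beta)$ and $A\subset[K]$. On the event $\{D\neq A\}$, either $D\setminus A\neq\emptyset$ or $A\setminus D\neq\emptyset$. In the first case the false discovery proportion $|D\setminus A|/\max\{|D|,1\}$ is at least $1/K$. In the second case the false non-discovery proportion is at least $1/K$. Hence
\[
\prob_A(D\neq A)\le K\Bigl(\expt_A\Bigl[\tfrac{|D\setminus A|}{\max\{|D|,1\}}\Bigr]+\expt_A\Bigl[\tfrac{|A\setminus D|}{\max\{|D^c|,1\}}\Bigr]\Bigr)\le K(\alpha+\beta).
\]
Averaging over $\pi_0$ gives $\ie(\delta;\mW)\le K(\alpha+\beta)$. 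By \eqref{eq:alpha_beta_same_rate}, this is at most $K(1+C)^2\min\{\alpha,\beta\}$ up to a constant depending only on $C$. The bound is therefore $Lc_{\alpha,\beta}$ once we set $c_{\alpha,\beta}:=\min\{\alpha,\beta\}/(2K)$ and take $L$ proportional to $K^2$ with a factor depending only on $C$.

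\emph{Condition \eqref{eq: second-order optimal sufficient condition 1}.} This is the main step. Under the zero-one loss, the posterior expected loss of decision $D$ is $1-\pi_t(D)$. Thus $T_{\mathrm{Ld}}(c,\mW)$ is the first $t$ at which $\pi_t(D)>1-c$ for some $D$. Fix such $t$ and $D$. For each $k$, compare $D$ with the neighbour $D\triangle\{k\}$. By \eqref{eq: density},
\[
\frac{\pi_t(D\triangle\{k\})}{\pi_t(D)}=\begin{cases}e^{-\lambda_t^k}, & k\in D,\\ e^{\lambda_t^k}, & k\notin D.\end{cases}
\]
This ratio is below $c/(1-c)$, because $\pi_t(D\triangle\{k\})\le 1-\pi_t(D)<c$. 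Consequently $\lambda_t^k>\log((1-c)/c)$ for every $k\in D$, and $\lambda_t^k<-\log((1-c)/c)$ for every $k\notin D$. With $c=c_{\alpha,\beta}$, we have
\[
\log\bigl((1-c)/c\bigr)\ge\log(1/(2c))=\max\{|\log\alpha|,|\log\beta|\}+\log K\ge\max\{a_{\alpha,\beta},b_{\alpha,\beta}\}.
\]
Hence $\lambda_t^k\notin(-a_{\alpha,\beta},b_{\alpha,\beta})$ for all $k$, which gives $T_I(a_{\alpha,\beta},b_{\alpha,\beta})\le t$. Applying this at $t=T_{\mathrm{Ld}}$ yields $T_I\le T_{\mathrm{Ld}}(c_{\alpha,\beta},\mW)$ almost surely.

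The only delicate points are two. First, the direction of the inequality $(1-c)/c\ge 1/(2c)$, which needs $c\le1/2$ and holds since $\alpha,\beta<1$. Second, the fact that $L$ stays bounded only because $\alpha$ and $\beta$ are comparable under \eqref{eq:alpha_beta_same_rate}; without this, $(\alpha+\beta)/\min\{\alpha,\beta\}$ would be unbounded.
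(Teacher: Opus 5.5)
Your proof is correct, and it is more direct than the paper's. The paper never checks the hypotheses of Theorem~\ref{thm: main result} for $\Delta^{\mathrm{fdr}}(\alpha,\beta)$ itself. It uses the inclusions $\Delta_{1,1}^{\mathrm{gfr}}(\alpha,\beta)\subset\Delta^{\mathrm{fdr}}(\alpha,\beta)\subset\Delta_{1,1}^{\mathrm{gfr}}(K\alpha,K\beta)$ to lower-bound $T_A^{\textup{min}}(\Delta^{\mathrm{fdr}}(\alpha,\beta))$ by $T_A^{\textup{min}}(\Delta_{1,1}^{\mathrm{gfr}}(K\alpha,K\beta))$. It then invokes the Leap-rule result, Theorem~\ref{thm: second-order optimal of leap rule}, at level $(K\alpha,K\beta)$; for $m_1=m_2=1$ the Leap rule is the Intersection rule and \eqref{eq: loss gfr} is exactly your zero-one loss. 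Finally it needs Lemma~\ref{lemma:aux_fdr}, a restart argument showing that raising both Intersection-rule thresholds by $\log K$ changes the ESS by only $O(1)$.

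You instead verify the three conditions directly for the FDR class. Your bound $\idf\{D\neq A\}\le K(\text{FDP}+\text{FNP})$ is the same inequality that gives the paper's second inclusion. Your comparison of $D$ with $D\triangle\{k\}$ under the stopping condition $\max_D\pi_t(D)>1-c$ is a self-contained special case of the argument in Lemmas~\ref{lemma: A_0} and~\ref{lemma: stop earlier leap}.

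Your route buys several things:
\begin{itemize}
\item no threshold-shift lemma is needed;
\item you never have to identify the thresholds \eqref{fdr:a_b} with \eqref{eq: leap_thresholds}, which in fact differ by $\log 2$ per coordinate;
\item you get an explicit bound $(2L+M)2^K$ resting only on Assumption~\ref{assumption: lorden} and Theorem~\ref{thm: main result};
\item since Assumption~\ref{cond:class_depends_on_decision} holds for Class~\ref{problem: fdr}, Theorem~\ref{thm: min ESS characterization} applies directly to $\Delta^{\mathrm{fdr}}(\alpha,\beta)$ with $c_{\alpha,\beta}=\min\{\alpha,\beta\}/(2K)$.
\end{itemize}
The paper's route, in exchange, reuses the already-proved Leap-rule machinery and the inclusion known from \cite{he2021asymptotically}.

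Two small tidy-ups. In your check of \eqref{eq: second-order optimal sufficient condition 2}, the clean bound is $K(\alpha+\beta)\le K(1+C)\min\{\alpha,\beta\}$, so $L=2K^2(1+C)$ works. Also, Theorem~\ref{thm: main result} uses $\delta_I(a_{\alpha,\beta},b_{\alpha,\beta})\in\Delta^{\mathrm{fdr}}(\alpha,\beta)$, so that the error bound applies to $\delta_0$ itself. You should state this and cite it from Corollary~4.1 of \cite{he2021asymptotically}.
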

\begin{proof}
The proof is in Appendix \ref{sec: proof of fdr}. 
\end{proof}

\begin{remark}\label{rk:relationship}
In the proof of Theorem \ref{thm: second-order optimal of intersection rule}, we use the following relationship between Class \ref{problem: fdr} and Class \ref{problem: gfr} with $m_1=m_2=1$, which was first established in \cite{he2021asymptotically}:
\begin{equation*}
    \Delta_{1,1}^{\mathrm{gfr}}(\alpha,\beta)\subset \Delta^{\textup{fdr}}(\alpha,\beta)\subset \Delta_{1,1}^{\mathrm{gfr}}(K\alpha,K\beta).
\end{equation*}
Further, when $m_1 = m_2 = 1$, the Intersection rule $\delta_I(a,b)$ in \eqref{def:intersection_rule} coincides with the Leap rule $\delta_L(a,b)$ in \eqref{eq: leap rule}, and the thresholds $(a_{\alpha,\beta}, b_{\alpha,\beta})$ in \eqref{fdr:a_b} are the same as $(a_{\alpha,\beta}, b_{\alpha,\beta})$ in \eqref{eq: leap_thresholds}. 
\end{remark}

Next, we consider asymptotic approximations to the smallest achievable ESS,
$T_A^{\textup{min}}\left(\Delta^{\mathrm{fdr}}(\alpha,\beta)\right)$, for each $A \subset [K]$ as $\alpha,\beta \to 0$ in such a way that \eqref{eq:alpha_beta_same_rate} holds. Specifically, Corollary 4.1 of \cite{he2021asymptotically} shows that under \eqref{eq:alpha_beta_same_rate}, for each $A \subset [K]$, the first-order performance can be characterized by:
\begin{equation*}
T_A^{\textup{min}}\left(\Delta^{\mathrm{fdr}}(\alpha,\beta)\right)
=
\frac{|\log \alpha|}{\KL_{A,A}^{\mW}}\,(1+o(1)),   
\end{equation*}
where $\KL_{A,A}^{\mW}$ is defined in \eqref{def:KL_A_D_star} with $\mW$ given in \eqref{eq: loss gfr} with $m_1 = m_2 = 1$, namely,
\begin{align*}
    \KL_{A,A}^{\mW} = \min\{\KL(f_A|f_C): C \subset [K], \; |A\setminus C| \geq 1 \text{ or } |C \setminus A| \geq 1\}.
\end{align*}
In this work, we obtain a second-order approximation by first exploiting the relationship described in Remark \ref{rk:relationship} and then applying Theorem \ref{thm: second-order min ESS gfr} with $m_1=m_2=1$. Note that when $m_1=m_2=1$, each $A\subset[K]$ admits a unique most favorable subset under $\mW$ defined in \eqref{eq: loss gfr}.

\begin{theorem}\label{thm: second-order min ESS fdr}
Suppose that Assumption \ref{assumption: lorden} holds and that $\alpha,\beta \in (0,1)$ satisfy condition \eqref{eq:alpha_beta_same_rate}. Let $\mW$ be defined in \eqref{eq: loss gfr}, and $A \subset [K]$ be an arbitrary signal subset.

\begin{enumerate}[label=(\alph*), ref=\theassumption $(\alph*)$]
\item 
Consider the asymmetric case $r_A^{\mW} = 1$. Assume \eqref{eq: bounded moment assumption} holds for $q=3$.
Then, as $\alpha\rightarrow 0$,  
$$
T_A^{\textup{min}}\left(\Delta^{\mathrm{fdr}}(\alpha,\beta)\right)= \frac{|\log (\alpha)|}{\KL_{A,A}^{\mW}}+O(1).
$$
\item 
Consider the symmetric case $r_A^{\mW} \geq 2$. Let $\epsilon \in (0,1/2)$, and assume
\eqref{eq: bounded moment assumption} holds for some $q$ satisfying \eqref{eq: main text q_cond symmetric}. Then, as $\alpha\rightarrow 0$,  
$$
T_A^{\textup{min}}\left(\Delta^{\mathrm{fdr}}(\alpha,\beta)\right)= \frac{|\log (\alpha)|}{\KL_{A,A}^{\mW}}+\frac{h_{A,A}^{\mW}\sqrt{|\log (\alpha)|}}{(\KL_{A,A}^{\mW})^{3/2}}+O\left((\log (\alpha))^{1/4+ \epsilon/2}\right).
$$
\end{enumerate}
\end{theorem}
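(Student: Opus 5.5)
We plan to sandwich the FDR class between two GFWER classes with $m_1=m_2=1$ and then use Theorem \ref{thm: second-order min ESS gfr} on both sides. By Remark \ref{rk:relationship},
\[
\Delta_{1,1}^{\mathrm{gfr}}(\alpha,\beta)\subset \Delta^{\textup{fdr}}(\alpha,\beta)\subset \Delta_{1,1}^{\mathrm{gfr}}(K\alpha,K\beta).
\]
Since $T_A^{\min}$ is an infimum, enlarging the class can only decrease it. Therefore
\[
T_A^{\min}\bigl(\Delta_{1,1}^{\mathrm{gfr}}(K\alpha,K\beta)\bigr)\le T_A^{\min}\bigl(\Delta^{\textup{fdr}}(\alpha,\beta)\bigr)\le T_A^{\min}\bigl(\Delta_{1,1}^{\mathrm{gfr}}(\alpha,\beta)\bigr).
\]
We check that Theorem \ref{thm: second-order min ESS gfr} applies to both outer classes:
\begin{itemize}
\item the pair $(K\alpha,K\beta)$ satisfies \eqref{eq:alpha_beta_same_rate} with the same constant $C$, and lies in $(0,1)^2$ once $\alpha,\beta<1/K$, which is the only regime that matters as $\alpha,\beta\to0$;
\item by Lemma \ref{gfr:sufficient_conditions_unique}(a), every $A$ admits a unique most favorable subset with $D_A^{\mW}=A$ and $\KL_{A,*}^{\mW}=\KL_{A,A}^{\mW}$, where $\mW$ is \eqref{eq: loss gfr} with $m_1=m_2=1$.
\end{itemize}
The quantities $r_A^{\mW}$ and $h_A^{\mW}$ depend only on $\mW$ and $A$, not on the error levels. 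Hence both outer classes fall into the same case (a) or (b), with identical constants.

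Applying the theorem to each outer class gives expansions in $|\log\alpha|$ and $|\log(K\alpha)|=|\log\alpha|-\log K$, respectively. In case (a), both bounds equal $|\log\alpha|/\KL_{A,A}^{\mW}+O(1)$, because the shift $\log K/\KL_{A,A}^{\mW}$ is a constant.

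In case (b), the leading terms again differ by $O(1)$. For the square-root terms,
\[
\sqrt{|\log\alpha|}-\sqrt{|\log\alpha|-\log K}=O\bigl(|\log\alpha|^{-1/2}\bigr)=o(1).
\]
Also $|\log(K\alpha)|^{1/4+\epsilon/2}\asymp|\log\alpha|^{1/4+\epsilon/2}$. Both sides of the sandwich therefore agree up to $O(|\log\alpha|^{1/4+\epsilon/2})$, which yields the claimed expansion (reading $h_{A,A}^{\mW}$ as $h_A^{\mW}$).

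There is no real obstacle here. The only care needed is bookkeeping: checking that the rescaled pair $(K\alpha,K\beta)$ stays admissible for Theorem \ref{thm: second-order min ESS gfr}, and that the $\log K$ shift is absorbed into the error terms.
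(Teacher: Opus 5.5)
Your proposal is correct and takes the same route as the paper. The paper likewise sandwiches $\Delta^{\mathrm{fdr}}(\alpha,\beta)$ between $\Delta_{1,1}^{\mathrm{gfr}}(\alpha,\beta)$ and $\Delta_{1,1}^{\mathrm{gfr}}(K\alpha,K\beta)$ via Remark \ref{rk:relationship}, then applies Theorem \ref{thm: second-order min ESS gfr} with $m_1=m_2=1$ to both ends, with uniqueness supplied by Lemma \ref{gfr:sufficient_conditions_unique}(a). Your explicit bookkeeping of the $\log K$ shift and of the admissibility of $(K\alpha,K\beta)$, which the paper leaves implicit, is accurate.
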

\begin{proof}
The proof follows from the preceding discussion.
\end{proof}

\begin{remark}
We emphasize that Theorem \ref{thm: second-order min ESS fdr} applies to every signal subset $A\subset[K]$. However,  $\alpha$ and $\beta$ need to vanish at the same rate. 
\end{remark}

\subsection{Discussion of Optimality}\label{subsec:discussion}
It is instructive to compare our second-order results with classical optimality results in related settings.

In the single stream case ($K=1$), Wald's sequential probability ratio test (SPRT) \cite{wald1945sequential}, which coincides with the Intersection rule in \eqref{def:intersection_rule}, is \emph{exactly} optimal in Class \ref{problem: gfr} with $m_1=m_2=1$, i.e., $\Delta_{1,1}^{\mathrm{gfr}}(\alpha,\beta)$. When the thresholds $(a,b)$ are chosen so that the Type-I and Type-II error probabilities equal $(\alpha,\beta)$, the SPRT simultaneously minimizes $\expt_0[T]$ and $\expt_1[T]$ over all sequential tests satisfying the same error constraints; see, e.g., \cite{tartakovsky2014sequential,wald1948optimum}. Thus, in this one stream setting, the minimal achievable ESS is attained exactly by the SPRT. 


Next, we review the classical multi-hypothesis testing problem 
\cite{lorden1977nearly,draglia1999multihypothesis,dragalin2000multihypothesis}, 
which is closely related to the multiple-stream setting considered in this paper.
In this formulation, one observes i.i.d.\ data $\{X_t\}_{t\ge1}$ whose common distribution belongs to a finite collection $\{f_i : i \in [M]\}$ of competing $\mu$-densities. A sequential procedure consists of a stopping time $T$ and a terminal decision $D \in [M]$. Error control is typically expressed either through the family of error probabilities $\{\mathbb{P}_i(D=j) : i \neq j \in [M]\}$ or through weighted risks of the form $\{\sum_{j\in[M]} \omega_{i,j}\mathbb{P}_i(D=j) : i \in [M]\}$, where $\omega_{i,j}\ge 0$ are fixed weights and $\mathbb{P}_i$ denotes the distribution under hypothesis $i$.

In contrast to the single hypothesis case, exact optimality is generally unavailable once $M\ge3$. Lorden \cite{lorden1977nearly} proposed the \emph{matrix SPRT}, a multidimensional extension of Wald’s SPRT, and established its \emph{third-order} asymptotic optimality under each $\mathbb{P}_i$, $i\in[M]$. More precisely, with appropriately chosen thresholds, the ESS of the matrix SPRT differs by $o(1)$ from the minimal achievable ESS among procedures whose error probabilities are no larger than those of the matrix SPRT, either in terms of the full collection $\{\mathbb{P}_i(D=j)\}$ or the weighted risks.

It is important to note that this optimality result holds within a restricted comparison class of procedures satisfying $\mathbb{P}_i(D=j)\to 0$ for all $i\neq j$ as the error control becomes more stringent. Such a property does not hold for several of the multiple testing classes considered in this paper, including Class \ref{problem: gmr} when $m_0>1$, Class \ref{problem: gfr} when $m_1+m_2>2$, and Class \ref{problem: fdr}. The procedures studied in the previous subsections, while computationally efficient and second-order optimal, are not expected to achieve third-order optimality within their respective classes.

\section{Numerical Results under Generalized Misclassification Rate}\label{sec:numerical_results}

In this section, we present a numerical study for Class \ref{problem: gmr}, $\Delta_{m_0}^{\mathrm{gmr}}(\alpha)$, which controls the generalized misclassification rate at level $\alpha\in(0,1)$ without assuming prior information. The goal is to corroborate the results in Section \ref{sec: gmr}.


Specifically, recall that for the Sum-Intersection rule in \eqref{eq: sum intersection rule}, if the threshold is chosen as $b_{\alpha}$ in \eqref{eq: second-order optimal of sum intersection rule 0}, then $\delta_S(b_{\alpha})\in\Delta_{m_0}^{\mathrm{gmr}}(\alpha)$ for each $\alpha\in(0,1)$, and by Theorem \ref{thm: second-order optimal of sum intersection rule} the family $\{\delta_S(b_{\alpha})\}$ is second-order optimal in $\{\Delta_{m_0}^{\mathrm{gmr}}(\alpha)\}$ as $\alpha \to 0$.

As discussed in \cite{song2019sequential}, the threshold $b_{\alpha}$ provides strong theoretical guarantees for the Sum-Intersection rule in terms of error control, but it is often conservative in practice. In our numerical study, for each given $\alpha\in(0,1)$, we first determine via simulation a threshold $b_{\alpha}^*$ such that the maximal error probability equals $\alpha$, that is, $\max_{B \subset [K]}\prob_B\bigl(|D_S(b_{\alpha}^*)\ \triangle \ B|\ge m_0\bigr)=\alpha$. We then compute, again by simulation, the ESS under the empty signal subset, namely $\expt_{\emptyset}[T_S(b_{\alpha}^*)]$. Note that $b_{\alpha}^*$ is a non-conservative critical value. To estimate the error probabilities, we employ importance sampling; see \cite{song2016logarithmically,song2025efficient}.

Recall the densities $\{f_0^k,f_1^k:k\in[K]\}$  in \eqref{def:simple_vs_simple} and the information numbers $\{I_0^k,I_1^k:k\in[K]\}$ in \eqref{def:KLs_k}. 
We consider the problem of testing Normal means, where for the $k$th stream,
\[
H_0^{k}:\ f_0^k=\mathcal{N}(0,\sigma_k^2)\qquad \text{versus}\qquad H_1^{k}:\ f_1^k=\mathcal{N}(\mu_k,\sigma_k^2).
\]
For each $k \in [K]$, $I_1^{k} = I_0^{k} = \mu_k^{2}/(2\sigma_k^2)$.

We consider the loss function $\mW$ defined in \eqref{eq: loss gmr}. By Lemma \ref{lemma: misclassification rate satisfies assumption}, each $A\subset[K]$ admits a unique most favorable subset with respect to $\mW$, with $D_A^{\mW}=A$ and $\KL_{A,*}^{\mW}=\KL_{A,A}^{\mW}$. Moreover, recall the definition of $\mathcal{C}_A^{\mW}$  and $r_A^{\mW}=|\mathcal{C}_A^{\mW}|$ in \eqref{eq: C_A}. Here we consider a symmetric case with $r_A^{\mW}\ge 2$, while Appendix \ref{app:addition_simulation} presents an asymmetric case with $r_A^{\mW}=1$.

Specifically, we consider the following setup
$$\mu_1=\cdots=\mu_K = \mu,\quad \text{ and } \quad  \sigma_1^2 = \cdots = \sigma_K^2 = \sigma^2,$$
where $\mu \in \mathbb{R}$ and $\sigma^2 > 0$. By definition \eqref{def:KL_A_D_star} and \eqref{eq: C_A},
\begin{align*}
\KL_{\emptyset,*} = m_0\frac{\mu^2}{2\sigma^2},\quad 
\left\{\mathcal{C}_{\emptyset,j}^{\mW}:\; 1 \leq j\leq r_{\emptyset}^{\mW}\right\} = \{B \subset [K]:\;|B|=m_0\}, \quad \text{ and }\quad r_\emptyset^{\mW} = \binom{K}{m_0}.
\end{align*}
Recall the definition of $\bSigma_{\emptyset}^{\mW}$ in \eqref{def:Sigma_A_Z}. For $1 \leq i, j \leq r_{\emptyset}^{\mW}$, by elementary calculation,
\begin{align*}
   (\bSigma_{\emptyset}^{\mW})_{ij} = \cov_{\emptyset}\left(\sum_{k \in \mathcal{C}_{\emptyset,i}^{\mW}} \log\frac{f_1^{k}(\mathbf{X}_1^{k})}{f_0^{k}(\mathbf{X}_1^{k})},\; \sum_{k \in \mathcal{C}_{\emptyset,j}^{\mW}} \log\frac{f_1^{k}(\mathbf{X}_1^{k})}{f_0^{k}(\mathbf{X}_1^{k})},\right) = \left|\mathcal{C}_{\emptyset,i}^{\mW}\cap \mathcal{C}_{\emptyset,j}^{\mW}\right|\frac{\mu^2}{\sigma^2},
\end{align*}
where we denote by $\cov_{\emptyset}$ the covariance with respect to the probability $\prob_{\emptyset}$. Thus, $h_{\emptyset}^{\mW}$ defined in \eqref{def:h_A} is the expected maximum of a Gaussian random vector with distribution $\mathcal{N}(\bd{0},\bSigma_{\emptyset}^{\mW})$, and it can be computed via Monte Carlo simulation. Since it depends on $m_0$ through $\mW$, we denote it by $h_{\emptyset}^{(m_0)}$ instead. 
By \eqref{eq: first_order_gmr} and Theorem \ref{thm: second-order min ESS gmr}(b), the first- and second-order asymptotic approximations for the smallest achievable ESS of Class \ref{problem: gmr} are, respectively,
\begin{align*}
\textup{FO}_{\alpha}:=
\frac{2\sigma^2}{m_0\mu^2}|\log\alpha|,\quad \text{ and } \quad 
\textup{SO}_{\alpha}:=\frac{2\sigma^2}{m_0\mu^2}|\log\alpha|
+\left(\frac{2\sigma^2}{m_0\mu^2}\right)^{3/2} h_{\emptyset}^{(m_0)}\sqrt{|\log\alpha|}.
\end{align*}


In Fig. \ref{fig: intro 1} and Fig. \ref{fig:numerical_fig}, we consider three cases: $(K,m_0)\in\{(20,1),(20,2),(50,2)\}$. For each case, we present three plots. The leftmost plot displays the ESS $\expt_{\emptyset}[T_S(b_{\alpha}^*)]$ (square markers), together with the first-order approximation $\textup{FO}_{\alpha}$ (triangle markers) and the second-order approximation $\textup{SO}_{\alpha}$ (circle markers), as functions of $|\log_{10}(\alpha)|$ as $\alpha\in(0,1)$ varies. The middle plot shows the differences $\expt_{\emptyset}[T_S(b_{\alpha}^*)]-\textup{FO}_{\alpha}$ and $\expt_{\emptyset}[T_S(b_{\alpha}^*)]-\textup{SO}_{\alpha}$ as functions of $|\log_{10}(\alpha)|$. The rightmost plot presents the ratios $\expt_{\emptyset}[T_S(b_{\alpha}^*)]/\textup{FO}_{\alpha}$ and $\expt_{\emptyset}[T_S(b_{\alpha}^*)]/\textup{SO}_{\alpha}$ as functions of $|\log_{10}(\alpha)|$.

As established in Theorem \ref{thm: second-order optimal of sum intersection rule}, the Sum-Intersection rule with the non-conservative threshold $b_{\alpha}^*$ is second-order optimal. Accordingly, as predicted by Theorem \ref{thm: second-order min ESS gmr}(b) and evident from the plots, the second-order approximation $\textup{SO}_{\alpha}$ is substantially more accurate than the first-order approximation $\textup{FO}_{\alpha}$. In particular, the difference $\expt_{\emptyset}[T_S(b_{\alpha}^*)]-\textup{FO}_{\alpha}$ diverges as $\alpha\to0$. Moreover, although Theorems \ref{thm: second-order optimal of sum intersection rule} and \ref{thm: second-order min ESS gmr}(b) only imply that $\expt_{\emptyset}[T_S(b_{\alpha}^*)]-\textup{SO}_{\alpha}=O(|\log(\alpha)|^{1/4+\epsilon})$ for any fixed $\epsilon \in (0,1/4)$, the middle panels suggest that this difference remains bounded, i.e., is $O(1)$. It would be of interest to establish this refinement rigorously. Moreover, $K$ is fixed in our analysis. From Fig. \ref{fig:numerical_fig}, the second-order approximation appears more accurate for smaller $K$. A systematic study of the impact of $K$ is left for future work.

\begin{figure}[!t]
\centering
\begin{subfigure}[b]{\textwidth}
\includegraphics[width=0.95\textwidth]{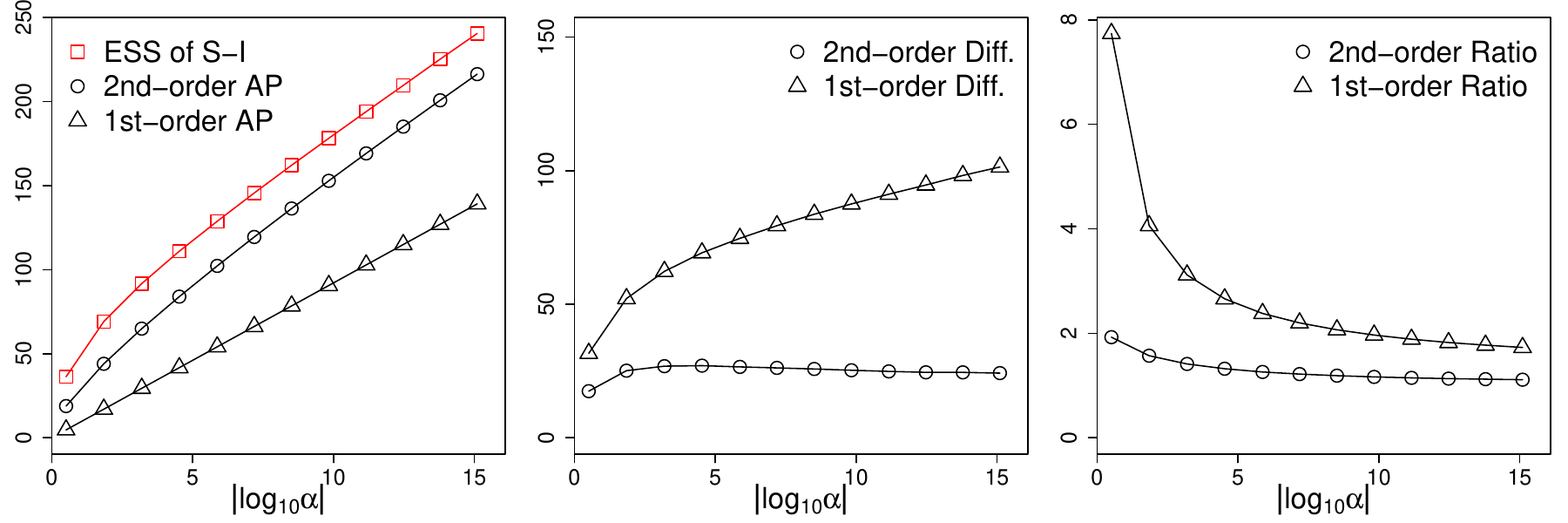}
\caption{Symmetric Case: $K=20, m_0=2$}
\end{subfigure}
\begin{subfigure}[b]{\textwidth}
\includegraphics[width=0.95\textwidth]{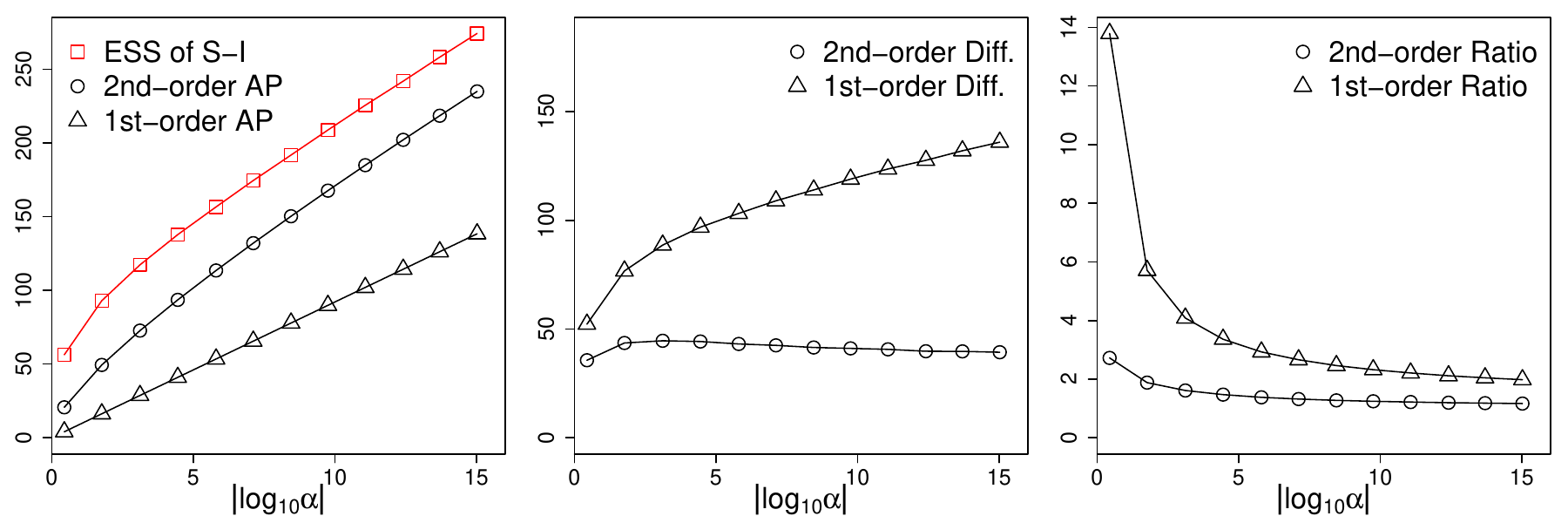} 
\caption{Symmetric Case: $K=50, m_0=2$}
\end{subfigure}
\caption{The x-axis in all panels is $|\log_{10}\alpha|$. In the leftmost plots, ``S-I'' denotes the Sum-Intersection rule and ``AP'' denotes the asymptotic approximation  to the smallest achievable ESS. In the middle plots, ``Diff.'' denotes the difference between the ESS of the S-I rule and the two approximations. In the rightmost plots, ``Ratio'' denotes the ratio of the ESS of the S-I rule to each of the two approximations.}
\label{fig:numerical_fig}
\end{figure}

\section{Conclusions and Future Work} \label{sec:conclusion}
We establish a unified second-order asymptotic theory for sequential multiple testing with simultaneous stopping across independent data streams. Through an associated Bayesian formulation with a properly specified cost parameter and loss function, we provide sufficient conditions under which second-order Bayesian optimality implies second-order frequentist optimality. Under these conditions, several procedures previously known to be first-order optimal under various error metrics and information structures are in fact second-order optimal: for every signal configuration, their excess ESS over the minimal achievable one remains uniformly bounded as the error tolerances vanish.

We also obtain a second-order asymptotic expansion of the minimal achievable ESS. Under suitable moment conditions and a uniqueness condition on the most favorable alternative induced by the loss, we identify the second-order correction to the leading logarithmic term. Its form depends on whether the associated boundary-crossing problem is asymmetric or symmetric. These results sharpen the classical first-order theory.

Several directions remain open. First, for all classes considered in this paper except Class \ref{problem: gfr}, each signal subset $A \in \mathcal{A}$ admits a unique most favorable subset under a properly chosen loss function $\mW$. For Class \ref{problem: gfr}, we provide a counterexample showing that this uniqueness may fail and give sufficient conditions under which it holds. A complete second-order characterization of the minimal achievable ESS for Class \ref{problem: gfr}  remains open.

Second, in the symmetric case, the remainder term in the ESS expansion is of order $O((\log(c_{\bell}^{-1}))^{\epsilon})$ for arbitrary $\epsilon \in (1/4,1/2)$. Numerical evidence suggests that the approximation error may in fact remain bounded, indicating that sharper analysis could improve the remainder bound. Third, for several classes, including Class \ref{problem: gfr} and Class \ref{problem: fdr}, first-order asymptotic optimality does not require the error tolerances (e.g., $\alpha$ and $\beta$) to converge to zero at the same rate (see condition \eqref{eq:alpha_beta_same_rate}). Extending the second-order theory to such more general regimes is of interest. Finally, it would be natural to investigate extensions to composite hypotheses and to settings with dependent data streams as in \cite{chaudhuri2024joint}.

\bibliography{ref.bib}       
\newpage
\begin{appendix}

\section{Proof of Theorem \ref{thm: main result}} \label{sec: proof of thm: main result}
Recall that $\pi_0$ is the uniform distribution on $\mathcal{A}$, and that $\ie(\cdot;\cdot)$ denotes the integrated error loss in \eqref{eq: integrated error}. Recall the definition of $\delta_{\mathrm{Ld}}(c,\mW) = (T_{\mathrm{Ld}}(c,\mW), D_{\mathrm{Ld}}(c,\mW))$ in \eqref{eq: lorden procedure}, $\ir(\delta;c,\mW)$ in \eqref{eq: integrated risk} and   $\ir_{\textup{min}}(c,\mW)$ in \eqref{smallest_Bayes_risk}. 

\begin{proof}[Proof of Theorem \ref{thm: main result}]
Fix $\bell \in (0,1)^r$. Let $\delta(\bell) = (T(\bell), D(\bell)) \in \Delta(\bell)$ be an arbitrary procedure in $\Delta(\bell)$.

By part (b) of Lemma \ref{lemma: thm 4 in lordon} ahead, there exists a constant $M > 0$ not depending on $\bell$, such that for any signal subset $A\in\mathcal{A}$, 
\begin{equation}
    \label{dis:theorem 1}
\expt_A[T_0(\bell)]-\expt_A[T(\bell)]\leq
\left( \frac{\ie(\delta_0(\bell); \mW) + \ie(\delta(\bell); \mW)}{c_{\bell}} + 
M \right)|\mathcal{A}|.
\end{equation}
 Since $\delta_0(\bell), \delta(\bell) \in \Delta(\bell)$, due to condition \eqref{eq: second-order optimal sufficient condition 2}, we have
\begin{align*}
    \expt_A[T_0(\bell)]-\expt_A[T(\bell)] \leq (2L+M) |\mathcal{A}|.
\end{align*}
Since $\delta(\bell)$ is an arbitrary procedure in $\Delta(\bell)$, it follows that
\begin{align*}
   \expt_A[T_0(\bell)]- T_A^{\textup{min}}(\Delta(\bell))\leq (2L+M) |\mathcal{A}|. 
\end{align*}
The proof is complete.
\end{proof}

The following lemma is non-asymptotic.

\begin{lemma}\label{lemma: thm 4 in lordon}
Suppose Assumption \ref{assumption: lorden} holds. Let $c>0$ be a cost,   and $\mW(\cdot\mid\cdot):\mathcal{A}\times\mathcal{A}\mapsto [0,\infty)$ be a loss function satisfying condition \eqref{assumption: lorden 1}. If a procedure $\delta_0 =(T_0,D_0) \in \Delta$ satisfies 
$$T_0 \le T_{\mathrm{Ld}}(c,\mW) \text{ ($i.e.$ $\delta_0$ stops earlier than $\delta_{\mathrm{Ld}}$), }$$
then there exists a constant $M>0$ not depending on $c$, such that 
\begin{enumerate}[label = (\alph*)]
\item we have
$$ \ir(\delta_0; c, \mW)-\ir_{\textup{min}}(c,\mW)  \le \ie(\delta_0; \mW)  + Mc;$$
\item for any sequential procedure $\delta = (T,D) \in \Delta$, we have
$$\expt_A (T_0  - T) \le \left(\frac{\ie(\delta_0;  \mW)+\ie(\delta; \mW) }{c } +M \right)|\mathcal{A}|, \quad \text{ for any } A\in\mathcal{A}.$$
\end{enumerate}
\end{lemma}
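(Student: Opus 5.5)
The plan is to get both parts from Lorden's second-order Bayesian optimality of $\{\delta_{\mathrm{Ld}}(c,\mW)\}$, taken as a black box. Part (b) then needs one extra device: a truncation that turns a prior-averaged comparison into a bound for each $A\in\mathcal{A}$.

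\emph{Step 1 (input from Lorden).} I would invoke \cite{lorden1967integrated}. Under Assumption \ref{assumption: lorden} and condition \eqref{assumption: lorden 1}, which says every decision is wrong under some configuration so that the stopping problem is nondegenerate, there is a constant $M>0$ independent of $c$ with
\[
\ir(\delta_{\mathrm{Ld}}(c,\mW);c,\mW)\le \ir_{\textup{min}}(c,\mW)+Mc \quad\text{for all } c>0 .
\]
The main obstacle is checking that Lorden's hypotheses (finite second moments of the log-likelihood ratios, uniform prior, nonnegative loss with $\mW(A\mid A)=0$, and \eqref{assumption: lorden 1}) match the present setting, and that $M$ depends only on $\mW$ and the densities. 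If the match were imperfect, I would re-derive the bound from Lorden's argument, but I expect a direct citation to suffice. Since $\ie\ge 0$, Step 1 also gives
\[
c\,\expt[T_{\mathrm{Ld}}]\le \ir_{\textup{min}}(c,\mW)+Mc .
\]
In particular $\expt_A[T_{\mathrm{Ld}}]<\infty$ for every $A$.

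\emph{Step 2 (part (a)).} Because $T_0\le T_{\mathrm{Ld}}$ almost surely,
\[
\ir(\delta_0;c,\mW)=c\,\expt[T_0]+\ie(\delta_0;\mW)\le c\,\expt[T_{\mathrm{Ld}}]+\ie(\delta_0;\mW)\le \ir_{\textup{min}}(c,\mW)+Mc+\ie(\delta_0;\mW),
\]
which is exactly (a).

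\emph{Step 3 (averaged form of (b)).} For any $\delta=(T,D)$ we have $\ir_{\textup{min}}\le c\,\expt[T]+\ie(\delta;\mW)$. Combining this with $c\,\expt[T_0]\le c\,\expt[T_{\mathrm{Ld}}]$ and Step 1 gives
\[
\sum_{B\in\mathcal{A}}\pi_0(B)\bigl(\expt_B[T_0]-\expt_B[T]\bigr)\le \ie(\delta;\mW)/c+M .
\]
This bound is only prior-averaged, and individual terms could be negative. To make every term nonnegative, I would apply the same inequality to a modified procedure $\delta'=(T',D')$, defined by $T'=T\wedge T_0$, with $D'=D$ on $\{T\le T_0\}$ and $D'=D_0$ on $\{T_0<T\}$. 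Then $T'$ is a stopping time and $D'$ is $\mathcal{F}_{T'}$-measurable. Since $\mW\ge 0$, splitting on the two events gives $\ie(\delta';\mW)\le \ie(\delta;\mW)+\ie(\delta_0;\mW)$. Moreover $T_0-T'=(T_0-T)^+\ge 0$. Applying the averaged inequality to $\delta'$ yields
\[
\sum_{B\in\mathcal{A}}\pi_0(B)\,\expt_B\bigl[(T_0-T)^+\bigr]\le \frac{\ie(\delta_0;\mW)+\ie(\delta;\mW)}{c}+M .
\]

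\emph{Step 4 (per-configuration bound).} Every summand in Step 3 is nonnegative and $\pi_0(A)=|\mathcal{A}|^{-1}$. Hence, for each $A\in\mathcal{A}$,
\[
\expt_A[T_0-T]\le \expt_A\bigl[(T_0-T)^+\bigr]\le \Bigl(\frac{\ie(\delta_0;\mW)+\ie(\delta;\mW)}{c}+M\Bigr)|\mathcal{A}| .
\]
All expectations here are finite because $T_0\le T_{\mathrm{Ld}}$ and $\expt_A[T_{\mathrm{Ld}}]<\infty$. If $\expt_A[T]=\infty$ the claim is trivial.
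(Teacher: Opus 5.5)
Your proposal is correct and follows essentially the same argument as the paper. Part (a) combines $T_0\le T_{\mathrm{Ld}}$ with Lorden's second-order Bayes optimality, and part (b) uses the same truncated procedure $(T\wedge T_0,\,D\text{ on }\{T\le T_0\},\,D_0\text{ otherwise})$, the bound $\ie(\delta';\mW)\le\ie(\delta;\mW)+\ie(\delta_0;\mW)$, and nonnegativity of $T_0-T\wedge T_0$ to pass from the prior average to each $A$. The only difference is cosmetic: you use $c\,\expt[T_{\mathrm{Ld}}]\le\ir_{\textup{min}}+Mc$ directly, whereas the paper routes through part (a) and cancels $\ie(\delta_0;\mW)$.
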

\begin{proof} 
First, we prove part (a). Recall the definition of $\ess$ in \eqref{eq: int ess def} and $\ie$ in \eqref{eq: integrated error}. 
Since $T_0 \le T_{\mathrm{Ld}}(c,\mW)$, we have 
$$
\ess(\delta_0;c) \leq \ess(\delta_{\mathrm{Ld}}(c,\mW);c).
$$
Since $\mW$ is non-negative, we have
\begin{align*}
    &\ir(\delta_0; c,\mW)-\ir(\delta_{\mathrm{Ld}}(c,\mW);c, \mW) \\&=  \ess(\delta_0;c) -\ess(\delta_{\mathrm{Ld}};c) 
+\ie(\delta_0; \mW) -\ie(\delta_{\mathrm{Ld}}(c,\mW);\mW)
    \le \ie(\delta_0; \mW).
\end{align*}
Due to Theorem 2.1 in \cite{lorden1967integrated}, in view of Assumption \ref{assumption: lorden} and condition \eqref{assumption: lorden 1}, $\delta_{\mathrm{Ld}}$ is second-order Bayesian optimal, that is, there exists a constant $M>0$ not depending on $c$, such that 
$$\ir(\delta_{\mathrm{Ld}}(c,\mW);c,\mW)-\ir_{\textup{min}}(c,\mW) \le Mc.$$Summing the above two inequalities, we have 
\begin{equation*}
    \ir(\delta_0;c,\mW)-\ir_{\textup{min}}(c,\mW) \le \ie(\delta_0; \mW) + Mc,
\end{equation*}
which completes the proof of part (a).

Next, we prove part (b). Fix any procedure $\delta = (T,D) \in \Delta$. We define an auxiliary procedure  $\tilde{\delta} = (\tilde{T},\tilde{D})$, as follows
\begin{equation*}
    \tilde{T}= \min\{T,T_0\}, \quad \tilde{D} = 
\begin{cases}
    D &\text{ if } T \leq T_0\\
    D_0 &\text{ otherwise }
\end{cases}.
\end{equation*}
It is clear that $\tilde{D}$ is $\mathcal{F}_{\tilde{T}}$ measurable. Thus, $\tilde{\delta}$ is a valid sequential procedure, that is, $\tilde{\delta} \in \Delta$. 
Due to the definition in \eqref{eq: integrated error},
\begin{equation}\label{eq: thm 4 in lordon 2}
\begin{aligned}
\ie(\tilde{\delta}; \mW)&=\sum_{A\in\mathcal{A}}\pi_0(A)\expt_A[\mW(D_0\mid A)\idf\{T_0 <  T\} + \mW(D\mid A)\idf\{T\le T_0\}]
\\&\le \ie(\delta_0; \mW) + \ie(\delta;  \mW).
\end{aligned}    
\end{equation}
Moreover, due to definition \ref{smallest_Bayes_risk}, we have $\ir_{\textup{min}}(c,\mW)\le \ir(\tilde{\delta};c,\mW)$, then it follows from part (a) that
\begin{equation}\label{eq: thm 4 in lordon 3}
\ir(\tilde{\delta};c,\mW) \ge \ir(\delta_0;c,\mW)-[\ie(\delta_0; \mW) + Mc] . 
\end{equation}
Then due to \eqref{eq: thm 4 in lordon 2} and \eqref{eq: thm 4 in lordon 3}, we have
\begin{align*}
\ess(\delta_0;c)- \ess(\tilde{\delta};c) &=
[\ir(\delta_0;c,\mW)-\ie(\delta_0; \mW)]-[\ir(\tilde{\delta};c,\mW)-\ie(\tilde{\delta};\mW)] \\
&\le \ie(\delta;  \mW)+\ie(\delta_0; \mW)+ Mc,
\end{align*}
which, due to definition \eqref{eq: int ess def}, implies that
$$ \frac{c}{|\mathcal{A}|}\sum_{A\in\mathcal{A}}\expt_A (T_0 - \tilde{T})\le \ie(\delta; \mW)+\ie(\delta_0; \mW)+ Mc. $$
Due to the definition of $\tilde{T}$, we have  $\tilde{T} \leq T$ and $\tilde{T} \leq T_0$. Thus, for any $A\in\mathcal{A}$, we have 
$$\frac{c}{|\mathcal{A}|} \expt_A (T_0 - T) \le \frac{c}{|\mathcal{A}|} \expt_A (T_0 - \tilde{T}) \le \ie(\delta;   \mW)+\ie(\delta_0; \mW)+ Mc,$$
which completes the proof of part (b).
\end{proof}

\section{Proof of Theorem \ref{thm: min ESS characterization}}\label{sec: proof of min ESS} 

\begin{proof}[Proof of Theorem \ref{thm: min ESS characterization}]
Recall the procedures $\delta_{\mathrm{Ld}}(c_{\bell}, \mW)$ in \eqref{eq: lorden procedure}. 

\noindent \textbf{Step 1: we show that  
$T_A^{\textup{min}}(\Delta(\bell))=\expt_A[T_{\mathrm{Ld}}(c_{\bell},\mW)]+O(1)$ as $\bell \to \bd{0}$.}

We construct a procedure, denoted by $\delta_1(\bell) = (T_1(\bell), D_1(\bell))$, by modifying $\delta_{\mathrm{Ld}}(c_{\bell})$ and $\delta_0(\bell)$: 
$$
T_1(\bell) := T_{\mathrm{Ld}}(c_{\bell},\mW), \quad \text{ and } \quad D_1(\bell):=D_0(\bell).
$$
By definition and due to \eqref{eq: second-order optimal sufficient condition 1},
$$
D_1(\bell)= D_0(\bell)\;\in\; \mF_{T_0(\bell)}\;\subset\;\mF_{T_{\mathrm{Ld}}(c_{\bell},\mW)} = \mF_{T_1(\bell)}.
$$
Thus, $\delta_1(\bell)$ is a valid sequential procedure, that is, $\delta_1(\bell) \in \Delta$  for each $\bell \in (0,1)^r$.

Since $\delta_0(\bell)\in\Delta(\bell)$ and  $\delta_1(\bell)$ share the same decision rule, by Assumption \ref{cond:class_depends_on_decision}, we have
$\delta_1(\bell)\in\Delta(\bell)$ for each $\bell \in (0,1)^r$.

Note that we assume \eqref{assumption: lorden 1} and \eqref{eq: second-order optimal sufficient condition 2} hold for $\mW$ and $\Delta(\bell)$. Moreover, $T_1(\bell) = T_{\mathrm{Ld}}(c_{\bell}, \mW)$ satisfies condition \eqref{eq: second-order optimal sufficient condition 1} by definition. Thus, by Theorem \ref{thm: main result}, we have $\{\delta_1(\bell)\}$ is second-order optimal in $\{\Delta(\bell)\}$, which in particular implies that as $\bell\rightarrow \bd{0}$,
$$
\expt_A[T_{\mathrm{Ld}}(c_{\bell},\mW)] -T_A^{\textup{min}}(\Delta(\bell)) \;=\;
\expt_A[T_1(\bell)]-T_A^{\textup{min}}(\Delta(\bell))\;=\; O(1).
$$ 

\noindent \textbf{Step 2: we establish a second-order approximation of $\expt_A[T_{\mathrm{Ld}}(c_{\bell},\mW)]$.}

Denote by $m_{\mW}$ is the smallest \emph{nonzero} entry of the matrix $\mW(\cdot|\cdot)$, and by $M_{\mW}$ the largest, that is,
\begin{align}\label{aux:m_M}
\begin{split}
m_{\mW}&:=\min\{\mW(D|A):A,D\in\mathcal{A}\text{ and }\mW(D|A)>0\},\;\;\; 
M_{\mW} :=\max\{\mW(D|A):A,D\in\mathcal{A}\}.
\end{split}
\end{align}
Since $\mathcal{A}$ is finite, we have $0< m_{\mW} \leq M_{\mW} < \infty$. 
Moreover, for each $c > 0$, define
\begin{equation}\label{eq: tilde T}
\begin{aligned}
&\tilde{T}(c,\mW) := \min_{B,D\in\mathcal{A}}\tilde{T}_{B,D}(c,\mW), \quad \text{ where }
\\&\tilde{T}_{B,D}(c,\mW) := \min\left\{t\ge 1:  f_{G,[t]}\; < \; c f_{B,[t]},\; \text{ for each } G\in\mathcal{H}_{D}^{\mW}\right\}.
\end{aligned}
\end{equation}
That is, $\tilde{T}(c,\mW)$ stops at the first time $t$ when there exist $B_0,D_0\in\mathcal{A}$ such that $f_{B_0,[t]}/f_{G,[t]}> c^{-1}$ for all $G\in\mathcal{H}_{D_0}^{\mW}$, where we recall $\mathcal{H}_{D_0}^{\mW}$ is defined in \eqref{def:H_D_W}.

By Lemma \ref{lemma: MSPRT equivalent with Lorden}, it follows that
$$
\tilde{T}\left(c_{\bell}\frac{|\mathcal{A}|}{m_{\mW}},\;\mW \right)\le T_{\mathrm{Ld}}(c_{\bell},\;\mW)\le \tilde{T}\left(c_{\bell} \frac{1}{|\mathcal{A}| M_{\mW}},\;\mW \right).
$$

Furthermore, Lemma \ref{lemma: min ESS dragalin} provides an asymptotic approximation to $\expt_A[\tilde{T}(c,\mW)]$ as $c \to \infty$, via nonlinear renewal theory. 
Since $|\mathcal{A}|$, $m_{\mW}$ and $M_{\mW}$ are all constants, the proof is thus complete.
\end{proof}

For a cost $c > 0$ and a loss function $\mW:\mathcal{A}\times\mathcal{A}\to[0,\infty)$,  the stopping time $\tilde{T}(c,\mW)$ is defined in \eqref{eq: tilde T}, and $m_{\mW}, M_{\mW}$ in \eqref{aux:m_M}.

\begin{lemma}\label{lemma: MSPRT equivalent with Lorden}
Let $\mW:\mathcal{A} \times \mathcal{A} \to [0,\infty)$ be a loss function and $c > 0$. Then
$$
T_1:= \tilde{T}\left(\frac{c|\mathcal{A}|}{m_{\mW}},\mW \right)\le T_{\mathrm{Ld}}(c,\mW)\le \tilde{T}\left(\frac{c}{|\mathcal{A}| M_{\mW}},\mW \right):= T_2.
$$
\end{lemma}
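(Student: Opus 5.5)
The plan is to compare the posterior expected loss appearing in \eqref{eq: lorden procedure} with likelihood-ratio statistics, in both directions. Write $\rho_t(D) := \sum_{G\in\mathcal{A}} \pi_t(G)\,\mW(D\mid G)$. Because $\mW(D\mid G)=0$ for $G\notin\mathcal{H}_D^{\mW}$, and $m_{\mW}\le \mW(D\mid G)\le M_{\mW}$ for $G\in\mathcal{H}_D^{\mW}$, we have
\[
m_{\mW}\sum_{G\in\mathcal{H}_D^{\mW}} \pi_t(G)\;\le\;\rho_t(D)\;\le\; M_{\mW}\sum_{G\in\mathcal{H}_D^{\mW}} \pi_t(G),
\qquad \pi_t(G)=\frac{f_{G,[t]}}{\sum_{B\in\mathcal{A}} f_{B,[t]}}.
\]
Both inequalities for stopping times then follow by showing pathwise implications between the stopping events at each fixed $t$. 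Since both stopping times are first-passage times of their respective events, such implications give the desired ordering.

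\textbf{Upper bound $T_{\mathrm{Ld}}\le T_2$.} Suppose that at time $t$ the event defining $\tilde T_{B,D}(c/(|\mathcal{A}|M_{\mW}),\mW)$ holds for some $B,D$. That is, $f_{G,[t]}< \frac{c}{|\mathcal{A}|M_{\mW}} f_{B,[t]}$ for every $G\in\mathcal{H}_D^{\mW}$. Using $\sum_{B'} f_{B',[t]}\ge f_{B,[t]}$, each such $G$ satisfies $\pi_t(G)< c/(|\mathcal{A}|M_{\mW})$. Summing over at most $|\mathcal{A}|$ such $G$ gives $\rho_t(D)< M_{\mW}\cdot|\mathcal{A}|\cdot c/(|\mathcal{A}|M_{\mW})=c$. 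If $\mathcal{H}_D^{\mW}$ is empty, then $\rho_t(D)=0<c$ trivially. Hence $\min_D\rho_t(D)<c$, so $T_{\mathrm{Ld}}$ has stopped by time $t$.

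\textbf{Lower bound $T_1\le T_{\mathrm{Ld}}$.} Suppose $\rho_t(D)<c$ at time $t$ for some $D$. Then $\sum_{G\in\mathcal{H}_D^{\mW}}\pi_t(G)<c/m_{\mW}$. Let $B$ be the maximizer of $f_{B',[t]}$ over $B'\in\mathcal{A}$, so that $\sum_{B'} f_{B',[t]}\le |\mathcal{A}| f_{B,[t]}$. Then for every $G\in\mathcal{H}_D^{\mW}$,
\[
f_{G,[t]} \;=\; \pi_t(G)\sum_{B'} f_{B',[t]} \;<\; \frac{c}{m_{\mW}}\,|\mathcal{A}|\, f_{B,[t]},
\]
which is precisely the event defining $\tilde T_{B,D}(c|\mathcal{A}|/m_{\mW},\mW)$ at time $t$. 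Hence $\tilde T\le t$.

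\textbf{Expected obstacle.} The one delicate point is the degenerate case $\mathcal{H}_D^{\mW}=\emptyset$. Here condition \eqref{assumption: lorden 1}, if assumed, rules it out. Without it, both criteria are vacuously satisfied at $t=1$ in the same way, and the inequalities still hold. Beyond this there is no real difficulty: strictness of the inequalities is preserved throughout, and the argument is purely deterministic, holding pathwise for every $t\ge1$.
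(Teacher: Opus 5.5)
Your proof is correct and follows essentially the same route as the paper. For $T_{\mathrm{Ld}}\le T_2$, you bound each posterior mass $\pi_t(G)$, $G\in\mathcal{H}_D^{\mW}$, via $f_{B,[t]}\le\sum_{B'}f_{B',[t]}$ and sum with weights at most $M_{\mW}$, where the paper equivalently uses $\max_G f_{G,[t]}\ge |\mathcal{A}|^{-1}\sum_G f_{G,[t]}$. For $T_1\le T_{\mathrm{Ld}}$, you take $B$ to be the likelihood maximizer and use $\mW(D\mid G)\ge m_{\mW}$ on $\mathcal{H}_D^{\mW}$, exactly as the paper does. Your explicit treatment of the case $\mathcal{H}_D^{\mW}=\emptyset$ is a small addition that the paper leaves implicit.
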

\begin{proof}
We begin by establishing the second inequality. At time $T_2$, by definition in \eqref{eq: tilde T}, there exist $B_0,D_0\in\mathcal{A}$ such that 
\begin{align*}
\frac{c}{|\mathcal{A}| M_{\mW}}&\ge \frac{\max_{G\in\mathcal{H}_{D_0}^{\mW}}f_{G,[T_2]}}{f_{B_0,[T_2]}},
\end{align*}
where $D_{D_0}^{\mW}$ is as defined in \eqref{def:H_D_W}. On one hand, by definition of $M_\mW$,
\begin{align*}
 \max_{G\in\mathcal{H}_{D_0}^{\mW}}f_{G,[T_2]} \geq \frac{1}{|\mathcal{A}|} \sum_{G \in\mathcal{H}_{D_0}^{\mW}} f_{G,[T_2]} \geq \frac{1}{|\mathcal{A}| M_{\mW}} \sum_{A\in\mathcal{H}_{D_0}^{\mW}} f_{G,[T_2]} W(D|G).
\end{align*}
On the other hand, we have
$$
f_{B_0,[T_2]} \leq \sum_{B\in\mathcal{A}}f_{B,[T_2]}.
$$
Then, in view of the definition of $\pi_t$ in \eqref{eq: posterior}, the above three inequalities imply that
\begin{align*}
    c > \sum_{A\in\mathcal{H}_{D_0}^{\mW}} \pi_{T_2}(G) W(D|G).
\end{align*}
Thus, the stopping criterion for $T_{\mathrm{Ld}}(c,\mW)$ is met at time $T_2$, which implies that $T_{\mathrm{Ld}}(c,\mW)\le T_2$.

Next, we focus on the first inequality. Since $\mW$ is fixed in this proof, we write $T_{\mathrm{Ld}}(c)$ for $T_{\mathrm{Ld}}(c,\mW)$. At time $T_{\mathrm{Ld}}(c)$, due to the definitions in \eqref{eq: posterior} and \eqref{eq: lorden procedure}, there exists $D_0\in\mathcal{A}$ such that
\begin{align*}
\sum_{G\in\mathcal{A}}f_{G,[T_{\mathrm{Ld}}(c)]} \mW(D_0\mid G) <  c\sum_{B\in\mathcal{A}}f_
{B,[T_\mathrm{Ld}(c)]}.
\end{align*}
Let $B_0 := \argmax_{B\in\mathcal{A}} f_
{B,[T_\mathrm{Ld}(c)]}$. Then, in view of the definition of $m_{\mW}$, it follows that
$$c|\mathcal{A}| f_
{B_0,[T_\mathrm{Ld}(c)]}> \sum_{G \in\mathcal{H}_{D_0}^{\mW}}f_{G,[T_{\mathrm{Ld}}(c)]}\mW(D_0\mid G)\ge m_{\mW}\max_{G\in\mathcal{H}_{D_0}^{\mW}}f_{G,[T_{\mathrm{Ld}}(c)]}.$$
Thus, the stopping criterion for $T_1$ is met at time $T_{\mathrm{Ld}}(c)$. Therefore we have $T_1\le T_{\mathrm{Ld}}(c,\mW)$. The proof is complete.
\end{proof}

Next, we provide an asymptotic approximation to $\expt_A[\tilde{T}(c,\mW)]$ as $c \to \infty$, where $A$ appears in Theorem \ref{thm: min ESS characterization}.  Recall  $h_{A}^{\mW}$ defined in \eqref{def:h_A} and the stopping time $\tilde{T}(c,\mW)$  in \eqref{eq: tilde T}. Define
\begin{align}
    \label{aux:L_c}
    L_c := \log(c^{-1}), \quad \text{ for } c \in (0,1).
\end{align}

\begin{lemma}\label{lemma: min ESS dragalin}
Suppose Assumption \ref{assumption: lorden} holds.
Let $A \in \mathcal{A}$, and assume that $A$ admits a unique most favorable subset with respect to $\mW$.
\begin{enumerate}[label=(\alph*), ref=\theassumption $(\alph*)$]
\item Consider the case $r_A^{\mW} = 1$. Assume \eqref{eq: bounded moment assumption} holds for $q=3$. Then, as $c\rightarrow 0$
$$\expt_A\left[\tilde{T}(c,\mW)\right] = \frac{L_c}{\KL_{A,*}^{\mW}}+O(1).$$

\item Consider the case $r_A^{\mW} \ge 2$. For $\epsilon\in(0,1/2)$, assume \eqref{eq: bounded moment assumption} holds for some $q$ satisfying \eqref{eq: main text q_cond symmetric}.
Then, as $c\rightarrow 0$
$$\expt_A\left[\tilde{T}(c,\mW)\right] = \frac{L_c}{\KL_{A,*}^{\mW}}+\frac{h_{A}^{\mW}\sqrt{L_c}}{(\KL_{A,*}^{\mW})^{3/2}}+O(L_c ^{1/4+ \epsilon/2}).$$
\end{enumerate}
\end{lemma}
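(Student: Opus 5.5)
The plan is to sandwich $\tilde{T}(c,\mW)$ between the stopping time $\tau_A^{\mW}(c)$ from the remark following Theorem \ref{thm: min ESS characterization} and small perturbations of it, and then expand $\expt_A[\tau_A^{\mW}(c)]$ via renewal theory.

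\textbf{Upper bound.} Taking $B=A$ and $D=D_A^{\mW}$ in \eqref{eq: tilde T} gives
\[
\tilde{T}(c,\mW)\le \tilde{T}_{A,D_A^{\mW}}(c,\mW)=\min\{t\ge1:\bd{R}_{t,G}>L_c \ \text{ for all } G\in\mathcal{H}_{D_A^{\mW}}^{\mW}\},
\]
which is essentially $\tau_A^{\mW}(c)$.

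\textbf{Lower bound.} I would show $\expt_A[\tilde{T}_{B,D}]\ge \expt_A[\tau_A^{\mW}(c)]-O(1)$ uniformly over pairs $(B,D)\neq(A,D_A^{\mW})$, so that the minimum over the finitely many pairs costs only $O(1)$. There are two cases.
\begin{itemize}
\item If $D\neq D_A^{\mW}$, uniqueness of the most favorable subset gives $\KL_{A,D}^{\mW}<\KL_{A,*}^{\mW}$. Since $\log(f_B/f_G)=\bd{R}_{t,G}-\bd{R}_{t,B}$ and $\bd{R}_{t,B}\ge$ its mean minus fluctuations, $\tilde{T}_{B,D}$ cannot occur before roughly $L_c/\KL_{A,D}^{\mW}$, which is strictly later than $L_c/\KL_{A,*}^{\mW}$.
\item If $B\neq A$, then $\bd{R}_{t,B}$ drifts to $+\infty$, so $\tilde{T}_{B,D}\ge$ the time when $\bd{R}_{t,G}\ge L_c+\bd{R}_{t,B}$, which is later than $\tilde{T}_{A,D}$ up to $\bd{R}_{t,B}\ge -O_p(1)$.
\end{itemize}
Making this rigorous in expectation uses $\tilde{T}\ge\min(\text{pairs})$ together with the event where the minimum comes early. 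On that event I would bound the contribution by $\expt_A[\tau\idf\{\text{early}\}]$ and show the probability of an early stop decays polynomially in $L_c$: Fuchs--Nagaev-type or Baum--Katz large-deviation bounds using $q$-th moments of the increments, combined with uniform integrability of $\tau/L_c$. I expect this to give an $O(1)$ error in case (a) and at worst $O(L_c^{1/4+\epsilon/2})$ in case (b).

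\textbf{Expanding $\expt_A[\tau_A^{\mW}(c)]$.} Only components $G\in\mathcal{C}_A^{\mW}$ matter to this order. Components with $\KL(f_A\mid f_G)>\KL_{A,*}^{\mW}$ cross the level strictly earlier, at linear rate, and contribute only exponentially or polynomially small error by the same moment bounds. So $\tau$ is the first time $\min_{j\le r}\bd{R}_{t,\mathcal{C}_{A,j}}\ge L_c$, i.e.
\[
t\,\KL_{A,*}^{\mW}-\max_j \bigl(-\bd{R}^{\mW}_{A,j}\bigr)\text{-sums}\;\ge\; L_c .
\]
This is a nonlinear renewal problem of the form $Z_t=t\KL+\xi_t$, with perturbation $\xi_t=\min_j S_{t,j}$, $S_{t,j}$ centered random walks.
\begin{itemize}
\item \textbf{Case (a)} ($r=1$): the process is an ordinary random walk. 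Lorden's bound on the overshoot, using the second moment, plus Wald's identity give $\expt\tau=L_c/\KL+O(1)$. The third moment is used in the reduction step.
\item \textbf{Case (b)}: I would apply Theorem of \cite{nagai2006nonlinear}, or argue directly via Wald's identity, to get
\[
\KL\,\expt\tau=L_c-\expt[\xi_\tau]+\expt[\text{overshoot}],
\]
with bounded overshoot. By the CLT and uniform integrability (from $q\ge3$ moments and Doob/Burkholder), $\expt[\min_j S_{t,j}]=-\sqrt{t}\,h_A^{\mW}+o(\sqrt t)$ at $t\approx L_c/\KL$, using symmetry of the Gaussian. Replacing $\tau$ by $t^*=L_c/\KL$ in $\xi$ costs $\expt|\xi_\tau-\xi_{t^*}|$. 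Since $|\tau-t^*|=O_p(\sqrt{L_c})$, the increment is of order $L_c^{1/4}$, with the $\epsilon$ slack coming from maximal inequalities under the $q>\epsilon^{-1}$ moment condition. The rate in the CLT for $\expt\max$ must also be controlled to $O(L_c^{1/4+\epsilon/2})$; a Berry--Esseen/Wasserstein bound gives $O(1)$ here.
\end{itemize}
Dividing by $\KL$ yields the stated expansion, with $h/\KL^{3/2}$ arising from $\sqrt{L_c/\KL}/\KL$.

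\textbf{Main obstacle.} The main obstacle is the symmetric-case control of $\expt[\xi_\tau]-\expt[\xi_{t^*}]$. This is where I would put most effort, via a Kolmogorov/Rosenthal maximal inequality on windows of length $L_c^{1/2+\epsilon}$ around $t^*$, plus a tail bound on $\prob(|\tau-t^*|>L_c^{1/2+\epsilon})$ that requires the $q$-th moment.
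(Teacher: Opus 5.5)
Your outline follows the paper's proof. You bound $\tilde T(c,\mW)$ above by $\tilde T_{A,D_A^{\mW}}(c,\mW)$. You then show that any other pair $(B,D)$ stops first only on an event of polynomially small probability; your case split is essentially the paper's, which uses $\textup{Div}_A^{\mW}(B,D)<\KL_{A,D}^{\mW}\le\KL_{A,*}^{\mW}$ when $B\neq A$ and uniqueness of $D_A^{\mW}$ when $B=A$. Finally you expand the expected time at which all components $\bd R_{t,G}$ clear $L_c$.

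The genuine difference is in part (a). You compare $\tau$ with the first-passage time $\tau_1$ of the slowest component, and then use Wald's identity and Lorden's overshoot bound. The paper instead applies Theorem 4.2 of \cite{nagai2006nonlinear} to the perturbed walk $\bd R_{n,1}+\min\{0,\bd R_{n,j}-\bd R_{n,1}\}$. Your route is more elementary and it works: $\tau\ge\tau_1$ always, and $\tau>\tau_1$ only if a faster component is not above $L_c$ at time $\tau_1$.

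In part (b), your direct argument is precisely the paper's verification of $\rho$-regularity with $\rho(n)=n^{\alpha/2}$ and $\alpha=1/2+\epsilon$: windows of length $L_c^{1/2+\epsilon}$, fluctuations of order $L_c^{1/4+\epsilon/2}$, and tail bounds using $q\epsilon>1$. Your $O(1)$ normal approximation is Lemma~\ref{lemma: E(zeta)}. Note that $\bSigma_A^{\mW}$ is typically singular (e.g.\ $\binom{K}{m_0}$ components built from $K$ streams), so the Wasserstein bound must be applied after projecting onto its column space, as the paper does.

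Two steps need tightening.

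First, comparing $\expt_A[\tilde T_{B,D}]$ with $\expt_A[\tau]$ pair by pair says nothing about $\expt_A[\min_{B,D}\tilde T_{B,D}]$. The expectation of a minimum can lie far below the minimum of the expectations. What you actually need is your event-based bound $0\le\expt_A[\tau-\tilde T]\le\expt_A[\tau\idf\{\tilde T<\tau\}]$.

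Second, uniform integrability of $\tau/L_c$ gives only $\expt_A[\tau\idf_E]=o(L_c)$ on an event $E$ with $\prob_A(E)\to0$. That is not $O(1)$, so the rates must be matched explicitly.
\begin{itemize}
\item The paper uses H\"older's inequality with $\|\tau\|_3=O(L_c)$ and $\prob_A(E)=O(L_c^{-3/2})$. The probability bound comes from splitting at $L_c/\mu^*$ with $\textup{Div}_A^{\mW}(B,D)<\mu^*<\KL_{A,*}^{\mW}$ and applying Markov and Rosenthal with $q=3$.
\item Alternatively, truncate at $n_0=2L_c/\KL_{A,*}^{\mW}$. For $n\ge n_0$, $\prob_A(\tau>n)\le\sum_G\prob_A(\bd R_{n,G}\le L_c)=O(n^{-3/2})$. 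This gives $\expt_A[\tau\idf_E]\le n_0\prob_A(E)+O(L_c^{-1/2})=O(L_c^{-1/2})$.
\end{itemize}
The same bookkeeping is what makes your removal of the faster components (and $\tau_1$ versus $\tau$ in part (a)) cost only $O(1)$.
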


\begin{proof}
Recall $D_{A}^{\mW}$ defined in \eqref{def:H_D_W}. For simplicity, denote $$
\tilde{T}_{A,*}:=\tilde{T}_{A,D_{A}^{\mW}} \quad \text{ and } \quad \mathcal{H}_{A,*}^{\mW}:=\mathcal{H}_{D_A^{\mW}}^{\mW}.$$ 
Since $\mW$ is fixed throughout the proof, we suppress its dependence in the notation and write $\tilde{T}_{A,D}(c)$ for $\tilde{T}_{A,D}(c,\mW)$, $\tilde{T}_{A,*}(c)$ for $\tilde{T}_{A,*}(c,\mW)$, and $\tilde{T}(c)$ for $\tilde{T}(c,\mW)$

We note the following decomposition
\begin{equation}\label{eq: min ESS dragalin 1}
\expt_A \left[\tilde{T}(c)\right] = \expt_A\left[\tilde{T}_{A,*}(c)\right] - \expt_{A}\left[(\tilde{T}_{A,*}(c)-\tilde{T}(c))\idf\{\tilde{T}(c)<\tilde{T}_{A,*}(c)\}\right]. 
\end{equation}
By definition \eqref{eq: tilde T}, $0 < \tilde{T}(c) \leq \tilde{T}_{A,*}(c)$ for each $c \in (0,1)$.
By Holder’s inequality, we have
\begin{align*}
0 &\leq \expt_{A}\left[(\tilde{T}_{A,*}(c)-\tilde{T}(c))\idf\{\tilde{T}(c)<\tilde{T}_{A,*}(c)\}\right]\\ 
&\leq \expt_A\left[\tilde{T}_{A,*}(c)\idf\{\tilde{T}(c)<\tilde{T}_{A,*}(c)\}\right]\le \left(\expt_A\left[(\tilde{T}_{A,*}(c))^3\right]\right)^{1/3} \left(\prob_A\left(\tilde{T}(c)<\tilde{T}_{A,*}(c) \right)\right)^{2/3}.  
\end{align*}
Theorem 4.1 in \cite{draglia1999multihypothesis} shows that 
$$
\left(\expt_A\left[(\tilde{T}_{A,*}(c))^3\right]\right)^{1/3} = O(L_c).
$$
In Lemma \ref{lemma:aux_A_D_A_W}, we show that  $\prob_A(\tilde{T}_{B,D}(c) < \tilde{T}_{A,*}(c)) = O\left((L_c)^{-3/2}\right)$ for any $(B,D) \neq (A,D_A^{W})$. Then, since $\mathcal{A}$ is finite, by the union bound, it follows that
\begin{align*}
    \prob_A \left(\tilde{T}(c) < \tilde{T}_{A,*}(c) \right) \leq \sum_{(B,D) \in \mathcal{A}^2\setminus \{(A,D_A^{W})\}} \prob_A\left(\tilde{T}_{B,D}(c) < \tilde{T}_{A,*}(c)\right) = O\left((L_c)^{-3/2}\right).
\end{align*}
Therefore, we have
$$
\left|\expt_{A}\left[(\tilde{T}_{A,*}(c)-\tilde{T}(c))\idf\{\tilde{T}(c)<\tilde{T}_{A,*}(c)\}\right]\right| = O(1),
$$
which, in view of \eqref{eq: min ESS dragalin 1}, implies
$$
\expt_A \left[\tilde{T}(c)\right] = \expt_A\left[\tilde{T}_{A,*}(c)\right] + O(1).
$$
Finally,  note that by definition,
\begin{align*}
\tilde{T}_{A,*}(c)
=\min\left\{t\ge1:\min_{C\in\mathcal{H}_{A,*}^{\mW}}\left(\sum_{s=1}^t\log\frac{f_A(\mathbf{X}_s^{[K]})}{f_C(\mathbf{X}_s^{[K]})}\right)\ge L_c\right\}.
\end{align*}
Furthermore, recall from \eqref{def:KL_A_D_star} that $\KL_{A,*}^{\mW} = \min_{C \in \mathcal{H}_{A,*}^{\mW}} \KL(f_A|f_C)$.

\medskip

\noindent \textbf{Part (a) $r_A = 1$.} 
Due to Theorem \ref{thm: rw_asymm}, as $c\rightarrow 0$ we have
$$\expt_A\left[\tilde{T}_{A,*}(c)\right] = \frac{L_c}{\KL_{A,*}^{\mW}}+O(1).$$

\medskip

\noindent \textbf{Part (b) $r_A \geq 2$.} 
Due to Theorem \ref{thm:rw_sym}, as $c\rightarrow 0$ we have
$$\expt_A(\tilde{T}_{A,*}) = \frac{L_c}{\KL_{A,*}^{\mW}}+\frac{h_{A}^{\mW}\sqrt{L_c}}{(\KL_{A,*}^{\mW})^{3/2}}+O(L_c^{1/4+ \epsilon/2}).$$
The proof is complete.
\end{proof}

\subsection{Supporting Lemmas}
Recall $L_c := \log(c^{-1})$ in \eqref{aux:L_c}, and the stopping time $\tilde{T}_{B,D}(c,\mW)$  in \eqref{eq: tilde T}. For $B,G \in \mathcal{A}$, we define
\begin{align}\label{def:check_T}
\widehat{T}_{B,G}(c) := \min\left\{t\ge 1: f_{G,[t]}\le c f_{B,[t]}\right\}.
\end{align}
For $B,G,D \in \mathcal{A}$,  we define the following quantity
\begin{equation*}
\textup{Div}_A(f_B, f_G) := \expt_A\left[\log\left(\frac{f_B(\mathbf{X}_1^{[K]})}{f_G(\mathbf{X}_1^{[K]})}\right) \right] = \int \log(f_B/f_G) f_A\; d(\prod_{k\in [K]}\mu_k),
\end{equation*}
and further denote by 
\begin{equation}
    \label{Div_A_W_B_D}
\textup{Div}_{A}^{\mW}(B,D) :=  \min_{G \in \mathcal{H}_{D}^{\mW}} \textup{Div}_A(f_B, f_G).
\end{equation}



\begin{lemma}\label{lemma:aux_A_D_A_W}
Suppose Assumption \ref{assumption: lorden} holds. 
Let $A \in \mathcal{A}$, and assume that $A$ admits a unique most favorable subset with respect to $\mW$. Let $(B,D) \in \mathcal{A} \times \mathcal{A}$ such that $(B,D) \neq (A,D_A^{\mW})$, where  $D_A^{\mW}$ appears in Definition \ref{def: dragalin 1}. Assume \eqref{eq: bounded moment assumption} holds for 
$q=3$. Then
\begin{align*}
    \prob_A\left(\tilde{T}_{B,D}(c,\mW) < \tilde{T}_{A,D_A^{\mW}}(c,\mW) \right) = O\left((L_c)^{-3/2}\right).
\end{align*}
\end{lemma}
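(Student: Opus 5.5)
The plan is to compare the two stopping times against a deterministic time $n_c$ of order $L_c$, chosen strictly before the point where the competing statistic can reach the threshold. For $(B,D)$ let
\[
\rho := \textup{Div}_A^{\mW}(B,D) = \min_{G\in\mathcal{H}_D^{\mW}} \textup{Div}_A(f_B,f_G),
\]
which is the drift under $\prob_A$ of the weakest component of the statistic driving $\tilde{T}_{B,D}$. Here $\mathcal{H}_D^{\mW}$ is nonempty by \eqref{assumption: lorden 1}.

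\textbf{Step 1: show $\rho<\KL_{A,*}^{\mW}$.} Writing $\textup{Div}_A(f_B,f_G)=\KL(f_A\mid f_G)-\KL(f_A\mid f_B)$, we get
\[
\rho = \KL_{A,D}^{\mW}-\KL(f_A\mid f_B).
\]
If $B\neq A$, then $\KL(f_A\mid f_B)>0$ by \eqref{def:KL_fA_fC} and Assumption \ref{assumption: lorden}, so $\rho<\KL_{A,D}^{\mW}\le \KL_{A,*}^{\mW}$. If $B=A$, then $D\neq D_A^{\mW}$, and uniqueness in Definition \ref{def: dragalin 1} gives $\rho=\KL_{A,D}^{\mW}<\KL_{A,*}^{\mW}$. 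Fix $\gamma\in(\max\{\rho,0\},\KL_{A,*}^{\mW})$ and set $n_c:=\lceil L_c/\gamma\rceil$. Since $\rho$ can be negative, the comparison with $L_c$ below uses $\rho^+ := \max\{\rho,0\}$.

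\textbf{Step 2: split the event.} We have
\[
\{\tilde{T}_{B,D}<\tilde{T}_{A,*}\}\subset\{\tilde{T}_{A,*}>n_c\}\cup\{\tilde{T}_{B,D}\le n_c\}.
\]

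For the first event, fix $G\in\mathcal{H}^{\mW}_{D_A^{\mW}}$. The walk $S^G_t:=\sum_{s\le t}\log(f_A/f_G)(\mathbf{X}_s^{[K]})$ has mean $t\,\KL(f_A\mid f_G)\ge t\,\KL_{A,*}^{\mW}$, so
\[
\prob_A(S^G_{n_c}<L_c)\le \prob_A\bigl(|S^G_{n_c}-\expt_A S^G_{n_c}|\ge n_c(\KL_{A,*}^{\mW}-\gamma)\bigr).
\]
Markov's inequality with third moments and Rosenthal's (or Burkholder's) inequality, $\expt_A|S^G_n-\expt_A S^G_n|^3=O(n^{3/2})$, bound this by $O(n_c^{-3/2})$. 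The third moment is finite because each increment is a sum over streams of $\pm\log(f_1^k/f_0^k)$ evaluated under $f_0^k$ or $f_1^k$, and \eqref{eq: bounded moment assumption} holds with $q=3$. A union bound over the finitely many $G$ handles the first event.

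For the second event, take $G_0$ attaining the minimum in $\rho$. Then $\tilde{T}_{B,D}\le n_c$ forces
\[
\max_{t\le n_c}\sum_{s\le t}\log(f_B/f_{G_0})(\mathbf{X}_s^{[K]})>L_c\ge n_c\gamma-\gamma .
\]
Centering by $t\rho$ and using $t\rho\le n_c\rho^+$, this implies that the maximum of the centered martingale $M_t$ over $t\le n_c$ exceeds $n_c(\gamma-\rho^+)-\gamma$. Doob's maximal inequality in $L^3$ together with Rosenthal's inequality gives probability $O(n_c^{3/2}/n_c^3)=O(n_c^{-3/2})$.

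\textbf{Step 3: conclude.} Since $n_c\asymp L_c$, both bounds are $O(L_c^{-3/2})$, which proves the lemma.

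The main obstacle is Step 1, the strict drift gap $\rho<\KL_{A,*}^{\mW}$ uniformly over all $(B,D)\neq(A,D_A^{\mW})$. It is exactly where uniqueness of the most favorable subset and positivity of the KL numbers enter. Once a fixed gap $\gamma$ is available, the remaining estimates are standard third-moment deviation bounds.
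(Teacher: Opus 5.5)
Your proof is correct, but from Step 2 onward it follows a different and more elementary route than the paper's proof.

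Step 1 is exactly the paper's argument: the same two cases $B\neq A$ and $B=A$, with uniqueness of $D_A^{\mW}$ giving the strict gap, and your $\gamma$ playing the role of the paper's $\mu^*$. The split of the event is also the paper's union bound, with $L_c/\mu^*$ replaced by $n_c$.

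The difference is in how the two probabilities are controlled. The paper handles $\prob_A(\tilde{T}_{B,D}\le L_c/\mu^*)$ by comparing with $\widehat{T}_{B,G^*}$. It then dominates that by the first-passage time of a tilted walk with drift $\theta>0$ and invokes Gut's bound $\expt_A|\tau-L_c/\theta|^3=O(L_c^{3/2})$ (Lemma~\ref{lemma: aux 0}). For $\prob_A(\tilde{T}_{A,D_A^{\mW}}\ge L_c/\mu^*)$ it needs Lemma~\ref{lemma: aux 2}. There the simultaneous crossing of all components is handled via $\tilde{T}_{A,D_A^{\mW}}\le\max_G\{\widehat{T}_{A,G}+\tau_G(\widehat{T}_{A,G})+1\}$, with last-exit times and Janson's moment bound.

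You work at the deterministic horizon $n_c$ instead. For the first event, if $\tilde{T}_{A,D_A^{\mW}}>n_c$ then some component fails at time $n_c$ itself. This turns the multidimensional stopping problem into a fixed-time deviation bound and removes the need for last-exit times. For the second event, Doob's $L^3$ maximal inequality plus Rosenthal (essentially Lemma~\ref{lemma: bounded moment}) suffices.

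What each approach buys:
\begin{itemize}
\item Your route is self-contained and uses only standard martingale moment inequalities.
\item It also covers the degenerate case $G_0=B$ automatically (the walk is then identically zero). Lemma~\ref{lemma: aux 0} is stated for $B\neq G$, and the paper does not check this for $G^*$.
\item The paper's route packages its bounds as statements about the individual stopping times $\widehat{T}_{B,G}$, but gains nothing beyond $O(L_c^{-3/2})$ here.
\end{itemize}

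One cosmetic slip: stopping requires strict inequality, so the correct inclusion is $\{\tilde{T}_{A,D_A^{\mW}}>n_c\}\subset\bigcup_G\{S^G_{n_c}\le L_c\}$ rather than $<L_c$. Your bound is unaffected, since $L_c\le n_c\gamma$.
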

\begin{proof}
If $B \neq A$,  by definition and due to Assumption \ref{assumption: lorden},  for any $G \in \mathcal{A}$, 
$$
\textup{Div}_A(f_B, f_G) = \KL(f_A\mid f_G)-\KL(f_A\mid f_B) < \textup{Div}_A(f_A, f_G) = \KL(f_A|f_G).
$$
Since $\mathcal{A}$ is finite and due to \eqref{def:KL_A_D_star}, we have
\begin{align*}
   \textup{Div}_A^{\mW}(B,D) <  \min_{G \in \mathcal{H}_{D}^{\mW}} \KL(f_A|f_G) = \KL_{A,D}^{\mW} \leq \KL_{A,*}^{\mW},
\end{align*}
If  $B = A$, since $(B,D) \neq (A,D_A^{\mW})$, we must have $D \neq D_A^{\mW}$. Then since  $A$ admits a unique most favorable subset with respect to $\mW$, we have
\begin{align*}
 \textup{Div}_A^{\mW}(B,D) = \textup{Div}_A^{\mW}(A,D)  = \KL_{A,D}^{\mW} < \KL_{A,*}^{\mW},
\end{align*}
where we use the fact that $D_A^{\mW}$ is the \emph{unique} maximizer of $\max_{G \in \mathcal{A}} \KL_{A,G}^{\mW}$.

Thus, in either case,  $\KL_{A,*}^{\mW}  > \textup{Div}^{\mW}_{A}(B,D)$, implying that
 there exists  $\mu^* > 0$ such that 
 \begin{equation*} 
     \KL_{A,*}^{\mW} > \mu^* > \textup{Div}^{\mW}_{A}(B,D).
 \end{equation*}

Due to union bound, we have 
\begin{align*}
\prob_A\left(\tilde{T}_{B,D}(c) < \tilde{T}_{A,D_A^{\mW}}(c)\right)\le \prob_A\left(\tilde{T}_{B,D}(c) \le \frac{L_c}{\mu^*}\right)  + \prob_A\left(\tilde{T}_{A,D_A^{\mW}}(c) \ge \frac{L_c}{\mu^*}\right).
\end{align*}

We start with  the first term in the upper bound above. By definition \eqref{Div_A_W_B_D}, there exists some $G^* \in \mathcal{H}_D^{\mW}$ such that
\begin{equation}
    \label{aux:G_star}
\textup{Div}_A(f_B, f_{G^*}) = \textup{Div}^{\mW}_{A}(B,D) < \mu^*.
\end{equation}
By definition \eqref{def:check_T} and \eqref{eq: tilde T}, $\tilde{T}_{B,D}(c) \geq \hat{T}_{B,G^*}(c)$, and thus
\begin{align*}
    \prob_A\left(\tilde{T}_{B,D}(c) \le \frac{L_c}{\mu^*}\right)  \leq 
    \prob_A\left(\hat{T}_{B,G^*}(c) \le \frac{L_c}{\mu^*}\right)  
\end{align*}
Due to \eqref{aux:G_star} and by Lemma \ref{lemma: aux 0}(a), we have $\prob_A\left(\tilde{T}_{B,D}(c) \le \frac{L_c}{\mu^*}\right) = O(L_c^{-3/2})$.

Finally, by Lemma \ref{lemma: aux 2}, $\prob_A\left(\tilde{T}_{A,D_A^{\mW}}(c) \ge \frac{L_c}{\mu^*}\right) = O\left((L_c)^{-3/2}\right)$.
The proof is then complete.
\end{proof}

Recall the definition of $\hat{T}_{B,G}$ in \eqref{def:check_T}.

\begin{lemma}\label{lemma: aux 0}
Suppose that Assumption \ref{assumption: lorden} holds, and that \eqref{eq: bounded moment assumption} holds for $q=3$. Let $A, B,G \in \mathcal{A}$ with $B \neq G$.
\begin{enumerate}[label=(\alph*)]
    \item For each $x > \max\{0,\textup{Div}_A(f_B,f_{G})\}$, as $c\rightarrow 0$,
    \begin{align*}
 \prob_A\left(\widehat{T}_{B,G}(c)\le\frac{L_c}{x}\right) = O\left((L_c)^{-3/2}\right).
\end{align*}
\item Assume $G \neq A$. Then for each $x \in (0, \KL(f_A|f_G))$, as $c\rightarrow 0$,
\begin{align*}
    \prob_A\left(\widehat{T}_{A,G}(c)\ge\frac{L_c}{x}\right) = O\left((L_c)^{-3/2}\right).
\end{align*}
\end{enumerate}
\end{lemma}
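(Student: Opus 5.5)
The plan is to reduce both parts to deviation inequalities for a one-dimensional random walk with i.i.d. increments. Write
\[
S_t := \sum_{s=1}^t \log\frac{f_B(\mathbf{X}_s^{[K]})}{f_G(\mathbf{X}_s^{[K]})},
\]
so that $\widehat{T}_{B,G}(c)=\min\{t\ge1: S_t\ge L_c\}$. Under $\prob_A$ the increments $\xi_s$ are i.i.d. with mean $\mu:=\textup{Div}_A(f_B,f_G)$. Each $\xi_s$ is a signed sum of per-stream LLRs $\log(f_1^k/f_0^k)(\mathbf{X}_s^k)$. Under $\prob_A$ each such term has the law of the LLR under $f_1^k$ or $f_0^k$. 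So \eqref{eq: bounded moment assumption} with $q=3$ gives $\expt_A|\xi_1|^3<\infty$ by Minkowski's inequality. (If $B\triangle G$ contains no streams the walk is degenerate, but $B\ne G$ rules this out.)

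\textbf{Part (a).} Let $n:=\lfloor L_c/x\rfloor$. The event $\{\widehat{T}_{B,G}(c)\le n\}$ is contained in $\{\max_{t\le n} S_t\ge L_c\}$. Since $L_c\ge n x$, this is contained in
\[
\Bigl\{\max_{t\le n}(S_t-t\mu)\ge n(x-\mu^+)\Bigr\},
\]
using $S_t\le (S_t-t\mu)+t\mu^+$ and $t\mu^+\le n\mu^+$. Here $x-\mu^+>0$ by assumption. Apply Doob's maximal inequality to the martingale $S_t-t\mu$ with third moments, and then the Marcinkiewicz--Zygmund (or Rosenthal) inequality, which gives $\expt|S_n-n\mu|^3\le C n^{3/2}$. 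This yields the bound
\[
\frac{C n^{3/2}}{n^3(x-\mu^+)^3}=O(n^{-3/2})=O(L_c^{-3/2}).
\]
For $n=0$ the probability is zero, and otherwise $n\asymp L_c$.

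\textbf{Part (b).} Now take $B=A$, so $\mu=\KL(f_A|f_G)>x>0$. Let $n:=\lceil L_c/x\rceil-1$. Then
\[
\{\widehat{T}_{A,G}(c)\ge L_c/x\}\subset\{S_n<L_c\}=\{S_n-n\mu< L_c-n\mu\}.
\]
Since $n\ge L_c/x-1$, we have $n\mu-L_c\ge L_c(\mu/x-1)-\mu$, which is of order $L_c$ and positive for small $c$. Markov's inequality with third moments and Marcinkiewicz--Zygmund again give
\[
\frac{C n^{3/2}}{(n\mu-L_c)^3}=O(L_c^{-3/2}).
\]

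The only mildly delicate step is (a). There the deviation must hold uniformly over all $t\le n$, not just at $t=n$. The shift by $\mu^+$ together with Doob's inequality handles this, including the case $\mu\le0$.
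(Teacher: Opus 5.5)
Your proof is correct, and it takes a different route from the paper's.

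\textbf{The paper's route.} For part (a), the paper picks $\theta$ with $\max\{0,\textup{Div}_A(f_B,f_G)\}<\theta<x$. It forms the drift-shifted walk with increments $\log(f_B/f_G)(\mathbf{X}_s^{[K]})+\theta-\textup{Div}_A(f_B,f_G)$, which have mean $\theta>0$. It observes that the first-passage time $\tau_{B,G}(c)$ of this walk over $L_c$ is at most $\widehat{T}_{B,G}(c)$. It then imports a moment bound for stopped random walks (Theorem~8.4 in Chapter~3 of \cite{gut2009stopped}): $\expt_A|\tau_{B,G}(c)-L_c/\theta|^3=O(L_c^{3/2})$. Markov's inequality finishes the argument. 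Part (b) is dismissed as analogous, i.e.\ the same first-passage moment bound applied directly to $\widehat{T}_{A,G}(c)$, whose drift is already positive.

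\textbf{Your route.} You never touch first-passage-time moments. Instead you freeze time at the deterministic horizon $n\approx L_c/x$ and bound the centered walk using Doob's maximal inequality plus Marcinkiewicz--Zygmund. Replacing $\mu$ by $\mu^+$ does the job of the paper's auxiliary $\theta$ in handling a nonpositive drift. In part (b) you only need the single marginal $S_n$, with no maximal inequality at all.

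\textbf{Comparison.} Your argument is more elementary and self-contained. It uses the same Rosenthal-type machinery the paper already relies on in Lemma~\ref{lemma: bounded moment}, and it handles the integer rounding of $L_c/x$ explicitly. The paper's version is shorter, given the black-box theorem. It also yields a stronger intermediate fact: the first-passage time concentrates in $L^3$ around $L_c/\theta$. That extra strength is not needed for this lemma.
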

\begin{proof}
We note that $\{\mathbf{X}_t^{[K]}: t\ge 1\}$ are collections of $i.i.d.$ random vectors with 
$$\expt_A\left[\log\frac{f_B(\mathbf{X}_1^{[K]})}{f_{G}(\mathbf{X}_1^{[K]})}\right] = \textup{Div}_A(f_B,f_{G})$$
and since \eqref{eq: bounded moment assumption} holds for $q=3$, we have
$\expt_A\left|\log\left({f_B(\mathbf{X}_1^{[K]})}/{f_{G}(\mathbf{X}_1^{[K]})}\right)\right|^{3} < \infty$.

\medskip

\noindent \textbf{Part (a).} Let $\theta$ be any constant such that
$\max\left\{0, \text{Div}_A(f_B, f_G)\right\} < \theta < x$. Further, define
$$
\tau_{B,G}(c) := \min\left\{t \geq 1: \sum_{s=1}^t \left(\log\frac{f_B(\mathbf{X}_s^{[K]})}{f_G(\mathbf{X}_s^{[K]})} + \theta  -\text{Div}_A(f_B, f_G) \right)\ge L_c \right\}.
$$
By the definition of $\theta$, it is clear that $\tau_{B,G}(c) \leq \widehat{T}_{B,G}(x)$, and that
$$
\expt_A\left[\log\frac{f_B(\mathbf{X}_1^{[K]})}{f_G(\mathbf{X}_1^{[K]})} + \theta  -\text{Div}_A(f_B, f_G)\right] = \theta > 0.
$$
As a result, due to Theorem 8.4 in Chapter 3 of \cite{gut2009stopped}, as $c\rightarrow 0$,
\begin{align*}
\expt_A\left|{\tau}_{B,G}-\frac{L_c}{\theta}\right|^{3} = O(L_c^{3/2}).
\end{align*}    
Finally, note that
\begin{align*}
\prob_A\left(\widehat{T}_{B,G}\le\frac{L_c}{x}\right) \leq   \prob_A\left({\tau}_{B,G}\le\frac{L_c}{x}\right) 
\leq \prob_A\left(\left|{\tau}_{B,G}-\frac{L_c}{\theta}\right|\ge L_c\left(\frac{1}{\theta}-\frac{1}{x}\right)\right).
\end{align*}
Since $0 < \theta < x$, $1/\theta -1/x > 0$. By Markov inequality,
$$
\prob_A\left(\left|{\tau}_{B,G}-\frac{L_c}{\theta}\right|\ge L_c\left(\frac{1}{\theta}-\frac{1}{x}\right)\right) = O((L_c)^{-3/2}),
$$
which completes the proof of Part (a).

\medskip
\noindent\textbf{Part (b).} Note that $\textup{Div}_A(f_A,f_G)=\KL(f_A\mid f_G)>0$. Thus, the argument is similar to, and simpler than, that of part (a), and we omit the details.
\end{proof}

Recall the stopping time $\tilde{T}_{B,D}(c,\mW)$  in \eqref{eq: tilde T} and the definition of $\hat{T}_{B,G}$ in \eqref{def:check_T}.

\begin{lemma}\label{lemma: aux 2}
Suppose that Assumption \ref{assumption: lorden} holds, and that \eqref{eq: bounded moment assumption} holds for $q=3$.  Let $A \in \mathcal{A}$, and assume that $A$ admits a unique most favorable subset with respect to $\mW$. Then, for each $0 < \mu^* < \KL_{A,*}^{\mW}$, as $c\rightarrow 0$,
$$
\prob_A\left(\tilde{T}_{A,D_A^{\mW}} (c)\ge \frac{L_c}{\mu^*}\right) = O((L_c)^{-3/2}).
$$
\end{lemma}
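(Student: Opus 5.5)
The plan is to reduce the event to a union of finitely many single-random-walk events and then apply Lemma \ref{lemma: aux 0}(b) to each. By the definition \eqref{eq: tilde T},
\[
\tilde{T}_{A,D_A^{\mW}}(c) = \min\Bigl\{t\ge 1: f_{G,[t]} < c\, f_{A,[t]} \text{ for all } G\in\mathcal{H}_{D_A^{\mW}}^{\mW}\Bigr\}.
\]
This stopping time is at most the first time after which every component stays below the boundary. The obstacle is that it is not simply $\max_G \widehat{T}_{A,G}(c)$, since the individual walks may cross and then re-cross. I will therefore bound the event directly. Set $n:=\lceil L_c/\mu^*\rceil$. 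If $\tilde{T}_{A,D_A^{\mW}}(c)\ge L_c/\mu^*$, then at time $n$ (or $n-1$, an integer-rounding detail I will handle by slightly perturbing $\mu^*$) there is some $G\in\mathcal{H}_{D_A^{\mW}}^{\mW}$ with $\bd{R}_{n,G}:=\sum_{s=1}^{n}\log(f_A/f_G)(\mathbf{X}_s^{[K]})\le L_c$. Hence, by the union bound over the finite set $\mathcal{H}_{D_A^{\mW}}^{\mW}$,
\[
\prob_A\Bigl(\tilde{T}_{A,D_A^{\mW}}(c)\ge \tfrac{L_c}{\mu^*}\Bigr)\le \sum_{G\in\mathcal{H}_{D_A^{\mW}}^{\mW}}\prob_A\bigl(\bd{R}_{n,G}\le L_c\bigr).
\]

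Each term is a fixed-time deviation for a random walk. Every $G$ in this set satisfies $\KL(f_A\mid f_G)\ge \KL_{A,*}^{\mW}>\mu^*$ by the definition \eqref{def:KL_A_D_star}, and $G\neq A$ because the most-favorable property forces $\KL_{A,*}^{\mW}>0$. Writing $\kappa_G:=\KL(f_A\mid f_G)$, the event $\bd{R}_{n,G}\le L_c$ implies
\[
\bd{R}_{n,G}-n\kappa_G\le L_c-n\kappa_G\le -L_c\bigl(\kappa_G/\mu^*-1\bigr),
\]
which is a deviation of order $n$. The increments are i.i.d. with a finite third absolute moment by \eqref{eq: bounded moment assumption} with $q=3$. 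Applying Markov's inequality to $|\bd{R}_{n,G}-n\kappa_G|^3$, together with Rosenthal's (or the Marcinkiewicz--Zygmund) inequality $\expt_A|\bd{R}_{n,G}-n\kappa_G|^3=O(n^{3/2})$, gives a bound of $O(n^{3/2}/L_c^3)=O(L_c^{-3/2})$. Summing over the finitely many $G$ completes the argument.

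An alternative is to cite Lemma \ref{lemma: aux 0}(b) with $x=\mu^*$. However, that lemma controls $\widehat{T}_{A,G}$, a first-passage time, whereas here a terminal-time bound is what is needed. The direct fixed-time estimate above avoids this mismatch and follows the same moment argument that proves Lemma \ref{lemma: aux 0}. The only delicate step is the reduction, namely checking that the stopping condition fails at the deterministic time $n$. This is immediate from the definition, since $\tilde{T}_{A,D_A^{\mW}}(c)>n$ (or $\ge$) means the joint condition did not hold at time $n$.
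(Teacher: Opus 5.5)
Your proof is correct, and it takes a genuinely different and more elementary route than the paper's.

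\textbf{The paper's route.} The paper handles the re-crossing issue you identified by bounding the stopping time pathwise,
\[
\tilde{T}_{A,D_A^{\mW}}(c)\le \max_{G}\bigl\{\widehat{T}_{A,G}(c)+\tau_G\bigl(\widehat{T}_{A,G}(c)\bigr)+1\bigr\},
\]
where $\tau_G(s_0)$ is the last exit time from $(-\infty,0]$ of the $G$-th log-likelihood walk restarted at $s_0$. It then controls the two pieces separately:
\begin{itemize}
\item the first-passage part by Lemma~\ref{lemma: aux 0}(b), which is itself proved via a third-moment bound for stopped random walks;
\item the last-exit part by the strong Markov property, the finiteness of $\expt_A[\tau_G(0)^2]$ from Janson's moment result for last exit times, and Markov's inequality.
\end{itemize}

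\textbf{Your route.} You observe that $\{\tilde{T}_{A,D_A^{\mW}}(c)\ge L_c/\mu^*\}$ forces the joint crossing condition to fail at a single deterministic time. This makes re-crossings irrelevant. You then need only a fixed-time third-moment deviation bound for an i.i.d.\ sum: Markov's inequality plus Rosenthal/Marcinkiewicz--Zygmund, as in Lemma~\ref{lemma: bounded moment}. No stopping-time theory is used. Under $q$-th moments the same argument would give $O(L_c^{-q/2})$ directly. The paper's pathwise bound is heavier; it describes $\tilde{T}_{A,D_A^{\mW}}$ through first-passage and last-exit times, but that structure is not needed for the tail estimate stated here.

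\textbf{Rounding correction.} The event $\tilde{T}_{A,D_A^{\mW}}(c)\ge L_c/\mu^*$ is compatible with $\tilde{T}_{A,D_A^{\mW}}(c)=\lceil L_c/\mu^*\rceil$. So the condition is guaranteed to fail only at $m:=\lceil L_c/\mu^*\rceil-1$, not at $n$. Your closing sentence (``$>n$ (or $\ge$)'') should therefore use $m$. Your perturbation fix works: pick $\mu'\in(\mu^*,\KL_{A,*}^{\mW})$. For all sufficiently small $c$ you have $m\ge 1$, $m\ge L_c/\mu'$ and $m=O(L_c)$. Then run the same estimate with $\mu'$ in place of $\mu^*$, using $\kappa_G\ge \KL_{A,*}^{\mW}>\mu'$.
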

\begin{proof}
The proof follows similar lines to that of Theorem 2 in \cite{mei2008asymptotic}.
Recall $D^{\mW}_A$ in Definition \ref{def: dragalin 1}. For simplicity, we define $\mathcal{H}_{A,*}^{\mW} :=\mathcal{H}_{A,D_A^{\mW}}^{\mW}$.

First, for each integer $s_0 \geq 0$ and $G \in \mathcal{A}$, define
\begin{equation*}
    \tau_{G}(s_0) :=  \max\left\{t\ge 1: \sum_{s=s_0 +1}^{s_0+t}\log\frac{f_A(\mathbf{X}_s^{[K]})}{f_{G}(\mathbf{X}_s^{[K]})}\leq 0\right\}.
\end{equation*}

For each $G\in \mathcal{H}_{A,*}^{\mW}$, by definition,
\begin{align*}
 \sum_{s=1}^{t}\log\frac{f_A(\mathbf{X}_s^{[K]})}{f_{G}(\mathbf{X}_s^{[K]})}\leq L_c, \text{ for all } t \geq  \widehat{T}_{A,G}(c)+ \tau_G\left(\widehat{T}_{A,G}(c)\right)+1.
 \end{align*}
As a result, 
\begin{align*}
 \tilde{T}_{A,D_A^{\mW}}(c) \le \max_{G \in\mathcal{H}_{A,*}^{\mW}} \left\{\widehat{T}_{A,G}(c)+\tau_{G}\left(\widehat{T}_{A,G}(c)\right) + 1 \right\}. 
\end{align*}

Let $\theta \in \mathbb{R}$ be such that $\mu^* < \theta < \KL_{A,*}^{\mW}$. Due to union bound,
\begin{equation}\label{eq: hat vs check xi_n 2 1}
\begin{aligned}
\prob_A\left(\tilde{T}_{A,D_A^{\mW}}(c) \ge \frac{L_c}{\mu^*}\right) &\le \sum_{G \in \mathcal{H}_{A,*}^{\mW}} \prob_A \left(\widehat{T}_{A,G}(c) \ge \frac{L_c}{\theta}\right) 
+ \sum_{G \in \mathcal{H}_{A,*}^{\mW}} \prob_A \left(\tau_{G}(\widehat{T}_{A,G}(c)) + 1 \ge \frac{L_c}{\mu^*} - \frac{L_c}{\theta}\right).
\end{aligned}
\end{equation}
First, we consider the first term on the right hand side of \eqref{eq: hat vs check xi_n 2 1}.
By definition \eqref{def:KL_A_D_star},
$$
\theta<\KL_{A,*}^{\mW} = \KL_{A,D_A^{\mW}}^{\mW} \leq \KL(f_A|f_G), \quad \text{ for each } G \in \mathcal{H}_{A,*}^{\mW},
$$
which, by Lemma \ref{lemma: aux 0}(b), implies that 
for each $G \in \mathcal{H}_{A,*}^{\mW}$, as $c\rightarrow 0$,
\begin{equation*}
\prob_A\left(\widehat{T}_{A,G}(c) \geq \frac{L_c}{\theta}\right) = O(L_c^{-3/2}).
\end{equation*}

Next, we consider the second part on the right hand side of \eqref{eq: hat vs check xi_n 2 1}. Since $\{\mathbf{X}_t^{[K]}:t\ge 1\}$ is a collection of $i.i.d.$ random variables, for each $G \in \mathcal{H}_{A,*}^{\mW}$,  $\tau_{G}(\widehat{T}_{A,G}(c))$ and $\tau_{G}(0)$ have the same distribution under $\prob_A$. Since
$$\expt_A\left[\log\frac{f_A(\mathbf{X}_s^{[K]})}{f_{G}(\mathbf{X}_s^{[K]})}\right] = \KL(f_A|f_G) >0,$$
by Theorem 1 in \cite{janson1986moments}, as $c\rightarrow 0$, we have $\expt_A\left[(\tau_G(0))^{2}\right]=O(1)$. Finally, by Markov's inequality and since $1/\mu^* - 1/\theta > 0$, for each $G \in \mathcal{H}_{A,*}^{\mW}$, as $c\rightarrow 0$,
$$\prob_A \left(\tau_{G}(\widehat{T}_{A,G}(c)) \ge \frac{L_c}{\mu^*} - \frac{L_c}{\theta} -1\right) = O(L_c^{-2}).$$
The proof is complete.
\end{proof}

\section{Asymptotic ESS of stopping times associated with multidimensional random walks}\label{sec: Asymptotic ESS of stopping times associated with multidimensional random walks}

Let $\bd{V}_1, \bd{V}_2,\ldots$ be a sequence of $d$ dimensional i.i.d.~random vectors, and denote by $\bd{R}_n = \bd{V}_1 + \bd{V}_2 + \cdots + \bd{V}_n$ for $n \geq 1$ the associated random walk. For each $j \in [d]$, we denote by $\mu_j := \expt[\bd{V}_{1,j}]$, where $\bd{V}_{1,j}$ represents the $j$-th component of $\bd{V}_1$. Moreover, we denote by $\tilde{\bd{V}}_{t,j} := \bd{V}_{t,j}-\mu_j$ for $t \geq 1$, and then
\begin{align}\label{aux:rw}
    \bd{R}_{n,j}-n\mu_j = \sum_{t=1}^{n} \tilde{\bd{V}}_{t,j}, \quad \text{ for } n \geq 1.
\end{align}

We consider the stopping time
\begin{equation}\label{eq: T_b}
T_b := \inf\left\{n \geq 1: \min_{j\in[d]}\bd{R}_{n,j} \geq b \right\},
\end{equation}
that is, the first time that all components of the random walk exceed a threshold. In this section, we assume
\begin{equation*}
0< \mu_1 \leq \mu_2 \leq \cdots \leq \mu_d.
\end{equation*}
Let $1 \leq r_{*} \leq d$ be the unique integer such that
\begin{equation}
    \label{def:r_star}
\mu_1 = \mu_2 = \ldots = \mu_{r_{*}} < \mu_{r_{*} + 1},
\end{equation}
where $\mu_{d+1} := +\infty$.
We refer to the case $r_{*}=1$ as the asymmetric setup, and to $r_{*}\neq 1$ as the symmetric setup.

In this section, for some $q\ge 2$ to be specified, we assume that
\begin{equation}\label{eq: bounded moment}
\expt\left(|\tilde{\bd{V}}_{1,j}|^q\right)<\infty \text{ for all $j\in[d]$}.
\end{equation}

\begin{theorem}\label{thm: rw_asymm}
Consider the asymmetric setup, i.e., $r_{*}=1$. Assume \eqref{eq: bounded moment} holds for some $q > 2\sqrt{2}$. 
Then, as $b\to\infty$,
\[
\expt(T_b)=\frac{b}{\mu_1}+O(1).
\]
\end{theorem}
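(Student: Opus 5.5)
The plan is to compare $T_b$ with the one-dimensional first-passage time of the slowest coordinate, $\tau_b := \inf\{n\ge1: \bd{R}_{n,1}\ge b\}$, for which classical renewal theory gives $\expt(\tau_b) = b/\mu_1 + O(1)$. By definition $T_b\ge \tau_b$, so the lower bound $\expt(T_b)\ge b/\mu_1 + O(1)$ is immediate; indeed Wald's identity and $\bd{R}_{\tau_b,1}\ge b$ already give $\expt(\tau_b)\ge b/\mu_1$. For the upper bound I would bound the overshoot $\bd{R}_{\tau_b,1}-b$ in expectation: with finite second moments (guaranteed since $q>2\sqrt2>2$), Lorden's inequality yields $\sup_b \expt(\bd{R}_{\tau_b,1}-b)<\infty$, hence $\expt(\tau_b)=b/\mu_1+O(1)$.

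The main step is to show $\expt(T_b-\tau_b)=O(1)$. I would write
\[
T_b \le \max_{j\in[d]} \tau_b^{(j),+},\qquad \tau_b^{(j),+}:=\tau_b + \sigma_j,
\]
where $\sigma_j$ denotes the last time after $\tau_b$ at which coordinate $j$ is still below $b$ (setting $\sigma_1$ via the last exit of coordinate $1$ below $b$ after first passage). Concretely, $T_b\le \max_j \lambda_j(b)+1$, where $\lambda_j(b):=\sup\{n: \bd{R}_{n,j}<b\}$ is the last-exit time of coordinate $j$. So it suffices to show $\expt[\max_j \lambda_j(b)] \le b/\mu_1 + O(1)$. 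Since $\lambda_1(b) = \tau_b + (\text{last exit of the post-}\tau_b\text{ walk below the overshoot level})$, and by the strong Markov property together with the moment bounds on last-exit times of positive-drift walks (Janson's theorem, as in Lemma~\ref{lemma: aux 2}), $\expt\lambda_1(b)=b/\mu_1+O(1)$ provided the overshoot and the last exit below $0$ have bounded means; both follow from finite second moments.

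It remains to control the excess $\expt[(\max_{j\ge2}\lambda_j(b)-\lambda_1(b))^+]$. For $j\ge2$, $\mu_j>\mu_1$ strictly (asymmetric setup), so $\lambda_j(b)$ concentrates near $b/\mu_j$, which lies a linear distance $b(1/\mu_1-1/\mu_j)$ below $b/\mu_1$. I would bound
\[
\expt[(\lambda_j(b)-\lambda_1(b))^+] \le \expt[\lambda_j(b)\idf\{\lambda_j(b)>\eta b\}] + \expt[(\eta b - \lambda_1(b))^+],
\]
with $\eta$ strictly between $1/\mu_j$ and $1/\mu_1$. The first term is handled by writing $\lambda_j(b)\le b/\mu_j + |\lambda_j(b)-b/\mu_j|$ and using $\expt|\lambda_j(b)-b/\mu_j|^{p}=O(b^{p/2})$ (Gut's Theorem 8.4-type moment bounds, needing $q$-moments) together with Markov's inequality on the event, giving $O(b\cdot b^{-p/2}+b^{1/2-(p-1)/2})$; the second is similar. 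The main obstacle is arranging the exponents so that these tail contributions are $O(1)$: one needs roughly $p$-th moment deviation bounds with tail probability $o(b^{-1})$ multiplied by the size $O(b)$, and combining via Hölder the $q$-th moment of $\lambda_j$ with the tail probability $O(b^{-q/2})$ leads to a condition like $q(q/2)\cdot\frac{1}{q}$-type balancing; I expect exactly this Hölder bookkeeping to produce the threshold $q>2\sqrt2$ (e.g., requiring $(q/2)(1-1/q')>1$ with $q'$ tied to $q$), and I would first try Hölder with exponents $(q/2, \cdot)$ and tune them until $O(1)$ emerges.
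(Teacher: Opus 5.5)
Your sandwich strategy is sound and genuinely different from the paper's, but the step you left open does not close the way you sketch it. The following pieces are all correct:
\begin{itemize}
\item the lower bound $T_b\ge\tau_b$;
\item the upper bound $T_b\le\max_j\lambda_j(b)+1$;
\item $\expt\lambda_1(b)\le\expt\tau_b+\expt L_0=b/\mu_1+O(1)$, using Lorden's overshoot bound and finiteness of the mean last exit $L_0$ below $0$, both of which need only second moments;
\item the split $(\lambda_j-\lambda_1)^+\le\lambda_j\idf\{\lambda_j>\eta b\}+(\eta b-\lambda_1)^+$.
\end{itemize}

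The problem is controlling $\expt[\lambda_j\idf\{\lambda_j>\eta b\}]$ through $\expt|\lambda_j(b)-b/\mu_j|^p=O(b^{p/2})$. Your resulting bound $O(b^{1-p/2})$ is bounded only if $p\ge 2$. For a \emph{last-exit} time, unlike the first-passage time covered by Gut's Theorem 8.4, a finite second moment requires the negative part of $\bd{V}_{1,j}$ to have a finite third moment. This is Janson's result: for increments $X$, $\expt L_0^r<\infty$ iff $\expt (X^-)^{r+1}<\infty$. The assumption $q>2\sqrt2$ does not provide this when $q<3$.

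So do not try to tune H\"older exponents to reach $2\sqrt2$; instead bound the tail of $\lambda_j$ directly. Set $\gamma:=\mu_j-1/\eta>0$. For $m\ge\eta b$ one has $\{\lambda_j(b)>m\}\subset\{\exists\, n>m:\ \bd{R}_{n,j}-n\mu_j<-\gamma n\}$. Applying Lemma~\ref{lemma: bounded moment} on dyadic blocks $(2^km,2^{k+1}m]$ gives $\prob(\lambda_j(b)>m)\le Cm^{-q/2}$. Hence $\expt[\lambda_j\idf\{\lambda_j>\eta b\}]\le(\eta b+1)\prob(\lambda_j>\eta b)+\sum_{m\ge\eta b}\prob(\lambda_j>m)=O(b^{1-q/2})$. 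Likewise $\expt(\eta b-\lambda_1)^+\le(\eta b+1)\prob(\tau_b\le\eta b+1)=O(b^{1-q/2})$, because $\eta\mu_1<1$.

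With this repair your argument proves the theorem for every $q>2$. It even works under finite variance alone, using the Hsu--Robbins--Erd\H{o}s/Baum--Katz fact that $\sup\{n:|\bd{R}_{n,j}/n-\mu_j|>\gamma\}$ has finite mean and dominates $\lambda_j\idf\{\lambda_j>\eta b\}$. So it is more elementary and slightly stronger than the paper's proof.

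The paper instead writes $\min_j\bd{R}_{n,j}=\bd{R}_{n,1}+\xi_n$, where $\xi_n\le 0$ is nonzero with probability $O(n^{-q/2})$. It verifies Nagai's $\rho$-regularity with $\rho\equiv1$ and reads off $\mu_1\expt T_b=b-\expt\zeta_{n_b}+O(1)$, with $\expt\zeta_{n_b}=o(1)$, from Theorem~4.2 of \cite{nagai2006nonlinear}. The threshold $2\sqrt2$ is an artifact of that route. It needs some $\alpha\in[2/q,q/4)$: $\alpha\ge 2/q$ for Nagai's moment condition $\expt|\tilde{\bd{V}}_{1,1}|^{2/\alpha}<\infty$, and $\alpha<q/4$ for a crude union bound in the uniform-integrability check.

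What the nonlinear renewal framework buys is uniformity. The same perturbed-random-walk decomposition yields the symmetric case (Theorem~\ref{thm:rw_sym}). There your sandwich is too loose, because the single-coordinate lower bound $\tau_b$ misses the order-$\sqrt b$ correction $h_{[r_*]}\sqrt b/\mu_1^{3/2}$.
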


\begin{proof}
We apply Theorem~4.2 in \cite{nagai2006nonlinear}. Specifically, let $Z_n := \min_{j\in[d]}\bd{R}_{n,j}$ for $n\ge 1$, and rewrite $\{Z_n : n \ge 1\}$ as a perturbed random walk:
\begin{equation}\label{eq: a perturbed random walk asymmetric}
Z_n = S_n + \xi_n, \quad \text{where} \quad
S_n := \bd{R}_{n,1}, \quad \text{ and } \quad
\xi_n := \min\{0,\, \bd{R}_{n,2} - \bd{R}_{n,1}, \ldots, \bd{R}_{n,d} - \bd{R}_{n,1}\}.
\end{equation}
Then $T_b$ in \eqref{eq: T_b} can be written as
\[
T_b = \inf\{n \ge 1 : Z_n \ge b\} = \inf\{n \ge 1 : S_n + \xi_n \ge b\}.
\]
Further, since $q > 2\sqrt{2}$, there exists some $\alpha^* \in (1/2,1]$ such that
\begin{align*}
    \frac{2}{q} \leq \alpha^* < \frac{q}{4}.
\end{align*}

Lemma \ref{lemma: rho regular in zhang 2006 asymmetric} shows that the process $\{\xi_n:n\ge 1\}$ is $1$-regular with parameters $\mu = \mu_1$, $\alpha= \alpha^*$, and $p=1$  (see Definition \ref{def: rho regular}) and  the following constants:
\begin{equation*}
\delta_0 = \frac{1}{2},\;\; \theta = \frac{\mu}{2},\;\;   \omega_0 = \frac{\mu}{4},\;\;   K = 3,\;\;  \theta^* = \frac{\mu}{2}.
\end{equation*}
The verification of (A.16) in \cite{dragalin2000multihypothesis} implies that condition~(4.2) in \cite{nagai2006nonlinear} holds for   $\delta_0 = 1/2$. Furthermore, since $\alpha^* \geq 2/q$, we have $2/\alpha^* \leq q$, and thus $\expt[|\tilde{\bd{V}}_{1,1}|^{2/\alpha^*}] < \infty$.

Therefore, by Theorem~4.2 in \cite{nagai2006nonlinear},
\[
\mu_1 \, \expt(T_b) = b - \expt(\zeta_{n_b}) + O(1),
\]
where $n_b := \lfloor b/\mu_1 \rfloor$ and $\zeta_n := (\xi_n \wedge \theta n^{\alpha}) \vee (-\theta^* n^{\alpha})$ for $n \geq 1$. Moreover, Lemma \ref{lemma: rho regular in zhang 2006 asymmetric} shows that $\expt(\zeta_{n_b}) = o(1)$ as $b \to \infty$. The proof is complete.
\end{proof}

Next, we turn to the symmetric setup, i.e., $r_{*}\ge 2$. Define
$$
\tilde{\bd{V}}_{1,[r_*]} := \left(\tilde{\bd{V}}_{1,1},\ldots,\tilde{\bd{V}}_{1,r_*}\right)^{\top}, 
\qquad 
\bSigma_{[r_*]} := \expt\left(\tilde{\bd{V}}_{1,[r_*]}\tilde{\bd{V}}_{1,[r_*]}^\top\right)\in\mR^{r_*\times r_*},
$$
so that $\bSigma_{[r_*]}$ is the covariance matrix of $\tilde{\bd{V}}_{1,[r_*]}$.
Let $(Y_1,\ldots,Y_{r_*})^{\top}$ be a zero-mean Gaussian random vector with covariance matrix $\bSigma_{[r_*]}$. Finally, define
\begin{equation}\label{eq: h}
h_{[r_*]} := \expt\left[\max\{Y_1,\ldots,Y_{r_*}\}\right].
\end{equation}

\begin{theorem}\label{thm:rw_sym}
Consider the symmetric setup, i.e., $r_{*} \ge 2$. Let $\epsilon \in (0,1/2)$ be a constant, and  assume
\eqref{eq: bounded moment} holds for some $q$ satisfying
\begin{equation}\label{eq: q_cond symmetric}
 q >\frac{1}{\epsilon} \quad \text{ and } \quad q \geq 3. 
\end{equation}
Then as $b \to \infty$,
$$\expt (T_b) = \frac{b}{\mu_1} + \frac{h_{[r_*]}\sqrt{b}}{\mu_1^{3/2}} + O(b^{1/4+\epsilon/2}).$$
\end{theorem}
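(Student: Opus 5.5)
We will follow the template of the proof of Theorem \ref{thm: rw_asymm} and invoke Theorem~4.2 of \cite{nagai2006nonlinear}, but with a different decomposition of $Z_n := \min_{j\in[d]}\bd{R}_{n,j}$ into a random walk plus a perturbation. Since $\mu_1=\cdots=\mu_{r_*}$, we write
\begin{equation*}
Z_n = S_n + \xi_n,\qquad S_n := n\mu_1,\qquad \xi_n := \min_{j\in[d]}\bigl(\bd{R}_{n,j}-n\mu_1\bigr).
\end{equation*}
The stopping time is then $T_b=\inf\{n: n\mu_1+\xi_n\ge b\}$. Because $S_n$ is deterministic, there is no overshoot contribution from the walk. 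The key structure is that for $j\le r_*$ the term $\bd{R}_{n,j}-n\mu_1=\sum_{t\le n}\tilde{\bd{V}}_{t,j}$ is a centered walk. For $j>r_*$ the term equals the centered walk plus $n(\mu_j-\mu_1)$, which has positive drift, so with overwhelming probability these components do not achieve the minimum for large $n$. Hence $\xi_n\approx -\max_{j\le r_*}\bigl(-\sum_{t\le n}\tilde{\bd{V}}_{t,j}\bigr)$, which is of order $\sqrt n$. By the multivariate CLT and uniform integrability, $\expt[\xi_n]=-\sqrt n\,h_{[r_*]}+o(\sqrt n)$, using the symmetry $-Y\overset{d}{=}Y$.

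\textbf{Step 1 (regularity).} We verify that $\{\xi_n\}$ is $\rho$-regular in the sense of Definition \ref{def: rho regular}, with parameter $\alpha$ close to $1/2$ rather than the $1$-regularity used in the asymmetric case. We need a slowly changing decomposition: increments $\xi_{n+k}-\xi_n$ for $k\le n^{\alpha}$ should be small relative to $n^{\alpha}$, plus maximal-inequality tail bounds. We will get these from Doob/Rosenthal inequalities with $q$ moments, i.e. $\prob(\max_{k\le m}|\sum\tilde{\bd{V}}|>x)\le C m^{q/2}x^{-q}$. The requirement $q>1/\epsilon$ enters here. Choosing the window exponent $\alpha=1/2+\epsilon$, or a comparable choice, should make the tail terms summable. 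We then apply Theorem~4.2 of \cite{nagai2006nonlinear} to obtain
\begin{equation*}
\mu_1\expt(T_b)=b-\expt(\zeta_{n_b})+O\bigl(b^{1/4+\epsilon/2}\bigr),\qquad n_b=\lfloor b/\mu_1\rfloor .
\end{equation*}
Here the error term is $O(b^{1/4+\epsilon/2})$ rather than $O(1)$, reflecting the square-root size of the perturbation: the fluctuation $\xi$ over a time window of length about $\sqrt b$ around $n_b$ is of order $b^{1/4}$.

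\textbf{Step 2 (the mean).} We show $\expt(\zeta_{n_b})=-h_{[r_*]}\sqrt{n_b}+O(b^{1/4+\epsilon/2})$, where $\zeta_n$ is the truncation of $\xi_n$ at $\pm\theta n^{\alpha}$. First, truncation changes the mean by a negligible amount thanks to the $q$-th moment bounds. Second, the components $j>r_*$ contribute only $O(n^{-q/2+1})$ in expectation by Markov's inequality. Third, we replace the scaled maximum of the centered walk by the Gaussian maximum. For this we use a rate in the multivariate CLT for the expectation of the Lipschitz functional $\max$: either a Wasserstein-1 bound of order $n^{-1/2}$ under third moments (here $q\ge3$ is used) or a coupling (KMT/Sakhanenko in multiple dimensions). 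This gives an error $O(1)$ relative to $\sqrt n$. Finally, since $\sqrt{n_b}=\sqrt{b/\mu_1}+O(b^{-1/2})$, dividing by $\mu_1$ yields the $h_{[r_*]}\sqrt b/\mu_1^{3/2}$ term.

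\textbf{Main obstacle.} The main obstacle is Step 1: checking the hypotheses of Nagai's theorem for a perturbation that is not slowly changing in the classical $o(\sqrt n)$ sense. Zhang's condition fails here, as noted in Remark \ref{rk:draglin_issue}. We will therefore work directly with Nagai's $\rho$-regularity. We will decompose $\xi_{n+k}-\xi_n$ into a part measurable with respect to the first $n$ observations plus a part controlled by $\max_{j}|\sum_{t=n+1}^{n+k}\tilde{\bd{V}}_{t,j}|$. The exponents will be tuned so that the resulting remainder is exactly $b^{1/4+\epsilon/2}$.
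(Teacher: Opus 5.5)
Your proposal matches the paper's proof: the same decomposition $S_n=n\mu_1$, $\xi_n=\min_j(\bd{R}_{n,j}-n\mu_1)$, $n^{\alpha/2}$-regularity with $\alpha=1/2+\epsilon$ and $p=1$ via Rosenthal-type maximal bounds (with $q>1/\epsilon$ giving summability), Theorem~4.2 of \cite{nagai2006nonlinear}, and a third-moment multivariate normal approximation for the Lipschitz functional $\max$ yielding $\expt(\zeta_{n_b})=-h_{[r_*]}\sqrt{n_b}+O(1)$. One detail for when you write Step 2 out: $\bSigma_{[r_*]}$ can be singular (e.g.\ for $m_0\ge 2$ in the GMR example), so, as the paper does, project onto its column space before applying the Wasserstein/Lipschitz CLT bound.
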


\begin{proof}
We apply Theorem 4.2 in \cite{nagai2006nonlinear}. We denote by $Z_n = \min_{j\in[d]}\bd{R}_{n,j}$ for $n\ge 1$, and we rewrite $\{Z_n: n \ge 1\}$ as a perturbed random walk:
\begin{equation}\label{eq: a perturbed random walk symmetric}
Z_n = S_n + \xi_n, \text{ where }
S_n := n \mu_1,\ \xi_n := \min\{\bd{R}_{n,1} -  n\mu_1, \cdots, \bd{R}_{n,d} - n\mu_1\}
\end{equation}
Then $T_b$ in \eqref{eq: T_b} can be written as
$$
T_b = \inf\{n\geq 1: Z_n \geq b\}=\inf\{n \geq 1: S_n + \xi_n \geq b\}.
$$
Lemma \ref{lemma: rho regular in zhang 2006 symmetric} shows that the process $\{\xi_n:n\ge 1\}$ is $n^{\alpha/2}$-regular with parameters $\mu = \mu_1$, $\alpha= 1/2+\epsilon$, and $p=1$, and  constants
\begin{equation*}
\delta_0 = \frac{1}{2},\;\; \theta = \frac{\mu}{2},\;\;   \omega_0 = \frac{\mu}{4},\;\;   K = 3,\;\;  \theta^* = \frac{\mu}{2}.
\end{equation*}
See Definition \ref{def: rho regular}. The verification of (A.16) in \cite{dragalin2000multihypothesis} shows that (4.2) in \cite{nagai2006nonlinear} holds with $\delta_0 = 1/2$. Furthermore, since $S_n = n\mu_1$, this is a deterministic random walk with increment $\mu_1$.

Thus, due to Theorem 4.2 in \cite{nagai2006nonlinear}, we have
$$\mu_1\expt (T_b) = b -\expt(\zeta_{n_b}) + O(b^{\alpha/2}),$$
where $n_b := \lfloor b/\mu_1 \rfloor$ and $\zeta_n := (\xi_n \wedge \theta n^{\alpha}) \vee (-\theta^* n^{\alpha})$ for $n \geq 1$. Moreover, Lemma \ref{lemma: rho regular in zhang 2006 symmetric} shows that $\expt(\zeta_{n_b}) = -h_{[r_*]}\sqrt{n_b} + O(1)$ as $b \to \infty$. The proof is complete.
\end{proof}

\subsection{Supporting Lemmas}
We consider a function $\rho(\cdot): \{1,2,\cdots\}\mapsto[0,\infty)$, such that
\begin{equation*}
1\le \rho(x)=o(x) \text{ as }x\rightarrow\infty,\quad \sup_{x\ge 1}\sup_{x\le t\le 3x}\rho(t)/\rho(2x)<\infty.
\end{equation*}
We recall the definition of $\rho$-regularity for a stochastic process $\{\xi_n \in \mathbb{R} : n \ge 1\}$ from \cite{nagai2006nonlinear}.

\begin{definition}\label{def: rho regular}
$\{\xi_n\in\mR:n\ge 1\}$ is called $\rho$-regular with parameters $\mu > 0$, $1/2<\alpha\le 1$ and $p\ge 1$, and constants 
\begin{equation*}
\delta_0, \theta>0,\;\;\theta < \mu \;\text{ for } \alpha=1, K>0, \omega_0>0, 0<\theta^*<K\mu,
\end{equation*}
if the following holds:
\begin{align*}
&\prob(\max_{\delta_0n<k\le n}\xi_k>\theta n^{\alpha}) = o(\rho(n)/n)^p,
\\&\sum_{k\ge n+Kn^{\alpha}}k^{p-1}\prob(\xi_k\le -(k-n)\mu +\omega_0k^{\alpha})=o(\rho(n))^p,
\\&\left\{\max_{0\le k\le Mn^{\alpha}} \left(\frac{|\zeta_n-\xi_{n+k}|\wedge n^{\alpha}}{\rho(n)}\right)^p: n\ge 1\right\} \text{ is uniformly integrable for all } M > 0,
\end{align*}
where we define
\begin{equation}\label{eq: zeta}
\zeta_n := (\xi_n\wedge\theta n^{\alpha})\vee (-\theta^*n^{\alpha}) = 
\begin{cases}
\theta n^{\alpha} &\text{ for }\xi_n>\theta n^{\alpha}\\
\xi_n &\text{ for }-\theta^* n^{\alpha}\le \xi_n\le \theta n^{\alpha}\\
-\theta^* n^{\alpha} &\text{ for }\xi_n<-\theta^* n^{\alpha}
\end{cases}.
\end{equation}
\end{definition}
\begin{remark}
For the second condition, since $(k-n)/k^{\alpha}$ is increasing in $k$ for $k \ge n + K n^{\alpha}$ and $1/2< \alpha\le 1$, we have
\begin{equation}\label{eq: second condition remark}
-(k-n)\mu + \omega_0 k^{\alpha} \le \left(- \frac{K\mu}{(K+1)^{\alpha}} + \omega_0\right) k^{\alpha}.     
\end{equation}    
\end{remark}

For $p>0$, we denote the $\ell_p$ norm of a random variable $X\in\mR$ by  $\|X\|_p = \left[\expt(|X|^p)\right]^{1/p}$. For $a,b \in \mR$, let $a\vee b = \max\{a,b\}$.

\begin{lemma}\label{lemma: rho regular in zhang 2006 asymmetric}
Consider the asymmetric setup, i.e., $r_{*}=1$. Assume \eqref{eq: bounded moment} holds for some $q > 2$, and let $\alpha^*$ be any constant such that $\alpha^* \in (1/2,q/4)$.  For each integer $n \geq 1$, define
\begin{equation*}
\rho(n)=1. 
\end{equation*}
Then, the following holds for $\{\xi_n:n\ge 1\}$ defined in \eqref{eq: a perturbed random walk asymmetric} and $\{\zeta_n: n \geq 1\}$ in \eqref{eq: zeta}:
\begin{enumerate}
\item $\{\xi_n:n\ge 1\}$ is $\rho$-regular with parameters $\mu = \mu_1$, $\alpha= \alpha^*$, and $p=1$ and constants
\begin{equation}\label{eq: verify constants asymmetric}
\delta_0 = \frac{1}{2},\;\; \theta = \frac{\mu}{2},\;\;   \omega_0 = \frac{\mu}{4},\;\;   K = 3,\;\;  \theta^* = \frac{\mu}{2}.
\end{equation}
\item $\expt(\zeta_n) = o(1)$ as $n\rightarrow\infty$.  
\end{enumerate}

\end{lemma}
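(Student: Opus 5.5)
We verify the three conditions of Definition \ref{def: rho regular} with $\rho\equiv 1$, $p=1$, $\mu=\mu_1$, and the constants in \eqref{eq: verify constants asymmetric}. Then we handle part 2 by a direct tail estimate. Write $\bd{D}_{n,j}:=\bd{R}_{n,j}-\bd{R}_{n,1}$ for $j=2,\ldots,d$. In the asymmetric setup this is a random walk with drift $\eta_j:=\mu_j-\mu_1>0$ and centered increments having finite $q$-th moment, $q>2$. Let $\eta:=\min_{j\ge2}\eta_j>0$. Since $\xi_n=\min\{0,\bd{D}_{n,2},\ldots,\bd{D}_{n,d}\}\le 0$, the first condition is trivial: $\prob(\max_{n/2<k\le n}\xi_k>\theta n^{\alpha})=0$.

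For the second condition, use \eqref{eq: second condition remark} with $K=3$ and $\omega_0=\mu/4$. The right-hand side is at most $-\gamma k^{\alpha}$ with $\gamma:=3\mu/4^{\alpha}-\mu/4\ge \mu/2>0$, because $4^{\alpha}\le 4$. By a union bound over $j$, it therefore suffices to show
\[
\sum_{k\ge n+3n^{\alpha}}\prob(\bd{D}_{k,j}\le-\gamma k^{\alpha})=o(1).
\]
Since $\bd{D}_{k,j}=k\eta_j+(\text{centered sum})$, we have $\prob(\bd{D}_{k,j}\le -\gamma k^\alpha)\le \prob(|\text{centered sum}|\ge k\eta_j)$. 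By Markov's inequality with the $q$-th moment and Rosenthal/Burkholder, this is $O(k^{q/2}/k^{q})=O(k^{-q/2})$, which is summable since $q>2$. Hence the tail sum over $k\ge n$ tends to zero.

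The third condition is the main obstacle. We need uniform integrability of $\max_{0\le k\le Mn^{\alpha}}|\zeta_n-\xi_{n+k}|\wedge n^{\alpha}$. First, $|\zeta_n-\xi_{n+k}|\le |\zeta_n-\xi_n|+|\xi_n|+|\xi_{n+k}|$, and $|\zeta_n-\xi_n|\le|\xi_n|$ because $\xi_n\le0$ and clipping moves $\xi_n$ toward $0$. So it suffices to bound $\expt[\max_{n\le m\le n+Mn^\alpha}|\xi_m|^{r}]$ uniformly in $n$ for some $r>1$. Use $|\xi_m|\le\sum_j(\bd{D}_{m,j})^-$ and $\sup_{m\ge n}(\bd{D}_{m,j})^-\le \sup_{m\ge 1}(\bd{D}_{m,j})^-$. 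The quantity $\sup_m(-\bd{D}_{m,j})^+$ is the all-time maximum of a negatively drifted walk. Since the increments have finite $q$-th moment with $q>2$, Kiefer–Wolfowitz gives that this maximum has finite moment of order $q-1>1$. This yields domination by a single integrable variable with finite $(q-1)$-th moment, hence uniform integrability, with no dependence on $M$. If one prefers to avoid this citation, the bound $\prob(\sup_{m\ge n}(\bd{D}_{m,j})^->x)\le\sum_m\prob(\bd{D}_{m,j}\le -x)$, estimated by Markov plus Rosenthal as before, also gives a tail of order $x^{-(q-1)}$, summable to the needed moment.

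For part 2, note that $-\theta^*n^\alpha\le\zeta_n\le0$ gives $|\zeta_n|\le|\xi_n|\le\sum_j(\bd{D}_{n,j})^-$. Then
\[
\expt|\zeta_n|\le\sum_j\expt[(\bd{D}_{n,j})^-]\le\sum_j\expt\bigl[|\bd{D}_{n,j}-n\eta_j|\,\idf\{|\bd{D}_{n,j}-n\eta_j|\ge n\eta_j\}\bigr].
\]
By Hölder and Markov, together with $\|\bd{D}_{n,j}-n\eta_j\|_q=O(\sqrt n)$, this is $O(\sqrt n\cdot (\sqrt n/n)^{q-1})=O(n^{1-q/2})\to0$ for $q>2$. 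This proves $\expt(\zeta_n)=o(1)$.
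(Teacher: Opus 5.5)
Your argument is correct. For the first two conditions it coincides with the paper's: $\xi_n\le 0$ makes the first condition trivial, and for the second a union bound with Markov and Rosenthal (i.e.\ Corollary \ref{cor: R_nj-R_n1<0}) gives $O(k^{-q/2})$, which is summable for $q>2$.

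You genuinely differ from the paper in the third condition and in part~2.
\begin{itemize}
\item The paper works with indicators. It bounds $|\zeta_n-\xi_{n+k}|\wedge n^{\alpha}$ by $n^{\alpha}\idf\{\xi_n\neq0\}+n^{\alpha}\idf\{\xi_{n+k}\neq 0\text{ for some }1\le k\le Mn^{\alpha}\}$, whose expectation is $O((M+1)n^{2\alpha-q/2})\to 0$. Similarly it gets $\expt|\zeta_n|\le(\mu/2)n^{\alpha}\prob(\xi_n\neq 0)=O(n^{\alpha-q/2})$. This is self-contained, but it is exactly where the hypothesis $\alpha^*<q/4$ is used.
\item You instead use $|\zeta_n|\le|\xi_n|\le\sum_j(\bd{D}_{n,j})^-$. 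You then dominate the whole family by $3\sum_j\sup_{m\ge0}(-\bd{D}_{m,j})^+$, the all-time maxima of negatively drifted walks. These are integrable by Kiefer--Wolfowitz (equivalently Janson's Theorem~1, already cited as \cite{janson1986moments}) as soon as the increments have a finite second moment. Part~2 then follows from your H\"older truncation bound $O(n^{1-q/2})$.
\end{itemize}
So your proof never needs $\alpha^*<q/4$, only $\alpha^*\le 1$ and $q>2$. It also gives uniform integrability by domination, uniformly in $M$. The price is importing a fluctuation-theory result rather than relying only on Rosenthal's inequality.

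One caveat: your parenthetical fallback does not work as stated. Markov plus Rosenthal gives $\prob(\bd{D}_{m,j}\le -x)\le Cm^{q/2}(x+m\eta_j)^{-q}$. Summing this over all $m\ge1$ yields a tail of order $x^{1-q/2}$, not $x^{-(q-1)}$, and that tail is integrable only when $q>4$. The union bound over all $m$ is too lossy. For $q\in(2,4]$ you really need the Kiefer--Wolfowitz moment bound (a ladder-height argument). The alternative is to restrict to $n\le m\le n+Mn^{\alpha}$ as the paper does, which brings back the condition $\alpha<q/4$.
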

\begin{proof}
In this proof, we denote by $C$ a constant that does not depend on $n$, but may depend on 
$q$ and $\{\mu_j,\|\tilde{\bd{V}}_{1,j}\|_q: 1 \leq j \leq d\}$. 

\noindent\textbf{A Tail Probability Bound.}
By definition \eqref{eq: a perturbed random walk asymmetric}, $\xi_n \leq 0$ for $n \geq 1$. Furthermore,  due to union bound and Corollary \ref{cor: R_nj-R_n1<0}, there exists a constant $C > 0$, such that for $n \geq 1$,
\begin{equation}\label{eq: xi_n tail prob asymmetric}
\prob(\xi_n \neq 0) = \prob(\xi_n < 0) \le \sum_{2\le j\le d}\prob(\bd{R}_{n,j}-\bd{R}_{n,1}<0) \le Cdn^{-q/2}.
\end{equation}

\noindent \textbf{First condition in Definition \ref{def: rho regular}.}
Due to definition \eqref{eq: a perturbed random walk asymmetric}, we have $\max_{1\le k\le n}\xi_k \leq 0$, which implies that
\begin{align*}
\prob\left(\max_{\delta_0 n \leq k \leq n}\xi_k \ge \theta n^{\alpha}\right) = 0.
\end{align*}

\noindent\textbf{Second condition in Definition \ref{def: rho regular}.} 
Due to \eqref{eq: second condition remark} and \eqref{eq: verify constants asymmetric}, we have
$$-(k-n)\mu +\omega_0k^{\alpha} \leq -\frac{\mu}{2} k^{\alpha}<0.$$
Then by \eqref{eq: xi_n tail prob asymmetric}, there exists a constant $C > 0$  such that for each $k \ge n + K n^{\alpha}$, we have,
\begin{align*}
\prob(\xi_k\le -(k-n)\mu +\omega_0k^{\alpha}) &\le \prob\left( \xi_k < 0 \right)\le Cdk^{-q/2}.
\end{align*}
Thus, for $p=1$ we have
$$\sum_{k=n + K n^{\alpha}}^{\infty} k^{p-1} \prob(\xi_k\le -(k-n)\mu +\omega_0k^{\alpha}) \le \sum_{k=n + K n^{\alpha}} Cd k^{-q/2} = o(1),$$
where for the last inequality $q>2$.

\noindent\textbf{Third condition in Definition \ref{def: rho regular}.}
First, due to definition \eqref{eq: a perturbed random walk asymmetric}, note that
\begin{equation*}
\begin{aligned}
|\zeta_n - \xi_{n+k}| &\leq |\zeta_n - \xi_n| + |\xi_n - \xi_{n+k}|
\leq |\zeta_n -\xi_n|\idf\{\xi_n \neq 0\}   + |\xi_n - \xi_{n+k}|\idf\left\{\bigcup_{k=1}^{Mn^{\alpha}}\{\xi_{n+k} \neq 0\}\right\}.
\end{aligned}  
\end{equation*}
As a result, we have
\begin{equation*}
\begin{aligned}
&|\zeta_n - \xi_{n+k}|\wedge n^{\alpha} \le a_n+b_n, \text{ where we define }
\\& a_n :=  n^{\alpha} \idf\{\xi_n \neq 0\}, \quad \ b_n = n^{\alpha} \idf\left\{\bigcup_{k=1}^{Mn^{\alpha}}\{\xi_{n+k} \neq 0\}\right\}.    
\end{aligned}
\end{equation*}
We show that $\{a_n, b_n: n\ge 1\}$ are uniformly integrable. Due to Theorem 4.6.3 in \cite{durrett2019probability}, it is sufficient to prove $\expt[a_n] \to 0$ and $\expt[b_n] \to 0$. 

By \eqref{eq: xi_n tail prob asymmetric},  there exists a constant $C > 0$ such that 
\begin{align*}
&\expt( a_n )\le n^{\alpha}\prob(\xi_n \neq 0) \le Cdn^{\alpha-q/2}\rightarrow 0 \textup{ as } n\rightarrow\infty,
\\&\expt( b_n )\le n^{\alpha}\sum_{k=1}^{Mn^{\alpha}}\prob(\xi_{n+k}\neq 0) \le C(M+1)dn^{2\alpha-q/2}\rightarrow 0 \textup{ as } n\rightarrow\infty,
\end{align*}
where we use the fact that $\alpha = \alpha^* < q/4$. Thus, the proof for part 1 is complete. 

\medskip
\noindent\textbf{Part 2. Expectation of $\zeta_n$.} 
Due to definition \eqref{eq: a perturbed random walk asymmetric} and \eqref{eq: zeta}, we have
$$|\zeta_n|\le |\xi_n|\wedge \frac{\mu}{2} n^{\alpha} \le \frac{\mu}{2} n^{\alpha}\idf\{\xi_n\neq 0\}.$$
Thus, by \eqref{eq: xi_n tail prob asymmetric}, there exists a constant $C>0$, such that 
\begin{align*}
\expt(|\zeta_n|) \le \frac{\mu}{2} n^{\alpha}\prob(\xi_n\neq 0)\le Cdn^{\alpha-q/2} = o(1),
\end{align*}
where for the last inequality we use the fact that $\alpha = \alpha^* < q/4 < q/2$.
Then the proof is complete.
\end{proof}

\begin{lemma}\label{lemma: rho regular in zhang 2006 symmetric}
Consider the symmetric case, i.e., $r_{*}\ge 2$. 
 Let $\epsilon \in (0,1/2)$ be a constant, and  assume
\eqref{eq: bounded moment} holds for some $q$ satisfying \eqref{eq: q_cond symmetric}. Define
\begin{equation*}
\alpha^* := 1/2 + \epsilon, \quad \text{ and } \quad
\rho(n):=n^{\alpha^*/2} \; \text{ for integer } n \geq 1.
\end{equation*}
Then, the following hold for $\{\xi_n:n\ge 1\}$ defined in \eqref{eq: a perturbed random walk symmetric} and $\zeta_n$ in \eqref{eq: zeta}:
\begin{enumerate}
\item $\{\xi_n:n\ge 1\}$ is $\rho$-regular with parameters $\mu = \mu_1$, $\alpha= \alpha^*$, and $p=1$, and constants
\begin{equation}\label{eq: verify constants symmetric}
\delta_0 = \frac{1}{2},\;\; \theta = \frac{\mu}{2},\;\;   \omega_0 = \frac{\mu}{4},\;\;   K = 3,\;\;  \theta^* = \frac{\mu}{2}.
\end{equation}
\item $\expt(\zeta_n) = -h_{[r_*]}\sqrt{n}+O(1) \text{ as } n\rightarrow\infty$, where $h_{[r_*]}$ is defined in \eqref{eq: h}.
\end{enumerate}
\end{lemma}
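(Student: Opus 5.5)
We have to verify the three conditions of Definition \ref{def: rho regular} for $\xi_n=\min_{j\in[d]}(\bd{R}_{n,j}-n\mu_1)$, and then compute $\expt(\zeta_n)$. The basic split is between the first $r_*$ coordinates and the rest. For $j\le r_*$ we have $\bd{R}_{n,j}-n\mu_1=\sum_{t\le n}\tilde{\bd{V}}_{t,j}$, a centered walk. For $j>r_*$ this equals a centered walk plus $n(\mu_j-\mu_1)$, which has positive drift. Let $\eta_n:=\min_{j\le r_*}\sum_{t\le n}\tilde{\bd{V}}_{t,j}$. Then $\xi_n=\eta_n$ except on an event $E_n$ where some coordinate $j>r_*$ falls below the centered part. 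Since $\mu_j-\mu_1>0$, Corollary \ref{cor: R_nj-R_n1<0}, the union bound and Rosenthal/Marcinkiewicz--Zygmund give $\prob(E_n)\le Cn^{-q/2}$.

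\textbf{Condition 1.} I would bound $\prob(\max_{n/2<k\le n}\xi_k>\theta n^{\alpha})$ by $\prob(\max_{k\le n}\sum_{t\le k}\tilde{\bd{V}}_{t,1}>\theta n^{\alpha})$, using $\xi_k\le \sum_{t\le k}\tilde{\bd V}_{t,1}$. Doob's maximal inequality in $L^q$ then gives the bound $Cn^{q/2-q\alpha}=Cn^{-q\epsilon}$. We need this to be $o(\rho(n)/n)=o(n^{\alpha/2-1})$, and this follows from $q\epsilon>1$, which is exactly the hypothesis $q>1/\epsilon$.

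\textbf{Condition 2.} By \eqref{eq: second condition remark} with $K=3$ and $\omega_0=\mu/4$, the event $\{\xi_k\le -(k-n)\mu+\omega_0 k^\alpha\}$ is contained in $\{\xi_k\le -c k^{\alpha}\}$ for some $c>0$. A union bound over the $d$ coordinates and the $L^q$ moment bound give probability at most $Ck^{-q\epsilon}$. Summing over $k\ge n$ yields $O(n^{1-q\epsilon})=o(1)$, which is within the required $o(n^{\alpha/2})$.

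\textbf{Condition 3.} This is the main obstacle. Write $|\zeta_n-\xi_{n+k}|\le|\zeta_n-\xi_n|+|\xi_n-\xi_{n+k}|$. The first term is nonzero only when $|\xi_n|\gtrsim n^\alpha$, and is handled as above. For the second term, $\min$ is $1$-Lipschitz in sup-norm, so $|\xi_n-\xi_{n+k}|\le \max_j|\sum_{t=n+1}^{n+k}\tilde{\bd V}_{t,j}|+k(\mu_d-\mu_1)$. The drift contribution must be avoided: on $E_n^c\cap\bigcap_{k}E_{n+k}^c$ both minima are attained on $j\le r_*$, where there is no drift. Off these events the indicator has probability $O(n^{\alpha-q/2})$ times $n^\alpha$, which is $o(1)$ since $2\alpha<q/2$ follows from $q\ge3$. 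On the good event I get $\max_{j\le r_*}\max_{k\le Mn^\alpha}|\sum_{t=n+1}^{n+k}\tilde{\bd V}_{t,j}|/n^{\alpha/2}$. Doob's inequality bounds the $L^2$ norm of this (indeed the $L^q$ norm with $q>2$) uniformly in $n$, and that gives uniform integrability.

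\textbf{Part 2.} We have $\expt\zeta_n=\expt\xi_n+O(\expt|\xi_n|\idf\{|\xi_n|>\theta n^\alpha\})$, and by Hölder with the $L^q$ bounds the error is $O(n^{1/2}n^{-(q-1)\epsilon})=O(1)$ once $q\epsilon\ge1/2$. Similarly $\expt\xi_n=\expt\eta_n+O(1)$ by Hölder on $E_n$. It remains to show $\expt\eta_n=-\sqrt n\,\expt\max_j Y_j+O(1)$, using $-\min(-Y)\overset{d}{=}\max Y$. The rate requirement here is the crux. A plain CLT with uniform integrability gives only $o(\sqrt n)$. To get $O(1)$ I would use a multivariate Berry--Esseen/Wasserstein-1 bound of order $n^{-1/2}$ under third moments ($q\ge3$), applied to the $1$-Lipschitz function $\min$. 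This gives $|\expt\eta_n/\sqrt n-\expt\min_j Y_j|=O(n^{-1/2})$, e.g. via Bonis' or Zhai's Wasserstein CLT, possibly with a degenerate covariance handled by projecting. If that fails, I would settle for $O(n^{1/4+\epsilon/2})$, which is still enough for Theorem \ref{thm:rw_sym}.
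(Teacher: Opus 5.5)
Your proposal is correct and follows essentially the paper's proof. It uses the same split into the $r_*$ zero-drift coordinates and the drifting ones (the paper's $\hat\xi_n,\check\xi_n$), Rosenthal/Markov tail bounds for the first two conditions, and handles the third condition by splitting on the event that a drifting coordinate attains the minimum. Part~2 likewise goes through an $O(n^{-1/2})$ multivariate normal approximation for Lipschitz functionals after projecting out the null directions of the covariance. The paper uses Rai\v{c}'s third-moment bound for this; Zhai's $W_2$ result assumes bounded summands and carries a $\log n$ factor, so it would not give $O(1)$.

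One small slip is in condition~3. The bad-event term must be divided by $\rho(n)=n^{\alpha/2}$, which gives $O(n^{3\alpha/2-q/2})$, so what is needed is $q>3\alpha$; this does follow from $q\ge 3$ and $\epsilon<1/2$. The inequality $2\alpha<q/2$ that you invoke instead can fail under the hypotheses, e.g.\ for $\epsilon=0.45$, $q=3$.
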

\begin{proof}
In this proof, we denote by $C$ a constant that does not depend on $n$, but may depend on 
$q$, $\{\mu_j,\|\tilde{\bd{V}}_{1,j}\|_q: 1 \leq j \leq d\}$. Since $\epsilon \in (0,1/2)$, due to the definition in \eqref{eq: q_cond symmetric}, we have 
\begin{equation}\label{aux:q_sym_conds}
q \geq 3,\qquad q > \frac{1}{\epsilon},\qquad
q> \frac{3}{4\epsilon} - \frac{1}{2},\qquad q > \frac{1}{2\epsilon} + \frac{1}{2},\quad \text{ and } \quad q > \frac{3}{2} + 3\epsilon= 3 \alpha.     
\end{equation}

\noindent\textbf{Tail Probability Bounds.}
For each $n \geq 1$, we define
\begin{equation}\label{eq: tilde xi}
\begin{aligned}
    \hat{\xi}_n:= \min\{\bd{R}_{n,1} -  n\mu_1, \cdots, \bd{R}_{n,r_{*}} - n\mu_1\},\quad
    \check{\xi}_n := \min\{\bd{R}_{n,r_{*}+1} -  n\mu_1, \cdots, \bd{R}_{n,d} - n\mu_1\},
\end{aligned}   
\end{equation}
where we use the convention $\min\{\emptyset\} := +\infty$. Then, by definition \eqref{eq: a perturbed random walk symmetric}, we have $\xi_n = \min\left\{ \hat{\xi}_n,\; \check{\xi}_n\right\}$. Due to union bound and Corollary \ref{cor: R_nj-R_n1<0},  there exists a constant $C > 0$, such that for $n \geq 1$ we have
\begin{align} \label{eq: hat vs check xi_n}
    \prob(\check{\xi}_n < \hat{\xi}_n) = \sum_{j=r_{*}+1}^{d} \sum_{i = 1}^{r_{*}} \prob\left(\bd{R}_{n,j} - \bd{R}_{n,i} < 0\right) \leq Cr_{*}(d-r_{*})n^{-q/2}.
\end{align}
Since $\mu_1 = \cdots = \mu_{r_{*}}$ in definition \eqref{eq: tilde xi}, for each $1 \leq k \leq n$,
\begin{align*}
|\hat{\xi}_k|&\le\max_{j\in[r_{*}]} |\bd{R}_{k,j}-k\mu_j| \le \sum_{j\in[r_{*}]}\left(\max_{1\le k\le n}|\bd{R}_{k,j}-k\mu_j|\right).
\end{align*}
Thus, due to triangle inequality and Lemma \ref{lemma: bounded moment}, there exists a constant $C > 0$, such that we have
\begin{equation}\label{eq: hat xi_n expectation symmetric}
\left\|\max_{1\le k\le n}|\hat{\xi}_k|\right\|_{q} \le \sum_{j\in[r_{*}]}\left\|\max_{1\le k\le n}|\bd{R}_{k,j}-k\mu_j|\right\|_{q}\le Cr_{*} n^{1/2},
\end{equation}
and by Markov's inequality for any $\delta>0$ we have
\begin{equation}\label{eq: hat xi_n tail prob symmetric}
\prob\left(\max_{1\le k\le n}|\hat{\xi}_k|\ge \delta\right)\le \left\|\max_{1\le k\le n}|\hat{\xi}_k|\right\|_{q}^q/\delta^q \le Cr_{*}^q n^{q/2}/\delta^q.  
\end{equation}

\noindent\textbf{First condition in Definition \ref{def: rho regular}.}
Since $\xi_n = \min\left\{ \hat{\xi}_n,\; \check{\xi}_n\right\}$ and due to \eqref{eq: hat xi_n tail prob symmetric}, there exists a constant $C>0$ such that
\begin{align*}
n^{-(\alpha/2-1)}\prob\left(\max_{\delta_0 n \leq k \leq n}\xi_k \ge \theta n^{\alpha}\right) \le
n^{-(\alpha/2-1)}\prob\left(\max_{1 \leq k \leq n}|\hat{\xi}_k| \ge \theta n^{\alpha}\right) 
\le Cr_{*}^qn^{-(\alpha/2-1)-(\alpha-1/2)q}.
\end{align*}
Since $q>(1-\alpha/2)/(\alpha-1/2) = 3/(4\epsilon) - 1/2$ due to \eqref{aux:q_sym_conds}, the last term is $o(1)$, and thus
$$
\prob\left(\max_{\delta_0 n \leq k \leq n}\xi_k \ge \theta n^{\alpha}\right) = o((n^{\alpha/2}/n)^p),
$$
where we recall that $p=1$. 

\noindent\textbf{Second condition in Definition \ref{def: rho regular}.}   Due to \eqref{eq: second condition remark} and \eqref{eq: verify constants symmetric}, we have
$$-(k-n)\mu +\omega_0k^{\alpha} \leq -\frac{\mu}{2} k^{\alpha}.$$
Further, due to union bound, \eqref{eq: hat vs check xi_n} and \eqref{eq: hat xi_n tail prob symmetric}, there exists a constant $C > 0$, such that for $k \ge n + K n^{\alpha}$,
\begin{align*}
&\prob(\xi_k\le -(k-n)\mu +\omega_0k^{\alpha})\le \prob(\xi_k\le -\frac{\mu}{2} k^{\alpha}) \le \prob(|\hat{\xi}_k|\ge \frac{\mu}{2} k^{\alpha}) + \prob(\check{\xi}_k < \hat{\xi}_k)
\\&\le Cr_{*}^qk^{-(\alpha-1/2)q} + Cr_{*}(d-r_{*})k^{-q/2}
\le Cr_{*}^q(d-r_{*})k^{-(\alpha-1/2)q},
\end{align*}
which, since $p=1$, implies that 
\begin{align*}
\sum_{k=n + K n^{\alpha}}^{\infty} k^{p-1} \prob(\xi_k\le -(k-n)\mu +\omega_0k^{\alpha})
\le Cr_{*}^q(d-r_{*})\sum_{k=n + K n^{\alpha}}^{\infty}k^{-(\alpha-1/2)q}.
\end{align*}
Since $(\alpha-1/2)q  = \epsilon q > \epsilon (1/\epsilon) = 1$ due to \eqref{aux:q_sym_conds}, we have
$$
\sum_{k=n + K n^{\alpha}}^{\infty} k^{p-1} \prob(\xi_k\le -(k-n)\mu +\omega_0k^{\alpha}) = o(1) = o((n^{\alpha/2})^p).
$$

\noindent\textbf{Third condition in Definition \ref{def: rho regular}.}
Note that on the event $\{\check{\xi}_n \ge \hat{\xi}_n\}\cap\{\check{\xi}_{n+k} \ge \hat{\xi}_{n+k}\}$ we have  $\xi_{n+k} = \hat{\xi}_{n+k}$ and $\zeta_n = \hat{\zeta}_n$, where we denote by $\hat{\zeta}_n := (\hat{\xi}_n\wedge\theta n^{\alpha})\vee (-\theta^*n^{\alpha})$.

Due to the triangle inequality and union bound, it follows that
\begin{align*}
&|\zeta_n - \xi_{n+k}|  \leq |\hat{\zeta}_n - \hat{\xi}_n| + |\hat{\xi}_n - \hat{\xi}_{n+k}| + (|\zeta_n - \xi_{n+k}|)
\left(\mathbbm{1}\{\check{\xi}_n < \hat{\xi}_n\} + \mathbbm{1}\{\check{\xi}_{n+k} < \hat{\xi}_{n+k}\}\right),
\end{align*}
As a result, for any $M > 0$,
\begin{align*}
\max_{1\le k\le Mn^{\alpha}}|\zeta_n - \xi_{n+k}|\wedge n^{\alpha} \le a_n+b_n+c_n,
\end{align*}
where we define
\begin{align*}
& a_n := n^{\alpha} \mathbbm{1}\{|\hat{\xi}_n| \geq (\mu/2) n^{\alpha}\},\qquad b_n := \max_{1\le k\le Mn^{\alpha},\ j \in [r_*]} |\bd{R}_{n+k,j}-(n+k)\mu_j - (\bd{R}_{n,j}-n\mu_j)|,
\\& c_n = \max_{1\le k\le Mn^{\alpha}}\left\{ n^{\alpha}\left(\mathbbm{1}\{\check{\xi}_n < \hat{\xi}_n\} + \mathbbm{1}\{\check{\xi}_{n+k} < \hat{\xi}_{n+k}\}\right)\right\}
\end{align*}

We first show that $\{a_n/n^{\alpha/2}:n\ge 1\}$ is uniformly integrable. Due to Theorem 4.6.3 in \cite{durrett2019probability}, it is sufficient to prove $\expt(a_n/n^{\alpha/2})\rightarrow 0 \text{ as } n\rightarrow\infty$.
Due to \eqref{eq: hat xi_n tail prob symmetric}, there exists a constant $C > 0$, such that we have
\begin{align*}
&\expt(|a_n/n^{\alpha/2}|)
= n^{\alpha/2} \prob\left(|\hat{\xi}_n| \geq (\mu/2) n^{\alpha}\right)
\le n^{\alpha/2} \frac{C r_*^q n^{q/2}}{((\mu/2) n^{\alpha})^q}
= C r_*^q n^{\alpha/2 + q/2 - \alpha q}.
\end{align*}
Since $q > (\alpha/2)/(\alpha-1/2) = 1/(4\epsilon) + 1/2$ due to \eqref{aux:q_sym_conds}, we have $\expt(a_n/n^{\alpha/2}) = o(1)$.

Next, we show that $\{b_n/n^{\alpha/2}:n\ge 1\}$ is uniformly integrable. Since the increments are i.i.d., for each $n \geq 1$
$$
b_n \text{ has the same distribution as } \max_{1\le k\le Mn^{\alpha}, \; j \in [r_*]} |\bd{R}_{n,j}-n\mu_j|.
$$
Then, due to \eqref{eq: hat xi_n expectation symmetric},  there exists a constant $C > 0$, such that for each $n \geq 1$,
\begin{align*}
&\expt[|b_n/n^{\alpha/2}|^q]=\left\|\max_{1\le k\le Mn^{\alpha}, \; j \in [r_*]} |\bd{R}_{k,j}-k\mu_j|\right\|_q^q/n^{\alpha q/2} \le Cr_{*}^q M^{q/2},
\end{align*}
which implies $\sup_{n \geq 1}\expt\left[\left(|b_n/n^{\alpha/2}|\right)^q\right] <  \infty$. Then, due to Theorem 4.6.2 in \cite{durrett2019probability}, $\{b_n/n^{\alpha/2}:n\ge 1\}$ is uniformly integrable.

Finally, we show that $\{c_n/n^{\alpha/2}:n\ge 1\}$ is uniformly integrable. Due to Theorem 4.6.3 in \cite{durrett2019probability}, it is sufficient to prove $\expt\left(c_n/n^{\alpha/2}\right) \to 0$.  Due to \eqref{eq: hat vs check xi_n} and union bound, there exists a constant $C>0$, such that 
\begin{align*}
\expt(c_n/n^{\alpha/2})\le \sum_{k=1}^{Mn^{\alpha}} n^{\alpha/2} \left[\prob\left(\check{\xi}_n < \hat{\xi}_n\right) + \prob\left(\check{\xi}_{n+k} < \hat{\xi}_{n+k}\right) \right] \le Cr_{*}(d-r_{*}) M n^{3\alpha/2-q/2} = o(1),
\end{align*}
where for the last inequality we use the fact that $q>3\alpha$ due to \eqref{aux:q_sym_conds}. The proof for part 1 is complete.

\smallskip
\noindent\textbf{Part 2. Expectation of $\zeta_n$.} Due to the definition of $\hat{\zeta}_n$ above, and since \eqref{eq: hat vs check xi_n} holds, there exists a constant $C>0$, such that  
\begin{align*}
    \left|\expt[\zeta_n] - \expt[\hat{\zeta}_n] \right| \leq \expt\left[\mu n^{\alpha} \idf\{\check{\xi}_n< \hat{\xi}_n\}\right] \le Cr_{*}(d-r_{*})n^{\alpha-q/2}.
\end{align*}
Since $q\ge 2\alpha$ due to \eqref{aux:q_sym_conds}, we have $\expt[\zeta_n]=\expt[\hat{\zeta}_n] + O(1)$.

Moreover, due to \eqref{eq: hat xi_n expectation symmetric}, there exists a constant $C>0$, such that  
\begin{align*}
\left|\expt(\hat{\zeta}_n-\hat{\xi}_n)\right| \le \expt\left[|\hat{\xi}_n|\idf\left\{|\hat{\xi}_n|\ge (\mu/2)n^{\alpha}\right\}\right]\le \frac{\expt|\hat{\xi}_n|^q}{[(\mu/2)n^{\alpha}]^{q-1}}\le C(r_*)^q n^{-(\alpha-1/2)q+\alpha}.
\end{align*}
Since $q\ge \alpha/(\alpha-1/2) = 1/(2\epsilon)+1$ due to \eqref{aux:q_sym_conds}, we have $\expt(\hat{\zeta}_n)= \expt(\hat{\xi}_n) + O(1)$.

Finally, recall $h_{[r_*]}$ in \eqref{eq: h}. Due to Lemma \ref{lemma: E(zeta)},   
\begin{align*}
\left|\expt(\hat{\xi}_n) - (-h_{[r_*]}\sqrt{n})\right| = O(1),
\end{align*}
which implies that $\expt[\zeta_n] = - h_{[r_*]}\sqrt{n} + O(1)$. The proof is complete.
\end{proof}

\subsection{Fundamental Result}
\begin{lemma}\label{lemma: bounded moment}
Assume \eqref{eq: bounded moment} holds with some $q\ge 2$. Then, there exists an absolute constant $C>0$, such that for each $j\in[d]$, we have
$$\left\|\max_{1 \leq k \leq n}|\bd{R}_{k,j}-k\mu_j|\right\|_q\le C    q \left\|\tilde{\bd{V}}_{1,j}\right\|_q n^{1/2}.$$
\end{lemma}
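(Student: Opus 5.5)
This is a maximal inequality for a centered i.i.d. random walk. I would obtain it by chaining Doob's $L^q$ maximal inequality with Rosenthal's (or the Burkholder) moment inequality. Fix $j\in[d]$ and write $M_k := \bd{R}_{k,j}-k\mu_j=\sum_{t=1}^k \tilde{\bd{V}}_{t,j}$ as in \eqref{aux:rw}. Since the $\tilde{\bd{V}}_{t,j}$ are i.i.d. with mean zero and finite $q$-th moment by \eqref{eq: bounded moment}, $\{M_k\}$ is a martingale with respect to its natural filtration, and $|M_k|$ is a nonnegative submartingale.

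First, Doob's $L^q$ maximal inequality for $q\ge 2>1$ gives
\[
\left\|\max_{1\le k\le n}|M_k|\right\|_q \le \frac{q}{q-1}\,\|M_n\|_q \le 2\,\|M_n\|_q .
\]
This reduces the claim to a bound on the endpoint $\|M_n\|_q$, and only a constant factor is lost.

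Second, I would bound $\|M_n\|_q$ using the Burkholder square-function inequality. Its constant is of order $q$ for $q\ge2$, and this is where the explicit linear factor $q$ comes from:
\[
\|M_n\|_q \le C_1 q \Bigl\|\Bigl(\sum_{t=1}^n \tilde{\bd{V}}_{t,j}^2\Bigr)^{1/2}\Bigr\|_q = C_1 q\,\Bigl\|\sum_{t=1}^n \tilde{\bd{V}}_{t,j}^2\Bigr\|_{q/2}^{1/2}.
\]
Since $q/2\ge1$, Minkowski's inequality in $L^{q/2}$ gives $\|\sum_t \tilde{\bd{V}}_{t,j}^2\|_{q/2}\le n\|\tilde{\bd{V}}_{1,j}^2\|_{q/2}=n\|\tilde{\bd{V}}_{1,j}\|_q^2$. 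Hence $\|M_n\|_q\le C_1 q\,\|\tilde{\bd{V}}_{1,j}\|_q\, n^{1/2}$. Combining this with the Doob step yields the statement with $C=2C_1$, which is an absolute constant independent of $j$, $n$ and the distribution.

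The main obstacle is only bookkeeping for the constant. The statement requires an absolute $C$ with the factor $q$ kept explicit, so I would cite a version of Burkholder's inequality (or Rosenthal's inequality, whose optimal constant is of order $q/\log q\le q$) with constant $O(q)$. If only a $q$-dependent constant is available, nothing downstream changes, because $q$ is fixed in every later application (Lemma \ref{lemma: rho regular in zhang 2006 symmetric} uses only $Cn^{1/2}$ with $C$ allowed to depend on $q$). An alternative to Burkholder that avoids heavy machinery is Rosenthal's inequality $\|M_n\|_q\lesssim q\bigl(\sqrt n\|\tilde{\bd{V}}_{1,j}\|_2+n^{1/q}\|\tilde{\bd{V}}_{1,j}\|_q\bigr)$, combined with $\|\cdot\|_2\le\|\cdot\|_q$ and $n^{1/q}\le n^{1/2}$ for $q\ge2$.
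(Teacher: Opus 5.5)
Your proof is correct, but it takes a different route from the paper. The paper has no Doob step. It cites a maximal form of Rosenthal's inequality (Theorem 1.5.13 of \cite{de2012decoupling}), which directly bounds $\|\max_{k\le n}\sum_{t\le k}\tilde{\bd{V}}_{t,j}\|_q$, and the same quantity for $-\tilde{\bd{V}}_{t,j}$, by $Cq(\|\sum_{t\le n}\tilde{\bd{V}}_{t,j}\|_2+\|\max_{t\le n}|\tilde{\bd{V}}_{t,j}||\|_q)$. It then bounds the two terms by $\|\tilde{\bd{V}}_{1,j}\|_2 n^{1/2}$ and $n^{1/q}\|\tilde{\bd{V}}_{1,j}\|_q\le n^{1/2}\|\tilde{\bd{V}}_{1,j}\|_q$, and recombines the two one-sided maxima into $\max_k|\bd{R}_{k,j}-k\mu_j|$ at the cost of a factor $2$.

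Your argument splits the work differently. Doob's $L^q$ inequality, with constant $q/(q-1)\le 2$, handles the maximum and both signs at once through the submartingale $|M_k|$. Burkholder's square-function inequality plus Minkowski in $L^{q/2}$ then gives $n^{1/2}\|\tilde{\bd{V}}_{1,j}\|_q$ in one line. You never pass through the two-term Rosenthal structure, whose finer second-moment information the paper discards anyway.

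What each approach buys:
\begin{itemize}
\item Your argument uses only martingale properties, so it applies unchanged to martingale-difference increments. The paper's cited maximal inequality is stated for independent summands, but it is a single citation that needs no separate maximal step.
\item The same endpoint estimate feeds Corollary \ref{cor: R_nj-R_n1<0}. Your Burkholder--Minkowski step, applied to $\tilde{\bd{V}}_{t,j}-\tilde{\bd{V}}_{t,i}$ and followed by Markov's inequality, yields that corollary equally well.
\end{itemize}

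The only point to pin down is a reference for an $O(q)$ Burkholder constant, for example Burkholder's sharp constant $q-1$ for $q\ge 2$, which you already flag. Your closing alternative, Doob combined with endpoint Rosenthal, is essentially the paper's argument with the maximum handled separately.
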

\begin{proof} 
Due to Rosenthal's inequality (see e.g. Theorem 1.5.13 of \cite{de2012decoupling}), there exists an absolute constant $C>0$ such that for each $j\in[d]$ and $q\ge 2$ we have
$$\left\|\max_{1 \leq k \leq n}\sum_{t=1}^{k} \tilde{\bd{V}}_{t,j}\right\|_q\vee\left\|\max_{1 \leq k \leq n}\sum_{t=1}^{k} (-\tilde{\bd{V}}_{t,j})\right\|_q
\leq C q\left(
\left\|\sum_{t=1}^n \tilde{\bd{V}}_{t,j}\right\|_2 + \left\|\max_{1\leq t \leq n} |\tilde{\bd{V}}_{t,j}|\right\|_q
\right).$$
Due to independence and since $q \geq 2$, we have
\begin{align*}
    \left\|\sum_{t=1}^n \tilde{\bd{V}}_{t,j}\right\|_2 = \left\|\tilde{\bd{V}}_{1,j}\right\|_2 n^{1/2} \leq \left\|\tilde{\bd{V}}_{1,j}\right\|_q  n^{1/2},  \text{ and }  
    \left\|\max_{1\leq t \leq n} |\tilde{\bd{V}}_{t,j}|\right\|_q\le \left[\sum_{t=1}^n\expt(|\tilde{\bd{V}}_{t,j}|^q)\right]^{1/q}\le \left\|\tilde{\bd{V}}_{1,j}\right\|_q  n^{1/2}.
\end{align*}
Note that  for any $1\le k\le n$ and $j\in[d]$ we have
\begin{align*}
&|\bd{R}_{k,j}-k\mu_j| = \max\left\{\sum_{t=1}^k \tilde{\bd{V}}_{t,j}, -\sum_{t=1}^k \tilde{\bd{V}}_{t,j}  \right\},
\end{align*}
which implies that for each $j \in [d]$,
\begin{align*}
    \left\|\max_{1 \leq k \leq n}|\bd{R}_{k,j}-k\mu_j|\right\|_q \le 2 \left\|\max_{1 \leq k \leq n}\sum_{t=1}^{k} \tilde{\bd{V}}_{t,j}\right\|_q\vee\left\|\max_{1 \leq k \leq n}\sum_{t=1}^{k} (-\tilde{\bd{V}}_{t,j})\right\|_q \leq 4C q \left\|\tilde{\bd{V}}_{1,j}\right\|_q  n^{1/2}.
\end{align*}
Then the proof is complete.
\end{proof}
\begin{corollary}\label{cor: R_nj-R_n1<0}
Assume \eqref{eq: bounded moment} holds for some $q \geq 2$. Let $i < j \in [d]$ such that $\mu_i < \mu_j$. Then, there exists an absolute  constant $C > 0$  such that  
$$\prob(\bd{R}_{n,j}-\bd{R}_{n,i}\leq 0)\le \left(\frac{C q}{\mu_j - \mu_i} \left(\left\|\tilde{\bd{V}}_{1,j}\right\|_q+\left\|\tilde{\bd{V}}_{1,i}\right\|_q\right) \right)^q n^{-q/2}.$$
\end{corollary}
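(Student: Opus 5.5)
The plan is to reduce to Lemma \ref{lemma: bounded moment} (Rosenthal/Doob maximal moment bound) applied to the centered difference walk, followed by Markov's inequality. Set $\bd{W}_t := \tilde{\bd{V}}_{t,j}-\tilde{\bd{V}}_{t,i}$. These are i.i.d. with mean zero, and by Minkowski's inequality $\|\bd{W}_1\|_q \le \|\tilde{\bd{V}}_{1,j}\|_q+\|\tilde{\bd{V}}_{1,i}\|_q<\infty$. Writing
\[
\bd{R}_{n,j}-\bd{R}_{n,i} = n(\mu_j-\mu_i) + \sum_{t=1}^n \bd{W}_t ,
\]
the event $\{\bd{R}_{n,j}-\bd{R}_{n,i}\le 0\}$ is contained in $\bigl\{\bigl|\sum_{t=1}^n \bd{W}_t\bigr| \ge n(\mu_j-\mu_i)\bigr\}$.

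Next, bound the $q$-th moment of $\sum_{t\le n}\bd{W}_t$. The Rosenthal step in the proof of Lemma \ref{lemma: bounded moment} uses only that the increments are i.i.d., centered and in $L^q$. Applying it to the one-dimensional walk with increments $\bd{W}_t$ (or simply applying Lemma \ref{lemma: bounded moment} to the augmented vector $(\bd{V}_{t,1},\dots,\bd{V}_{t,d},\bd{V}_{t,j}-\bd{V}_{t,i})$) gives
\[
\Bigl\|\sum_{t=1}^n \bd{W}_t\Bigr\|_q \le C q \|\bd{W}_1\|_q n^{1/2} \le Cq\bigl(\|\tilde{\bd{V}}_{1,j}\|_q+\|\tilde{\bd{V}}_{1,i}\|_q\bigr) n^{1/2},
\]
with $C$ absolute. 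Alternatively, without reusing the lemma, one can apply it separately to coordinates $i$ and $j$ and use the triangle inequality $\|\sum \bd{W}_t\|_q\le \|\bd{R}_{n,j}-n\mu_j\|_q+\|\bd{R}_{n,i}-n\mu_i\|_q$, which yields the same bound.

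Finally, Markov's inequality at order $q$ gives
\[
\prob(\bd{R}_{n,j}-\bd{R}_{n,i}\le 0)\le \frac{\bigl(Cq(\|\tilde{\bd{V}}_{1,j}\|_q+\|\tilde{\bd{V}}_{1,i}\|_q)\bigr)^q n^{q/2}}{(\mu_j-\mu_i)^q n^q},
\]
which is exactly the claimed bound. No step is a real obstacle. The only care needed is to check that the constant stays absolute, i.e. independent of $n$, $q$ and the distribution. This holds because Lemma \ref{lemma: bounded moment} already isolates the factor $q$ and the $L^q$ norm of the increment.
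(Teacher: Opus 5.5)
Your proposal is correct and follows essentially the same route as the paper: rewrite $\{\bd{R}_{n,j}-\bd{R}_{n,i}\le 0\}$ as a deviation event for the centered difference walk $\sum_{t\le n}(\tilde{\bd{V}}_{t,j}-\tilde{\bd{V}}_{t,i})$, bound its $q$-th moment via the Rosenthal estimate from the proof of Lemma \ref{lemma: bounded moment} (with Minkowski's inequality giving the factor $\|\tilde{\bd{V}}_{1,j}\|_q+\|\tilde{\bd{V}}_{1,i}\|_q$), and conclude by Markov's inequality of order $q$.
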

\begin{proof} 
Note that
$$\prob(\bd{R}_{n,j}-\bd{R}_{n,i}\leq 0)= \prob\left(
\sum_{t=1}^{n}(\tilde{\bd{V}}_{t,j} - \tilde{\bd{V}}_{t,i}) \leq  -n(\mu_j - \mu_i)
\right).
$$
By Markov's inequality and Rosenthal's inequality (see the proof of Lemma \ref{lemma: bounded moment}), there exists an absolute constant $C > 0$ such that
\begin{align*}
    \prob(\bd{R}_{n,j}-\bd{R}_{n,i}\leq 0) \leq    \frac{\left[C q \left(\left\|\tilde{\bd{V}}_{1,j}\right\|_q + \left\|\tilde{\bd{V}}_{1,i}\right\|_q\right) n^{1/2}\right]^q}{\left[n(\mu_j - \mu_i)\right]^q}.
\end{align*}
Then the proof is complete.
\end{proof}

Recall the definition of random walks in \eqref{aux:rw} and $r_*$ in \eqref{def:r_star}. Furthermore, recall that $\bSigma_{[r_*]}$ is the covariance matrix of $\tilde{\bd{V}}_{1,[r_*]}$, and  that $h_{[r_*]}$ is defined in \eqref{eq: h}.

\begin{lemma}\label{lemma: E(zeta)} 
Assume \eqref{eq: bounded moment} holds for $q=3$. Then there exists a finite constant $C>0$ depending only on the positive eigenvalues of $\bSigma_{[r_*]}$, $r_*$ and $\{\expt[|\tilde{\bd{V}}_{1,j}|^3],j\in[r_*]\}$, such that
$$\left|\expt\left[\min_{j\in [r_*]}(\bd{R}_{n,j}-n\mu_j)\right] - (-h_{[r_*]})\sqrt{n}\right| \le C.$$
\end{lemma}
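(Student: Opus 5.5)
The plan is to treat this as a quantitative central limit theorem for the maximum of a normalized multivariate random walk. Writing $\bd{S}_n := (\bd{R}_{n,j}-n\mu_j)_{j\in[r_*]} = \sum_{t=1}^n \tilde{\bd{V}}_{t,[r_*]}$, we have $\min_j \bd{S}_{n,j} = -\max_j(-\bd{S}_{n,j})$. Since $-\bd{S}_n/\sqrt n$ has mean zero and covariance $\bSigma_{[r_*]}$, and $-\bd{Y}\overset{d}{=}\bd{Y}$ for $\bd{Y}\sim\mathcal N(\bd 0,\bSigma_{[r_*]})$, it suffices to show
\[
\left|\expt\left[\max_{j}\bd{U}_{n,j}\right]-\expt\left[\max_j \bd{Y}_j\right]\right|\le C n^{-1/2},\qquad \bd{U}_n:=-\bd{S}_n/\sqrt n.
\]
Multiplying by $\sqrt n$ then gives the claim.

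First I reduce to the nondegenerate case. Let $k$ be the rank of $\bSigma_{[r_*]}$, and write $\tilde{\bd{V}}_{1,[r_*]} = \bd{B}\bd{W}_1$ almost surely, where $\bd{W}_1\in\mR^k$ has identity covariance. The matrix $\bd{B}$ is built from the positive eigenvalues and eigenvectors, which is why the constant depends on those. Then $\max_j \bd{U}_{n,j}=g(n^{-1/2}\sum_t \bd{W}_t)$ and $\max_j \bd{Y}_j = g(\bd{Z})$, where $g(\bd{x}):=\max_j(\bd{B}\bd{x})_j$ and $\bd{Z}\sim \mathcal N(\bd 0,\bd{I}_k)$. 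The function $g$ is Lipschitz and piecewise linear. Also, $\expt|\bd{W}_1|^3<\infty$ follows from the third moments of $\tilde{\bd{V}}_{1,j}$.

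Next I use the standard Wasserstein-1 Berry--Esseen bound for multivariate sums of i.i.d. vectors with finite third moments. Results of this kind (Bonis, or Stein's method via Gallouët--Mijoule--Swan, with constants depending on $k$ and $\expt|\bd{W}_1|^3$) give
\[
\sup_{\|h\|_{\mathrm{Lip}}\le 1}\left|\expt h\left(n^{-1/2}\textstyle\sum_t \bd{W}_t\right)-\expt h(\bd{Z})\right|\le C n^{-1/2}.
\]
Applying this with $h=g/\|g\|_{\mathrm{Lip}}$, where $\|g\|_{\mathrm{Lip}}\le \|\bd{B}\|$, gives the required bound directly. As an alternative route, a Lindeberg replacement argument works by smoothing the max with the soft-max $F_\beta(\bd{x})=\beta^{-1}\log\sum_j e^{\beta x_j}$. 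This function is within $\beta^{-1}\log r_*$ of the max, and its third derivatives are bounded by $C\beta^2$. Lindeberg swapping then gives an error of order $\beta^2 n^{-1/2}+\beta^{-1}$. Optimizing over $\beta$ only yields $n^{-1/6}$, so for the sharp rate I would rely on the Wasserstein bound.

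The main obstacle is obtaining the optimal $n^{-1/2}$ rate for a nonsmooth functional under only third moments. The naive smoothing argument loses a power of $n$. If I need to avoid quoting the $W_1$ CLT, I would try the following. The expectation of a Lipschitz function equals an integral of distribution-function differences, $\expt g = \int_0^\infty \prob(g>s)\,ds-\int_{-\infty}^0\prob(g<s)\,ds$. The multivariate Berry--Esseen theorem over convex sets (Bentkus), applied to the half-space intersections $\{g\le s\}=\{\bd{x}:(\bd{B}\bd{x})_j\le s\ \forall j\}$, bounds each difference by $Cn^{-1/2}$. The tails are controlled by a moment bound uniform in $n$, namely Lemma \ref{lemma: bounded moment} with $q=3$, together with a nonuniform Berry--Esseen estimate. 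Combining these, the integral is $O(n^{-1/2})$, which completes the plan.
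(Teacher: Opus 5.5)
Your main route is essentially the paper's. The paper also restricts to the range of $\bSigma_{[r_*]}$ via $\mathcal{B}_0=\Lambda_1^{-1/2}U_1^\top$ and $\mathcal{B}_1=U_1\Lambda_1^{1/2}$, writes the minimum as $-g(\bar{\bd{R}}_n)$ with the Lipschitz map $g(u)=\max_j(\mathcal{B}_1u)_j$, and applies a multivariate CLT bound for Lipschitz test functions, namely (3.5) of Rai\v{c}, which gives exactly $O(n^{-1/2})$ under third moments. Do make sure the Wasserstein-1 bound you invoke has a clean $n^{-1/2}$ rate under only third moments, as Rai\v{c}'s does: to my recollection, the third-moment result of Gallou\"{e}t--Mijoule--Swan carries an extra $\log n$ factor, which would only give an $O(\log n)$ error here instead of $O(1)$.
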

\begin{proof} 
Consider the spectral decomposition
$$
\bSigma_{[r_*]} = {U} \Lambda {U}^\top, 
$$
where $\Lambda := \text{diag}(\lambda_1,\lambda_2,\ldots,\lambda_{r_*})$ is a diagonal matrix with $\lambda_1 \geq \lambda_2 \geq \ldots \geq \lambda_{r_*} \geq 0$, and $U \in \mathbb{R}^{r_* \times r_*}$ is an orthonormal matrix, that is, $U U^{\top}$ equals the identity matrix $\bd{I}_{r_* \times r_*}$.

Denote by $s = \textup{rank}(\bSigma_{[r_*]})$ the rank of covariance matrix $\bSigma_{[r_*]}$. By definition, $s \leq r_*$, $\lambda_{1}\ge\cdots\ge\lambda_{s} > 0$ and $\lambda_{s+1}=\cdots=\lambda_{r_*}=0$. Let $U_1 \in \mathbb{R}^{r_* \times s}$ be the first $s$ columns of $U$, and 
 $U_2\in \mathbb{R}^{r_* \times (s-r_*)}$ the last $s-r_*$ columns of $U$.

By definition, 
$$
\cov(U_2^\top \tilde{\bd{V}}_{1,[r_*]} ) = \expt\left[ U_2^\top \tilde{\bd{V}}_{1,[r_*]} \left(U_2^\top \tilde{\bd{V}}_{1,[r_*]}\right)^\top\right] = U_2^\top \bSigma_{[r_*]} U_2 = \boldsymbol{0}_{(r_*-s) \times (r_* - s)},
$$
which implies that
\begin{equation}
    \label{aux:prob1_column}
\prob\left(\tilde{\bd{V}}_{1,[r_*]}  \in \text{column}(U_1)\right) = 1,
\end{equation}
where $\text{column}(U_1)$ is the column span of $U_1$.

Next, let $\Lambda_1 = \text{diag}(\lambda_1,\ldots,\lambda_s) \in \mathbb{R}^{s \times s}$ be the diagonal matrix of all positive eigenvalues, we define
$$
\mathcal{B}_0 := {\Lambda_1}^{-1/2} U_1^{\top} \in \mathbb{R}^{s \times r_*},\quad \mathcal{B}_1 := U_1  {\Lambda_1}^{1/2} \in \mathbb{R}^{r_* \times s}.
$$
Note that by elementary calculation,
\begin{align}\label{aux:prop_B0B1}
    \mathcal{B}_0\mathcal{B}_1 = \bd{I}_{s\times s},\quad \mathcal{B}_0\bSigma_{[r_*]} \mathcal{B}_0^\top=\bd{I}_{s\times s}, \quad
    \mathcal{B}_1\mathcal{B}_0 u = u \text{ for any }\;\; u\in\textup{column}(U_1).
\end{align}

In addition,  for $t \geq 1$, we define
\begin{align*}
&\bar{\bd{V}}_t := \mathcal{B}_0 (-\tilde{\bd{V}}_{t,[r_*]}) \in \mathbb{R}^{s},\quad \bar{\bd{R}}_n := \sum_{t=1}^n \bar{\bd{V}}_t/\sqrt{n} \in \mathbb{R}^{s},\quad \bd{Y} := \mathcal{B}_1 \bd{Z} \in \mathbb{R}^{r_*}, \text{ where } \bd{Z} \sim \mathcal{N}(\bd{0}_s,\bd{I}_{s\times s}).     
\end{align*}
Then, by definition and \eqref{aux:prop_B0B1}, for $j \in [r_*]$,
\begin{equation}\label{eq: E(zeta) 1}
\begin{aligned}
&\expt\bar{\bd{V}}_1 = \bd{0}_s,\quad \cov(\bar{\bd{V}}_1) = \bd{I}_{s\times s}, \quad \bar{\bd{R}}_{n,j} = \mathcal{B}_0 \left[-(\bd{R}_{n,j}-n\mu_j)/\sqrt{n} \right], 
 \quad   \bd{Y} \sim \mathcal{N}(\bd{0}_d,\bSigma_{[r_*]}).
\end{aligned}
\end{equation}
Then, due to  \eqref{aux:prob1_column} and \eqref{aux:prop_B0B1}, we have for each $j \in [r_*]$,
\begin{align*}
    (\bd{R}_{n,j}-n\mu_j)/\sqrt{n} = \mathcal{B}_1\mathcal{B}_0 (\bd{R}_{n,j}-n\mu_j)/\sqrt{n} = -\mathcal{B}_1 \bar{\bd{R}}_{n,j}.
\end{align*}
We define a function $g(u) := \max_{j\in [r_*]}(\mathcal{B}_1 u)_j$ for any $u \in \mR^s$. Then
\begin{align*}
    \min_{j\in [r_*]}\left((\bd{R}_{n,j}-n\mu_j)/\sqrt{n}\right) = -g(\bar{R}_n).
\end{align*}
Furthermore, in view of  $h_{[r_*]}$  defined in \eqref{eq: h}, 
$$
h_{[r_*]} = \expt[\max\{Y_1,\ldots,Y_{r_*}\}] = \expt[\max\{(\mathcal{B}_1 \bd{Z})_1,\ldots,(\mathcal{B}_1 \bd{Z})_{r_*}\}] = \expt[g(\bd{Z})],
$$
which implies that 
\begin{align*}
    \left|\expt \left[\min_{j\in [r_*]}\left((\bd{R}_{n,j}-n\mu_j)/\sqrt{n}\right)\right] - (-h_{[r_*]})\right| = \left|\expt[g(\bar{\bd{R}}_n)] - \expt[g(\bd{Z})]\right|.
\end{align*}

Finally, we apply normal approximation to bound the above term. Denote by $\|u\|$ the Euclidean norm of $u \in \mathbb{R}^{s}$. Since $|\max_{j\in [r_*]} u_j-\max_{j\in [r_*]} v_j| \le \|u-v\|$ for any $u,v\in\mR^s$, we have
$$
\frac{|g(u)-g(v)|}{\|u-v\|} \le \frac{\|\mathcal{B}_1u- \mathcal{B}_1 v\|}{\|u-v\|}\le \lambda_{\max}(\mathcal{B}_1\mathcal{B}_1') = \lambda_{\max}(\bSigma):= M_1(g) \text{ for any } u,v \in\mR^s.
$$
Now, due to (3.5) in \cite{raivc2018multivariate} and \eqref{eq: E(zeta) 1}, 
\begin{align*}
& |\expt g(\bar{\bd{R}}_n) - \expt g(\bd{Z})|
 \le M_1(g)\sum_{t=1}^n\expt\left(\|\bar{\bd{V}}_t/\sqrt{n}\|^2 \min\{
4.5, (11.1+0.83\log(s))\|\bar{\bd{V}}_t/\sqrt{n}\|\}\right)
\\&\le \log(r_*) \lambda_{\max}(\bSigma)\expt(\|\bar{\bd{V}}_1\|^3)n^{-1/2} 
\\&\le \log(r_*) \lambda_{\max}(\bSigma)\left(\lambda_{\max}({\Lambda_1}^{-1})\right)^{3/2}\expt(\|\tilde{\bd{V}}_{1,[r_*]}\|^3)n^{-1/2},
\end{align*}
where the last inequality is due to $\|\bar{\bd{V}}_1\|^2\le \lambda_{\max}({\Lambda_1}^{-1})\|\tilde{\bd{V}}_{1,[r_*]}\|^2$. Then the proof is complete.
\end{proof}

\subsection{On the Validity of Theorem 3.3 in \cite{dragalin2000multihypothesis}} \label{sec: Theorem 3.3 in dragalin is not correct}

We note that the proof of Theorem 3.3 in \cite{dragalin2000multihypothesis} appears to be incorrect. In (A.14) and (A.15) therein, they define
\begin{align*}
& \xi(n)= \sqrt{n}(\zeta(n)-h_{r,i}), \;\;\text{ with }\;\;
\zeta(n) := \frac{1}{\sqrt{n}}\max_{1\le k\le r}[\lambda_{k,i}+Z_{\langle M-r-1+k\rangle}(n)-n\mu_{i[M-1]}],
\end{align*}
where $i$ is a fixed hypothesis, $\{\lambda_{k,i},1 \leq k \leq r \}$ are constants, $\{Z_{\langle M-r-1+k\rangle}(n): 1 \leq k \leq r\}$ is an $r$-dimensional random walk with $\expt[Z_{\langle M-r-1+k\rangle}(1)] = \mu_{i[M-1]}$ for $1 \leq k \leq r$,
 and 
$$
h_{r,i} := \expt\left[ \max\{Y_{1},\ldots,Y_{r}\}\right], \;\; \text{ where } \;\; (Y_1,\ldots,Y_r)^\top \sim \mathcal{N}(0,\Sigma),
$$
and $\Sigma$ is the covariance matrix of the random vector $\{Z_{\langle M-r-1+k\rangle}(1), 1 \leq k \leq r\}$.

They use non-linear renewal theory based on Theorem 3 in \cite{zhang1988nonlinear}, where the condition (A.19) in \cite{dragalin2000multihypothesis} must hold, which is stated in the following:
\begin{equation}\label{eq: uniform integrable in dragalin}
    \{\max_{1\le i\le n}|\xi(n+i)|: n\ge 1\} \text{ is uniform integrable. }
\end{equation}

However, \emph{this condition fails to hold}. Specifically, by the central limit theorem and the continuous mapping theorem, as $n\to\infty$,
\[
\zeta_n-h_{r,i}\ \text{ converges in distribution to } \max\{Y_1,\ldots,Y_r\}-h_{r,i}.
\]
Since the limit is nondegenerate, $\xi(n)= \sqrt{n}(\zeta(n)-h_{r,i})$ is \emph{not} uniformly integrable.

In this work, instead of applying Theorem 3 in \cite{zhang1988nonlinear} by verifying \eqref{eq: uniform integrable in dragalin}, we invoke Theorem 4.2 in \cite{nagai2006nonlinear} and verify the uniform integrability of
\begin{equation*}
\left\{\max_{0\le i\le Mn^{\alpha}}\left(\frac{|\xi(n+i)-\xi(n)|}{\rho(n)}\right):n\ge1\right\},
\end{equation*}
where $\rho(n)=n^{\alpha/2}$ for a suitable $\alpha\in(1/2,1]$. See the previous two subsections for details.

\section{Additional Classes of Sequential Procedures and Discussion}
\label{app:discussions}

\subsection{Known Number of Signals} \label{app:kns}
In this subsection, we consider Class \ref{problem: kns}, $\Delta_m^{\textup{kns}}(\alpha)$, where the number of signals is known to be $m$, and the \emph{classical} misclassification rate is controlled. We start by reviewing the Gap rule, denoted by $\delta_G(b) = (T_G(b),D_G(b))$, proposed in   \cite{song2017asymptotically}, where $b>0$ is a threshold.

 Recall the LLRs $\{\lambda_t^k : k\in [K],\, t=1,2,\ldots\}$ defined in \eqref{eq:LLR}. For each time $t\ge 1$, denote by
\begin{equation}\label{eq: blamda}
\blambda_t^{(1)}\le \cdots \le \blambda_t^{(K)}
\end{equation}
the ordered statistics of the LLRs $\{\lambda_t^k : k\in [K]\}$ (without taking absolute values). Moreover, denote the corresponding stream indices by $\{\bar{j}_1(t),\ldots,\bar{j}_K(t)\}$ such that $\lambda_t^{\bar{j}_i(t)}=\blambda_t^{(i)}$ for $i\in[K]$. Then 
  for $b > 0$, the Gap rule $\delta_G(b)$ is defined as follows:
\begin{equation}\label{eq: gap rule}
\begin{aligned}
    T_G(b) := \min\{t\ge 1: \blambda_t^{(K-m+1)}-\blambda_t^{(K-m)}\ge b\}, \qquad
    D_G(b) := \{\bar{j}_{K-m+1}(t), \cdots, \bar{j}_K(t)\}.
\end{aligned}
\end{equation}
That is $\delta_G(b)$ stops when the gap between the $(K-m+1)$-th and $(K-m)$-th ordered statistics is larger than the threshold $b$, and the null hypothesis are rejected for the streams with the $m$ largest LLRs upon stopping.

It is shown in Theorem 3.1 of \cite{song2017asymptotically} that if we select
\begin{equation}\label{gap:b}
    b_{\alpha} = |\log\alpha| + \log [m(K-m)],
\end{equation}
then $\delta_G(b_{\alpha}) \in \Delta_m^{\textup{kns}}(\alpha)$. Furthermore, by Theorem~5.3 of \cite{song2017asymptotically}, with this choice of threshold, the family $\{\delta_G(b_{\alpha})\}$ is first-order asymptotically optimal as $\alpha \to 0$. 
The next theorem strengthens this optimality to the second order.
\begin{theorem}\label{thm: second-order optimal of gap rule}
Suppose Assumption \ref{assumption: lorden} holds. Then, the Gap rule $\{\delta_G(b_{\alpha})\}$ is second-order asymptotically optimal in $\{\Delta_m^{\textup{kns}}(\alpha)\}$ as $\alpha \to 0$, 
 that is, for each $A \subset [K]$,
\begin{equation*}
\limsup_{\alpha \to {0}}
\Bigl(
\expt_A[T_G(b_{\alpha})]
-
T_A^{\textup{min}}\left(\Delta_m^{\textup{kns}}(\alpha)\right) 
\Bigr)
< \infty.
\end{equation*}

\end{theorem}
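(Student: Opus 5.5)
We will apply Theorem \ref{thm: main result}, following the template of the proof of Theorem \ref{thm: second-order optimal of sum intersection rule}. Take $\mathcal{A}=\{A\subset[K]:|A|=m\}$ with uniform prior $\pi_0(A)=\binom{K}{m}^{-1}$. Use the zero-one loss
\[
\mW(D\mid A)=\idf\{D\neq A\},\qquad A,D\in\mathcal{A}.
\]
Set $c_\alpha:=\binom{K}{m}^{-1}e^{-b_\alpha}=\binom{K}{m}^{-1}\alpha/[m(K-m)]$ and $L:=\binom{K}{m}m(K-m)$. Then $Lc_\alpha=\alpha$.

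\textbf{Condition \eqref{assumption: lorden 1}.} Since $1\le m<K$, we have $|\mathcal{A}|\ge 2$. Hence for each $D$ some $A\in\mathcal{A}$ with $A\neq D$ exists, and $\mW(D\mid A)=1$.

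\textbf{Condition \eqref{eq: second-order optimal sufficient condition 2}.} For any $\delta\in\Delta_m^{\textup{kns}}(\alpha)$,
\[
\ie(\delta;\mW)=\binom{K}{m}^{-1}\sum_{A\in\mathcal{A}}\prob_A(D\neq A)\le\alpha=Lc_\alpha .
\]

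\textbf{Condition \eqref{eq: second-order optimal sufficient condition 1}.} This is the main step: we must show $T_G(b_\alpha)\le T_{\mathrm{Ld}}(c_\alpha,\mW)$ pathwise. Write $\lambda_t^B:=\sum_{k\in B}\lambda_t^k$, so that $f_{B,[t]}\propto e^{\lambda_t^B}$ after dividing by the null density. Under the zero-one loss, the Lorden rule stops at the first $t$ with $\max_D\pi_t(D)>1-c_\alpha$, i.e.
\[
\sum_{G\neq D}e^{\lambda^G_t-\lambda^D_t}<\frac{c_\alpha}{1-c_\alpha}.
\]
Suppose this holds at time $t$ for some $D$. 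For any $k\in D$ and $\ell\notin D$, take $G=D\setminus\{k\}\cup\{\ell\}\in\mathcal{A}$. Then $\lambda^G_t-\lambda^D_t=\lambda^\ell_t-\lambda^k_t$, so
\[
e^{-(\lambda_t^k-\lambda_t^\ell)}<c_\alpha/(1-c_\alpha),
\]
that is, $\lambda_t^k-\lambda_t^\ell>\log(c_\alpha^{-1}-1)$. Two consequences follow. First, $D$ consists exactly of the $m$ largest LLRs. Second, the gap satisfies
\[
\blambda_t^{(K-m+1)}-\blambda_t^{(K-m)}=\min_{k\in D,\ \ell\notin D}(\lambda_t^k-\lambda_t^\ell)>\log(c_\alpha^{-1}-1).
\]

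It therefore suffices to have $\log(c_\alpha^{-1}-1)\ge b_\alpha$, i.e. $c_\alpha\le(1+e^{b_\alpha})^{-1}$. Our choice gives $c_\alpha^{-1}=\binom{K}{m}e^{b_\alpha}\ge 2e^{b_\alpha}\ge 1+e^{b_\alpha}$, because $\binom{K}{m}\ge 2$ and $e^{b_\alpha}\ge1$. If needed, we can shrink $c_\alpha$ by a further constant factor; this only changes $L$ by a constant. Since the $T_G$ criterion is met at $T_{\mathrm{Ld}}$, we get $T_G(b_\alpha)\le T_{\mathrm{Ld}}$.

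With the three conditions verified, Theorem \ref{thm: main result} gives
\[
\expt_A[T_G(b_\alpha)]-T_A^{\min}(\Delta_m^{\textup{kns}}(\alpha))\le(2L+M)|\mathcal{A}|
\]
uniformly in $\alpha$, which proves the claim. I expect the obstacle to be only the bookkeeping of the swap argument in the verification of \eqref{eq: second-order optimal sufficient condition 1}: checking that a single swapped subset $G$ controls the gap, and handling ties in the ordered LLRs.
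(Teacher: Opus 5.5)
Your proposal is correct. It follows the paper's overall structure: Theorem \ref{thm: main result} with the zero-one loss, $c_\alpha=\binom{K}{m}^{-1}e^{-b_\alpha}$, $L=\binom{K}{m}m(K-m)$, and a single-swap alternative subset. Where you differ is in how you verify $T_G(b_\alpha)\le T_{\mathrm{Ld}}(c_\alpha,\mW)$.

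The paper invokes its general Lemma \ref{lemma: |A_0| known}, which holds for any loss.
\begin{itemize}
\item That lemma bounds the posterior normalizer crudely by $|\mathcal{A}|\,e^{\lambda^{D_0}_t}$, where $D_0$ is the top-$m$ set.
\item The paper applies it to $A_0=D_0\setminus\{\bar j_{K-m+1}\}\cup\{\bar j_{K-m}\}$, which gives a gap exceeding $\log\bigl(c^{-1}\binom{K}{m}^{-1}\bigr)=b_\alpha$.
\item This step needs $\mW(D_{\mathrm{Ld}}\mid A_0)=1$, i.e.\ $D_{\mathrm{Ld}}\neq A_0$. The paper asserts this without comment; it follows by first applying the lemma with $A_0=D_0$, which forces $D_{\mathrm{Ld}}=D_0$.
\end{itemize}

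You instead use the zero-one-loss identity that Lorden's rule stops when $\max_D\pi_t(D)>1-c$. You then compare the Lorden decision $D$ itself against all single swaps. This has three consequences.
\begin{itemize}
\item It gives the sharper threshold $\log(c^{-1}-1)$.
\item Since $c_\alpha<1/2$, the strict inequality shows at once that $D$ is the unique top-$m$ set, so your concern about ties is already settled.
\item The fallback of shrinking $c_\alpha$ by a further constant is unnecessary, because $\binom{K}{m}e^{b_\alpha}-1\ge e^{b_\alpha}$ holds outright.
\end{itemize}

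Your route is more self-contained and slightly tighter for this class. The paper's route buys uniformity: the same Lemma \ref{lemma: A_0}-type bound is reused for the non-zero-one losses of Classes \ref{problem: gmr} and \ref{problem: gfr}, where no identity like $\min_D\sum_A\pi_t(A)\mW(D\mid A)=1-\max_D\pi_t(D)$ is available.
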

\begin{proof}
The proof is in Appendix \ref{sec: proof of kns}. 
\end{proof}

\begin{remark}
In the proof of above theorem, we apply Theorem \ref{thm: main result} with 
\begin{equation}\label{eq: loss kns}
 \mW(D\mid A)=
\begin{cases}
    1 &\;\text{ if }\;\; D \neq A \\
    0 &\;\text{ else} 
\end{cases}, \quad \text{ for } A, D \subset [K] \text{ with } |A| = |D| = m.
\end{equation} 
\end{remark}

Next, we consider asymptotic approximations to the smallest achievable ESS,
$T_A^{\textup{min}}\left(\Delta_m^{\textup{kns}}(\alpha)\right)$, for each $A \subset [K]$ as $\alpha \to 0$. It is shown in Theorem 5.3 of \cite{song2017asymptotically} that for each $A \subset [K]$, the first-order asymptotic performance satisfies
\begin{align*}
    T_A^{\textup{min}}\left(\Delta_m^{\textup{kns}}(\alpha)\right) = \frac{|\log(\alpha)|}{\KL_{A,A}^{\mW}}(1+o(1)),
\end{align*}
where $\KL_{A,A}^{\mW}$ is defined in \eqref{def:KL_A_D_star} with $\mW$ given in \eqref{eq: loss kns}, that is,
\begin{align*}
    \KL_{A,A}^{\mW} = \min\{\KL(f_A|f_C):  C \in \mathcal{A} \text{ and } C \neq A\}.
\end{align*}

In this work, we provide second-order accurate approximations. Specifically, since $\mW$ in \eqref{eq: loss kns} is a zero-one loss, by Remark \ref{remark: zero-one loss}, each $A \in \mathcal{A}$ admits a unique most favorable subset with repsect to $\mW$, as in Definition \ref{def: dragalin 1}, with $D_A^{\mW} = A$ and $ \KL_{A,*}^{\mW} =  \KL_{A,A}^{\mW}$.

\begin{theorem}\label{thm: second-order min ESS kns}
Suppose Assumption \ref{assumption: lorden} holds. Let $\mW$ be defined in \eqref{eq: loss kns}, and  $A \in \mathcal{A}$ be an arbitrary signal subset.
\begin{enumerate}[label=(\alph*), ref=\theassumption $(\alph*)$]
\item 
Consider the asymmetric case $r_A^{\mW} = 1$. Assume \eqref{eq: bounded moment assumption} holds for $q=3$.
Then, as $\alpha\rightarrow 0$,  
$$
T_A^{\textup{min}}\left(\Delta_{m}^{\mathrm{kns}}(\alpha)\right)= \frac{|\log \alpha|}{\KL_{A,A}^{\mW}}+O(1).
$$
\item 
Consider the symmetric case $r_A^{\mW} \geq 2$. Let $\epsilon \in (0,1/2)$, and assume
\eqref{eq: bounded moment assumption} holds for some $q$ satisfying \eqref{eq: main text q_cond symmetric}. Then, as $\alpha\rightarrow 0$,  
$$
T_A^{\textup{min}}\left(\Delta_{m}^{\mathrm{kns}}(\alpha)\right)= \frac{|\log \alpha|}{\KL_{A,A}^{\mW}}+\frac{h_{A,A}^{\mW}\sqrt{|\log \alpha|}}{(\KL_{A,A}^{\mW})^{3/2}}+O\left((\log \alpha)^{1/4+ \epsilon/2}\right).
$$
\end{enumerate}
\end{theorem}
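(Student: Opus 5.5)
This is a direct application of Theorem \ref{thm: min ESS characterization}, following the template of the proof of Theorem \ref{thm: second-order min ESS gmr}. The plan is to take $\delta_0(\alpha)=\delta_G(b_\alpha)$ with $b_\alpha$ as in \eqref{gap:b}. By Theorem 3.1 of \cite{song2017asymptotically}, $\delta_G(b_\alpha)\in\Delta_m^{\textup{kns}}(\alpha)$ for every $\alpha\in(0,1)$. I would then reuse the cost $c_\alpha$, the loss $\mW$ in \eqref{eq: loss kns}, and the constant $L$ constructed in the proof of Theorem \ref{thm: second-order optimal of gap rule} (Appendix \ref{sec: proof of kns}), where conditions \eqref{assumption: lorden 1}--\eqref{eq: second-order optimal sufficient condition 2} are verified.

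For concreteness, the expected choice there is $c_\alpha=\alpha/(|\mathcal{A}|\,m(K-m))$ with $L=|\mathcal{A}|\,m(K-m)$, up to harmless constants. Condition \eqref{eq: second-order optimal sufficient condition 2} then follows exactly as in the GMR case: for any $\delta\in\Delta_m^{\textup{kns}}(\alpha)$,
\[
\ie(\delta;\mW)=|\mathcal{A}|^{-1}\sum_{A\in\mathcal{A}}\prob_A(D\neq A)\le\alpha.
\]
Condition \eqref{assumption: lorden 1} holds because $\mW$ is a zero-one loss and $|\mathcal{A}|\ge 2$, since $1\le m<K$. Since the preceding theorem has already been established, I would simply cite it for these three conditions rather than reprove them.

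Next, I would check the remaining hypotheses of Theorem \ref{thm: min ESS characterization}. First, $c_\alpha\to0$ as $\alpha\to0$ by construction. Second, Assumption \ref{cond:class_depends_on_decision} holds because membership in $\Delta_m^{\textup{kns}}(\alpha)$ depends only on the law of $D$ under each $\prob_A$, not on the stopping rule. Third, each $A\in\mathcal{A}$ admits a unique most favorable subset: by Remark \ref{remark: zero-one loss}, since $\mW$ is zero-one on $\mathcal{A}$, we get $D_A^{\mW}=A$ and $\KL_{A,*}^{\mW}=\KL_{A,A}^{\mW}$. That remark uses Assumption \ref{assumption: lorden} to ensure $\KL_{A,A}^{\mW}>0$, which holds here because $\KL(f_A\mid f_C)>0$ for every $C\ne A$.

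Theorem \ref{thm: min ESS characterization} then gives both expansions, with $\KL_{A,*}^{\mW}$ replaced by $\KL_{A,A}^{\mW}$ and $h_A^{\mW}$ (written $h_{A,A}^{\mW}$ in the statement). It remains only to replace $\log(c_\alpha^{-1})$ by $|\log\alpha|$. Since $\log(c_\alpha^{-1})=|\log\alpha|+O(1)$, the leading term changes by $O(1)$. The middle term changes by $O\bigl(|\log\alpha|^{-1/2}\bigr)$, because $\sqrt{x+O(1)}-\sqrt{x}=O(x^{-1/2})$. The remainder $O\bigl((\log c_\alpha^{-1})^{1/4+\epsilon/2}\bigr)$ equals $O\bigl(|\log\alpha|^{1/4+\epsilon/2}\bigr)$. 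All of these perturbations are absorbed in the stated error terms.

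The only step that is not bookkeeping is confirming that the Gap rule stops no later than $T_{\mathrm{Ld}}(c_\alpha,\mW)$, i.e. condition \eqref{eq: second-order optimal sufficient condition 1}. This is the content of Theorem \ref{thm: second-order optimal of gap rule}, which I take as given. If it needed rechecking, I would argue as follows. When $\min_{C\ne A}\log(f_{A,[t]}/f_{C,[t]})\ge b$ for the top-$m$ set $A$, this minimum equals the gap $\blambda_t^{(K-m+1)}-\blambda_t^{(K-m)}$ for the nearest competitor, obtained by swapping one signal stream with one noise stream. Summing the posterior mass over the at most $m(K-m)$ one-swap competitors, and bounding the multi-swap competitors by the same gap, then shows that the posterior loss of choosing $A$ is below $c_\alpha$.
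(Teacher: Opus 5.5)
Your proposal is correct and follows exactly the argument the paper omits as ``similar to that of Theorem \ref{thm: second-order min ESS gmr}''. You take $\delta_0=\delta_G(b_\alpha)$ with $c_\alpha$ and $L$ from \eqref{eq: gap_constants_def}, which coincide with your choice since $|\mathcal{A}|=\binom{K}{m}$. You use the zero-one loss to get $D_A^{\mW}=A$ and then apply Theorem \ref{thm: min ESS characterization}. Your explicit conversion of $\log(c_\alpha^{-1})=|\log\alpha|+O(1)$ into the stated error terms is a step the paper leaves implicit.

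One correction concerns your fallback sketch for \eqref{eq: second-order optimal sufficient condition 1}. You rightly do not rely on it, but it argues the wrong implication. Starting from ``gap $\ge b$'' and concluding that the posterior loss is small would show that Lorden's rule stops whenever the Gap rule does, i.e.\ $T_{\mathrm{Ld}}\le T_G$. The condition needs $T_G\le T_{\mathrm{Ld}}$.

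That reverse inequality is in fact false in general here. Since $e^{-b_\alpha}=\alpha/(m(K-m))=\binom{K}{m}c_\alpha$, at a time when the gap is close to $b_\alpha$ the single one-swap competitor already carries posterior mass about $\binom{K}{m}c_\alpha>c_\alpha$. Your union bound only yields a loss of order $\alpha=Lc_\alpha$, not below $c_\alpha$.

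The paper's Lemma \ref{lemma: stop earlier gap} runs the other way. At $T_{\mathrm{Ld}}$, the Lorden criterion (with $D_{\mathrm{Ld}}\neq A_0$) bounds the likelihood of the one-swap set $A_0$ by $c$ times the total likelihood. Here $A_0$ is obtained by exchanging $\bar{j}_{K-m+1}$ and $\bar{j}_{K-m}$ in the top-$m$ set $D_0$. The total likelihood is at most $\binom{K}{m}$ times that of $D_0$. Hence $\blambda^{(K-m+1)}_{T_{\mathrm{Ld}}}-\blambda^{(K-m)}_{T_{\mathrm{Ld}}}>\log\bigl[(\binom{K}{m}c)^{-1}\bigr]=b$.
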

\begin{proof}
The proof is similar to that of Theorem \ref{thm: second-order min ESS gmr} and is therefore omitted.
\end{proof}

\subsection{Additional Subclasses of Sequential Procedures}
\label{app:further}

In this subsection, we review additional subclasses of sequential procedures, for which we can strengthen existing first-order results in the literature to the second order by applying Theorem \ref{thm: main result} and Theorem \ref{thm: min ESS characterization}. Since the arguments are similar to those for Class \ref{problem: gmr}--\ref{problem: kns}, we omit the details.

First, we assume that the number of signals is bounded between $\ell$ and $u$, and control the classical familywise error rates \cite{song2017asymptotically,he2021asymptotically}. 

\begin{problem}\label{problem: bns}
Let $\ell,u$ be integers such that $0 \le \ell < u \le K$. For tolerance levels $\alpha,\beta \in (0,1)$, we define a class of procedures controlling familywise error rates under this setting as
\begin{equation*}
\begin{aligned}
&\mathcal{A} = \{A \subset [K] : \ell \le |A| \le u\},\\
&\Delta_{\ell,u}^{\text{bns}}(\alpha,\beta)
=
\left\{
(T,D) \in \Delta:
\max_{A \in \mathcal{A}} \prob_A\bigl(|D \setminus A| \geq 1\bigr) \le \alpha,
\;\;
\max_{A \in \mathcal{A}} \prob_A\bigl(|A \setminus D| \geq 1\bigr) \le \beta
\right\}.
\end{aligned}
\end{equation*}
\end{problem}

\begin{remark}
When $\ell = 0$ and $u = K$, the class $\Delta_{\ell,u}^{\text{bns}}(\alpha,\beta)$ coincides with $\Delta_{1,1}^{\mathrm{gfr}}(\alpha,\beta)$; see Class \ref{problem: gfr}.
\end{remark}

In \cite{song2017asymptotically}, a procedure called the Gap-intersection rule, denoted by $\delta_{GI}(a,b)$, is proposed, where $a,b>0$ are thresholds. It is shown in \cite{song2017asymptotically} that, with a proper choice of $a,b$, $\delta_{GI}(a,b)$ belongs to $\Delta_{\ell,u}^{\text{bns}}(\alpha,\beta)$. In addition, as $\alpha \vee \beta \to 0$, the family $\{\delta_{GI}(a,b)\}$ is first-order optimal, and the first-order minimum ESS is characterized for each $A \in \mathcal{A}$.

Now, define the following \emph{zero-one} loss (see Remark \ref{remark: zero-one loss}):
\begin{equation*}
 \mW(D\mid A)=
\begin{cases}
    1 &\;\text{ if }\;\; D \neq A \\
    0 &\;\text{ else} 
\end{cases}, \quad \text{ for } A, D \subset [K] \text{ with } \ell \leq |A|, |D| \leq u,
\end{equation*} 
By applying Theorem \ref{thm: main result} and Theorem \ref{thm: min ESS characterization} with this loss function $\mW$, we can establish the second-order optimality of $\{\delta_{GI}(a,b)\}$ and characterize $T_A^{\min}(\Delta_{\ell,u}^{\text{bns}}(\alpha,\beta))$ for each $A \in \mathcal{A}$ as $\alpha \vee \beta \to 0$ such that \eqref{eq:alpha_beta_same_rate} holds.

Second, we control the false discovery rate (FDR) and the false non-discovery rate (FNR), and assume that the number of signals is known to be either $m$ or between $\ell$ and $u$ \cite{he2021asymptotically}. Recall the definition of $\Delta^{\mathrm{fdr}}(\alpha,\beta)$  in Class \ref{problem: fdr}.

\begin{problem}\label{problem: kns fdr fnr}
Given tolerance levels $\alpha,\beta \in (0,1)$ and an integer $m \in [1,K)$, define
\begin{equation*}
\mathcal{A} := \{A \subset [K]: |A| = m\},\quad \text{ and }\quad
\Delta_{m}^{\mathrm{fdr}}(\alpha,\beta)
:= \Delta^{\mathrm{fdr}}(\alpha,\beta).
\end{equation*}
\end{problem}

\begin{problem}\label{problem: bns fdr fnr}
Given tolerance levels $\alpha,\beta \in (0,1)$ and  integers $0\leq \ell < u \leq K$, define
\begin{equation*}
\mathcal{A} = \{A \subset [K] : \ell \le |A| \le u\},\quad \text{ and }\quad 
\Delta_{\ell,u}^{\mathrm{fdr}}(\alpha,\beta)
= \Delta^{\mathrm{fdr}}(\alpha,\beta).
\end{equation*}
\end{problem}

In \cite{he2021asymptotically}, it is shown that the Gap rule in \eqref{eq: gap rule} (resp.~the Gap-intersection rule mentioned above), with properly designed thresholds, is first-order asymptotically optimal for the class $\Delta_{m}^{\mathrm{fdr}}(\alpha,\beta)$ (resp.~$\Delta_{\ell,u}^{\mathrm{fdr}}(\alpha,\beta)$). Moreover, the minimum ESS is characterized to the first order. By arguments similar to those in Subsection \ref{sec: fdr}, we can apply Theorem \ref{thm: main result} to establish the corresponding second-order optimality, and Theorem \ref{thm: min ESS characterization} to obtain a second-order accurate approximation to the minimum ESS.

Finally, we note that \cite{he2021asymptotically} also considers control of the \emph{marginal} FDR and \emph{marginal} FNR under prior knowledge about the number of signals. We refer the interested reader there for the definitions and first-order optimal procedures, and note that our results apply to the corresponding classes of sequential procedures as well.

\section{Proofs under Specific Error Metrics and Information Structure}\label{sec: proof of optimality}
\subsection{Results regarding the Lorden's rule}
In this subsection, let $\mW : \mathcal{A} \times \mathcal{A} \to [0,\infty)$ be a general loss function, and let $\pi_0$ be the uniform distribution on $\mathcal{A}$.
 We present results regarding the rule
$\delta_{\mathrm{Ld}}(c,\mW) = (T_{\mathrm{Ld}}(c,\mW), D_{\mathrm{Ld}}(c,\mW))$
in \eqref{eq: lorden procedure}, which will be used later.

\begin{lemma}\label{lemma: A_0}
Fix a cost $c > 0$. On the event $\{T_{\mathrm{Ld}} < \infty\}$, for each $A_0 \in \mathcal{A}$,
\[
\sum_{i \in D_0} \lambda_{T_{\mathrm{Ld}}}^i
-
\sum_{j \in A_0} \lambda_{T_{\mathrm{Ld}}}^j
>
\log \left[c^{-1}\pi_0(A_0)\mW(D_{\mathrm{Ld}} \mid A_0)\right],
\]
where $D_0 := \{k \in [K] : \lambda_{T_{\mathrm{Ld}}}^k \ge 0\}$ is the set of stream indices with nonnegative LLRs at stopping, $\log(0) := -\infty$ and $(T_{\mathrm{Ld}}, D_{\mathrm{Ld}})$ is short for $(T_{\mathrm{Ld}}(c,\mW), D_{\mathrm{Ld}}(c,\mW))$.
\end{lemma}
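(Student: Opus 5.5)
The plan is to read the inequality directly off the stopping condition of $T_{\mathrm{Ld}}$ in \eqref{eq: lorden procedure}, written in log-likelihood form. On $\{T_{\mathrm{Ld}}<\infty\}$, set $t=T_{\mathrm{Ld}}$. The decision $D_{\mathrm{Ld}}$ minimizes the posterior expected loss, and the stopping rule says that this minimum is below $c$. Hence
\[
\sum_{A\in\mathcal{A}}\pi_t(A)\,\mW(D_{\mathrm{Ld}}\mid A)<c .
\]
All terms are nonnegative, so we may drop every summand except $A=A_0$. This gives $\pi_t(A_0)\mW(D_{\mathrm{Ld}}\mid A_0)<c$.

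If $\mW(D_{\mathrm{Ld}}\mid A_0)=0$, the right-hand side of the claimed inequality is $-\infty$ by the convention $\log 0=-\infty$, and there is nothing to prove. Otherwise, I write the posterior with the prior made explicit. Since $\pi_0$ is uniform, $\pi_0(A_0)=|\mathcal{A}|^{-1}$ for every $A_0$. By \eqref{eq: posterior},
\[
\pi_t(A_0)=\frac{\pi_0(A_0) f_{A_0,[t]}}{\sum_{B\in\mathcal{A}}\pi_0(B) f_{B,[t]}} .
\]
I divide the numerator and the denominator by $f_{\emptyset,[t]}=\prod_{s\le t}\prod_k f_0^k(\mathbf{X}^k_s)$. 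By \eqref{eq: density} and \eqref{eq:LLR}, this turns each $f_{B,[t]}$ into $\exp(\sum_{j\in B}\lambda_t^j)$.

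The key elementary step is to bound the denominator from above:
\[
\sum_{B\in\mathcal{A}}\pi_0(B)\exp\Big(\sum_{j\in B}\lambda^j_t\Big)\le \max_{B\subset[K]}\exp\Big(\sum_{j\in B}\lambda^j_t\Big)=\exp\Big(\sum_{i\in D_0}\lambda^i_t\Big).
\]
The inequality holds because the $\pi_0(B)$ sum to one and $\mathcal{A}\subset 2^{[K]}$. The equality holds because the sum $\sum_{j\in B}\lambda_t^j$ is maximized by including exactly the streams with nonnegative LLR, which is $D_0$.

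Combining the two displays gives
\[
c>\pi_t(A_0)\mW(D_{\mathrm{Ld}}\mid A_0)\ge \pi_0(A_0)\,\mW(D_{\mathrm{Ld}}\mid A_0)\exp\Big(\sum_{j\in A_0}\lambda^j_t-\sum_{i\in D_0}\lambda^i_t\Big).
\]
Taking logarithms and rearranging yields the claimed inequality, and it is strict.

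No step here is a real obstacle. The only points that need care are keeping the strict inequality inherited from the stopping rule, and handling the case of zero loss through the convention $\log 0=-\infty$. The uniform prior is used only to bound the denominator by its maximal term. Replacing $\mathcal{A}$ by $2^{[K]}$ in that maximum can only enlarge the bound, so the argument does not need $D_0\in\mathcal{A}$.
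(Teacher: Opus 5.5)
Your proposal is correct and follows essentially the same route as the paper: keep only the $A_0$ term of the stopping inequality, divide by the null likelihood so that each $f_{B,[t]}$ becomes $\exp(\sum_{j\in B}\lambda_t^j)$, and bound the normalizing sum by $|\mathcal{A}|$ times its largest possible term $\exp(\sum_{i\in D_0}\lambda_t^i)$. Your explicit treatment of the case $\mW(D_{\mathrm{Ld}}\mid A_0)=0$ and your remark that $D_0\in\mathcal{A}$ is not needed are consistent with the paper's argument and make points explicit that the paper leaves implicit.
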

\begin{proof}
Recall LLRs $\{\lambda_t^k: k\in [K]\}$ in \eqref{eq:LLR}. We focus on the event $\{T_{\mathrm{Ld}}(c,\mW) < \infty\}$.
For each stream $k \in [K]$ and subset $A \subset [K]$, we define
$$
\Lambda_t^k := \exp\{\lambda_t^k\} = \prod_{s=1}^t  \frac{f_1^k(\mathbf{X}_s^k)}{f_0^k(\mathbf{X}_s^k)}.,
\quad \text{ and } \quad
\Lambda_t^A := \prod_{k \in A} \Lambda_t^k.
$$
Due to the definition of $D_0$, we have 
\begin{equation}
    \label{obs:D0_prop}
    \Lambda_{T_{\mathrm{Ld}}}^A \leq \Lambda_{T_{\mathrm{Ld}}}^{D_0},\quad \text{ for each } A \in \mathcal{A}.
\end{equation}

By the definition of $\delta_{\mathrm{Ld}}(c,\mW)$ in \eqref{eq: lorden procedure}, we have
$
\sum_{A\in\mathcal{A}}\pi_{T_{\mathrm{Ld}}}(A)\mW(D_{\mathrm{Ld}}\mid A) < c,
$
which, in view of the definition of $\pi_t(A)$ in \eqref{eq: posterior}, is equivalent to
$$
\sum_{A\in\mathcal{A}}f_{A,[T_{\mathrm{Ld}}]}(A)\mW(D_{\mathrm{Ld}}\mid A) < c \sum_{B\in\mathcal{A}}f_{B,[T_{\mathrm{Ld}}]}.
$$ 
Moreover, dividing both sides by $\prod_{s=1}^{T_{\mathrm{Ld}}} \prod_{k \in [K]} f_0^k(\mathbf{X}_s^k)$, we obtain
\begin{equation*}
\sum_{A\in\mathcal{A}} \Lambda_{T_{\mathrm{Ld}}}^A \mW(D_{\mathrm{Ld}}\mid A) < c \sum_{B\in\mathcal{A}} \Lambda_{T_{\mathrm{Ld}}}^B.
\end{equation*}

Fix an arbitrary $A_0\in \mathcal{A}$. On one hand,
$$\sum_{A\in\mathcal{A}} \Lambda_{T_{\mathrm{Ld}}}^A \mW(D_{\mathrm{Ld}}\mid A)\ge \Lambda_{T_{\mathrm{Ld}}}^{A_0}\mW(D_{\mathrm{Ld}}\mid A_0).$$
On the other, due to \eqref{obs:D0_prop}, we have
$$
\sum_{B\in\mathcal{A}}\Lambda_{T_{\mathrm{Ld}}}^B \le \sum_{B\in\mathcal{A}}\Lambda_{T_{\mathrm{Ld}}}^{D_0} = |\mathcal{A}|\Lambda_{T_{\mathrm{Ld}}}^{D_0} = (\pi_0(A))^{-1}\Lambda_{T_{\mathrm{Ld}}}^{D_0}.
$$
Therefore, combining the above three inequalities, we obtain
\[
\frac{\Lambda_{T_{\mathrm{Ld}}}^{D_0}}{\Lambda_{T_{\mathrm{Ld}}}^{A_0}}
>
c^{-1}\pi_0(A_0)\mW(D_{\mathrm{Ld}} \mid A_0),
\]
and the proof is completed by taking logarithms on both sides.
\end{proof}

Recall the definition of $\bar{j}_k(t)$ in Appendix \ref{app:kns}.
\begin{lemma}\label{lemma: |A_0| known}
Fix a cost $c > 0$. Suppose that $\mathcal{A} = \{A \subset [K]: |A| = m\}$, where $m \in [1,K)$ is an integer. On the event that $\{T_{\mathrm{Ld}} < \infty\}$, for each $A_0 \in \mathcal{A}$, we have
$$
\sum_{i\in D_0}\lambda_{T_{\mathrm{Ld}}}^i-\sum_{j\in A_0}\lambda_{T_{\mathrm{Ld}}}^j > \log [c^{-1}\pi_0(A_0)\mW(D_{\mathrm{Ld}}\mid A_0)],
$$
where $D_0 := \{\bar{j}_{K-m+1}(T_{\mathrm{Ld}}),\ldots,\bar{j}_{K}(T_{\mathrm{Ld}})\}$ is the collection of stream indices with $m$ largest LLRs at $T_{\mathrm{Ld}}$,  $\log(0) := -\infty$ and $(T_{\mathrm{Ld}}, D_{\mathrm{Ld}})$ is short for $(T_{\mathrm{Ld}}(c,\mW), D_{\mathrm{Ld}}(c,\mW))$.
\end{lemma}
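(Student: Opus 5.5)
We follow the proof of Lemma \ref{lemma: A_0} and change only the combinatorial step that bounds the normalizing sum $\sum_{B\in\mathcal{A}} \Lambda_{T_{\mathrm{Ld}}}^B$. As there, we work on $\{T_{\mathrm{Ld}}<\infty\}$ and set $\Lambda_t^k=\exp\{\lambda_t^k\}$ and $\Lambda_t^A=\prod_{k\in A}\Lambda_t^k$.

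First we rewrite the stopping condition. By \eqref{eq: lorden procedure} and \eqref{eq: posterior}, at $T_{\mathrm{Ld}}$ we have
\[
\sum_{A\in\mathcal{A}} f_{A,[T_{\mathrm{Ld}}]}\,\mW(D_{\mathrm{Ld}}\mid A) < c\sum_{B\in\mathcal{A}} f_{B,[T_{\mathrm{Ld}}]}.
\]
Dividing by $\prod_{s\le T_{\mathrm{Ld}}}\prod_{k\in[K]} f_0^k(\mathbf{X}_s^k)$ gives
\[
\sum_{A\in\mathcal{A}}\Lambda^A_{T_{\mathrm{Ld}}}\,\mW(D_{\mathrm{Ld}}\mid A) < c\sum_{B\in\mathcal{A}}\Lambda^B_{T_{\mathrm{Ld}}}.
\]
Since every term is nonnegative, the left side is at least $\Lambda^{A_0}_{T_{\mathrm{Ld}}}\mW(D_{\mathrm{Ld}}\mid A_0)$ for the fixed $A_0$.

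The key step replaces \eqref{obs:D0_prop}. Now $\mathcal{A}$ contains only subsets of size exactly $m$, and we claim $\Lambda^B_{T_{\mathrm{Ld}}}\le \Lambda^{D_0}_{T_{\mathrm{Ld}}}$ for every $B\in\mathcal{A}$. We have $\log\Lambda^B_{T_{\mathrm{Ld}}}=\sum_{k\in B}\lambda^k_{T_{\mathrm{Ld}}}$, a sum of exactly $m$ LLRs. Among all $m$-element subsets, this sum is maximized by the indices of the $m$ largest LLRs, namely $D_0=\{\bar{j}_{K-m+1}(T_{\mathrm{Ld}}),\ldots,\bar{j}_K(T_{\mathrm{Ld}})\}$. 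This holds regardless of the signs of the LLRs, because the cardinality is fixed; ties are handled by the fixed ordering $\bar{j}$. Also $D_0\in\mathcal{A}$, although the argument does not need this. Consequently
\[
\sum_{B\in\mathcal{A}}\Lambda^B_{T_{\mathrm{Ld}}}\le |\mathcal{A}|\,\Lambda^{D_0}_{T_{\mathrm{Ld}}}=\pi_0(A_0)^{-1}\Lambda^{D_0}_{T_{\mathrm{Ld}}}.
\]

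Combining the two displays yields
\[
\Lambda^{A_0}_{T_{\mathrm{Ld}}}\,\mW(D_{\mathrm{Ld}}\mid A_0)<c\,\pi_0(A_0)^{-1}\Lambda^{D_0}_{T_{\mathrm{Ld}}},
\]
and taking logarithms gives the claim. If $\mW(D_{\mathrm{Ld}}\mid A_0)=0$, the right-hand side of the lemma equals $\log 0=-\infty$ and the inequality is trivial. If the weight is positive, dividing through is legitimate since $\Lambda^{A_0}_{T_{\mathrm{Ld}}}>0$ almost surely. The only step that differs from Lemma \ref{lemma: A_0} is the rearrangement bound for fixed-cardinality subsets, and it is elementary, so no real obstacle is expected.
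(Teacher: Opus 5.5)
Your proposal is correct and matches the paper's proof: the paper likewise notes that $\Lambda_{T_{\mathrm{Ld}}}^A \le \Lambda_{T_{\mathrm{Ld}}}^{D_0}$ for every $A\in\mathcal{A}$, because every subset in $\mathcal{A}$ has cardinality $m$ and $D_0$ collects the $m$ largest LLRs, and then reuses the argument of Lemma \ref{lemma: A_0} unchanged. Your explicit treatment of the case $\mW(D_{\mathrm{Ld}}\mid A_0)=0$ is a small addition that the paper leaves implicit.
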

\begin{proof}
Due to the definition of $D_0$, we have 
\begin{equation*}
    \Lambda_{T_{\mathrm{Ld}}}^A \leq \Lambda_{T_{\mathrm{Ld}}}^{D_0},\quad \text{ for each } A \in \mathcal{A}.
\end{equation*}
The remaining arguments are similar to those in the proof of Lemma \ref{lemma: A_0}, and are therefore omitted.
\end{proof}

\subsection{Generalized Misclassification Rates}\label{app: proof of gmr}
In this subsection, we present the proofs of the results in Subsection \ref{sec: gmr}. 
We recall the definition of the stopping rule $T_S(b)$ in \eqref{eq: sum intersection rule}, the loss function $\mW$ in \eqref{eq: loss gmr}, and the associated rule $T_{\mathrm{Ld}}(c,\mW)$ in \eqref{eq: lorden procedure}. Furthermore, $\pi_0$ denotes the uniform distribution on $\mathcal{A} = 2^{[K]}$.

\begin{lemma} \label{lemma: stop earlier sum intersection}
Let the cost $c\in (0, 2^{-K})$ and $b =  \log(2^{-K}c^{-1})$. Then,   
$$T_S(b)\le T_{\mathrm{Ld}}(c,\mW).$$
\end{lemma}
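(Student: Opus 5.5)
The plan is to compare the two stopping rules pathwise. It suffices to show that on the event $\{T_{\mathrm{Ld}}<\infty\}$, writing $t:=T_{\mathrm{Ld}}(c,\mW)$, the Sum-Intersection statistic already satisfies $\sum_{i=1}^{m_0}\tlambda_t^{(i)}\ge b$. Then $T_S(b)\le t$ by the definition in \eqref{eq: sum intersection rule}, and on $\{T_{\mathrm{Ld}}=\infty\}$ there is nothing to prove. The main tool is Lemma \ref{lemma: A_0}, which lower-bounds log-likelihood-ratio differences at time $T_{\mathrm{Ld}}$.

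\textbf{Specializing Lemma \ref{lemma: A_0}.} Since $\pi_0(A_0)=2^{-K}$ and $\mW$ in \eqref{eq: loss gmr} is $\{0,1\}$-valued, the lemma gives, for every $A_0$ with $\mW(D_{\mathrm{Ld}}\mid A_0)=1$,
\[
\sum_{i\in D_0}\lambda_t^i-\sum_{j\in A_0}\lambda_t^j>\log(2^{-K}c^{-1})=b .
\]
Here $D_0=\{k:\lambda_t^k\ge 0\}$. For any $F\subset[K]$, the left-hand side with $A_0=D_0\triangle F$ equals $\sum_{k\in F}|\lambda_t^k|$. Indeed, removing $k\in D_0$ subtracts $\lambda_t^k\ge 0$ from the $A_0$-sum, and adding $k\notin D_0$ contributes $-\lambda_t^k=|\lambda_t^k|$. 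So the task reduces to choosing a set $F$ such that
\[
\sum_{k\in F}|\lambda_t^k|\le \sum_{i=1}^{m_0}\tlambda_t^{(i)} \quad\text{and}\quad |D_{\mathrm{Ld}}\triangle(D_0\triangle F)|\ge m_0 .
\]

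\textbf{Choice of $F$.} Let $E:=D_{\mathrm{Ld}}\triangle D_0$, and let $J$ be the index set of the $m_0$ smallest values of $|\lambda_t^k|$, with ties broken arbitrarily. Take $F:=J\setminus E$.
\begin{itemize}
\item Since $F\subset J$, we have $\sum_{F}|\lambda_t^k|\le\sum_{J}|\lambda_t^k|=\sum_{i=1}^{m_0}\tlambda_t^{(i)}$.
\item Moreover, $D_{\mathrm{Ld}}\triangle(D_0\triangle F)=E\triangle F=E\cup(J\setminus E)\supset J$, because $F$ is disjoint from $E$. Hence this set has cardinality at least $m_0$.
\end{itemize}
Thus $\mW(D_{\mathrm{Ld}}\mid A_0)=1$ for $A_0=D_0\triangle F$, and the displayed inequality yields $\sum_{i=1}^{m_0}\tlambda_t^{(i)}> b$, as required.

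\textbf{Main obstacle.} The delicate step is that the Bayes decision $D_{\mathrm{Ld}}$ need not coincide with the sign-based set $D_0$. A naive choice of $A_0$, obtained by flipping exactly the $m_0$ weakest streams, may fail to incur unit loss relative to $D_{\mathrm{Ld}}$. The remedy is to flip only the weak streams outside the disagreement set $E$, as done above.

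Two routine points remain to check. First, zero LLRs are harmless, since streams with $\lambda_t^k=0$ contribute $|\lambda_t^k|=0$ regardless of their membership in $D_0$. Second, $b>0$ holds because $c<2^{-K}$. This positivity is not needed for the argument, but it confirms the threshold is meaningful.
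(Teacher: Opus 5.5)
Your proof is correct and is essentially the paper's argument. Your set $A_0=D_0\triangle(J\setminus E)$ equals $D_{\mathrm{Ld}}^c$ on $J$ and $D_0$ off $J$, which is exactly the paper's choice $A_0=(B_0\cap D_{\mathrm{Ld}}^c)\cup(B_0^c\cap D_0)$ with $B_0=J$, followed by the same application of Lemma~\ref{lemma: A_0}; the only cosmetic difference is that you compute the log-likelihood difference exactly as $\sum_{k\in J\setminus E}|\lambda_t^k|$, whereas the paper bounds it directly by $\sum_{k\in B_0}|\lambda_t^k|$.
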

\begin{proof}
Since $c$, $b$, and $\mW$ are fixed, we suppress their dependence in $T_S(b)$ and $T_{\mathrm{Ld}}(c,\mW)$.
Suppose that $T_{\mathrm{Ld}} < \infty$ since otherwise the conclusion trivially holds. We show that the stopping criterion of $T_S$ is met at time $T_{\mathrm{Ld}}$, that is,
\begin{equation}
    \label{gmr:aux_comp}
\sum_{i=1}^{m_0}\tlambda_{T_{\mathrm{Ld}}}^{(i)} \geq b = \log(2^{-K} c^{-1}),
\end{equation}
where we recall the ordered statistics $\{\tlambda_t^{(i)}: i\in[K]\}$ in \eqref{eq: ordered statistics sum intersection}.
This immediately implies $T_S \le T_{\mathrm{Ld}}$.

For each $i \in [K]$, let $\ell_i$ denote the index of the stream attaining $\tilde{\lambda}_t^{(i)}$, that is,
\[
|\lambda_t^{\ell_i}| = \tilde{\lambda}_t^{(i)},
\]
where ties are broken arbitrarily. Further, define $B_0 := \{\ell_1, \ldots, \ell_{m_0}\}$ as the set of the $m_0$ streams with the smallest absolute LLRs. Let $D_0 := \{k \in [K] : \lambda_{T_{\mathrm{Ld}}}^k \ge 0\}$ denote the set of streams with nonnegative LLRs at time $T_{\mathrm{Ld}}$.

We define the following subset 
$$A_0 := (B_0\cap D_{\mathrm{Ld}}^c)\cup(B_0^c\cap D_0),$$
as illustrated in Figure \ref{fig: A_0 sum intersection}. We claim that
\begin{equation}\label{eq: B_0 2}
    |A_0 \triangle D_{\mathrm{Ld}}|\ge m_0 \quad \text{ and } \quad A_0\cap B_0^c = D_0\cap B_0^c.
\end{equation}
The second part above follows directly from De Morgan's laws. For the first part, it suffices to prove that $A_0 \triangle D_{\mathrm{Ld}} \supseteq B_0$. To this end, fix an arbitrary $k \in B_0$. By the definition of $A_0$, if $k \in A_0$, then $k \in D_{\mathrm{Ld}}^{c}$, i.e., $k \in A_0 \setminus D_{\mathrm{Ld}}$; if $k \notin A_0$, then $k \notin D_{\mathrm{Ld}}^{c}$, i.e., $k \in D_{\mathrm{Ld}} \setminus A_0$. Therefore, $k \in A_0 \triangle  D_{\mathrm{Ld}}$.

\begin{figure}[!t]
\centering
\resizebox{200pt}{!}{
\begin{circuitikz}
\tikzstyle{every node}=[font=\Large]
\draw  (6.25,22) rectangle (15.75,21.25);
\draw [short] (11.5,22) -- (11.5,21.25);
\draw [dashed] (11.5,21.25) -- (11.5,16);
\node [font=\Large] at (13.5,22.5) {$D_{\mathrm{Ld}}$};
\node [font=\Large] at (8.75,22.5) {$D_{\mathrm{Ld}}^c$};
\node [font=\Large] at (8.75,21.6) {$0$};
\node [font=\Large] at (13.5,21.6) {$1$};

\draw  (6.25,20) rectangle (15.75,19.25);
\draw [<->, >=Stealth, dashed] (9.5,19) -- (13.5,19);
\node [font=\Large] at (10.75,18.5) {$D_0$};
\draw [short] (9.5,20) -- (9.5,19.25);
\draw [short] (13.5,20) -- (13.5,19.25);
\node [font=\Large] at (8,19.6) {$-$};
\node [font=\Large] at (10.5,19.6) {$+$};
\node [font=\Large] at (12.5,19.6) {$+$};
\node [font=\Large] at (14.5,19.6) {$-$};
\draw [dashed] (9.5,19.25) -- (9.5,16);
\draw [dashed] (13.5,19.25) -- (13.5,16);

\draw  (6.25,18) rectangle (15.75,17.25);
\node [font=\Large] at (10.75,16.5) {$B_0$};
\draw [<->, >=Stealth, dashed] (8.5,17) -- (12.5,17);
\draw [short] (8.5,18) -- (8.5,17.25);
\draw [short] (12.5,18) -- (12.5,17.25);
\draw [dashed] (8.5,17.25) -- (8.5,16);
\draw [dashed] (12.5,17.25) -- (12.5,16);

\draw  (6.25,16) rectangle (15.75,15.25);
\draw [short] (8.5,16) -- (8.5,15.25);
\draw [short] (9.5,16) -- (9.5,15.25);
\draw [short] (11.5,16) -- (11.5,15.25);
\draw [short] (12.5,16) -- (12.5,15.25);
\draw [short] (13.5,16) -- (13.5,15.25);
\node [font=\Large] at (7.5,15.6) {$0$};
\node [font=\Large] at (9,15.6) {$1$};
\node [font=\Large] at (10.5,15.6) {$1$};
\node [font=\Large] at (12,15.6) {$0$};
\node [font=\Large] at (13,15.6) {$1$};
\node [font=\Large] at (14.5,15.6) {$0$};
\draw [<->, >=Stealth, dashed] (8.5,15) -- (11.5,15);
\draw [<->, >=Stealth, dashed] (12.5,15) -- (13.5,15);
\draw [dashed] (10.5,15) -- (11.25,14.55);
\draw [dashed] (13,15) -- (12.25,14.55);
\node [font=\Large] at (11.75,14.5) {$A_0$};

\end{circuitikz}
}
\caption{Visualization of the subsets appearing in the proof of the second-order optimality of the Sum-Intersection rule.}
 \label{fig: A_0 sum intersection}
\end{figure}
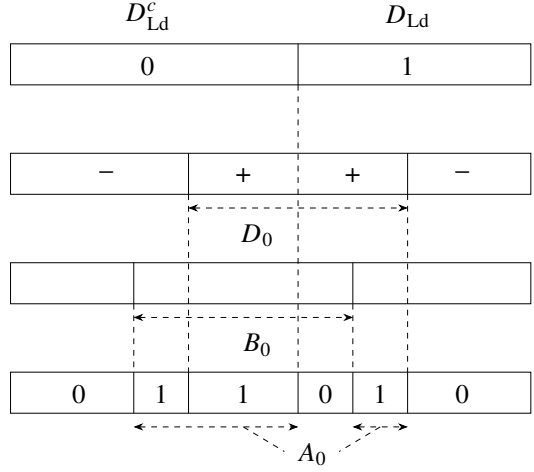

Next, by Lemma \ref{lemma: A_0}, we have
$$\sum_{i\in D_0}\lambda_{T_{\mathrm{Ld}}}^i- \sum_{j\in A_0}\lambda_{T_{\mathrm{Ld}}}^j > \log [c^{-1}\pi_0(A_0)\mW(D_{\mathrm{Ld}}\mid A_0)].$$ 
Due to the second part in \eqref{eq: B_0 2}, we have
\begin{align*}
\sum_{i\in D_0}\lambda_{T_{\mathrm{Ld}}}^i- \sum_{j\in A_0}\lambda_{T_{\mathrm{Ld}}}^j = \sum_{i\in D_0\cap B_0}\lambda_{T_{\mathrm{Ld}}}^i - \sum_{j\in A_0\cap B_0}\lambda_{T_{\mathrm{Ld}}}^j
\le \sum_{k\in B_0}|\lambda_{T_{\mathrm{Ld}}}^k| = \sum_{i=1}^{m_0}\tlambda_{T_{\mathrm{Ld}}}^{(i)}.
\end{align*}
Due to the first part in \eqref{eq: B_0 2} and the definition of $\mW$ in \eqref{eq: loss gmr}, we have $\mW(D_{\mathrm{Ld}}\mid A_0) = 1$. This proves \eqref{gmr:aux_comp} and completes the proof since $\pi_0(A_0) = 2^{-K}$.
\end{proof}

Recall the definition of $\KL(f_A \mid f_C)$ in \eqref{def:KL_fA_fC}, of $\mathcal{H}_D^{\mW}$ in \eqref{def:H_D_W}, and of $\KL_{A,D}^{\mW}$ in \eqref{def:KL_A_D_star}.

\begin{proof}[Proof of Lemma \ref{lemma: misclassification rate satisfies assumption}]
Fix an arbitrary $A\subset [K]$. Let $\tilde{C} \in \mathcal{H}_{A}^{\mW}$ (i.e., $|\tilde{C} \triangle A| \geq m_0$) be such that $\KL(f_A \mid f_{\tilde{C}}) = \KL_{A,A}^{\mW}$. It is sufficient to show that for any $D\neq A$, there exists a subset $C^*\in \mathcal{H}_{D}^{\mW}$ (i.e., $|C^* \triangle  D| \geq m_0$) such that
\begin{equation}\label{eq: misclassification rate satisfies assumption 2}
\KL(f_A\mid f_{C^*})<\KL(f_A\mid f_{\tilde{C}}).
\end{equation}

We denote
\begin{equation*}
\begin{aligned}
&L_1 = A \setminus D,\qquad L_2 = A \cap D,\qquad L_3 = D \setminus A,\qquad L_4 = D^{c} \setminus A,\\
&\Gamma_1 = L_1 \setminus \tilde{C},\qquad \Gamma_2 = L_2 \setminus \tilde{C},\qquad 
\Gamma_3 = L_3 \cap \tilde{C},\qquad \Gamma_4 = L_4 \cap \tilde{C},
\end{aligned}
\end{equation*}
as illustrated in Figure \ref{fig: C^* and tilde C}. Then  it follows that
\begin{equation*}
A \setminus \tilde{C} = \Gamma_1 \cup \Gamma_2,\qquad
\tilde{C} \setminus A = \Gamma_3 \cup \Gamma_4,\qquad
\tilde{C} \triangle A =
\Gamma_1 \cup \Gamma_2 \cup \Gamma_3 \cup \Gamma_4,\qquad D \triangle A = L_1 \cup L_3.
\end{equation*}
Since   $D \neq A$, and $|\tilde{C} \triangle A| \geq m_0$, we have
\begin{equation}\label{eq: misclassification rate satisfies assumption 0}
|L_1| + |L_3| \geq 1,\qquad
|\Gamma_1| + |\Gamma_2| + 
|\Gamma_3| + 
|\Gamma_4| \geq m_0. 
\end{equation}

\noindent \textbf{Case 1.}  $\Gamma_1\cup\Gamma_3 \neq  \emptyset$. In this case, we define 
$$
C^* = L_1\cup(L_2\setminus \Gamma_2)\cup  \Gamma_4,
$$
and it follows that
\begin{equation*}
A\setminus C^* = {\Gamma}_2, \qquad   C^*\setminus A = {\Gamma}_4,  \qquad
C^*\triangle D = L_1\cup {\Gamma}_2\cup L_3\cup {\Gamma}_4.
\end{equation*}
On one hand, since $L_1,L_2,L_3,L_4$ are disjoint, we have 
\begin{align*}
|C^*\triangle D|  = |L_1| + |{\Gamma}_2| + |L_3| + |{\Gamma}_4| \ge |\Gamma_1| + |\Gamma_2| + |\Gamma_3| + |\Gamma_4| \ge m_0,
\end{align*}
where the last inequality is due to \eqref{eq: misclassification rate satisfies assumption 0}. On the other hand, in view of \eqref{def:KL_fA_fC},
\begin{align*}
\KL(f_A\mid f_{\tilde{C}}) - \KL(f_A\mid f_{C^*})& = \sum_{k\in A\setminus \tilde{C}}\mathcal{I}_k^1 + \sum_{k\in \tilde{C}\setminus A}\mathcal{I}_k^0 - (\sum_{k\in A\setminus C^*}\mathcal{I}_k^1 + \sum_{k\in C^*\setminus A}\mathcal{I}_k^0)\\
& = 
\sum_{k\in\Gamma_1}\mathcal{I}_k^1 + \sum_{k\in \Gamma_3}\mathcal{I}_k^0
>0,
\end{align*}
where the last inequality is due to $\Gamma_1 \cup \Gamma_3 \neq \emptyset$ and Assumption \ref{assumption: lorden}. Thus \eqref{eq: misclassification rate satisfies assumption 2} is established in Case 1.

\medskip

\noindent \textbf{Case 2}. $\Gamma_1 \cup \Gamma_3 = \emptyset$. In this case, due to \eqref{eq: misclassification rate satisfies assumption 0}, we have
\begin{equation} \label{aux:G2G4_m0}
    |\Gamma_2| + |\Gamma_4| \geq m_0.
\end{equation}
If  $|L_1| + |L_3| \geq |\Gamma_2| + |\Gamma_4|$, we define $\tilde{\Gamma}_2 = \tilde{\Gamma}_4 = \emptyset$. Otherwise,
let
$\tilde{\Gamma}_2 \subset \Gamma_2$ and $\tilde{\Gamma}_4 \subset \Gamma_4$ be such that
$|\tilde{\Gamma}_2| + |\tilde{\Gamma}_4|
=
|\Gamma_2| + |\Gamma_4| - |L_1| - |L_3|
$.
 Due to \eqref{aux:G2G4_m0} and \eqref{eq: misclassification rate satisfies assumption 0}, we always have
\begin{align}\label{aux:diff_non_empty}
    \left(\Gamma_2 \setminus \tilde{\Gamma}_2\right) \cup
     \left(\Gamma_4 \setminus \tilde{\Gamma}_4\right) \neq \emptyset.
\end{align}
Now, we define 
$$
C^* = L_1\cup(L_2\setminus\tilde{\Gamma}_2)\cup \tilde{\Gamma}_4,
$$
and it follows that
\begin{equation*}
A\setminus C^* = \tilde{\Gamma}_2, \qquad   C^*\setminus A = \tilde{\Gamma}_4,  \qquad
C^*\triangle D = L_1\cup\tilde{\Gamma}_2\cup L_3\cup\tilde{\Gamma}_4.
\end{equation*}
On one hand, since $L_1,L_2,L_3,L_4$ are disjoint and due to \eqref{aux:G2G4_m0}, we have 
\begin{align*}
|C^*\triangle D| = |L_1| + |\tilde{\Gamma}_2| + |L_3| + |\tilde{\Gamma}_4| \ge  |\Gamma_2| + |\Gamma_4| \ge m_0,
\end{align*}
On the other hand, in view of \eqref{def:KL_fA_fC},
\begin{align*}
\KL(f_A\mid f_{\tilde{C}}) - \KL(f_A\mid f_{C^*}) &= \sum_{k\in A\setminus \tilde{C}}\mathcal{I}_k^1 + \sum_{k\in \tilde{C}\setminus A}\mathcal{I}_k^0 - (\sum_{k\in A\setminus C^*}\mathcal{I}_k^1 + \sum_{k\in C^*\setminus A}\mathcal{I}_k^0) \\
& = 
\sum_{k\in\Gamma_2\setminus\tilde{\Gamma}_2}\mathcal{I}_k^1 + \sum_{k\in \Gamma_4\setminus\tilde{\Gamma}_4}\mathcal{I}_k^0 
>0,
\end{align*}
where the last inequality follows from \eqref{aux:diff_non_empty} and Assumption \ref{assumption: lorden}. Thus \eqref{eq: misclassification rate satisfies assumption 2} is established in Case 2, and the proof is complete.
\end{proof}

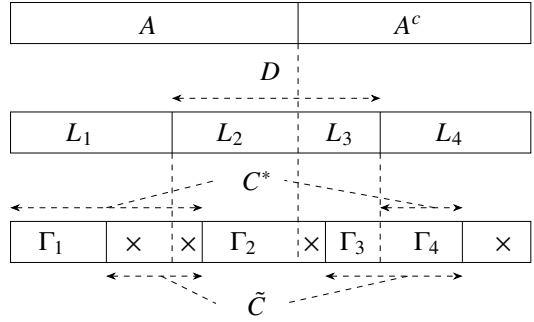
\begin{figure}[!t]
\centering
\resizebox{200pt}{!}{
\begin{circuitikz}
\tikzstyle{every node}=[font=\Large]
\draw  (6.25,22) rectangle (15.75,21.25);
\draw [short] (11.5,22) -- (11.5,21.25);
\draw [dashed] (11.5,21.25) -- (11.5,17.25);
\node [font=\Large] at (8.75,21.6) {$A$};
\node [font=\Large] at (13.5,21.6) {$A^c$};

\draw  (6.25,20) rectangle (15.75,19.25);
\draw [<->, >=Stealth, dashed] (9.2,20.25) -- (13,20.25);
\draw [short] (9.2,20) -- (9.2,19.25);
\draw [short] (13,20) -- (13,19.25);
\draw [dashed] (9.2,19.25) -- (9.2,17.25);
\draw [dashed] (13,19.25) -- (13,17.25);
\node [font=\Large] at (7.5,19.6) {$L_1$};
\node [font=\Large] at (10.25,19.6) {$L_2$};
\node [font=\Large] at (12.25,19.6) {$L_3$};
\node [font=\Large] at (14.25,19.6) {$L_4$};
\node [font=\Large] at (11,20.75) {$D$};

\draw  (6.25,18) rectangle (15.75,17.25);
\draw [short] (8,18) -- (8,17.25);
\draw [short] (9.75,18) -- (9.75,17.25);
\draw [short] (12,18) -- (12,17.25);
\draw [short] (14.5,18) -- (14.5,17.25);
\node [font=\Large] at (7,17.6) {$\Gamma_1$};
\node [font=\Large] at (8.5,17.6) {$\times$};
\node [font=\Large] at (9.5,17.6) {$\times$};
\node [font=\Large] at (10.5,17.6) {$\Gamma_2$};
\node [font=\Large] at (11.75,17.6) {$\times$};
\node [font=\Large] at (12.5,17.6) {$\Gamma_3$};
\node [font=\Large] at (13.85,17.6) {$\Gamma_4$};
\node [font=\Large] at (15.25,17.6) {$\times$};
\draw [<->, >=Stealth, dashed] (6.25,18.25) -- (9.75,18.25);
\draw [<->, >=Stealth, dashed] (13,18.25) -- (14.5,18.25);
\draw [dashed] (8,18.25) -- (10,18.75);
\draw [dashed] (14,18.25) -- (11.25,18.75);
\node [font=\Large] at (10.75,18.75) {$C^*$};
\draw [<->, >=Stealth, dashed] (8,17) -- (9.75,17);
\draw [<->, >=Stealth, dashed] (12,17) -- (14.5,17);
\draw [dashed] (9,17) -- (10,16.5);
\draw [dashed] (13.5,17) -- (11.25,16.5);
\node [font=\Large] at (10.75,16.5) {$\tilde{C}$};
\end{circuitikz}
}
\caption{Visualization of the subsets appearing in the proof of the ESS approximation under the generalized misclassification rate.}
\label{fig: C^* and tilde C}
\end{figure}

\subsection{Generalized Familywise Error Rates}\label{sec: proof of gfr}
In this subsection, we present the proofs of the results in Subsection \ref{sec: leap rule}. 
We recall the definition of the stopping rule $T_L(a,b)$ in \eqref{eq: leap rule}, the loss function $\mW$ in \eqref{eq: loss gfr}, and the associated rule $T_{\mathrm{Ld}}(c,\mW)$ in \eqref{eq: lorden procedure}. Furthermore, $\pi_0$ denotes the uniform distribution on $\mathcal{A} = 2^{[K]}$.

\begin{proof}[Proof of Theorem \ref{thm: second-order optimal of leap rule}] 
Let $\bell := (\alpha,\beta)$, and we define
\begin{align}\label{eq: leap_constants_def}
\begin{split}
c_{\bell} &:= 2^{-K} e^{-(a_{\bell} \vee b_{\bell})} = 2^{-K}\min\left\{2^{-m_2}\binom{K}{m_2}^{-1}\beta, 2^{-m_1}\binom{K}{m_1}^{-1}\alpha\right\}, \\
L &:= (1+C) 2^{K} \max\left\{2^{m_2}\binom{K}{m_2}, 2^{m_1} \binom{K}{m_1}\right\},
\end{split}
\end{align}
where we recall $a_{\bell}, b_{\bell}$ are defined in \eqref{eq: leap_thresholds} and the constant $C$ in \eqref{eq:alpha_beta_same_rate}.  
Then the conclusion follows immediately from Theorem \ref{thm: main result} once we verify condition \eqref{assumption: lorden 1}--\eqref{eq: second-order optimal sufficient condition 2} hold with the loss function $\mW$ in \eqref{eq: loss gfr}, cost $c_{\bell}$ and constant $L$.

We start with condition \eqref{assumption: lorden 1}. For each $D \subset [K]$, let $A = D^{c}$. Then $D\setminus A = D$ and $A\setminus D = D^{c}$. Since $|D| + |D^{c}| = K \ge m_1 + m_2$, it follows that either $|D\setminus A| \ge m_1$ or $|A\setminus D| \ge m_2$. By the definition of $\mW$ in \eqref{eq: loss gfr}, we therefore have $\mW(D\mid A) > 0$. Hence, condition \eqref{assumption: lorden 1} holds.

Next, we verify condition \eqref{eq: second-order optimal sufficient condition 1}. By the definition of $c_{\bell}$ in \eqref{eq: leap_constants_def}, we have
$$
\max\{a_{\bell}, b_{\bell}\} = \log\left(2^{-K} c_{\bell}^{-1}\right).
$$
Then, by Lemma \ref{lemma: stop earlier leap}, $
T_L(a_{\bell}, b_{\bell}) \le T_{\mathrm{Ld}}(c_{\bell},\mW)$, 
which verifies condition \eqref{eq: second-order optimal sufficient condition 1}.

Finally, we focus on condition \eqref{eq: second-order optimal sufficient condition 2}. Fix an arbitrary procedure $\delta=(T,D)\in\Delta_{m_1,m_2}^{\textup{gfr}}(\bell)$ in \eqref{problem: gfr}. By its definition and the definition of $\mW$ in \eqref{eq: loss gfr}, 
\begin{align*}
    \ie(\delta; \mW) &\le \sum_{A\in\mathcal{A}}\pi_0(A) \left(\prob_A(|D\setminus A|\ge m_1)+\prob_A(|A\setminus D|\ge m_2)\right) 
    \\&\le  (\alpha+\beta)\sum_{A\in\mathcal{A}}\pi_0(A) =  (\alpha+\beta).
\end{align*}
Note that due to condition \eqref{eq:alpha_beta_same_rate}
$$
\alpha + \beta \leq (1+C) \min\{\alpha,\beta\} \leq L c_{\bell},
$$
where the last inequality is due to the definition of $c_{\bell}$ and $L$ in \eqref{eq: leap_constants_def}. Thus, condition \eqref{eq: second-order optimal sufficient condition 2} is verifies, the proof is complete.
\end{proof}

 \begin{lemma}\label{lemma: stop earlier leap}
Let the cost $c\in (0, 2^{-K})$, and assume that the thresholds $0< a,b\le \log(2^{-K} c^{-1})$. Then
$$T_L(a,b)\le T_{\mathrm{Ld}}(c,\mW).$$
\end{lemma}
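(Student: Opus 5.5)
We argue exactly as in the proof of Lemma \ref{lemma: stop earlier sum intersection}. Suppose $T_{\mathrm{Ld}}<\infty$ and write $t:=T_{\mathrm{Ld}}(c,\mW)$, $D_{\mathrm{Ld}}$ for the Lorden decision, and $D_0:=\{k:\lambda_t^k\ge 0\}$. By Lemma \ref{lemma: A_0}, with $\pi_0(A_0)=2^{-K}$, every $A_0\subset[K]$ satisfying $\mW(D_{\mathrm{Ld}}\mid A_0)=1$ obeys
\[
\sum_{i\in D_0}\lambda_t^i-\sum_{j\in A_0}\lambda_t^j>\log(2^{-K}c^{-1})\ge \max\{a,b\}.
\]
It therefore suffices to show that at time $t$ one of the component rules $\hat\tau_i$ ($0\le i\le m_1-1$) or $\check\tau_i$ ($1\le i\le m_2-1$) has both of its criteria satisfied. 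Then $T_L(a,b)\le t$.

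We first relate $D_{\mathrm{Ld}}$ to the sign structure. Set $i_1:=|D_{\mathrm{Ld}}\setminus D_0|$, the number of nonpositive-LLR streams declared signals, and $i_2:=|D_0\setminus D_{\mathrm{Ld}}|$, the number of positive-LLR streams declared noise. We test two families of $A_0$.

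For the first family, start from $A_0=D_{\mathrm{Ld}}$, remove the $m_1$ streams of $D_{\mathrm{Ld}}$ with the smallest $|\lambda_t|$, and otherwise set $A_0$ equal to the sign pattern $D_0$. Then $|D_{\mathrm{Ld}}\setminus A_0|\ge m_1$, so $\mW=1$. Here we use that $D_{\mathrm{Ld}}$ has at least $m_1$ elements; if it does not, we use the second family. The left-hand side of the displayed inequality then collapses to a sum of $|\lambda_t^k|$ over the removed streams, which is at most the sum of the smallest $m_1-i_1$ positive LLRs among $D_{\mathrm{Ld}}\cap D_0$, plus terms from streams in $D_{\mathrm{Ld}}\setminus D_0$ that contribute nonpositively. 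The upshot is a lower bound $\sum_{j=1}^{m_1-i}\hlambda_t^{(j)}\ge b$ with an appropriate shift $i$.

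For the second family, remove $m_2$ type-II streams symmetrically. This gives $\sum_{j}\clambda_t^{(j)}\ge a$ over a window of length $m_2$.

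The main obstacle is matching the index windows. The construction above produces inequalities of the form $\sum_{j=1}^{m_1-i}\hlambda^{(j)}\ge b$ together with $\sum_{j=i+1}^{i+m_2}\clambda^{(j)}\ge a$ for the \emph{same} $i$, which is the pattern of $\hat\tau_i$. We will choose $i$ as follows. Since $|\lambda|$ is monotone in the ordered lists, the best choice of streams to flip is the smallest ones. Let $i$ be the number of smallest negative-LLR streams that $D_{\mathrm{Ld}}$ includes, if $D_{\mathrm{Ld}}$ leaps this way; this case yields $\hat D_i$ and $\hat\tau_i$ with $i\le m_1-1$. Otherwise let $i$ be the number of smallest positive-LLR streams that $D_{\mathrm{Ld}}$ excludes, which yields $\check\tau_i$.

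To reduce to these cases, we first show that we may assume $D_{\mathrm{Ld}}$ differs from $D_0$ only on the smallest $|\lambda|$ streams of each sign. If $D_{\mathrm{Ld}}$ flips a non-minimal stream, swapping it for a smaller one gives a subset $A_0$ with $\mW=1$ whose left-hand side is even smaller, so the bound still transfers.

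We also need $i_1\le m_1-1$; otherwise the choice $A_0=D_0$ yields $\mW(D_{\mathrm{Ld}}\mid D_0)=1$ and left-hand side $0$, a contradiction. Symmetrically, $i_2\le m_2-1$. The same argument shows that $i_1$ and $i_2$ cannot both be positive, since swapping one of each would still keep $\mW=0$ relative to $D_0$. Hence exactly one of the $\hat\tau$ or $\check\tau$ cases applies. In each case we pick $A_0$ attaining the $m_1$- or $m_2$-threshold violation through the smallest streams, with the convention $\hlambda=\infty$ beyond $p(t)$ handling a deficit of streams, and read off both criteria of the corresponding component rule at time $t$.
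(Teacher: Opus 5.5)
\textbf{There is a genuine gap in how you choose the index $i$.}

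Several parts of your argument are correct:
\begin{itemize}
\item The reduction to Lemma~\ref{lemma: A_0} with $\pi_0(A_0)=2^{-K}$.
\item The use of $A_0=D_0$ to get $i_1=|D_{\mathrm{Ld}}\setminus D_0|\le m_1-1$ and $i_2=|D_0\setminus D_{\mathrm{Ld}}|\le m_2-1$. The paper tacitly needs this too, to drop the $\min$ in its $i^*$.
\item When one of $i_1,i_2$ is zero, your counting argument works: if a subset misses only $i$ streams of a given sign, its $j$-th smallest $|\lambda_t|$ is at most the $(i+j)$-th smallest overall.
\end{itemize}

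What fails is your exclusion of the mixed case $i_1,i_2\ge 1$. Your reason (``swapping one of each would still keep $\mW=0$ relative to $D_0$'') yields no contradiction, because $\mW(D_{\mathrm{Ld}}\mid D_0)=0$ is perfectly consistent. In fact nothing derivable from Lemma~\ref{lemma: A_0} can exclude it, since that lemma only uses that the posterior loss of $D_{\mathrm{Ld}}$ is below $c$.

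A concrete example: take $m_1=m_2=2$, $\lambda_t^{k'}=\varepsilon$, $\lambda_t^{k}=-\varepsilon$, all other $|\lambda_t^j|=M$ huge, and $D=(D_0\setminus\{k'\})\cup\{k\}$. Every $A_0$ with $\mW(D\mid A_0)=1$ has left-hand side at least $M$, so the conclusion of Lemma~\ref{lemma: A_0} holds. Yet your recipe picks $i=i_1=1$, and $\hat\tau_1$ fails at time $t$ because $\hat\lambda_t^{(1)}=\varepsilon<b$, while $\hat\tau_0$ does hold.

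The same problem appears in your first family. Bounding by the smallest $m_1-i_1$ positive LLRs \emph{within} $D_{\mathrm{Ld}}\cap D_0$ goes the wrong way relative to $\sum_{j\le m_1-i}\hat\lambda_t^{(j)}$ whenever some positive streams lie outside $D_{\mathrm{Ld}}$. Your step ``we may assume $D_{\mathrm{Ld}}$ differs from $D_0$ only on the smallest streams'' is also not legitimate: Lemma~\ref{lemma: A_0} is a statement about the actual $D_{\mathrm{Ld}}$, so you cannot swap its elements.

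\textbf{Two repairs are available.}

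\emph{(i) Prove the mixed case is impossible.} It is, but only because $D_{\mathrm{Ld}}$ is an \emph{argmin} of the posterior loss, which you would have to prove.
\begin{itemize}
\item Under the uniform prior on $2^{[K]}$, the posterior is a product of Bernoulli$(p_k)$ with $p_k=e^{\lambda_t^k}/(1+e^{\lambda_t^k})$.
\item Exchanging $k\in D$ and $k'\notin D$ with $p_k<p_{k'}$ lowers the posterior loss by $(p_{k'}-p_k)$ times the probability that the remaining error counts sit on the boundary of the error region.
\item That probability is positive because $m_1+m_2\le K$.
\end{itemize}

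\emph{(ii) Follow the paper, which needs nothing beyond Lemma~\ref{lemma: A_0}.}
\begin{itemize}
\item Set $L_2=D_0\setminus D_{\mathrm{Ld}}$, $L_3=D_0\cap D_{\mathrm{Ld}}$, $L_4=D_{\mathrm{Ld}}\setminus D_0$, and suppose $|L_2|\le|L_4|$.
\item Use the \emph{net} shift $i^*=|L_4|-|L_2|$.
\item Let $B_0^+$ be the $m_1-i^*$ overall smallest positive LLRs, and $B_0^-$ the $m_2+i^*$ overall smallest nonpositive ones.
\item Then $|B_0^+\cap D_{\mathrm{Ld}}|\ge m_1-|L_4|$ and $|B_0^-\setminus D_{\mathrm{Ld}}|\ge m_2-|L_2|$.
\item Hence $A_0^+=L_2\cup(L_3\setminus B_0^+)$ and $A_0^-=\tilde B^-\cup D_0$ both have $\mW(D_{\mathrm{Ld}}\mid A_0)=1$. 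Here $\tilde B^-$ consists of at most $m_2$ elements of $B_0^-\setminus D_{\mathrm{Ld}}$.
\item These two choices deliver the two criteria of $\hat\tau_{i^*}$.
\item The case $|L_2|>|L_4|$ gives $\check\tau_{|L_2|-|L_4|}$ symmetrically.
\end{itemize}
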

\begin{proof} 
Since $c$, $a,b$, and $\mW$ are fixed, we suppress their dependence in $T_L(a,b)$ and $T_{\mathrm{Ld}}(c,\mW)$.
Suppose that $T_{\mathrm{Ld}} < \infty$ since otherwise the conclusion trivially holds. We show that the stopping criterion of $T_L$ is met at time $T_{\mathrm{Ld}}$, that is, one of the following holds: either (i) for some $0 \leq i < m_1$,
\begin{equation}
    \label{fmr:aux_comp1}
    \sum_{j=1}^{m_1-i}\hlambda_{T_{\mathrm{Ld}}}^{(j)} \ge \log(2^{-K} c^{-1}),\quad \text{ and } \quad  
\sum_{j=i+1}^{i+m_2}\clambda_{T_{\mathrm{Ld}}}^{(j)} \ge \log(2^{-K} c^{-1}),
\end{equation}
or (ii) for some $0 \leq i < m_2$,
\begin{equation*}
\sum_{j= i+1}^{i+m_1}\hlambda_{T_{\mathrm{Ld}}}^{(j)} \ge \log(2^{-K} c^{-1}),\quad \text{ and } \quad   
\sum_{j=1}^{m_2- i}\clambda_{T_{\mathrm{Ld}}}^{(j)} \ge \log(2^{-K} c^{-1}) \end{equation*}
where we recall the ordered statistics $\{\hlambda_t^{(i)}\}$ and $\{\clambda_t^{(i)}\}$, and their corresponding stream indices $\{\hl_i(t),\cl_j(t)\}$ in Subsection \ref{sec: leap rule}.
This immediately implies $T_L \le T_{\mathrm{Ld}}$.

 Recall the LLRs $\{\lambda_t^k: k\in[K]\}$ in \eqref{eq:LLR}, and
 let $D_0 := \{k \in [K] : \lambda_{T_{\mathrm{Ld}}}^k \ge 0\}$ denote the set of streams with nonnegative LLRs at time $T_{\mathrm{Ld}}$. We define
\begin{equation}\label{eq: stop earlier leap 3}
\begin{aligned}
    &L_1 := \{k\in D_{\mathrm{Ld}}^c: \lambda_{T_{\mathrm{Ld}}}^k<0\} = D_{\mathrm{Ld}}^{c} \cap D_0^{c}, \quad L_2 := \{k\in D_{\mathrm{Ld}}^c: \lambda_{T_{\mathrm{Ld}}}^k\ge 0\} =   D_{\mathrm{Ld}}^{c} \cap D_0,
    \\& L_3 := \{k\in D_{\mathrm{Ld}}: \lambda_{T_{\mathrm{Ld}}}^k\ge 0\} =   D_{\mathrm{Ld}}  \cap D_0, \quad L_4 := \{k\in D_{\mathrm{Ld}}: \lambda_{T_{\mathrm{Ld}}}^k< 0\} =   D_{\mathrm{Ld}}  \cap D_0^{c}.
    \end{aligned}
\end{equation}
We focus on the case $|L_2| \le |L_4|$; the proof for the case $|L_2| > |L_4|$ is analogous and omitted. Denote 
$$
i^* := \min\{|L_4| - |L_2|,\;\; m_1 - 1\}.
$$

We denote by $B_0^+ = \{\hl_1(T_{\mathrm{Ld}}),\ldots,\hl_{m_1-i^*}(T_{\mathrm{Ld}})\}$ the set of stream indices corresponding to the first $m_1-i^*$ ordered statistics of the positive LLRs at time $T_{\mathrm{Ld}}$, and by $B_0^- = \{\cl_1(T_{\mathrm{Ld}}),\ldots,\cl_{m_2+i^*}(T_{\mathrm{Ld}})\}$ the set corresponding to the first $m_2+i^*$ ordered statistics of the absolute values of the nonpositive LLRs at time $T_{\mathrm{Ld}}$.  We define the following subsets:
 \begin{equation}\label{eq: stop earlier leap 4}
\begin{aligned}
&A_0^+ := L_2 \cup (L_3 \setminus B_0^+),\quad \text{ and } \quad  A_0^-:= \tilde{B}^- \cup L_2 \cup L_3,\quad \text{ where }
\\& \tilde{B}^- = 
\begin{cases}
B_0^-\cap L_1 &\text{ if } \;\;|B_0^-\cap L_1|\le m_2\\
\text{ any subset of } B_0^-\cap L_1 \text{ with cardinality } m_2 & \text{ otherwise}
\end{cases},
\end{aligned}
\end{equation}
as illustrated in Figure \ref{fig: A_0 leap}. By definition, we have 
\begin{equation}
  A_0^+ \subset D_0 \subset A_0^{-},\quad   D_0 \setminus A_0^+ = B_0^+\cap L_3, \quad \text{ and } \quad A_0^-\setminus D_0 = \tilde{B}^-. \label{aux:A_0_D_0}
\end{equation}
Furthermore, by Lemma \ref{lemma: leap rule claim}, 
\begin{equation}\label{eq: stop earlier leap 2}
|D_{\mathrm{Ld}}\setminus A_0^+| \ge m_1, \quad \text{ and }  \quad |A_0^-\setminus D_{\mathrm{Ld}}| \ge m_2.
\end{equation}

Now, due to Lemma \ref{lemma: A_0},  we have
$$\sum_{i\in D_0}\lambda_{T_{\mathrm{Ld}}}^i-\sum_{j\in A_0}\lambda_{T_{\mathrm{Ld}}}^j > \log[ c^{-1}\pi_0(A_0)\mW(D_{\mathrm{Ld}}\mid A_0)], \text{ for } A_0 \in \{A_0^+, A_0^-\}.
$$
On one hand, by the definition of $\mW$ in \eqref{eq: loss gfr}
and  \eqref{eq: stop earlier leap 2}, it follows that
$$
\mW(D_{\mathrm{Ld}}\mid A_0^{+}) = 1,\quad \text{ and} \quad 
\mW(D_{\mathrm{Ld}}\mid A_0^{-}) = 1.
$$
On the other hand,   due to   \eqref{aux:A_0_D_0},  it follows that
\begin{equation*}
\sum_{i\in D_0}\lambda_{T_{\mathrm{Ld}}}^i-\sum_{j\in A_0}\lambda_{T_{\mathrm{Ld}}}^j =  
\begin{cases}
    \sum_{k\in B_0^+\cap L_3}\lambda_{T_{\mathrm{Ld}}}^k & \text{ for } A_0=A_0^+\\
    \sum_{k\in \tilde{B}^-}|\lambda_{T_{\mathrm{Ld}}}^k| & \text{ for } A_0=A_0^-
\end{cases}.
\end{equation*}
As a result, due to the definition of $B_0^+, \tilde{B}^-$, we have 
\begin{align*}
   & \sum_{j=1}^{m_1-i^*}\hlambda_{T_{\mathrm{Ld}}}^{(j)} = \sum_{k\in B_0^+}\lambda_{T_{\mathrm{Ld}}}^k\ge \sum_{k\in B_0^+\cap L_3}\lambda_{T_{\mathrm{Ld}}}^k > \log[ c^{-1}\pi_0(A_0^{+})], \\
   &\sum_{j=1+i^*}^{m_2+i^*}|\clambda_{T_{\mathrm{Ld}}}^{(j)}| \ge \sum_{j\in\tilde{B}^-}|\clambda_{T_{\mathrm{Ld}}}^{(j)}| > \log[ c^{-1}\pi_0(A_0^{-})].
\end{align*}
Since $\pi(A) = 2^{-K}$ for each $A \subset [K]$, it implies that 
\eqref{fmr:aux_comp1} holds with $i = i^*$. The proof is complete.
\end{proof}

\begin{figure}[!t]
\centering
\resizebox{200pt}{!}{
\begin{circuitikz}
\tikzstyle{every node}=[font=\Large]
\draw  (6.25,22) rectangle (15.75,21.25);
\draw [short] (11.5,22) -- (11.5,21.25);
\draw [dashed] (11.5,21.25) -- (11.5,15.25);
\draw [dashed] (13,19.25) -- (13,16);
\node [font=\Large] at (13.5,22.5) {$D_{\mathrm{Ld}}$};
\node [font=\Large] at (8.75,22.5) {$D_{\mathrm{Ld}}^c$};
\node [font=\Large] at (8.75,21.6) {$0$};
\node [font=\Large] at (13.5,21.6) {$1$};

\draw  (6.25,20) rectangle (15.75,19.25);
\draw [<->, >=Stealth, dashed] (9.2,20.25) -- (13,20.25);
\draw [short] (9.2,20) -- (9.2,19.25);
\draw [short] (13,20) -- (13,19.25);
\draw [dashed] (9.2,19.25) -- (9.2,16);
\node [font=\Large] at (7.5,19.6) {$-$};
\node [font=\Large] at (10.25,19.6) {$+$};
\node [font=\Large] at (12.25,19.6) {$+$};
\node [font=\Large] at (14.25,19.6) {$-$};
\node [font=\Large] at (7.5,18.75) {$L_1$};
\node [font=\Large] at (10,18.75) {$L_2$};
\node [font=\Large] at (12.25,18.75) {$L_3$};
\node [font=\Large] at (14.5,18.75) {$L_4$};
\node [font=\Large] at (11,20.75) {$D_0$};

\draw  (6.25,18) rectangle (15.75,17.25);
\draw [short] (9.75,18) -- (9.75,17.25);
\draw [short] (12.5,18) -- (12.5,17.25);
\draw [dashed] (12.5,17.25) -- (12.5,16);
\node [font=\Large] at (8,17.6) {$-$};
\node [font=\Large] at (10.5,17.6) {$+$};
\node [font=\Large] at (12,17.6) {$+$};
\node [font=\Large] at (13.85,17.6) {$-$};
\draw [<->, >=Stealth, dashed] (9.75,17) -- (12.5,17);
\node [font=\Large] at (10.75,16.5) {$B_0^+$};

\draw  (6.25,16) rectangle (15.75,15.25);
\draw [short] (9.2,16) -- (9.2,15.25);
\draw [short] (11.5,16) -- (11.5,15.25);
\draw [short] (12.5,16) -- (12.5,15.25);
\draw [short] (13,16) -- (13,15.25);
\node [font=\Large] at (7,15.6) {0};
\node [font=\Large] at (10,15.6) {1};
\node [font=\Large] at (12,15.6) {0};
\node [font=\Large] at (12.75,15.6) {1};
\node [font=\Large] at (13.85,15.6) {0};
\draw [<->, >=Stealth, dashed] (9.2,15) -- (11.5,15);
\draw [<->, >=Stealth, dashed] (12.5,15) -- (13,15);
\draw [dashed] (10.5,15) -- (11.25,14.55);
\draw [dashed] (12.75,15) -- (12.25,14.55);
\node [font=\Large] at (11.75,14.5) {$A_0^+$};
\end{circuitikz}
}
\hspace{0.14cm}
\resizebox{200pt}{!}{
\begin{circuitikz}
\tikzstyle{every node}=[font=\Large]
\draw  (6.25,22) rectangle (15.75,21.25);
\draw [short] (11.5,22) -- (11.5,21.25);
\draw [dashed] (11.5,21.25) -- (11.5,15.25);
\node [font=\Large] at (13.5,22.5) {$D_{\mathrm{Ld}}$};
\node [font=\Large] at (8.75,22.5) {$D_{\mathrm{Ld}}^c$};
\node [font=\Large] at (8.75,21.6) {$0$};
\node [font=\Large] at (13.5,21.6) {$1$};

\draw  (6.25,20) rectangle (15.75,19.25);
\draw [<->, >=Stealth, dashed] (9.2,20.25) -- (13,20.25);
\draw [short] (9.2,20) -- (9.2,19.25);
\draw [short] (13,20) -- (13,19.25);
\draw [dashed] (9.2,19.25) -- (9.2,18);
\draw [dashed] (13,19.25) -- (13,16);
\node [font=\Large] at (7.5,19.6) {$-$};
\node [font=\Large] at (10.25,19.6) {$+$};
\node [font=\Large] at (12.25,19.6) {$+$};
\node [font=\Large] at (14.25,19.6) {$-$};
\node [font=\Large] at (7.5,18.75) {$L_1$};
\node [font=\Large] at (10,18.75) {$L_2$};
\node [font=\Large] at (12.25,18.75) {$L_3$};
\node [font=\Large] at (14.5,18.75) {$L_4$};
\node [font=\Large] at (11,20.75) {$D_0$};

\draw  (6.25,18) rectangle (15.75,17.25);
\draw [short] (8,18) -- (8,17.25);
\draw [short] (9.2,18) -- (9.2,17.25);
\draw [short] (13,18) -- (13,17.25);
\draw [short] (15,18) -- (15,17.25);
\draw [dashed] (8,17.25) -- (8,16);
\node [font=\Large] at (7,17.6) {$-$};
\node [font=\Large] at (8.5,17.6) {$-$};
\node [font=\Large] at (10.25,17.6) {$+$};
\node [font=\Large] at (12.25,17.6) {$+$};
\node [font=\Large] at (13.85,17.6) {$-$};
\node [font=\Large] at (15.4,17.6) {$-$};
\draw [<->, >=Stealth, dashed] (8,17) -- (9.2,17);
\draw [<->, >=Stealth, dashed] (13,17) -- (15,17);
\draw [dashed] (8.5,17) -- (10.25,16.55);
\draw [dashed] (13.85,17) -- (11,16.55);
\node [font=\Large] at (10.75,16.5) {$B_0^-$};

\draw  (6.25,16) rectangle (15.75,15.25);
\draw [short] (8,16) -- (8,15.25);
\draw [short] (13,16) -- (13,15.25);
\node [font=\Large] at (7,15.6) {0};
\node [font=\Large] at (9.75,15.6) {1};
\node [font=\Large] at (12,15.6) {1};
\node [font=\Large] at (13.85,15.6) {0};
\draw [<->, >=Stealth, dashed] (8,15) -- (13,15);
\node [font=\Large] at (10.5,14.5) {$A_0^-$};
\end{circuitikz}
}
\caption{Visualization of the subsets appearing in the proof of the second-order optimality of the Leap rule.
}\label{fig: A_0 leap}
\end{figure}
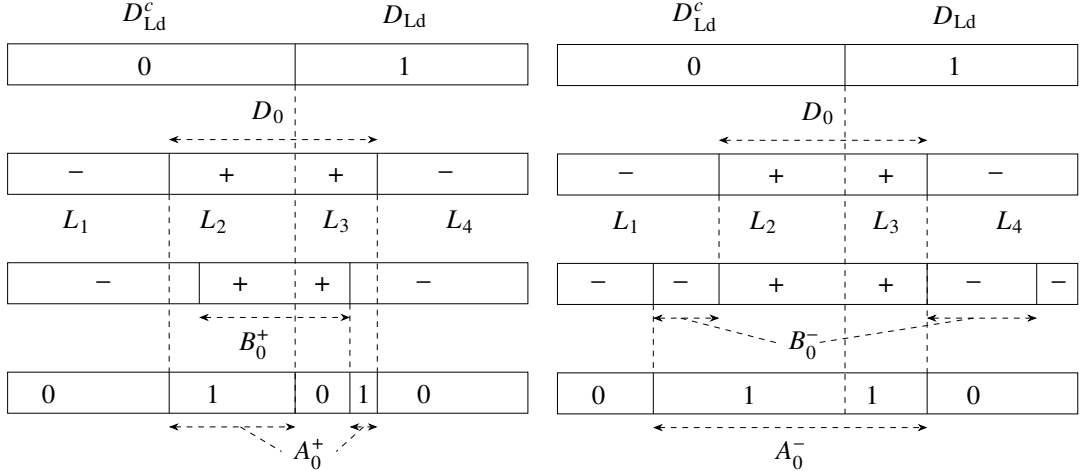

\begin{lemma}\label{lemma: leap rule claim}
Consider the subsets $L_1,L_2,L_3,L_4,D_0$ in \eqref{eq: stop earlier leap 3}, and $A_0^+, A_0^-, B_0^+, B_0^-, \tilde{B}^-$ in \eqref{eq: stop earlier leap 4}.  We assume $|L_2|\le |L_4|$ and denote by $i^*:=|L_4|-|L_2|$. Then
\begin{equation*}
 |D_{\mathrm{Ld}}\setminus A_0^+| \ge m_1, \quad \text{ and } \quad  |A_0^-\setminus D_{\mathrm{Ld}}| \ge m_2.
\end{equation*}
\end{lemma}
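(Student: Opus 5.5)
The plan is pure counting. Neither inequality uses probability; both follow from the partition $[K]=L_1\sqcup L_2\sqcup L_3\sqcup L_4$ in \eqref{eq: stop earlier leap 3}. Throughout, $D_{\mathrm{Ld}}=L_3\cup L_4$ and $D_{\mathrm{Ld}}^c=L_1\cup L_2$.

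For the positive side, the positive-LLR streams lie in $D_0=L_2\cup L_3$. Hence $B_0^+\subset L_2\cup L_3$ (I will treat zero LLRs by the convention fixed in the definitions; this is a measure-zero bookkeeping point). By \eqref{eq: stop earlier leap 4} we have $A_0^+=L_2\cup(L_3\setminus B_0^+)$, and since $L_4\cap A_0^+=\emptyset$,
\[
D_{\mathrm{Ld}}\setminus A_0^+=(L_3\cap B_0^+)\cup L_4 ,
\]
a disjoint union. Since $B_0^+\subset L_2\cup L_3$, we get $|L_3\cap B_0^+|\ge |B_0^+|-|L_2|=m_1-i^*-|L_2|$. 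Therefore
\[
|D_{\mathrm{Ld}}\setminus A_0^+|\ge m_1-i^*-|L_2|+|L_4|=m_1
\]
with $i^*=|L_4|-|L_2|$. If $m_1-i^*\le 0$, the set $B_0^+$ is empty, but then $|L_4|\ge |L_2|+m_1\ge m_1$ and the bound holds directly.

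For the negative side, the negative-LLR streams lie in $L_1\cup L_4$, so $B_0^-\subset L_1\cup L_4$. Since $\tilde B^-\subset L_1\subset D_{\mathrm{Ld}}^c$, $L_2\subset D_{\mathrm{Ld}}^c$ and $L_3\subset D_{\mathrm{Ld}}$,
\[
A_0^-\setminus D_{\mathrm{Ld}}=\tilde B^-\cup L_2 ,
\]
again disjoint. I split into the two cases in the definition of $\tilde B^-$.
\begin{itemize}
\item If $|B_0^-\cap L_1|>m_2$, then $|\tilde B^-|=m_2$ and the claim follows immediately.
\item Otherwise $|\tilde B^-|=|B_0^-\cap L_1|\ge |B_0^-|-|L_4|=m_2+i^*-|L_4|$, so $|A_0^-\setminus D_{\mathrm{Ld}}|\ge m_2+i^*-|L_4|+|L_2|=m_2$.
\end{itemize}

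The only delicate point, which I expect to be the main obstacle, is well-definedness of $B_0^\pm$ when there are fewer than $m_1-i^*$ positive LLRs or fewer than $m_2+i^*$ nonpositive ones. In that situation the convention $\hlambda_t^{(k)}=\infty$ (resp.\ $\clambda_t^{(k)}=\infty$) makes the corresponding sum in \eqref{fmr:aux_comp1} infinite. So in the proof of Lemma \ref{lemma: stop earlier leap} that half of the stopping condition holds trivially, and the lemma is only needed for the other half. Otherwise $B_0^+$ (resp.\ $B_0^-$) is the set of all positive (resp.\ nonpositive) streams, so it contains $L_3$ up to zero-LLR streams (resp.\ equals $L_1\cup L_4$), and the same count goes through whenever enough indices exist.

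I would also reconcile $i^*$ with the version $\min\{|L_4|-|L_2|,m_1-1\}$ used in Lemma \ref{lemma: stop earlier leap}. If the minimum is $m_1-1$, then $|L_4|\ge m_1$, which gives the first inequality directly. The second inequality then needs the count above with $i^*=m_1-1$; I would check it by the same subtraction, using $|B_0^-\cap L_1|\ge\min\{|L_1|,\,m_2+i^*-|L_4|\}$.
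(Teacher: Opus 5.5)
Your counting argument is correct and is essentially the paper's proof. It uses the same containments $B_0^+\subset L_2\cup L_3$ and $B_0^-\subset L_1\cup L_4$, the same identities $D_{\mathrm{Ld}}\setminus A_0^+=(B_0^+\cap L_3)\cup L_4$ and $A_0^-\setminus D_{\mathrm{Ld}}=\tilde B^-\cup L_2$, and the same two-case count for $\tilde B^-$. Your treatment of the degenerate cases is more careful than the paper's, which tacitly uses $|B_0^+|=m_1-i^*$ and $|B_0^-|=m_2+i^*$. In those cases, with too few positive or nonpositive LLRs, the corresponding sum is $+\infty$ and only the other inequality is needed, as you say. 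One small correction: zero LLRs are not a measure-zero event for discrete data. The clean fix, which the paper equally needs, is to take $D_0=\{k:\lambda^k_{T_{\mathrm{Ld}}}>0\}$; Lemma \ref{lemma: A_0} still holds for this choice.

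The one step that would fail is your proposed reconciliation with $i^*=\min\{|L_4|-|L_2|,m_1-1\}$. Suppose $|L_4|-|L_2|>m_1-1$ and you set $i^*=m_1-1$. Then the subtraction only gives $|B_0^-\cap L_1|\ge m_1+m_2-1-|L_4|\le m_2-|L_2|-1$. In that configuration the second inequality is actually false as a pure counting statement. Take $m_1=m_2=1$, $L_2=\emptyset$, $|L_4|=2$, and let the nonpositive LLR of smallest absolute value lie in $L_4$; then $\tilde B^-=\emptyset$ and $A_0^-\setminus D_{\mathrm{Ld}}=\emptyset$.

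The correct resolution is that this case never occurs. Applying Lemma \ref{lemma: A_0} with $A_0=D_0$ gives $0>\log[c^{-1}2^{-K}\mW(D_{\mathrm{Ld}}\mid D_0)]$. Since $c<2^{-K}$, this forces $\mW(D_{\mathrm{Ld}}\mid D_0)=0$, that is, $|L_4|=|D_{\mathrm{Ld}}\setminus D_0|\le m_1-1$ and $|L_2|\le m_2-1$. Hence the minimum in the paper always equals $|L_4|-|L_2|$, and the lemma as stated is all that is needed.
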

\begin{proof} 
Since $B_0^+\subset L_2\cup L_3$, $B_0^-\subset L_1\cup L_4$ and $L_1,L_2,L_3,L_4$ are disjoint, we have $B_0^+\cap L_3 = B_0^+\setminus L_2$ and $B_0^-\cap L_1 = B_0^-\setminus L_4$, which implies
$$|B_0^+\cap L_3| \ge |B_0^+|-|L_2| = m_1-i^*-|L_2| = m_1-|L_4|,$$ 
$$|B_0^-\cap L_1| \ge |B_0^-|-|L_4| = m_2+i^*-|L_4| = m_2-|L_2|.$$
In addition, due to the definition of $\tilde{B}^-$, if $|B_0^-\cap L_1|\le m_2$ we have $\tilde{B}^- = B_0^-\cap L_1$; otherwise, we have $|\tilde{B}^-| =  m_2$.

Next, since $D_{\mathrm{Ld}} = L_3 \cup L_4$, $A_0^+ = L_2 \cup (L_3\setminus B_0^+)$ and $A_0^- = \tilde{B}^-\cup L_2\cup L_3$, it follows that
\begin{align*}
D_{\mathrm{Ld}}\setminus A_0^+ = (B_0^+\cap L_3) \cup L_4, \quad \text{ and } \quad A_0^-\setminus D_{\mathrm{Ld}} = \tilde{B}^- \cup L_2.
\end{align*}

As a result, we have 
$$|D_{\mathrm{Ld}}\setminus A_0^+| = |B_0^+\cap L_3| + |L_4|\ge m_1, \quad \text{ and } \quad  |A_0^-\setminus D_{\mathrm{Ld}}| = |\tilde{B}^-| + |L_2|\ge m_2.$$
This completes the proof.
\end{proof}

\begin{proof}[Proof of Lemma \ref{gfr:sufficient_conditions_unique}]
For part (a), note that when $m_1=m_2=1$, the loss function $\mW$ in \eqref{eq: loss gfr} is a zero-one loss as defined in Remark \ref{remark: zero-one loss}. The conclusion therefore follows directly from Remark \ref{remark: zero-one loss}.

Next, we focus on part (b), where $\mathcal{I}_0^k=\mathcal{I}_1^k=\mathcal{I}^* \text{ for } k\in[K]$ and $m_1=m_2=m^*$.  Let $A \subset [K]$ be such that  $\min\{|A|,|A^c|\}\ge m^*$. 
Recall the definition of $\mW$ in \eqref{eq: loss gfr} and of $\KL_{A,D}^{\mW}$ in \eqref{def:KL_A_D_star}.
It is clear that $\KL_{A,A}^{\mW} = m^*\mathcal{I}^*$. Thus, it is sufficient to prove that for any $D\neq A$, there exists a subset $C^*\in\mathcal{H}_{D}^{\mW}$, such that 
\begin{equation*}
\KL(f_A\mid f_{C^*})< m^*\mathcal{I}^*.
\end{equation*}
We denote by 
\begin{equation*}
\begin{aligned}
&L_1 = A\setminus D, \ L_2 = A\cap D, \ L_3 = D\setminus A, \ L_4 = D^c\setminus A.
\end{aligned}
\end{equation*}
Since $\min\{|A|,|A^c|\}\ge m^*$, we have $\sum_{i=1}^4 |L_i| = |A| + |A^c| \ge 2m^*$, which implies either $|L_2|+|L_3|\ge m^*$ or $|L_1|+|L_4|\ge m^*$. 
We provide the proof for the case $|L_2|+|L_3|\ge m^*$; the other case is analogous.

\noindent \textbf{Case 1.} $L_3\neq \emptyset$. In this case, we define 
$$
C^* = L_1\cup(L_2\setminus\Gamma_2), \;\text{ with } \Gamma_2\subset L_2 \;\text{ such that } |\Gamma_2| = \max\{m^*-|L_3|,0\},
$$
where we note that such $\Gamma_2$ always exists since $|L_2|\ge m^*-|L_3|$. Note that 
$$
D\setminus C^* = \Gamma_2 \cup L_3 \quad 
A\setminus C^* = \Gamma_2, \quad \text{ and } \quad  C^* \setminus A= \emptyset.
$$
On one hand, it implies that $|D\setminus C^*| = |\Gamma_2|+|L_3|\ge m^*$, and thus $C^*\in\mathcal{H}_{D}^{\mW}$. On the other, since in this case $|\Gamma_2| < m^*$,
$$\KL(f_A\mid f_{C^*}) = |\Gamma_2|\mathcal{I}^*< m^*\mathcal{I}^*.$$
The proof under Case 1 is complete.

\noindent \textbf{Case 2.} $L_3 =  \emptyset$. In this case, since $D \neq A$, we must have $L_1 \neq \emptyset$. Furthermore, we have $|L_2| \geq m^*$ and $|L_4| = |A^c| \ge m^*$. 
We define
$$
C^* = L_1\cup L_2\cup \Gamma_4, \text{ with }\Gamma_4\subset L_4 \text{ such that } |\Gamma_4| = \max\{m^*-|L_1|,0\},
$$
where we note that such $\Gamma_4$ always exists since $|L_4| \ge m^*$. Note that
\begin{align*}
    C^* \setminus D = L_1 \cup \Gamma_4, \quad
    C^*\setminus A = \Gamma_4, \quad \text{ and } \quad A \setminus C^* = \emptyset
\end{align*}
On one hand, it implies that $|C^*\setminus D| = |L_1|+|\Gamma_4|\ge m^*$, and thus $C^*\in\mathcal{H}_{D}^{\mW}$. On the other, since $|L_1| >0$, we have that $|\Gamma_4| < m^*$, and thus
$$\KL(f_A\mid f_{C^*}) = |\Gamma_4|\mathcal{I}^*< m^*\mathcal{I}^*.$$
The proof under Case~2 is complete, which concludes the proof of part~(b).
\end{proof}

\begin{figure}[!t]
\centering
\resizebox{200pt}{!}{
\begin{circuitikz}
\tikzstyle{every node}=[font=\Large]
\draw  (6.25,22) rectangle (15.75,21.25);
\draw [short] (11.5,22) -- (11.5,21.25);
\draw [dashed] (11.5,21.25) -- (11.5,17.25);
\node [font=\Large] at (8.75,21.6) {$A$};
\node [font=\Large] at (13.5,21.6) {$A^c$};

\draw  (6.25,20) rectangle (15.75,19.25);
\draw [<->, >=Stealth, dashed] (9.2,20.25) -- (13,20.25);
\draw [short] (9.2,20) -- (9.2,19.25);
\draw [short] (13,20) -- (13,19.25);
\draw [dashed] (9.2,19.25) -- (9.2,17.25);
\draw [dashed] (13,19.25) -- (13,17.25);
\node [font=\Large] at (7.5,19.6) {$L_1$};
\node [font=\Large] at (10.25,19.6) {$L_2$};
\node [font=\Large] at (12.25,19.6) {$L_3$};
\node [font=\Large] at (14.25,19.6) {$L_4$};
\node [font=\Large] at (11,20.75) {$D$};

\draw  (6.25,18) rectangle (15.75,17.25);
\draw [short] (9.75,18) -- (9.75,17.25);
\draw [short] (14.5,18) -- (14.5,17.25);
\node [font=\Large] at (10.5,17.6) {$\Gamma_2$};
\node [font=\Large] at (13.85,17.6) {$\Gamma_4$};
\draw [<->, >=Stealth, dashed] (6.25,18.25) -- (11.5,18.25);
\draw [<->, >=Stealth, dashed] (13,18.25) -- (14.5,18.25);
\draw [dashed] (8.5,18.25) -- (10,18.75);
\draw [dashed] (14,18.25) -- (11.25,18.75);
\node [font=\Large] at (10.75,18.75) {$C^*$};
\draw [<->, >=Stealth, dashed] (6.25,17) -- (9.75,17);
\node [font=\Large] at (8,16.5) {$C^*$};
\end{circuitikz}
}
\caption{Visualization of the subsets appearing in the proof of the ESS approximation under the generalized familywise error rates.}
\label{fig: C^* and tilde C familywise}
\end{figure}
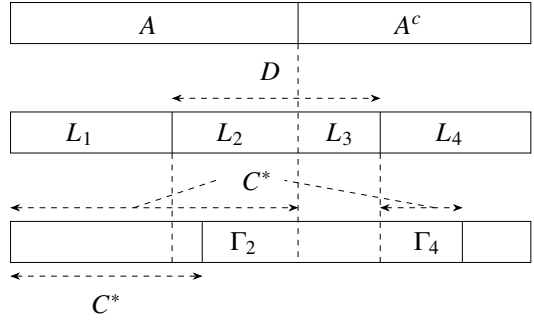

\subsection{False Discovery and Non-discovery Rate}\label{sec: proof of fdr}
\begin{proof}[Proof of Theorem \ref{thm: second-order optimal of intersection rule}]
As shown in Subsection 2.2 of \cite{he2021asymptotically}, for any decision rule $D$, we have the following inequalities: for each $A \subset [K]$,
\begin{equation*}
\begin{aligned}
&\frac{1}{K}\prob_A(|D\setminus A|\ge 1) \le \expt_A\left[\frac{|D\setminus A|}{|D|\vee 1}\right] \le \prob_A(|D\setminus A|\ge 1),
\\&\frac{1}{K}\prob_A(|A\setminus D|\ge 1) \le \expt_A\left[\frac{|A\setminus D|}{|D|\vee 1}\right]\le \prob_A(|A\setminus D|\ge 1),
\end{aligned}
\end{equation*}
which implies the following relationship between
Class \ref{problem: fdr} and Class \ref{problem: gfr} with $m_1 =m _2 =1$:
\begin{equation*}
    \Delta_{1,1}^{\mathrm{gfr}}(\alpha,\beta)\subset \Delta^{\textup{fdr}}(\alpha,\beta)\subset \Delta_{1,1}^{\mathrm{gfr}}(K\alpha,K\beta).
\end{equation*}
Then, by definition, it follows that for $\alpha,\beta \in (0,1)$ and each $A \subset [K]$,
\begin{equation}
    \label{eq: T_comp}
    T_A^{\textup{min}}\left(\Delta_{1,1}^{\mathrm{gfr}}(\alpha,\beta)\right) \geq 
     T_A^{\textup{min}}\left(\Delta^{\textup{fdr}}(\alpha,\beta)\right)
    \geq T_A^{\textup{min}}\left(\Delta_{1,1}^{\mathrm{gfr}}(K\alpha,K\beta)\right).
\end{equation}

Furthermore, when $m_1 = m_2 = 1$, the Intersection rule $\delta_I(a,b)$ in \eqref{def:intersection_rule} coincides with the Leap rule $\delta_L(a,b)$ in \eqref{eq: leap rule}, and the thresholds $(a_{\alpha,\beta}, b_{\alpha,\beta})$ in \eqref{fdr:a_b} are the same as $(a_{\alpha,\beta}, b_{\alpha,\beta})$ in \eqref{eq: leap_thresholds}. Then by Theorem \ref{thm: second-order optimal of leap rule}, there exists a constant $C' > 0$ that does not depend on $\alpha,\beta$, such that for each $A \subset [K]$, and $\alpha,\beta \in (0,1)$ such that \eqref{eq:alpha_beta_same_rate} holds, we have
\begin{align*}
 \expt_A[T_I(a_{K\alpha,K\beta},b_{K\alpha,K\beta})]
-
T_A^{\textup{min}}\left(\Delta_{1,1}^{\mathrm{gfr}}(K\alpha,K\beta)\right) \leq C'.
\end{align*}
Then due to \eqref{eq: T_comp}, we immediately have
\begin{align*}
&     \expt_A[T_I(a_{\alpha,\beta},b_{\alpha,\beta})] - T_A^{\textup{min}}\left(\Delta^{\textup{fdr}}(\alpha,\beta)\right) \leq      \expt_A[T_I(a_{\alpha,\beta},b_{\alpha,\beta})] - T_A^{\textup{min}}\left(\Delta_{1,1}^{\mathrm{gfr}}(K\alpha,K\beta)\right) \\
     &\leq \expt_A[T_I(a_{\alpha,\beta},b_{\alpha,\beta})] - \expt_A[T_I(a_{K\alpha,K\beta},b_{K\alpha,K\beta})] + C'
\end{align*}
The proof is then complete in view of Lemma \ref{lemma:aux_fdr}.
\end{proof}

Denote by $\bell_{\alpha,\beta} = (a_{\alpha,\beta}, b_{\alpha,\beta})$. Consider the Intersection rule defined in \eqref{def:intersection_rule}.
\begin{lemma}\label{lemma:aux_fdr}
    Suppose Assumption \ref{assumption: lorden} holds, and 
    $\alpha,\beta \in (0,1)$ satisfy \eqref{eq:alpha_beta_same_rate}. Then, there exists a constant $L' > 0$, that does not depend on $\alpha,\beta$, such that for each $A \subset [K]$ and any $\alpha,\beta \in (0,1)$
    \begin{align*}
         \expt_A[T_I(\bell_{\alpha,\beta})] - \expt_A[T_I(\bell_{K\alpha,K\beta})] \leq L'.
    \end{align*}
\end{lemma}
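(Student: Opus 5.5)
Under \eqref{eq:alpha_beta_same_rate}, passing from $(\alpha,\beta)$ to $(K\alpha,K\beta)$ lowers both thresholds in \eqref{fdr:a_b} by exactly $\log K$: $a_{K\alpha,K\beta}=a_{\alpha,\beta}-\log K$ and $b_{K\alpha,K\beta}=b_{\alpha,\beta}-\log K$. So the plan is to show that raising both thresholds of the Intersection rule by a fixed amount $\Delta:=\log K$ costs only $O(1)$ in expected sample size, uniformly in the thresholds. I will do this by a pathwise comparison with one-sided first-passage times, stream by stream.

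\textbf{Pathwise bound.} Fix $A$ and write $a'=a-\Delta$, $b'=b-\Delta$, $T'=T_I(a',b')$, $T=T_I(a,b)$. For each stream $k$, the drift of $\lambda_t^k$ under $\prob_A$ is $\mathcal{I}_1^k>0$ if $k\in A$ and $-\mathcal{I}_0^k<0$ otherwise. I will use the bound
\[
T\le T'+\sum_{k\in[K]} \sigma_k ,
\]
where $\sigma_k$ is constructed as follows. After time $T'$, stream $k$ sits either $\ge b'$ or $\le -a'$. Define $\sigma_k$ as the additional time after $T'$ until $\lambda^k$, started from its position at $T'$, reaches the correct boundary for stream $k$ (that is, $\ge b$ if $k\in A$, $\le -a$ if $k\notin A$) and never again leaves $(-\infty,-a]\cup[b,\infty)$.

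A cleaner variant replaces this with the last-exit time $\gamma_k:=\sup\{t:\lambda^k_t\in(-a,b) \text{ or } \lambda_t^k \text{ is on the wrong side}\}$. Then $T\le\max_k \gamma_k+1$, since after $\max_k\gamma_k$ every stream is outside $(-a,b)$. So it suffices to show $\expt_A[\max_k\gamma_k]\le \expt_A[T']+O(1)$.

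\textbf{Key estimates.} For a correct-side stream, e.g.\ $k\in A$ with $\lambda^k_{T'}\ge b'$, the strong Markov property at $T'$ reduces the extra time to at most $\nu_k(\Delta)+\kappa_k$. Here $\nu_k(\Delta)$ is the first passage of a fresh positive-drift walk above level $\Delta$, with $\expt\,\nu_k(\Delta)\le (\Delta+O(1))/\mathcal{I}_1^k$ by Wald and the overshoot bound of Lorden under Assumption \ref{assumption: lorden}. The term $\kappa_k$ is a last-time-below-zero variable $\tau_G(0)$-type, which has bounded expectation by the Janson moment result already used in Lemma \ref{lemma: aux 2}.

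For a wrong-side stream ($k\in A$ but $\lambda^k_{T'}\le -a'$), the walk must climb at least $a'+b$. This event has probability $\le e^{-a'}$, by the usual change-of-measure error bound used to show $\delta_I\in\Delta^{\mathrm{fdr}}$. On it, the conditional expected extra time is $O(a+b)$, since linear-time passage holds by Wald, using the bound on the starting point $\lambda^k_{T'}$, whose overshoot has controlled second moments. Because $a\asymp b\asymp|\log\alpha|$ under \eqref{eq:alpha_beta_same_rate}, this contributes $O((a+b)e^{-a'})=o(1)$.

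Summing over the $K$ streams and taking the maximum via the sum, each contribution is bounded uniformly in $(\alpha,\beta)$. This gives $L'$ of order $K\log K/\min_k\min(\mathcal{I}_0^k,\mathcal{I}_1^k)+O(1)$.

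\textbf{Main obstacle.} The hard step is the wrong-side term. It needs a moment bound on $|\lambda^k_{T'}|$ beyond the threshold, i.e.\ control of the overshoot at a stopping time that depends on all streams, using only second moments. I would first try to bound $\expt_A[|\lambda^k_{T'}|\,;\,\text{wrong side}]$ by Cauchy--Schwarz. This requires $\expt_A[T']=O(a+b)$, which follows from the first-order results, together with Doob's maximal inequality, giving $\expt\max_{t\le n}|\lambda_t^k-t\,\text{drift}|^2=O(n)$. If that fails, I will avoid starting points altogether. Instead I will dominate $\max_k\gamma_k$ by $\max_k$ of the last-exit times of single-stream walks at levels $(a,b)$, and compare these with those at $(a',b')$ via an additional term $\Delta/\min\mathcal{I}$ for each stream.
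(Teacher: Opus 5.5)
Your main argument is correct in substance and takes a different route from the paper. The paper works with $A=[K]$ and replaces both Intersection times by the one-sided time $\tilde T_I$, the first time all streams are simultaneously on the correct side. It uses $T_I\le\tilde T_I$ at the higher thresholds and asserts $\expt[T_I]=\expt[\tilde T_I]+O(1)$ at the lower ones ``by the same argument'' as Lemma \ref{lemma: min ESS dragalin}. It then shifts at the start: $\tilde T_I(\bell_{\alpha,\beta})\le\tau^*+\widehat T_I$, where $\tau^*$ is the first time all LLRs exceed $\log K$ and $\widehat T_I$ is a fresh copy of $\tilde T_I(\bell_{K\alpha,K\beta})$.

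You instead shift at the end. You run the lower-threshold rule to $T'$ and then pay stream by stream via the strong Markov property, using $T\le T'+\max_k\sigma_k\le T'+\sum_k\sigma_k$. This never needs the comparison with the one-sided time, handles every $A$ directly, and gives an explicit constant. It also uses only second-moment tools: Lorden's overshoot bound, Janson's last-exit result, Ville's inequality and Wald's identities. The paper's comparison step, by contrast, is only sketched by reference to a H\"older argument that used third moments.

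Your wrong-side step is not right as written, and it needs one more observation. $\lambda^k_{T'}$ is not an overshoot: $T'$ is driven by the other streams, so stream $k$ can sit far below $-a'$. If you bound $|\lambda^k_{T'}|$ by $T'$ times the drift plus the centered part, Cauchy--Schwarz needs $\expt_A[(T')^2]$, which the first-order results do not provide.

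What closes the step is the sign of the drift. For $k\in A$, write $W_k=\{\lambda^k_{T'}\le -a'\}$ and $S^k_t=\lambda^k_t-t\mathcal{I}_1^k$. On $W_k$ you have $0\le-\lambda^k_{T'}\le -S^k_{T'}$. Wald's second identity gives $\expt_A[(S^k_{T'})^2]=v_k\,\expt_A[T']$, with $v_k$ the increment variance. Hence
\[
\expt_A\bigl[|\lambda^k_{T'}|\idf_{W_k}\bigr]\le \sqrt{v_k\,\expt_A[T']}\;e^{-a'/2},
\]
so only $\expt_A[T']=O(a+b)$ is needed. A simpler alternative is $-\lambda^k_{T'}\le\sup_t(-\lambda^k_t)$, which has finite mean under Assumption \ref{assumption: lorden}. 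Combine this with $\prob_A(W_k)\le e^{-a'}$, and with the fact that $(a+b)e^{-a'}$ and $(a+b)e^{-b'}$ stay bounded under \eqref{eq:alpha_beta_same_rate}. The wrong-side contributions are then bounded uniformly, with the same argument for $k\notin A$.

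Drop the ``cleaner variant'' with global last-exit times $\gamma_k$ and the fallback. Proving $\expt_A[\max_k\gamma_k]\le\expt_A[T']+O(1)$ requires a lower bound on $\expt_A[T']$ that is sharp to $O(1)$. That is exactly the nontrivial one-sided comparison your strong-Markov decomposition lets you avoid.

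Finally, the exact $\log K$ shift of the thresholds comes from $K\alpha,K\beta\le 1$, not from \eqref{eq:alpha_beta_same_rate}. The same-rate condition is used only in the wrong-side bounds.
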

\begin{proof}
Without loss of generality, assume that $A = [K]$, and 
that $\alpha, \beta \leq (K)^{-1}$. Then
\begin{equation}\label{eq: bell alpha, beta}
\begin{aligned}
    \bell_{K\alpha, K\beta} = (|\log(\beta)|, |\log(\alpha)|),\quad \text{ and } \quad 
     \bell_{\alpha, \beta} =(|\log(\beta)| + \log(K), |\log(\alpha)|+ \log(K)).
\end{aligned}    
\end{equation}
For $a,b > 0$, define
    \begin{align*}
    \tilde{T}_I(a,b) := \inf\left\{t \geq 1: \lambda_t^{k} \geq b, \text{ for } k \in [K]\right\}.
    \end{align*}
Then by the definition of  the Intersection rule  in \eqref{def:intersection_rule},  
\begin{align*}
 T_I(\bell_{\alpha,\beta})  \leq \tilde{T}_I(\bell_{\alpha,\beta} ),\quad 
  T_I(\bell_{K\alpha,K\beta})  \leq \tilde{T}_I(\bell_{K\alpha,K\beta} ).
\end{align*}
Furthermore, by the same argument as in Theorem \ref{thm: min ESS characterization},  as $\alpha \vee \beta \to 0$ such that \eqref{eq:alpha_beta_same_rate} holds, we have
\begin{align*}
    \expt_{[K]}\left[T_I(\bell_{K\alpha,K\beta})  \right]  =  \expt_{[K]}\left[\tilde{T}_I(\bell_{K\alpha,K\beta})\right] + O(1),
\end{align*}
which implies that
 \begin{align*}
         \expt_{[K]}[T_I(\bell_{\alpha,\beta})] - \expt_{[K]}[T_I(\bell_{K\alpha,K\beta})] \leq   \expt_{[K]}\left[\tilde{T}_I(\bell_{ \alpha, \beta})\right] - \expt_{[K]}\left[\tilde{T}_I(\bell_{K\alpha,K\beta})\right] +O(1).
    \end{align*}
Finally, we show that as $\alpha \vee \beta \to 0$ ,  $\expt_{[K]}\left[\tilde{T}_I(\bell_{ \alpha, \beta})\right] - \expt_{[K]}\left[\tilde{T}_I(\bell_{K\alpha,K\beta})\right] = O(1)$. We define
\begin{align*}
\tau^* := \tilde{T}_I(\log(K), \log(K)) = \inf\left\{t \geq 1: \lambda_t^{k} \geq  \log(K), \text{ for } k \in [K]\right\}.
\end{align*}
It is clear that $\tilde{L} := \expt_{[K]}\left[\tau^*\right] < \infty$, which is a constant. Moreover, by definition,
$$
\tilde{T}_I(\bell_{ \alpha, \beta}) \leq \tau^* + \widehat{T}_I(\bell_{ K\alpha, K\beta}),
$$
where in view of \eqref{eq: bell alpha, beta} we define
\begin{align*}
    \widehat{T}_I(\bell_{ K\alpha, K\beta}) := \inf\left\{ t \geq \tau^*: \lambda_t^{k} -  \lambda_{\tau^*}^{k} \geq |\log(\alpha)| \text{ for } k \in [K]\right\} - \tau^*.
\end{align*}
Since $\{\mathbf{X}_t^{[K]}: t \geq 1\}$ are i.i.d., we have $\widehat{T}_I(\bell_{ K\alpha, K\beta})$ have the same distribution under $\prob_{[K]}$ as $\tilde{T}_I(\bell_{K\alpha, K\beta})$. As a result,  
\begin{align*}
    \expt_{[K]}\left[\tilde{T}_I(\bell_{ \alpha, \beta})\right] \leq  \expt_{[K]}\left[\tau^*\right] + \expt_{[K]}\left[ \widehat{T}_I(\bell_{ K\alpha, K\beta})\right] \leq \tilde{L} + \expt_{[K]}\left[\tilde{T}_I(\bell_{K\alpha,K\beta})\right],
\end{align*}
which concludes the proof.
\end{proof}

\subsection{Known Number of Signals}\label{sec: proof of kns}
In this subsection, we present the proofs of the results in Appendix \ref{app:kns}. 
We recall the definition of the stopping rule $T_G(b)$ in \eqref{eq: gap rule}, the loss function $\mW$ in \eqref{eq: loss kns}, and the associated rule $T_{\mathrm{Ld}}(c,\mW)$ in \eqref{eq: lorden procedure}. Furthermore, $\pi_0$ denotes the uniform distribution on $\mathcal{A} = \{A \subset [K]: |A| = m\}$.

\begin{proof}[Proof of Theorem \ref{thm: second-order optimal of gap rule}]
We define
\begin{align}\label{eq: gap_constants_def}
c_{\alpha} :=  \binom{K}{m}^{-1} e^{-b_{\alpha}} = \binom{K}{m}^{-1}[m(K-m)]^{-1}\alpha, \qquad
L := \binom{K}{m}m(K-m).
\end{align}
where we recall $b_{\alpha}$ is defined in \eqref{gap:b}.  
Then the conclusion follows immediately from Theorem \ref{thm: main result} once we verify condition \eqref{assumption: lorden 1}--\eqref{eq: second-order optimal sufficient condition 2} hold with the loss function $\mW$ in \eqref{eq: loss kns}, cost $c_{\alpha}$ and constant $L$.

We start with condition \eqref{assumption: lorden 1}.  Since $1\leq m < K$, for each $D \in \mathcal{A}$, there exists $i \in D$ and $i' \in D^c$. Now, let $A := (D\setminus \{i\}) \cup \{i'\}$. Then clearly $|A| =m$, and thus $A \in \mathcal{A}$. Then by the definition \eqref{eq: loss kns}, we have $W(D|A) = 1$, which verifies condition \eqref{assumption: lorden 1}.

Next, we consider condition \eqref{eq: second-order optimal sufficient condition 1}. Since  $b_{\alpha}=\log \left[\left(\binom{K}{m}c_{\alpha}\right)^{-1}\right]$, by Lemma \ref{lemma: stop earlier gap}, we have
$$
T_G(b_{\alpha})\le T_{\mathrm{Ld}}(c_{\alpha},\mW).
$$

Finally, we focus on condition \eqref{eq: second-order optimal sufficient condition 2}. For an arbitrary procedure $\delta=(T,D)\in\Delta_m^{\textup{kns}}(\alpha)$, by the definitions in \eqref{eq: problem kns}, \eqref{eq: loss kns} and \eqref{eq: gap_constants_def}, we have
\begin{align*}
    \ie(\delta; \mW) &= \sum_{|A|=m} \pi_0(A) \prob_A(|D\triangle A|\geq 1)\le \alpha\sum_{|A|=m} \pi_0(A) = \alpha = L c_{\alpha}.
\end{align*}
The proof is complete.
\end{proof}

Recall the loss function $\mW$ in \eqref{eq: loss kns},   the definition of $\blambda_t^{(k)}$   in \eqref{eq: blamda} and the definition of $\bar{j}_k(t)$ in Appendix \ref{app:kns}.

\begin{lemma}\label{lemma: stop earlier gap}
Assume the cost $c\le \binom{K}{m}^{-1}$. Then for the threshold $b = \log\left[\left(\binom{K}{m}c\right)^{-1}\right]$, we have 
$$
T_G(b)\le T_{\mathrm{Ld}}(c,\mW).
$$
\end{lemma}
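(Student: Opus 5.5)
We follow the template of Lemma \ref{lemma: stop earlier sum intersection}: assume $T_{\mathrm{Ld}}:=T_{\mathrm{Ld}}(c,\mW)<\infty$, and show that the stopping criterion of $T_G(b)$ is already satisfied at $T_{\mathrm{Ld}}$, i.e.
\[
\blambda_{T_{\mathrm{Ld}}}^{(K-m+1)}-\blambda_{T_{\mathrm{Ld}}}^{(K-m)} \ge b = \log\Bigl[\binom{K}{m}^{-1}c^{-1}\Bigr].
\]
The main tool is Lemma \ref{lemma: |A_0| known}. Let $D_0=\{\bar{j}_{K-m+1}(T_{\mathrm{Ld}}),\ldots,\bar{j}_K(T_{\mathrm{Ld}})\}$ be the streams with the $m$ largest LLRs at $T_{\mathrm{Ld}}$. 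For every $A_0\in\mathcal{A}$, that lemma gives
\[
\sum_{i\in D_0}\lambda_{T_{\mathrm{Ld}}}^i-\sum_{j\in A_0}\lambda_{T_{\mathrm{Ld}}}^j>\log\bigl[c^{-1}\pi_0(A_0)\mW(D_{\mathrm{Ld}}\mid A_0)\bigr],
\]
where $\pi_0(A_0)=\binom{K}{m}^{-1}$. The task is therefore to choose $A_0$ so that $\mW(D_{\mathrm{Ld}}\mid A_0)=1$, i.e. $A_0\neq D_{\mathrm{Ld}}$, and so that the left-hand side is at most the gap.

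Set $i:=\bar{j}_{K-m+1}(T_{\mathrm{Ld}})$, the smallest element of $D_0$, and $i':=\bar{j}_{K-m}(T_{\mathrm{Ld}})$, the largest element outside $D_0$; both exist because $1\le m<K$. Split into two cases. If $D_{\mathrm{Ld}}\neq D_0$, take $A_0=D_0$. Then the left-hand side is $0$, while the right-hand side is $\log[c^{-1}\binom{K}{m}^{-1}]\ge 0$ since $c\le\binom{K}{m}^{-1}$. This contradicts the strict inequality, so this case cannot occur. If $D_{\mathrm{Ld}}=D_0$, take $A_0=(D_0\setminus\{i\})\cup\{i'\}$. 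Then $|A_0|=m$ and $A_0\neq D_{\mathrm{Ld}}$, so $\mW(D_{\mathrm{Ld}}\mid A_0)=1$. The left-hand side equals $\lambda^i_{T_{\mathrm{Ld}}}-\lambda^{i'}_{T_{\mathrm{Ld}}}=\blambda^{(K-m+1)}_{T_{\mathrm{Ld}}}-\blambda^{(K-m)}_{T_{\mathrm{Ld}}}$, and the inequality yields the gap bound $>b$.

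Hence $T_G(b)\le T_{\mathrm{Ld}}$. The only delicate point is ruling out $D_{\mathrm{Ld}}\neq D_0$, which the first case handles. Ties among the order statistics are harmless: $D_0$ in Lemma \ref{lemma: |A_0| known} uses the same index labeling $\bar{j}$ as the Gap rule, so $i$ and $i'$ refer to the same streams in both.
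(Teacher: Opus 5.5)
Your proof is correct and follows the paper's argument: the same $D_0$, the same swapped set $A_0=(D_0\setminus\{\bar{j}_{K-m+1}(T_{\mathrm{Ld}})\})\cup\{\bar{j}_{K-m}(T_{\mathrm{Ld}})\}$, and the same appeal to Lemma~\ref{lemma: |A_0| known}. Your extra case, which rules out $D_{\mathrm{Ld}}\neq D_0$ by taking $A_0=D_0$ and using $c\le\binom{K}{m}^{-1}$, is a genuine improvement, because the paper only asserts $D_{\mathrm{Ld}}\neq A_0$ (needed for $\mW(D_{\mathrm{Ld}}\mid A_0)=1$), and this could fail under ties if left unargued.
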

\begin{proof} 
Since $c$, $b$, and $\mW$ are fixed, we suppress their dependence in $T_G(b)$ and $T_{\mathrm{Ld}}(c,\mW)$. Suppose that $T_{\mathrm{Ld}} < \infty$ since otherwise the conclusion trivially holds. We show that the stopping criterion of $T_G$ is met at time $T_{\mathrm{Ld}}$, that is,
\begin{equation}
    \label{kns:aux_comp}
\blambda_{T_{\mathrm{Ld}}}^{(K-m+1)} - \blambda_{T_{\mathrm{Ld}}}^{(K-m)} \geq b.
\end{equation}
This immediately implies $T_G \le T_{\mathrm{Ld}}$.

Let $D_0 := \{\bar{j}_{K-m+1}(T_{\mathrm{Ld}}),\ldots,\bar{j}_{K}(T_{\mathrm{Ld}})\}$ be the collection of stream indices with $m$ largest LLRs at $T_{\mathrm{Ld}}$, 
We define
$$
A_0 := (D_0\setminus\{\bar{j}_{K-m+1}(T_{\mathrm{Ld}})\})\cup\{\bar{j}_{K-m}(T_{\mathrm{Ld}})\},
$$
which implies that
$$
D_0 \setminus A_0 = \{\bar{j}_{K-m+1}(T_{\mathrm{Ld}})\}, \qquad A_0 \setminus D_0 =\{\bar{j}_{K-m}(T_{\mathrm{Ld}})\}. 
$$
By Lemma \ref{lemma: |A_0| known}, we have
$$
\sum_{i\in D_0}\lambda_{T_{\mathrm{Ld}}}^i- \sum_{j\in A_0}\lambda_{T_{\mathrm{Ld}}}^j > \log[c^{-1}\pi_0(A_0)\mW(D_{\mathrm{Ld}}\mid A_0)].
$$
On one hand, we have
$$\sum_{i\in D_0}\lambda_{T_{\mathrm{Ld}}}^i- \sum_{j\in A_0}\lambda_{T_{\mathrm{Ld}}}^j = \blambda_{T_{\mathrm{Ld}}}^{(K-m+1)}-\blambda_{T_{\mathrm{Ld}}}^{(K-m)}.$$
On the other hand, since $D_{\mathrm{Ld}} \neq A_0$, by the definition of $\mW$ in \eqref{eq: loss kns}, $\mW(D_{\mathrm{Ld}}\mid A_0)=1$. Since $\pi_0(A_0) = \left(\binom{K}{m}\right)^{-1}$, it follows that \eqref{kns:aux_comp} holds, and the proof is complete.
\end{proof}

\section{Numerical results for the asymmetric case}
\label{app:addition_simulation}

In this Appendix, we present additional numerical results under the generalization misclassification rate. Recall the setup in Section \ref{sec:numerical_results}. Consider a nonhomogeneous and asymmetric setup, where
\[
K=20,\qquad \mu_1=1/2,\ \mu_2=\cdots=\mu_K=1,\qquad \sigma_1^2=\cdots=\sigma_K^2=1,
\]
so that the first hypothesis is much harder to solve than the others. We take $m_0=1$ and consider the empty signal subset $A=\emptyset$. In this case,
\[
\mathcal{C}_{\emptyset}^{\mW}=\{\{1\}\},\qquad r_{\emptyset}^{\mW}=1,\quad \text{ and } \quad \KL_{\emptyset,*}^{\mW}=\KL_{\emptyset,\emptyset}^{\mW}=1/8.
\]
By \eqref{eq: first_order_gmr} and Theorem \ref{thm: second-order min ESS gmr}(a), the first- and second-order approximations coincide and
\begin{align*}
T_{\emptyset}^{\textup{min}}\left(\Delta_{m_0}^{\mathrm{gmr}}(\alpha)\right)=\textup{SO}_{\alpha}+O(1),\quad \text{where}\quad \textup{SO}_{\alpha}:=8|\log\alpha|.
\end{align*}
We present the numerical results in Figure \ref{fig: asymmetric_SI_20_1}. The leftmost plot displays the ESS $\expt_{\emptyset}[T_S(b_{\alpha}^*)]$ (square markers), together with the second-order approximation $\textup{SO}_{\alpha}$ (circle markers), as functions of $|\log_{10}(\alpha)|$ as $\alpha\in(0,1)$ varies. The middle and rightmost plots display the difference $\expt_{\emptyset}[T_S(b_{\alpha}^*)]-\textup{SO}_{\alpha}$ and the ratio $\expt_{\emptyset}[T_S(b_{\alpha}^*)]/\textup{SO}_{\alpha}$ as functions of $|\log_{10}(\alpha)|$. Although Theorems \ref{thm: second-order optimal of sum intersection rule} and \ref{thm: second-order min ESS gmr}(a) only imply that $\expt_{\emptyset}[T_S(b_{\alpha}^*)]-\textup{SO}_{\alpha}=O(1)$, the middle panel suggests that this difference vanishes as $\alpha\to0$. It would be of interest to establish this refinement rigorously.

\begin{figure}[!t]
\centering
\includegraphics[width=0.95\textwidth]{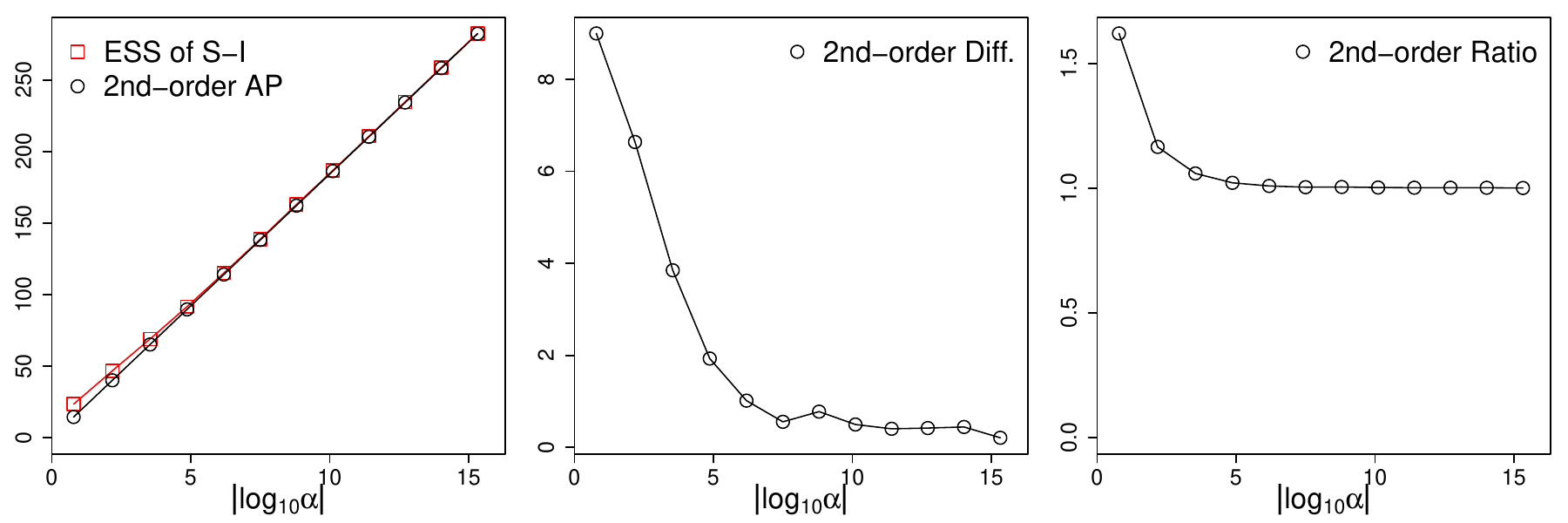}
\caption{Asymmetric Case: $K=20, m_0=1$. The x-axis in all panels is $|\log_{10}\alpha|$. In the leftmost plots, ``S-I'' denotes the Sum-Intersection rule and ``AP'' denotes the asymptotic approximation  to the smallest achievable ESS. In the middle plots, ``Diff.'' denotes the difference between the ESS of the S-I rule and the two approximations. In the rightmost plots, ``Ratio'' denotes the ratio of the ESS of the S-I rule to each of the two approximations.}
\label{fig: asymmetric_SI_20_1}
\end{figure}

\end{appendix}

\end{document}